\def\Z{{\mathbb Z}} \def\R{{\mathbb R}}  \def\C{{\mathbb C}} \def\Ha{{\mathbb H}}
\def\Cl{\mathop{\fam0 Cl}}
\def\codim{\mathop{\fam0 codim}}
\def\coker{\mathop{\fam0 coker}}
\def\diag{\mathop{\fam0 diag}}
\def\di{\mathop{\fam0 div}}
\def\ex{\mathop{\fam0 ex}}
\def\id{\mathop{\fam0 id}}
\def\inc{\mathop{\fam0 i}}
\def\Int{\mathop{\fam0 Int}}
\def\im{\mathop{\fam0 im}}
\def\lk{\mathop{\fam0 lk}}
\def\mo{\mathop{\fam0 mod}}
\def\pr{\mathop{\fam0 pr}}
\def\rk{\mathop{\fam0 rk}}
\def\SO{SO}
\def\Spin{Spin}
\def\Tors{\mathop{\fam0 Tors}}
\def\ker{\mathop{\fam0 ker}}
\def\rel#1{\allowbreak\mkern8mu{\fam0rel}\,\,#1}
\def\t{\widetilde}
\def\mk{\# }
\newcommand{\capM}[1]{\cap_{#1}}
\newcommand{\de}{\partial}
\newcommand{\nus}{\nu}
\newcommand{\nud}{D\nu}
\newcommand{\ol}{\overline}
\newcommand{\jonly}[1]{}
\newcommand{\aronly}[1]{#1}
\long\def\comment#1\endcomment{}
    \theoremstyle{theorem}
         \newtheorem{Theorem}{Theorem}[section]
         \newtheorem{Lemma}[Theorem]{Lemma}
         \newtheorem{Corollary}[Theorem]{Corollary}
         \newtheorem{Proposition}[Theorem]{Proposition}
\theoremstyle{definition}
\newtheorem{Remark}[Theorem]{Remark}
\newtheorem{Problem}[Theorem]{Problem}
\begin{document}

\comment

\bigskip
Dear Referee,

Thank you for your careful reading of our paper and critical remarks.
Please find the revised version of our paper and our reply to your comments.
The corrections comparatively to the previous version (except the bibliography) are marked in red.

Sincerely Yours, Arkadiy Skopenkov  (on behalf of both authors).

\smallskip
Lemma 2.11, Corollary 2.13, Lemma 3.9, Lemma 3.14, Lemma 3.16, before Theorem 4.10, Theorem 4.11. Corrected.

Proof of Lemma 3.1.
What we need (for the second phrase) is $g(S^4)\cap C_f =\emptyset$ not $g(S^4)\cap\partial C_f=\emptyset$.

Proof of Lemma 3.1. The following phrase is correct the way it is written:
`Since $\nus x=y$ and $y\cap_N z\in\Z$, we obtain
$y\cap_N z=\partial Ay\cap_{\partial C}\nus^!z=Ay\cap_C\widehat Az$.'
Indeed, after Remark 1.2 in `Convention on coefficients' we write
`We identify the coefficient group $\Z_d$ with $H_0(X; \Z_d)$'.

Proof of $(\lambda)$ and $(\varkappa)$ in Lemma 3.2. We have to keep the phrase `represents $\partial AX$ there' as it is because the relation between $\partial AX$ and $fX$ is explained in a separate (the third) bullet point and is more delicate than $\partial AX=fX$.

\bigskip
Dear Alexey,

Thank you for your careful reading of our paper and critical remarks,
both those we privately discussed in April and those sent to us via Editors.
Please find the revised version of our paper and our reply to your comments.
The corrections comparatively to the previous version (except the bibliography) are marked in red.

Sincerely Yours, Arkadiy Skopenkov  (on behalf of both authors).

\smallskip
{\bf Organization of the report.}
We are grateful for starting the report with high estimation of the value of our results.
It would be nice if the important words `...this does not affect the proofs, I believe' would be easier to find in the beginning of the report.
More generally, it would be helpful to our work on improving the exposition in our paper
if critical remarks could be structured as

(i) important remarks justifying referee's recommendation (rejection, major revision, minor revision),

(ii) less important remarks, and

(iii) remarks which the authors could consider on `take it or leave it' basis.


\smallskip
{\bf Language.} The referee mentions what they call "language flaws", specifically they mention missing our useless a/the/commas and strange formulations like "let us".   They also state that this is more or less harmless, but we shall take up their criticism.

As for the placing of commas, this is really a matter of taste.  Similarly, writing a sentence like "Let us prove the claim." is correct English.  We could write something like "We now prove the claim." if the referee feels strongly about this.

Finally, concerning the use of articles, this is a constant source of friction between native speakers of English and Russian.
We cannot say more than that the first author of the paper is a native English speaker and exerted significant efforts to ensure that each sentence in the paper is a correctly formed English sentence.
We are the more grateful for the referee spotting out (in a private letter not in a report) a missing article on page 2, in the statement of the Embedding Problem.

We also corrected `adjectives' to `terms' in footnote 1 according to a suggestion by referee (in a private letter not in a report).

\smallskip
{\bf Organization of the text.} We do not find `chaotic' is justified by specific critical remarks.
Many forward and backward references are due to

* the complexity of the proofs

* our efforts to make the main ideas accessible to non-specialists, by presenting main lemmas
(whose statements are simple and accessible to non-specialists, but whose proof are non-trivial and long) at the beginning of the paper.

We spent several years for writing this paper in a way as much accessible to non-specialists as possible.
The report brings no suggestions how to improve it except the agreement of numbering.

The disagreement of the numberings of subsections on the one hand, and statements on the other, is usual.
The numbering is up to the standards of MMJ (or some interjournal standards), on which we do not have influence.

Also it is not justified that having a separate section for every lemma would make the paper clearer.
We think that having a separate subsection for any of about 50 lemmas would make the text less readable.
E.g. think about having a separate subsections for lemmas 1.4 and 1.5.

The remark concerning `below in \S4.4' is unjustified.
Indeed, `below in \S4.4' means `in the part of \S4.4 below this phrase' not
`in section 4.4 which is a section presented below '

\smallskip
{\bf (1).} Thank you.
We added to \S2.1 the phrase `Throughout this paper `manifold' is shorthand for `compact oriented manifold, possibly with non-empty boundary', and explicitly added `oriented' to the relevant phrase in \S1.3. (We also replaced `compact manifold' and `compact oriented manifold' to `manifold' throughout.)

\smallskip
{\bf (2a).} Thank you, we added  `compatible with the orientation of $Q$'.

\smallskip
{\bf (2b).} In the submitted version we explicitly (not tacitly) adopt the normal point of view:
we write

`A {\em spin structure} (more precisely, {\it stable tangent spin structure}) on $Q$ is a stable tangent framing over the 2-skeleton of $Q$.
Two spin structures on $Q$ are {\it equivalent} if their restrictions to the $1$-skeleton are  homotopic.'

In the revision we corrected the sentences to make the point more explicit.

\smallskip
{\bf (2c, first).} In your remark you explain the difference between a spin structure and a stable spin structure.
This contradicts to `there is no need here to "stabilize" the tangent bundle' (i.e. to use stable spin structure not spin structure).

We edited the corresponding text to make this even more clear.

\smallskip
{\bf (2c, second).} The existence of such a 1--1 correspondence need not be matter-of-fact to a non-specialist, so we have chosen to explicitly state it as (well-known) Lemma 4.2.
Although normal and tangent spin structures are in natural 1--1 correspondence, in the arguments it is helpful
for a non-specialist to see which of the two objects is considered.

\smallskip
{\bf (2d).} Thank you, we explicitly defined `stable vector bundle'.
Thus no confusion with the different notion of algebraic geometry is possible.

\smallskip
{\bf (3).} Thank you, we shortened the phrase.

\smallskip
{\bf (4a).} We did specify what we meant by a stable normal spin structure, see the phrase `a (stable) normal spin structure, i.e. a stable normal framing over the 2-skeleton of some triangulation' of the submitted version.
In the revision we give more detailed definition in \S3.1.

\smallskip
{\bf (4b, 1st sentence).}  In our understanding `due cite' of a lemma is just before its applications
(unless the lemma is as important as to be presented in the introduction).
Since normal bundles and normal spin structures are not used before \S4, we do not want to bother a non-specialist reader by mentioning them in \S3 (and by thus asking to remember this until \S4).
So the lemma is presented in its due cite, unless the referee explains his understanding of `due cite' and
justify that having the lemma in another place  would be more convenient to a reader.

In our understanding `due formulation' of a lemma is the formulation used later.
So the lemma is presented in its due formulation, unless the referee explains his understanding of `due formulation' and justify that having another formulation  would be more convenient to a reader.

\smallskip
{\bf (4b, 2nd sentence).} This remark is superfluous because the sentence before Lemma 4.2 reads
`The following lemma is well-known'.

\smallskip
{\bf About `homological Alexander duality'.} Our use of `homological Alexander duality' is justified in footnote 4. Hence it is wrong that the homological Alexander duality have nothing to do
with Alexander duality (as in the report).



You consider $N$ as a subset of $S^m$, i.e. you fix an embedding $f:N\to S^m$,
while the main point of our paper is classification of  different $f$'s.

The isomorphism $A$ is not the inverse of the connecting homomorphism $\partial$
(as in the report), but is the composition of the inverse with the excision isomorphism.
The latter should not be considered  canonical in this situation (because $f$ is not fixed),
so it should not be confused with the identity.
More importantly, although such an interpretation of $A$ is correct
and was known to us, it provides no simple geometric interpretation
that we can use in calculations required for our proof.
We introduce $A$ exactly for such calculations.
There is analogous remark on $\widehat A$.

In the report you tacitly identify groups $H_{i+1}(S^n;fN)$ and $H_{i+1}(C,\partial C)$
related by a non-canonical (excision) isomorphism, see above.

\endcomment

\newpage
\title{Embeddings of non-simply-connected 4-manifolds in 7-space. I. Classification modulo knots}


\author{D. Crowley and A. Skopenkov}

\thanks{We would like to acknowledge S.~Avvakumov, M.~Kreck, S.~Melikhov, D.~Tonkonog, A. Zhubr and anonymous referee for useful discussions.
We would like to thank the Hausdorff Institute for Mathematics and the University of Bonn for their hospitality and support during the early stages of this project.
A. Skopenkov is supported in part by the Russian Foundation for Basic Research Grants No. 15-01-06302 and 19-01-00169, by Simons-IUM Fellowship and by the D. Zimin Dynasty Foundation.}

\subjclass[2010]{57R40, 57R52 (primary), 57R67, 57Q35, 55R15 (secondary)}

\keywords{Embedding, isotopy, 4-manifolds, surgery obstructions, spin structure}

\date{}

\maketitle

\abstract
We work in the smooth category.
Let $N$ be a closed connected orientable 4-manifold with torsion free $H_1$, where $H_q:=H_q(N; \Z)$.
The main result is {\it a complete readily calculable classification of embeddings $N\to\R^7$},
up to equivalence generated by isotopies and embedded connected sums with embeddings $S^4\to\R^7$.
Such a classification was earlier known only for $H_1=0$ by Bo\'echat-Haefliger-Hudson 1970.
Our classification involves the Bo\'echat-Haefliger invariant $\varkappa(f)\in H_2$,
Seifert bilinear form $\lambda(f):H_3\times H_3\to\Z$ and $\beta$-invariant assuming values in the quotient of $H_1$ defined by values of $\varkappa(f)$ and $\lambda(f)$.
In particular, for $N=S^1\times S^3$ we define geometrically a 1--1 correspondence between the set of
equivalence classes of embeddings and an explicitly defined quotient of
$\Z\oplus\Z$.

Our proof is based on development of Kreck modified surgery approach,
involving some simpler reformulations, and also uses parametric
connected sum.
\endabstract



\tableofcontents


\section{Introduction and main results} \label{s:intro}

In \S\ref{s:intro-kno} we provide a broader context for the results in this paper.
The main results are described in \S\ref{s:intro-kno} and formally stated in \S\ref{s:intro-main}.
Our ideas are described in \S\ref{s:intro-strat} and formally realized in \S\S\ref{s:defandplan}-\ref{s:eta-mk}.
Except for the definitions in bold, `some notation' in bold and Lemma \ref{l:isored}, the material of \S\ref{s:intro-kno} and \S\ref{s:intro-strat} is not formally used in the rest of this paper.

\subsection{The Knotting Problem}\label{s:intro-kno}

Let us start with a citation of \cite{MAE}.

`Three important classical problems in topology are the following, cf. \cite[p. 3]{Ze93}.

{\it The Manifold Problem:} Classify $n$-manifolds.

{\it The Embedding Problem:} Find the least dimension $m$ such that a given manifold admits an embedding into $m$-dimensional Euclidean space $\R^m$.

{\it The Knotting Problem:} Classify embeddings of a given manifold into another given manifold up to isotopy.

The Embedding and Knotting Problems have played an outstanding role in the development of topology. Various methods for the investigation of these problems were created by such classical figures as G. Alexander, H. Hopf, 
L. S. Pontryagin, R. Thom, H. Whitney, M. Atiyah, F. Hirzebruch, R. Penrose, J. H. C. Whitehead, C. Zeeman, W. Browder, J. Levine, S. P. Novikov, A. Haefliger, M. Hirsch, J. F. P. Hudson, M. Irwin and others.'


The Knotting Problem is related to other branches of mathematics, most importantly, to algebraic topology.
For recent surveys see \cite{RS99, Sk08, MAE}; whenever possible we refer to these surveys not to original papers.


This paper is on the Knotting Problem for 4-manifolds.
We consider {\it smooth} manifolds, embeddings and isotopies.
By a classification we mean {\it a complete, readily calculable} classification.\footnote{For a discussion of the terms `smooth' and `readily calculable'
see \jonly{\cite[Remark 2.20]{I},} \aronly{Remark \ref{r:tos1},} \cite[Remark 1.2]{MAE}.}
For $m\ge n+2$ the classifications of embeddings of compact $n$-manifolds into $S^m$ and into $\R^m$ are the same, see details in \cite[Remark 1.3]{MAE}.
In this subsection $P$ is a closed connected $n$-manifold.

\smallskip
{\bf Definition of $E^m(P)$.}
Let $E^m(P)$ be the set of isotopy classes of embeddings $f \colon P \to S^m$, where $S^m \subset \R^{m+1} $ is the unit $m$-sphere.

\smallskip
The Knotting Problem is more accessible for
$$
2m\ge3n+4,
$$
where there are some classical
classifications of embeddings, which are surveyed in \cite[\S2, \S3]{Sk08}, \cite{MAE}.
When $n=4$, classification was obtained for $m\ge 9$ by Whitney--Wu, for $m=8$ by Haefliger--Hirsch,
and for $N=S^4$ and $m=7$ by Haefliger,
giving
$$
|E^m(P)|=1\quad\text{for}\quad m\ge9,\quad E^8(P)=H_1(P;\Z_2),\quad E^7(S^4)\cong\Z_{12}.
$$
Here the equality sign between sets denotes the existence of a `geometrically defined' bijection,
and the isomorphism is a group isomorphism for the group structure defined below.
Any orientable 4-manifold embeds into $\R^7$ (this follows easily from results of Donaldson and Boechat-Haefliger, see detailed references in \cite{MAM}).\footnote{For more information and references see \cite{MAM}.
For classifications of embeddings of $n$-manifolds into $\R^{2n-1}$ when $n\ne4$ see \cite{Ya84, Sa99, Sk10', To10}.}

The Knotting Problem is much harder for $2m<3n+4$.
If $P$ is a closed manifold that is not a disjoint union of homology spheres, then until recently no
classification of embeddings $P\to\R^m$ was known.
This is in spite of the existence of many interesting approaches including methods of Haefliger-Weber, Browder-Wall and Goodwillie-Weiss \cite[\S5]{Sk08}, \cite{Wa70, GW99, CRS04}, see \cite[Remark 1.2]{MAE}.

Recent
classifications for $2m<3n+4$ concern

$\bullet$ embeddings of 3- and 4-dimensional manifolds \cite{Sk08', Sk10, CS11},

$\bullet$ embeddings of $d$-connected $n$-manifolds for $2m\ge3n+3-d$ \cite{Sk02}, and

$\bullet$ embeddings $S^p\times S^q\to S^m$ \cite{CRS07, CRS12, CFS14, Sk15}.

These results are based on three productive approaches.
One of them involves almost embeddings and the $\beta$-invariant of \cite{Sk02, Sk07, Sk14, CRS07, CRS12}
(which, though related to, is different from the $\beta$-invariant in this paper),
another is based on relations between different sets of embeddings \cite{Sk11, Sk15}.
However, these and other approaches are not sufficient to classify the embeddings of $4$-manifolds $N$
into $S^7$, even in the case of $N = S^1\times S^3$.
In this paper we develop the approach which uses the modified surgery of M. Kreck, see \S\ref{s:intro-strat} and~\cite{Sk08', Sk10, CS11}.

This paper is the first and most important one in the program for the classification of (smooth and piecewise linear) embeddings into $S^m$ of {\it non-simply connected} 4-manifolds (under the `torsion free' condition and up to an indeterminancy in certain cases).
See  \cite{II, III}; some parts of those results easily follow from this paper, but we state those parts in \cite{II, III} not here.
In order to explain what is done in the present paper, let us recall some more definitions.


Denote by $[f]$ the isotopy class of an embedding $f \colon P \to S^m$.

\smallskip
{\bf Definitions of the group $E^m(S^n)$ for $m\ge n+3$, of its action on $E^m(P)$ and of $q_{\mk}$, $E^m_{\mk}(P)$.}
Represent elements of $E^m(P)$ and of $E^m(S^n)$ by embeddings $f:P\to S^m$ and $g:S^n\to S^m$ whose images are contained in disjoint balls.
Join the images of $f,g$ by an arc whose interior misses the images.
Let $[f]\#[g]$ be the isotopy class of the {\it embedded connected sum} of $f$ and $g$ along this arc,
cf. \cite[Theorem 1.7]{Ha66},
\cite[\S1]{Av16}.
The isotopy class of the embedded connected sum depends only on the isotopy classes $[f]$ and $[g]$ \cite[\S5]{MAE}.
Hence we can define the operation
\[
\# \colon E^m(P) \times E^m(S^n) \to E^m(P)\quad \text{by}\quad ([f], [g]) \mapsto [f] \#[g].
\]
For $P=S^n$ and $m \geq n+3$ this  defines a group structure on $E^m(S^n)$ \cite{Ha66}.
Clearly $\#$ is an action of $E^m(S^n)$ on the set $E^m(P)$.
We define
\[
E^m_{\mk}(P) := E^m(P)/E^m(S^n).
\]
to be the quotient of this action, i.e. the set of `embeddings modulo knots'.
Let $q_{\mk}:E^m(P)\to E^m_{\mk}(P)$ be the quotient map.


\medskip
A simpler version of the knotting problem is to classify the set $E^m_{\mk}(P)$.
For $n=4$ Bo\'echat and Haefliger classified $E^7_{\mk}(P)$ when $H_1(P; \Z) = 0$ \cite{BH70}.
The action of the knots was investigated in \cite{Sk10} and determined
when $H_1(P; \Z) = 0$ in \cite{CS11}, which also classified $E^7(P)$ in this case.
In general, even the simpler version is hard: If $2m<3n+4$ and $P$ is a closed manifold that is not (homologically)  $[(n{-}2)/2]$-connected, then until recently no
classification of $E^m_{\mk}(P)$ was known.


The main result of this paper is a classification of $E^7_{\mk}(P)$ when $P$ is a closed connected orientable 4-manifold with torsion free $H_1(P; \Z)$; see Theorems \ref{t:s1s3} and \ref{t:gen} below.
This requires finding a complete set of invariants and constructing embeddings realizing particular values of these invariants.
Although the invariants come from modified surgery theory (see \S\ref{s:intro-strat}), we work with them using basic algebraic topology.
Thus we make this paper and its relation to other work (independent of surgery) accessible to non-specialists:
e.g. we emphasize relations between our $\lambda$-invariant and \cite{Sa99, To10}.
Lemma \ref{l:calam} and \S\ref{s:defandplan-kl}, \S\ref{s:defandplan-b} describe the invariants we use.
In \S\ref{s:intro-strat} we explain how the invariants appear in our approach to classification and give an overview of the proof of their completeness.
The beginning of \S\ref{s:intro-main} gives explicit construction of embeddings $S^1\times S^3\to S^7$.
For our general $P$, we use a parametric connected sum operation on embeddings which is described in \S\ref{s:defandplan-pr}.
We create new embeddings $P\to S^7$ from a fixed embedding $f_0:P\to S^7$ using parametric connected sum with embeddings $S^1 \times S^3 \to S^7$.
So embeddings of $S^1 \times S^3$ play a key role in the classification of embeddings for general $P$.


\subsection{Main results}\label{s:intro-main}

We first define a family of embeddings $\tau_\alpha \colon S^1 \times S^3\to S^7$
and a corresponding map
$$
\tau \colon \Z^2 \to E^7(S^1 \times S^3).
$$
Let $V_{4, 2}$ denote the Stiefel manifold of orthonormal $2$-frames in $\R^4$.
Take a smooth map $\alpha:S^3\to V_{4,2}$.
Regarding $V_{4, 2}\subset (\R^4)^2$, write $\alpha(x) = (\alpha_1(x), \alpha_2(x))$.
Define the adjunction map $\R^2 \times S^3 \to \R^4$ by $((s, t), x) \mapsto \alpha_1(x)s + \alpha_2(x)t$.
(Regarding $V_{4, 2}\subset (\R^4)^{\R^2}$, this map is obtained from $\alpha$ by the exponential law $Z^{X\times Y}=(Z^X)^Y$.)
Denote by $\ol\alpha:S^1\times S^3\to S^3$ the restriction of the adjunction map.
We define the embedding $\tau_\alpha$ to be the composition
$$
S^1\times S^3 \xrightarrow{~\ol \alpha\times\pr_2~}
S^3\times S^3 \xrightarrow{~\inc~} S^7,\quad\text{where}\quad \inc(x,y):=(y, x)/\sqrt{2}\quad\text{and}\quad
\pr\phantom{}_2(x, y) = y.
$$
We define the map $\tau$ by $\tau(l, b):=[\tau_{\alpha}]$, where $\alpha\colon S^3 \to V_{4, 2}$ represents
$(l, b) \in \pi_3(V_{4, 2})$ (for the standard identification $\pi_3(V_{4, 2})=\Z^2$ described in \S\ref{s:defandplan-not}).

We define $\tau_{\mk}:=q_{\mk}\tau$.



\begin{Theorem} \label{t:s1s3}
The map
$$
\tau_{\mk}:\Z^2\to E_{\mk}^7(S^1\times S^3)
$$
is a surjection such that
\[
\tau_{\mk}(l,b)=\tau_{\mk}(l',b')\quad\Longleftrightarrow\quad(\ l=l' \quad\text{and}\quad b\equiv b'\mo2l\ ).
\]
\end{Theorem}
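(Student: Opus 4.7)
The plan is to establish completeness of the pair $(\lambda,\beta)$ as an invariant of $E^7_{\mk}(S^1\times S^3)$, to compute $(\lambda,\beta)$ on the explicit family $\tau_\alpha$, and to verify that every permissible value of the invariants is realized. Since $H_2(S^1\times S^3)=0$ and $H_3(S^1\times S^3)\cong\Z$, the Bo\'echat--Haefliger invariant $\varkappa$ vanishes automatically and the Seifert bilinear form $\lambda$ is determined by a single integer, which I will denote $\lambda(f)\in\Z$. Similarly, $\beta(f)$ takes values in a quotient of $H_1(S^1\times S^3)\cong\Z$ that depends only on $\lambda(f)$.

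First I would compute $\lambda(\tau_\alpha)$ and $\beta(\tau_\alpha)$ on the explicit embeddings. The map $\ol\alpha:S^1\times S^3\to S^3$ is adjoint to a $2$-frame-valued map on $S^3$, so a natural Seifert hypersurface $W^6\subset S^7$ for $\tau_\alpha(S^1\times S^3)$ can be obtained by capping off one factor and extending $\alpha_1$. Computing the self-linking of the generator of $H_3(W^6)$ should yield $\lambda(\tau(l,b))=l$ once the standard identification $\pi_3(V_{4,2})=\Z^2$ from \S\ref{s:defandplan-not} is arranged so that the first coordinate measures the degree of $\alpha_1$ in the fibration $V_{4,2}\to S^3$. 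A parallel but more delicate calculation using the framing data carried by $\alpha_2$ should give $\beta(\tau(l,b))\equiv b\pmod{2l}$; the factor of $2$ comes from an intrinsic symmetry of the $\beta$-invariant under reversal and is ultimately responsible for the modulus in the statement of the theorem.

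Second, I would prove that $(\lambda,\beta)$ is a complete invariant on $E^7_{\mk}(S^1\times S^3)$, which is the heart of the argument and the main obstacle. Following the plan in the abstract, I would apply Kreck's modified surgery: given two embeddings $f,f':S^1\times S^3\to S^7$ with matching invariants, build Seifert hypersurfaces $W,W'$ bounding the images, construct a normal bordism between $W$ and $W'$ rel boundary (the hypothesis that the Seifert forms agree is what makes the relevant surgery-theoretic obstruction vanish), and then cancel the middle-dimensional handles of the bordism to upgrade it to an ambient isotopy modulo a connected sum with a knot $S^4\to S^7$. The subtle point in this step is to keep careful track of the framing data so that the indeterminacy coming from the knot action exactly matches the $2l$ ambiguity visible in the $\beta$-computation above. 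Combined with the realizability statement that every $(l,b)\in\Z^2$ occurs in $\pi_3(V_{4,2})$, so that every permissible pair of invariant values lies in the image of $\tau_{\mk}$, completeness then yields both the surjectivity of $\tau_{\mk}$ and the stated equivalence relation.
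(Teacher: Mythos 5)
Your overall architecture is the paper's: compute $\lambda$ and $\beta$ on the explicit family $\tau_\alpha$, prove that $(\varkappa,\lambda,\beta)$ is complete modulo knots via Kreck's modified surgery, and combine with realizability. This is exactly how \S\ref{s:defandplan-s1s3} assembles the proof from the calculation of $\lambda$ (Lemma \ref{l:calam}.b), the calculation of $\beta$ (Lemma \ref{l:calc}.b), the transitivity of $\beta$ (Lemma \ref{l:Dif}) and the MK Isotopy Classification Theorem \ref{l:isomk}.

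There is, however, one concrete misconception in the step you identify as the heart of the argument. You propose to ``keep careful track of the framing data so that the indeterminacy coming from the knot action exactly matches the $2l$ ambiguity''. These two indeterminacies are unrelated, and trying to match them would fail. The action of $E^7(S^4)\cong\Z_{12}$ is invisible to all three invariants --- $\varkappa$ and $\lambda$ by Lemma \ref{l:Addl}, and $\beta$ by the Additivity Lemma \ref{l:Add} --- so it is disposed of simply by passing to the quotient $E^7_{\mk}$; no bookkeeping ties it to $2l$ (indeed Theorem \ref{t:s1s3} itself shows the $\tau$-preimages vary in size, so no uniform correspondence with $\Z_{12}$ is possible). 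The modulus $2l$ arises instead from the indeterminacy in the \emph{definition} of $\beta$: a joint Seifert class $Y\in H_5(M_\varphi)$ is unique only up to adding $i\widehat A_0y$ for $y\in H_3$, and this changes $Y^2$ by $2i\widehat A_0\overline{\lambda(f_0)}(y)$ (Lemma \ref{l:desei}.b,c), which is why $\beta$ takes values in $\coker(2\rho_d\overline\lambda)\cong\Z_{2l}$ here. In particular the factor of $2$ comes from the square $Y^2$, not from ``a symmetry under reversal''. Relatedly, $\beta$ in the paper is a \emph{relative} invariant $\beta(f_0,f_1)$ of pairs (made absolute only after choosing a basepoint such as $\tau(l,0)$), so your computation of ``$\beta(\tau(l,b))$'' should be read as $\beta(\tau(l,0),\tau(l,b))=\rho_{2l}b\,[S^1\times 1_3]$. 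With these corrections the remainder of your outline goes through as in the paper.
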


\begin{Remark}\label{r:s1s3}
(a) There is a map $\tau:\pi_q(V_{m-q,p+1})\to E^m(S^p\times S^q)$
which is defined analogously to above.
For $m\ge2p+q+3$ the sets $E^m(S^p\times S^q)$ and $E^m_\#(S^p\times S^q)$ admit a group structure such that $\tau$ and $q_{\mk}$ are homomorphisms \cite{Sk15}.
For $p\le q$ and $2m\ge2p+3q+4$ (conjecturally for $2m\ge p+3q+4$) the map $\tau_\#:=q_{\mk}\tau$ is an isomorphism.
Theorem \ref{t:s1s3} shows that the case of embeddings $S^1\times S^3\to S^7$ is different:
there are no group structures on $E^7(S^1\times S^3)$ or on $E^7_\#(S^1\times S^3)$ such that $\tau$ or $\tau_\#$ is a homomorphism
(because by Theorem \ref{t:s1s3} $\tau_\#^{-1}\tau_\#(0,0)$ is infinite while $\tau_\#^{-1}\tau_\#(1,0)$ is finite; analogous argument works for $\tau$ since $E^7(S^4)$ is finite).

(b) The isotopy classes $\tau(1,0)$ and $\tau(0,1)$ are represented by embeddings
$$
S^1\times S^3 \xrightarrow{\pr_2\times T^k} S^3\times S^3 \xrightarrow{~\inc~} S^7,
$$
where the maps $T^k:S^1\times S^3\to S^3$ are defined as follows:

$\bullet$ $T^1(s,y):=sy$, where $S^3$ is identified with the set of unit length quaternions and
$S^1\subset S^3$ with the set of unit length complex numbers;

$\bullet$ $T^2(e^{i\theta},y):=\eta(y)\cos\theta+\sin\theta$, where $\eta:S^3\to S^2$ is the Hopf map, and
$S^2$ here is identified with the 2-sphere formed by unit length quaternions of the form $ai+bj+ck$.

For other constructions see \cite[Examples of knotted tori]{MAM}.
\end{Remark}




Before stating our main results for the general case, we establish some conventions, notation and definitions.

\smallskip
{\bf Convention on coefficients.}
Unless otherwise stated, we omit $\Z$-coefficients from the notation of (co)ho\-mo\-lo\-gy groups.
We identify the coefficient group $\Z_d$ with $H_0(X; \Z_d)$,
the zero-dimensional homology group of a connected oriented manifold $X$.

\smallskip
{\bf Notation for $N$, $f$, characteristic classes and intersections in manifolds.}
Throughout this paper $N$ is a closed connected oriented $4$-manifold and $f \colon N \to S^7$ an embedding.
Let $H_q:=H_q(N)$.
We denote the Poincar\'e dual of a characteristic class by adding a superscript `$*$',
so for example $w_2^*(N) \in H_2(N; \Z_2)$ is the Poincar\'{e} dual of the second Stiefel-Whitney class.
The homology intersection product in an oriented $n$-manifold $M$ is denoted by
$$\capM{M}\colon H_i(M) \times H_j(M) \to H_{n-i-j}(M).$$
The well-known definition of such product is recalled in
\cite{MAI}.
For the intersection {\it powers} we omit subscripts indicating the manifold $M$,
so, for example, $x^2$ denotes $x \capM{M} x$.
Let $\rho_n$ be the reduction modulo $n$.
The intersection $x\capM{M} y$ of a $\Z$-homology class $x$ and a $\Z_n$-homology class $y$ is defined as the
$\Z_n$-homology class $\rho_nx\capM{M} y$.
Let $\sigma(N)$ be the signature of the intersection form $H_2\times H_2\to\Z$.

\smallskip
{\it If $H_1=0$, then the map
$$
\varkappa_\mk:E_{\mk}^7(N)\to H_2^{DIFF}:=\{u\in H_2\ |\ \rho_2u=w_2^*(N),\ u^2=\sigma(N)\}\subset H_2
$$
(which is the Bo\'echat-Haefliger invariant defined below)
is 1--1.} \label{pst:bh} This statement is easily deduced from known results in \jonly{\cite[Remark 2.21.e]{I}.}
\aronly{Remark \ref{r:bil}.e.}
Our second main result is a generalization of this statement to non-simply-connected 4-manifolds.

\smallskip
{\bf Definition of $\di$, $B(H_3)$, $\overline l$ and a symmetric pair.}
For an element $u$ of a free abelian group denote by $\di u$ the divisibility of $u$,
i.e. $\di 0=0$ and $\di u$ is the largest integer which divides $u$ for $u\ne0$.
For an element $u$ of an abelian group $G$ denote by $\di u$ the divisibility of $[u]\in G/\Tors(G)$.

Denote by $B(H_3)$ the space of bilinear forms $H_3\times H_3\to\Z$.
For $l\in B(H_3)$ denote by $\overline l:H_3\to H_1$ the adjoint homomorphism uniquely defined by the property
$l(x,y)=x\capM{N} \overline ly$.
A pair $(u,l)\in H_2\times B(H_3)$ is called {\it symmetric} if
$$l(y,x)=l(x,y)+u\capM{N} x\capM{N} y \quad\text{for all}\quad x,y\in H_3.$$

The maps
$$
\varkappa\colon E^7(N) \to H_2^{DIFF},\quad \lambda\colon E^7(N) \to B(H_3)\quad\text{and}
$$
$$
\beta_{u,l}: (\varkappa\times\lambda)^{-1}(u,l)
\to C_{u,l}:=\coker(2\rho_{\di(u)}\overline l)=\frac{H_1}{2\overline l(H_3)+\di(u)H_1}
$$
required for Theorem \ref{t:gen} below are defined in \S\ref{s:defandplan-kl} and \S\ref{s:defandplan-b}.
Then the maps
$$
\varkappa_{\mk}:E^7_{\mk}(N)\to H_2^{DIFF},\quad \lambda_{\mk}\colon E^7_{\mk}(N) \to B(H_3)\quad\text{and}\quad \beta_{u,l,\mk}:(\varkappa_{\mk}\times\lambda_{\mk})^{-1}(u,l)\to C_{u,l}
$$
of Theorem \ref{t:gen} below are well-defined by $\varkappa=\varkappa_{\mk}q_{\mk}$,
$\lambda=\lambda_{\mk}q_{\mk}$ and $\beta_{u,l}=\beta_{u,l,\mk}(q_{\mk}\times q_{\mk})$
because of the additivity (Lemmas \ref{l:Addl} and \ref{l:Add} below).

In order to avoid double statements of similar properties, we use the following convention:
a statement involving $\varkappa$  holds for both $\varkappa$ and $\varkappa_{\mk}$.
If a statement holds for $\varkappa_{\mk}$ but not for  $\varkappa$, we write $\varkappa_{\mk}$ in the formulation.
In this paper there are no statements which hold for $\varkappa$ but not for  $\varkappa_{\mk}$.
Analogous remark holds for $\lambda$ vs $\lambda_{\mk}$, $\beta_{u,l}$ vs $\beta_{u,l,\mk}$ etc.

\begin{Theorem}\label{t:gen}
Let $N$ be a closed connected orientable 4-manifold with torsion free $H_1$.
Then the product
$$\varkappa_{\mk}\times\lambda_{\mk}:E_{\mk}^7(N)\to H_2^{DIFF}\times B(H_3)$$
has non-empty image consisting of all symmetric pairs,
and for every $(u,l)\in \im (\varkappa_{\mk}\times\lambda_{\mk})$
each map $\beta_{u,l,\mk}$ is 1--1 (see the remark immediately below).
\end{Theorem}


We call geometrically defined maps invariants.
In particular, the maps $\lambda$ and $\varkappa$ are invariants.


\smallskip
{\bf Remark on relative invariants.}
The map $\beta_{u,l}$ is
a {\em relative invariant}.
By this we mean that for $[f_0], [f_1] \in (\lambda\times \varkappa)^{-1}(u, l)$
there is an
invariant $([f_0], [f_1]) \mapsto \beta(f_0, f_1)$ (defined in \S\ref{s:defandplan-b}) and that
$\beta_{u,l}(f) := \beta(f,f')$ for a fixed choice of $[f']\in (\lambda\times \varkappa)^{-1}(u, l)$.
We
suppress the choice of $[f']$ from the notation.


\smallskip
The Seifert bilinear form $\lambda(f)\colon H_3 \times H_3 \to \Z$ (defined in \S\ref{s:defandplan-kl})
measures the linking of $3$-cycles in $N$ under $f$.
For $N=S^1\times S^3$ identify $B(H_3)$ with $\Z$.

\begin{Lemma}[Calculation for $\lambda$; proved in \S\ref{s:defandplan-kl}]\label{l:calam}
(a) For an embedding $f:S^1 \times S^3\to S^7$ we have
$$\lambda(f)=\lk\phantom{}_{S^7}(f|_{(1,0)\times S^3},f|_{(-1,0)\times S^3})\in \Z.$$	

(b) $\lambda(\tau(l, b)) = l$.

(c) We have $\lambda(f)(x,y)=\lk_{S^7}(f|_X,f|_Y)$ if classes $x$ and $y$ are represented by disjoint closed oriented 3-submanifolds (or integer 3-cycles) $X$ and $Y$.
\end{Lemma}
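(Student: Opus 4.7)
My strategy is to establish (c) first, derive (a) as an immediate special case, and then reduce (b) to the explicit computation of a single linking number in $S^7$.

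For (c), the plan is to unwind the definition of $\lambda(f)$ from \S\ref{s:defandplan-kl}: given a Seifert $5$-manifold $W \subset S^7$ with $\partial W = f(N)$, the value $\lambda(f)(x, y)$ is the linking in $S^7$ of $f(Y)$ with a push-off $X^+$ of $f(X)$ obtained by sliding slightly into $W$ along its interior collar. If $X$ and $Y$ admit disjoint representatives in $N$, then $f(X)$ and $f(Y)$ are disjoint compact subsets of $\partial W$, and I can choose the collar narrow enough that the sliding homotopy from $f(X)$ to $X^+$ avoids $f(Y)$ entirely. Homotopy invariance of linking in $S^7 \setminus f(Y)$ then yields $\lk_{S^7}(X^+, f(Y)) = \lk_{S^7}(f(X), f(Y))$. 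For integer $3$-cycles the same argument goes through after a smooth approximation to disjoint chain-level representatives.

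Part (a) is then the specialisation of (c) to $X = \{(1, 0)\} \times S^3$ and $Y = \{(-1, 0)\} \times S^3$, two disjoint $3$-spheres in $S^1 \times S^3$ both representing the generator of $H_3 \cong \Z$; the identification $B(H_3) \cong \Z$ by evaluation on (generator, generator) then matches $\lambda[f]$ with the claimed linking number.

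For (b), I would use (a) to reduce to computing $\lk_{S^7}(\Gamma_+, \Gamma_-)$, where $\Gamma_\pm(x) := (x, \pm\alpha_1(x))/\sqrt{2}$, with $S^7 \subset \R^4 \times \R^4$. My plan is to exhibit a concrete $4$-chain $C$ with $\partial C = \Gamma_+$ as the union of a ``cone'' piece $\{(\sqrt{1 - s^2}\,x,\, s\alpha_1(x)) : x \in S^3,\ s \in [0, 1/\sqrt{2}]\}$, interpolating between $\Gamma_+$ and $S^3 \times \{0\}$, and a $4$-disk cap $\{(u, tw_0) : |u|^2 + t^2 = 1,\ t \ge 0\}$ capping off $S^3 \times \{0\}$, for a regular value $-w_0$ of $\alpha_1$. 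A short direct check shows the cone piece is disjoint from $\Gamma_-$ (equating coordinates forces $\alpha_1(y) = 0$), so $\lk_{S^7}(\Gamma_+, \Gamma_-) = C \capM{S^7} \Gamma_-$ reduces to an intersection count inside the $4$-disk cap, which is the signed count of $y \in \alpha_1^{-1}(-w_0)$, i.e.\ $\deg \alpha_1$. Under the standard identification $\pi_3(V_{4, 2}) = \Z \oplus \Z$ of \S\ref{s:defandplan-not}, whose first factor is $p_*$ for the first-vector projection $p \colon V_{4, 2} \to S^3$, this degree is precisely $l$. The main obstacle I foresee is tracking orientation signs in (b) so that the computation yields $+l$ rather than $-l$; the conceptual content of the lemma sits in (c), which is essentially a homotopy-invariance argument for linking, and once (c) is in hand both (a) and the identification step of (b) fall out.
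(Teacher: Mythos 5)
Your argument for (c) and hence (a) is essentially the paper's: the paper's entire proof of (c) is the observation that $\xi Y$ and $fY$ are homologous in $S^7-fX$ (the track of the push-off lies in the tubular neighbourhood fibres over $Y$, hence misses $fX$ when $X\cap Y=\emptyset$), which is exactly your collar/homotopy-invariance argument. One caveat: you have misremembered the definition. In \S\ref{s:defandplan-kl}, $\lambda(f)(x,y):=\lk_{S^7}(fX,\xi Y)$ where $\xi$ is a \emph{weakly unlinked section} of the normal sphere bundle over $N_0$ — it is $Y$ that gets pushed off, not $X$, and no embedded Seifert $5$-manifold $W$ with $\de W=f(N)$ is ever produced (the paper only works with the homology class $A[N]\in H_5(C,\de)$, and the existence of an embedded representative is not established). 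Pushing off $X$ instead computes $\lambda(f)(y,x)$, which by the $\varkappa$-symmetry (Lemma \ref{l:Lam}) differs from $\lambda(f)(x,y)$ by $\varkappa(f)\capM{N}x\capM{N}y$; this vanishes when $X$ and $Y$ are disjoint, so your proof of (c) survives, but the slip would matter anywhere outside the disjoint-representative setting.

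For (b) you genuinely go beyond the paper, whose proof is the single phrase ``(b) follows by (a)''. Your reduction via (a) to $\lk_{S^7}(\Gamma_+,\Gamma_-)$ with $\Gamma_\pm(x)=(x,\pm\alpha_1(x))/\sqrt2$ is correct, the cone-plus-cap $4$-chain does bound $\Gamma_+$ inside $S^7$, the disjointness of the cone from $\Gamma_-$ checks out, and the intersection with the cap is the signed count of $\alpha_1^{-1}(-w_0)$, i.e.\ $\deg\alpha_1=l$ under the identification $\pi_3(V_{4,2})=\Z^2$ of \S\ref{s:defandplan-not}. The unresolved sign is the only remaining loose end, and since the paper supplies no computation at all here, your version is the more complete one.
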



See \jonly{\cite[Remark 2.21]{I}.} \aronly{Remark \ref{r:bil}.}

\subsection{An approach to the Knotting Problem}\label{s:intro-strat}\nopagebreak

The proofs of our main results are based on the ideas we explain below.
These ideas are useful in a wider range of dimensions \cite{Sk08'} and for problems other than classification of embeddings \cite{Kr99}.
In this paper we do not assume the reader is familiar with surgery.
Hence we describe the application of modified surgery in non-specialist terms and make parenthetical remarks for specialists.

\smallskip
{\bf Some notation.}
Take the standard orientation on $\R^m$.
For an oriented manifold with boundary we use the orientation on the boundary whose completion by `the first vector pointing outside' gives the orientation on the manifold.
So an orientation of $S^{m-1}=\de D^m$ is defined.
Denote by

$\bullet$ $N$ a closed connected oriented $4$-manifold and $f \colon N \to S^7$ an embedding;

$\bullet$ $C=C_f$ the closure of the complement in $S^7$ to a sufficiently small
tubular neighborhood of $f(N)$; the orientation on $C$ is inherited from the orientation of $S^7$;

$\bullet$ $\nus=\nus_f:\de C\to N$ the sphere subbundle of the normal vector bundle of $f$:
the total space of $\nus$ is identified with $\partial C$;

In this paper a {\it bundle isomorphism} is an oriented vector bundle
isomorphism identical on the base, or the restriction to the sphere bundle of such.
In this and other notation we sometimes omit the subscript $f$.
We shall also change the subscript `$f_k$' to `$k$'.


\begin{Lemma}\label{l:isored} For a closed connected 4-manifold $N$
two embeddings $f_0,f_1:N\to S^7$ are isotopic if and only if there is an orientation preserving diffeomorphism
$C_0\to C_1$ whose restriction to the boundary $\de C_0\to\de C_1$ is a bundle isomorphism.
\end{Lemma}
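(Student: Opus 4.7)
The plan is as follows. For the forward implication, I would invoke isotopy extension together with uniqueness of tubular neighborhoods. An isotopy $f_t$ from $f_0$ to $f_1$ extends to an ambient isotopy $\Phi_t\colon S^7\to S^7$ with $\Phi_0=\mathrm{id}$ and $\Phi_1\circ f_0=f_1$. The derivative $d\Phi_1$ along $f_0(N)$ induces an orientation-preserving isomorphism $D\nu_0\to D\nu_1$ of the normal disk bundles. Using uniqueness of tubular neighborhoods, I post-compose $\Phi_1$ with a further ambient isotopy supported in a tubular neighborhood of $f_1(N)$ and fixing $f_1(N)$ pointwise, so that the modified time-$1$ map carries a chosen tubular neighborhood of $f_0(N)$ onto that of $f_1(N)$ via this bundle map. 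Restricting to the complement then yields the required diffeomorphism $C_0\to C_1$ whose boundary restriction is a bundle isomorphism.

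For the reverse implication, the plan is first to extend $F$ to an ambient diffeomorphism of $S^7$ and then to isotope that diffeomorphism to the identity. Let $\phi:=F|_{\partial C_0}$ denote the given bundle isomorphism, and extend $\phi$ fiberwise by radial coning to a bundle isomorphism $\bar\phi\colon D\nu_0\to D\nu_1$ of the normal disk bundles. Identifying the disk bundles with closed tubular neighborhoods through the exponential map, $\bar\phi$ and $F$ agree on the common boundary $\partial C_0=\partial D\nu_0$ and glue to an orientation-preserving diffeomorphism $\Phi\colon S^7\to S^7$ satisfying $\Phi\circ f_0=f_1$ (identifying each $f_i$ with the zero-section of $D\nu_i$).

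The hard part is then converting $\Phi$ into an honest isotopy of embeddings, since $\pi_0\,\mathrm{Diff}^+(S^7)\cong\Theta_8$ is nontrivial and so $\Phi$ need not lie in the identity component. I would absorb this obstruction into a ball disjoint from $f_0(N)$: every class in $\pi_0\,\mathrm{Diff}^+(S^7)$ is represented by a diffeomorphism supported in an arbitrarily small $7$-ball, via the clutching identification $\pi_0\,\mathrm{Diff}^+(S^7)\cong\pi_0\,\mathrm{Diff}(D^7,\partial)$. Since $\dim f_0(N)=4<7$, one can choose such a ball $B^7\subset S^7\setminus f_0(N)$ and pick $\sigma$ supported in $B^7$ with $[\sigma]=[\Phi]$. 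Then $\Phi\circ\sigma^{-1}$ is isotopic to the identity through a path $\Psi_t$, while $(\Phi\circ\sigma^{-1})\circ f_0=\Phi\circ f_0=f_1$ because $\sigma^{-1}$ fixes $f_0(N)$ pointwise. The composition $\Psi_t\circ f_0$ is therefore an isotopy of embeddings from $f_0$ to $f_1$, completing the proof.
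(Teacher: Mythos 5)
Your argument is correct and is essentially the classical proof that the paper itself defers to (\cite[Lemma 1.3]{Sk10} -- the paper gives no proof of its own): isotopy extension plus uniqueness of tubular neighbourhoods in one direction, and in the other, coning the bundle isomorphism over the disk bundles, gluing with $F$, and absorbing the class of the resulting $\Phi$ in $\pi_0\,\mathrm{Diff}^+(S^7)\cong\Theta_8$ into a ball disjoint from $f_0(N)$. The one step to make explicit is that before gluing $\bar\phi$ with $F$ you should normalize $F$ to be product-like in a collar of $\partial C_0$ (matching the radial coordinate of the disk bundle), so that the union is genuinely a smooth diffeomorphism rather than merely a homeomorphism smooth on each piece.
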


Lemma \ref{l:isored} is well-known to the experts (for a proof see, e.g., \cite[Lemma 1.3]{Sk10}) and holds in more general situations.

\begin{Remark} We shall not only decide if there is a diffeomorphism $C_0 \to C_1$ as in Lemma \ref{l:isored}
but we also prove a general `relative diffeomorphism criterion' for certain 7-manifolds with boundary.
This is the Almost Diffeomorphism Theorem \ref{t:aldi}.
It generalizes \cite[Almost Diffeomorphism Theorem 2.8, Diffeomorphism Theorem 4.7]{CS11}.
It is a new non-trivial analogue of \cite[Theorem 3.1]{KS91} and of \cite[Theorem 6]{Kr99}
for 7-manifolds $M$ with non-empty boundary and infinite $H_4(M)$.
\end{Remark}

Lemma \ref{l:isored} reduces the classification of embeddings to a two-step classification problem for their complements.
Firstly, we classify the complements relative to fixed identifications of their boundaries,
and secondly we determine the action of the bundle automorphisms on the relative diffeomorphism classes of the complements.
This is the starting point of both the classical and modified surgery approaches.

To find out if there exists a diffeomorphism $C_0\to C_1$ as in Lemma \ref{l:isored} using classical surgery (see \cite{Wa70}),
we would first need decide if the complements $C_0$ and $C_1$ have the same homotopy type.
If they do, then we take homotopy equivalence $h:C_0\to C_1$ and apply surgery relative to the boundary to the Poincar\'{e} pairs $(C_1, h(\de C_0))$ and $(C_1,\de C_1)$.




In this paper to determine if there is a diffeomorphism $C_0 \to C_1$ as in Lemma \ref{l:isored} we use modified surgery \cite{Kr99}; cf.~\cite[Remark 2.2]{CS11} and the text after it.
For this we fix for $k=0,1$

$\bullet$ the spin structures on $C_k$ which they inherit from $S^7$ and also

$\bullet$ the {\it Seifert classes} which are algebraic analogues of Seifert surfaces; the Seifert classes are the relative homology classes $A_k[N]\in H_5(C_k,\de C_k)\cong\Z$, which are the images of the fundamental class of $N$ under homology Alexander duality (defined in \S\ref{s:prelemmas-not}).

(This data on $C_k$ defines a {\it normal 2-smoothing} of $C_k$ \cite[\S 2]{Kr99},
i.e.~a normal $3$-equivalence $C_k\to B\Spin\times\C P^\infty$.
We use a particular case of the modified surgery approach which corresponds to spin surgery
over the {\it homotopy $2$-type} of the complement.)

The modified surgery approach to the embedding problem requires
that we find a bundle isomorphism $\varphi \colon \partial C_0 \to \de C_1$
preserving both the spin structures and the homology classes $\de A_k[N] \in H_3(\de C_k)$.
We prove that {\it there is always a bundle isomorphism $\varphi:\de C_0\to\de C_1$ preserving spin structure}
(Lemma \ref{l:spbuis}, cf.~\cite[Lemma 2.4]{CS11}).
The first obstruction we encounter to the existence of a diffeomorphism as in Lemma \ref{l:isored} is the difference
$\varphi_*\de A_0[N]-\de A_1[N]\in H_3(\de C_1)$.
The analysis of this obstruction leads to the definition of
$\varkappa$-invariant (see \S\ref{s:defandplan-kl} and \jonly{\cite[Remark 2.22]{I}}\aronly{Remark \ref{r:tobh}})
\[
\varkappa \colon E^7(N)\to H_2.
\]
{\it Assume further that $\varkappa(f_0)=\varkappa (f_1)$.}
We prove that {\it $\varphi_*\de A_0[N]=\de A_1[N]$ for every bundle isomorphism $\varphi:\de C_0\to\partial C_1$}
(Lemma \ref{l:Agr}.a for $q=4$, \cite[Agreement Lemma 2.5]{CS11}).
We then identify the spin boundaries of $(C_0,A_0[N])$ and $(C_1,A_1[N])$
via a bundle isomorphism $\varphi$ which preserves the spin structures.
For any such identification {\it there is a spin bordism $(W,z)$ between $(C_0,A_0[N])$ and $(C_1,A_1[N])$ relative to the boundaries}
(because the complete obstruction to the existence of such a bordism assumes values in $\Omega_7^{Spin}(\C P^\infty)=0$ \cite[Lemma 6.1]{KS91}).
It remains to determine whether we can replace the bordism $(W,z)$ by an $h$-cobordism.
This problem is addressed in \cite[Theorem 3]{Kr99}, where a complete algebraic obstruction
is defined.
Analysis of the obstruction for the bordism $(W,z)$ to have the homology of an $h$-cobordism `outside $H_4(W)$' leads to the definition of $\lambda$-invariant (see \S\ref{s:defandplan-kl} and
\jonly{\cite[Remark 2.22]{I}}\aronly{Remark \ref{r:tobh}})
\[
\lambda \colon E^7(N)\to B(H_3).
\]
{\it Assume further that $\lambda(f_0)=\lambda(f_1)$.}
From the surgery point of view, the $\beta$-invariant (see definition in \S\ref{s:defandplan-b}) arises as
the obstruction for the bordism $(W,z)$ to have the homology of an $h$-cobordism `in the summand of $H_4(W)$ coming from $H_4(\de W)$'
(i.e.~in the {\it singular} part of the intersection form on $H_4(W)$).
This invariant assumes values in a quotient of $H_1$ defined by
$\varkappa(f_0)$ and $\lambda(f_0)$.

{\it Assume further that $\beta(f_0)=\beta(f_1)$.}
We may now assume that the bordism $(W, z)$ `has the homology of an $h$-cobordism' away from the {\it unimodular} part of $H_4(W)$.
Extending arguments from \cite{CS11}, we prove that we can modifiy $f_0$ by connected sum with a knot
$g\colon S^4\to S^7$ so that for some corresponding

$\bullet$ spin bundle isomorphism $\varphi':\de C_{f_0\# g}\to\de C_1$,

$\bullet$ identification of the spin boundaries of the pairs $(C_{f_0\# g},A_{f_0\# g}[N])$ and $(C_1,A_1[N])$,

$\bullet$ spin null bordism $(W',z')$ between the above pairs, relative to the boundaries,

the pair $(W',z')$ is bordant to an $h$-cobordism.
Then by the $h$-cobordism theorem \cite{Mi65} and Lemma \ref{l:isored}, $f_0 \# g$ and $f_1$ are isotopic.

\smallskip
The above discussion outlines the proof that the $\varkappa$-, $\lambda$- and $\beta$-invariants
combine to give a complete systems of invariants for embeddings modulo knots.
This is stated in the MK Isotopy Classification Theorem \ref{l:isomk} and
the behaviour of these invariants under connected sum with knots is described in
the Additivity Lemmas \ref{l:Addl}, \ref{l:Add}.

\smallskip
{\bf Plan of the paper.}
We introduce further notation in \S\ref{s:defandplan-not} and \S\ref{s:prelemmas-not}.
In \S\ref{s:defandplan} we present the important constructions and lemmas
used in the proof of our main results.
The lemmas from \S2 are proven in \S\ref{s:prelemmas} and \S\ref{s:eta-mk}.
The subsection titles in \S3 indicate the most important lemmas proven in that subsection.
A reader who wants to check a particular lemma from \S\ref{s:defandplan} does not need to read all of \S\ref{s:prelemmas} and \S\ref{s:eta-mk}.



\comment



Omit this and the next paragraphs?
More formally, we prove that there is a class $Y\in H_5(C_0\cup_\varphi(-C_1))$ whose intersections with
$C_0$ and $C_1$ are $A_0[N]$ and $A_1[N]$, respectively.
Such a class is called {\it a joint Seifert class} for $f_0,f_1,\varphi$.
Denote $d:=\di(\varkappa(f_0))$.
The $\beta$-invariant arises as the obstruction for the existence of a joint Seifert class $Y$ and a !!!$\pi$-iso!!! $\varphi$ with $\rho_dY^2=0$.

If $(C_0,A_0[N]) \cong (C_1,A_1[N])$, then one can prove that?
$Y^2=0\in H_3(C_0\cup_\varphi(-C_1))$.
There are split surjections $H_3(C_0 \cup_\varphi (-C_1)) \to H_1$, and for a candidate??? $Y$, the $\beta$-invariant is a certain equivalence
class to be the projection of $Y^2 \to H_1$.
Different choices of splittings??? and different choices of
joint Seifert class [of !!!$\pi$-iso!!! $\varphi$???] lead to a well-defined quantity
\[
\beta(f_0, f_1) \in \coker(2\rho_d\overline{\lambda(f_0)})
\]
where we recall that $\overline{\lambda(f_0)}\colon H_3 \to H_1$ is the adjoint of $\lambda(f_0)$.
\remas{In the paper we realize this plan differently (see \S\ref{s:defandplan-b}, Lemma \ref{l:webeta} and the text before it).}

\remas{Upgrade some remarks to Proposition or Corollary?}

Here $\tau(a,b,u,l)=a\#\tau_{\mk}(u,l,b)$. (is well-defined?)

\bigskip
{\bf Main result for $\C P^2$ and $S^2\times S^2$ \cite{Sk10}, \cite{CS11} (delete from \S1?)}

\begin{Theorem} \label{t:cp2} \cite{Sk10}, \cite{CS11}
There are exactly two isotopy classes of embeddings $\C P^2\to\R^7$.
\end{Theorem}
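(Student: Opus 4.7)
The plan is to combine the Bo\'echat--Haefliger classification of $E^7_\mk(\C P^2)$, which in this case reduces to computing $H_2^{DIFF}(\C P^2)$, with a triviality statement for the knot action of $E^7(S^4) = \Z_{12}$ on $E^7(\C P^2)$ taken from \cite{Sk10, CS11}.

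First, I would compute $H_2^{DIFF}(\C P^2)$. Writing $H_2(\C P^2;\Z) = \Z\langle h\rangle$ with $h$ the hyperplane class, one has $h\capM{\C P^2}h = 1 = \sigma(\C P^2)$, and $w_2^*(\C P^2) = h \bmod 2$ because $\C P^2$ is not spin. Thus $u = kh \in H_2^{DIFF}$ requires $k$ odd and $k^2 = 1$, forcing $k = \pm 1$, so $|H_2^{DIFF}(\C P^2)| = 2$. Since $H_1(\C P^2) = 0$ and $H_3(\C P^2) = 0$, the footnote preceding Theorem~\ref{t:gen} (the Bo\'echat--Haefliger theorem) gives that $\varkappa_\mk$ is a bijection $E^7_\mk(\C P^2) \to H_2^{DIFF}(\C P^2)$, hence $|E^7_\mk(\C P^2)| = 2$. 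To realize both values concretely I would take any smooth embedding $f : \C P^2 \to S^7$ (for example one obtained from the projective embedding $\C P^2 \hookrightarrow \C P^3$ after a generic projection) and its composition $f \circ c$ with complex conjugation $c : \C P^2 \to \C P^2$, which reverses the orientation of $h$ and hence negates $\varkappa$.

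Second, I would upgrade from $E^7_\mk$ to $E^7$ by appealing to \cite{Sk10, CS11}. These papers classify $E^7(N)$ for simply-connected $4$-manifolds $N$ via $\varkappa$ together with an auxiliary ``Haefliger'' knotting invariant taking values in a cyclic quotient of $\Z_{12} = E^7(S^4)$ whose order depends on the divisibility $\di\varkappa(f) \in H_2(N)$ and on characteristic data of $N$. For $N = \C P^2$ and $u = \pm h$ one has $\di u = 1$, and the relevant arithmetic collapses this quotient to the trivial group; equivalently, the knot action of $\Z_{12}$ is trivial on $E^7(\C P^2)$. Consequently $|E^7(\C P^2)| = |E^7_\mk(\C P^2)| = 2$.

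The main obstacle is the second step. A priori the naive bound from combining $\varkappa$ with the full knot group is $2\cdot 12 = 24$, and one must argue that every knotted summand $f\#g$ is already isotopic to $f$ or to $\bar f := f\circ c$. Geometrically the intuition is that any $g \in E^7(S^4)$ can be absorbed into an isotopy of $f$ through a small $4$-disk in $\C P^2$ whose normal $\R^3$-bundle in $\R^7$ admits the trivializations needed to cancel the knot, and the arithmetic condition $\di\varkappa(f) = 1$ (which holds because $h$ is primitive) is exactly what permits such trivializations to exist. Making this rigorous is the surgery-theoretic content of \cite{CS11}, which extends the methods outlined in \S\ref{s:intro-strat} to detect and then annihilate the knotting obstruction in the complement $C_f$.
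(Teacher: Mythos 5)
Your proposal is correct and follows essentially the same route by which the paper treats this statement, namely quoting \cite{Sk10}, \cite{CS11}: the Bo\'echat--Haefliger classification gives $E^7_{\mk}(\C P^2)=H_2^{DIFF}(\C P^2)=\{\pm h\}$, so two classes modulo knots, and the determination of the knot action in \cite{Sk10}, \cite{CS11} (for $\di\varkappa(f)=1$ the relevant obstruction group $\Z_{\gcd(\di\varkappa(f),24)}$ is trivial, so every $f\#g$ is isotopic to $f$) upgrades this to $|E^7(\C P^2)|=2$. Only the parenthetical construction of a concrete representative by ``generic projection'' from $\C P^3$ is shaky as stated (a generic projection of a $4$-manifold into $\R^7$ acquires double points) and is in any case unnecessary, since existence of embeddings realizing both values of $\varkappa$ already follows from the surjectivity of $\varkappa_{\mk}$ onto $H_2^{DIFF}$.
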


The standard embedding $\C P^2\to\R^7$ is given by
$$(x:y:z)\mapsto(x\overline y, y\overline z, z\overline x,2|x|^2+|y|^2),\quad\text{where}\quad |x|^2+|y|^2+|z|^2=1.$$
The two isotopy classes are represented by the standard embedding and its composition with the symmetry w.r.t. $\R^6\subset\R^7$.

{\bf Addendum.} {\it For every pair of embeddings $f:\C P^2\to\R^7$ and
$g:S^4\to\R^7$ the embedding $f\#g$ is isotopic to $f$.}

\begin{Theorem} \label{t:s2s2} \cite{CS11}
There is a map $\varkappa:E^7(S^2\times S^2)\to\Z\oplus\Z$ whose image is $2\Z\vee2\Z$.
For every integer $u$ there are exactly $\gcd(u,12)$ isotopy classes of
embeddings $f:S^2\times S^2\to\R^7$ with $\varkappa (f)=(2u,0)$, and
the same holds for those with $\varkappa (f)=(0,2u)$.
\end{Theorem}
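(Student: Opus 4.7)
The plan has three steps: (i) identify the image of $\varkappa$, (ii) invoke the Bo\'echat--Haefliger theorem to reduce the problem to counting orbits of the connected-sum action of $E^7(S^4) \cong \Z_{12}$, and (iii) compute the size of the stabilizer of this action for each value of $\varkappa$.

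For (i), since $S^2 \times S^2$ is spin we have $w_2^*(N) = 0$, and the intersection form is the standard hyperbolic form with $\sigma(N) = 0$. A class $u = (a,b) \in H_2 \cong \Z \oplus \Z$ thus lies in $H_2^{DIFF}$ if and only if $a$ and $b$ are both even and $2ab = 0$, yielding $H_2^{DIFF} = 2\Z \vee 2\Z$. The Bo\'echat--Haefliger Seifert-surface construction realizes every class in $H_2^{DIFF}$ by an embedding, giving surjectivity. For (ii), since $H_1(N) = 0$, the Bo\'echat--Haefliger theorem quoted in the excerpt gives that $\varkappa_{\mk}$ is a bijection onto $H_2^{DIFF}$. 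Consequently the fibers of $\varkappa \colon E^7(N) \to H_2$ are precisely the orbits of the $\Z_{12}$-action by embedded connected sum with knots, so for any $f_0 \in \varkappa^{-1}(u)$ the orbit--stabilizer theorem gives $|\varkappa^{-1}(u)| = 12/|\mathrm{Stab}[f_0]|$.

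Step (iii) is the heart of the argument: for $\varkappa(f_0) = (2u_0, 0)$ we must show $|\mathrm{Stab}[f_0]| = 12/\gcd(u_0, 12)$. The upper bound on the stabilizer (equivalently, the lower bound on the orbit size) comes from an explicit \emph{knot-absorption} construction. Pick a Seifert 5-manifold $V \subset S^7$ with $\partial V = f_0(N)$ whose Poincar\'{e}--Lefschetz dual restricts to $u$ on the boundary; sliding a generating knot $g \in E^7(S^4)$ along a $4$-cycle in $V$ related to $u_0$ exhibits an isotopy between $f_0 \# kg$ and $f_0$ for $k$ equal to a specific multiple dictated by the intersection number, and tracking this multiple modulo $12$ produces exactly $\gcd(u_0,12)$ distinct isotopy classes in the orbit. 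The matching lower bound (that no smaller translate of $g$ stabilizes $[f_0]$) is proved via the modified surgery framework sketched in \S\ref{s:intro-strat}: an isotopy $f_0 \# kg \sim f_0$ would yield, via Lemma \ref{l:isored}, a spin bordism between the complements $C_{f_0}$ and $C_{f_0 \# kg}$ preserving Seifert classes, and the resulting surgery-theoretic obstruction can be shown to vanish only when $k$ is a multiple of $12/\gcd(u_0,12)$.

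The main obstacle is step (iii), and specifically the lower bound. It requires a precise computation of the framing obstruction in $\pi_3(SO)$ that detects when embedded connected sum with a knot is isotopically trivial in the presence of a nontrivial Seifert surface. The factor of $12$ enters through the quotient $\pi_3(SO) \to E^7(S^4) \cong \Z_{12}$, while the factor $\gcd(u_0, 12)$ reflects how the divisibility of $\varkappa(f_0)$ controls the image of this obstruction in $\Z_{12}$; the two contributions must be disentangled carefully, since $H_3(S^2\times S^2) = 0$ means neither $\lambda$ nor $\beta$ contributes, and the entire count must come from the interplay of $\varkappa$ with the knot action.
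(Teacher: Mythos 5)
Your steps (i) and (ii) are correct, and they are the standard reduction: for $S^2\times S^2$ one has $w_2^*(N)=0$, $\sigma(N)=0$, so $H_2^{DIFF}=2\Z\vee 2\Z$; since $H_1=0$ the map $\varkappa_{\mk}$ is a bijection onto $H_2^{DIFF}$, and by additivity of $\varkappa$ (Lemma \ref{l:Addl}) the fibres of $\varkappa$ on $E^7(S^2\times S^2)$ are exactly the orbits of the $E^7(S^4)\cong\Z_{12}$ action, so everything comes down to the order of the stabilizer. Be aware, however, that this paper does not prove the statement you are working on: it occurs only as a quotation of \cite{CS11} (inside a commented-out block), and the machinery developed here (Theorem \ref{t:gen}, the MK Isotopy Classification Theorem \ref{l:isomk}) only classifies embeddings \emph{modulo} knots. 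The count $\gcd(u,12)$ is precisely the part that lives in \cite{CS11}, so there is no in-paper argument for you to match; your proposal has to stand on its own for step (iii).

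And that is where the genuine gap is: step (iii), which you yourself identify as the heart of the matter, is asserted rather than proved. Neither half is established. The ``knot-absorption'' claim --- that sliding a generating knot $g$ along a $4$-cycle in a Seifert manifold yields an isotopy $f_0\# kg\simeq f_0$ for $k$ governed by an intersection number --- is exactly the statement whose proof carries all the difficulty; you give no construction of the isotopy and no argument for which multiples of $g$ are absorbed. Likewise the distinctness half (``the surgery obstruction vanishes only when $12/\gcd(u_0,12)$ divides $k$'') names the framework but not the computation: in \cite{CS11} both directions are obtained from the Kreck-type obstruction $\eta$, via statements such as the divisibility of $\eta$ by $2$ and the change of $\eta$ by $2$ under connected sum with a knot (the analogues in the present paper are Lemma \ref{l:etaeven} and the surrounding \S4), together with the identification $E^7(S^4)\cong\Z_{12}$; nothing in your sketch pins down how the divisibility of $\varkappa(f_0)$ enters the obstruction, which is where the factor $\gcd(u_0,12)$ actually comes from. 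A further, smaller, sign of trouble: your bounds are labelled backwards --- an absorption isotopy $f_0\# kg\simeq f_0$ enlarges the stabilizer, hence gives a \emph{lower} bound on the stabilizer and an \emph{upper} bound on the orbit size, while exhibiting distinct classes bounds the stabilizer from above. So the plan is the right one in outline, but as written it defers the entire quantitative content of the theorem to unproved claims.
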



 {\it Construction of an embedding $f_u$ with $\varkappa (f)=(2u,0)$.}
Take the standard embeddings $2D^5\times S^2\subset\R^7$ (where 2 is
multiplication by 2) and $\partial D^3\subset\partial D^5$.
Take $u$ copies $(1+\frac1n)\partial D^5\times x$ ($n=1,\dots,u$) of the oriented 4-sphere
outside $D^5\times S^2$ `parallel' to $\partial D^5\times x$.
Join these spheres by tubes so that the homotopy class of the resulting
embedding
$$S^4\to S^7-(D^5\times S^2)\simeq S^7-S^2\simeq S^4\quad\text{will be}\quad u\in\pi_4(S^4)\cong\Z.$$
Let $f$ be the connected sum of this embedding with the standard embedding
$\partial D^3\times S^2\subset\R^7$.

\smallskip
{\bf Addendum.} {\it There are embeddings $f_0,f_1:S^2\times S^2\to\R^7$ such that
 for every embedding $g:S^4\to\R^7$

$\bullet$ embedding $f_0\#g$ is isotopic to $f_0$.

$\bullet$ embedding $f_1\#g$ is isotopic to $f_1$ if and only if $g$ is isotopic to the standard embedding.}

\endcomment

\section{Definitions of the invariants and proofs modulo lemmas} \label{s:defandplan}

\subsection{Main notation}\label{s:defandplan-not}

Recall that some notation was introduced in \S\ref{s:intro}.

Throughout this paper `(sub)manifold' is shorthand for `compact oriented (sub)manifold, possibly with non-empty boundary'.

\smallskip
{\bf Some identifications.}

Identify $\pi_n(S^n)$ and $\Z$ by the degree isomorphism.

Identify $S^2$ and $\C P^1$.
Represent $S^3=\{(z_1,z_2)\in\C^2\colon |z_1|^2+|z_2|^2=1\}.$
The Hopf map $\eta:S^3\to S^2$ is defined by $\eta(z_1,z_2)=[z_1:z_2]$.
Identify $\pi_3(S^2)$ and $\Z$ by the Hopf isomorphism (that sends the homotopy class of $\eta$ to 1).

Identify $\R^4$ and the algebra $\mathbb{H}$ of the quaternions.
Identify $\pi_3(V_{4, 2})$ and $\pi_3(S^3) \oplus \pi_3(S^2) = \Z^2$
by the standard isomorphism, which is defined using the
projection $V_{4, 2} \to S^3$ given by $(x, y) \to x$ and the section $S^3 \to V_{4, 2}$ given by $x\mapsto(x,xi)$.
 Identify $\pi_3(\SO_3)$ and $\pi_3(S^2)=\Z$ by the map induced by the action of $\SO_3$ on $S^2$.

\smallskip
{\bf General notation.}

Denote by

$\bullet$ $N$ a closed connected orientable 4-manifold {\it with torsion free $H_1$};

$\bullet$ $f,f_0,f_1:N\to S^7$ embeddings;

$\bullet$ $\underset n\equiv$ a congruence modulo $n$;

$\bullet$ $\pr_k$ the projection of a Cartesian product onto the $k$-th factor;

$\bullet$ $\id X$ the identity map of the set $X$;

$\bullet$ $1_m:=(1,0,\ldots,0)\in S^m$;

$\bullet$ $\Cl X$ the closure of a subset $X$ in the ambient space, which is clear from the context;

$\bullet$ $N_0:=\Cl(N-B^4)$, where $B^4$ is an embedded closed 4-ball in $N$.


For every $q\le m$ identify the space $\R^q$ with the subspace of $\R^m$ given by the equations $x_{q+1}=x_{q+2}=\dots=x_m=0$.
Analogously identify $D^q,S^{q-1}$ with the corresponding subspaces of $D^m,S^{m-1}$.

Define $\R^m_+,\R^m_-\subset\R^m$ and $D^m_+,D^m_-\subset S^m$ by
the equations $x_1\ge0$ and $x_1\le0$, respectively.
Then
$$S^m=D^m_+\bigcup\limits_{\partial D^m_+=\partial D^m_-}D^m_-\quad\text{and}\quad
\partial D^m_+=\partial D^m_-=D^m_+\cap D^m_-=0\times S^{m-1}\ne S^{m-1}.$$
We denote the union of oriented manifolds in the same way as set-theoretic union.
So both formulas $S^4 = D^4_+ \cup (-D^4_-)$ and $S^4 = D^4_+ \cup D^4_-$ are correct, the sign $``\cup"$ means
union of oriented manifolds in the first formula and union of manifolds in the second one.

\smallskip
{\bf Homological notation.}

Denote by $[x]$ the homology class of $x$ or the equivalence class of $x$ which is an element of a quotient group.

We denote the maps induced in homology by the same letters as the inducing maps.
Thus if $g \colon X \to Y$ is a map of spaces, $g \colon H_*(X) \to H_*(Y)$ denotes the induced map on homology.

Homomorphisms between homology groups with $\Z_d$-coefficients are denoted in the same way as those for $\Z$-coefficients.
So the coefficients are to be understood from the context.
When this could lead to confusion, we specify coefficients by indicating the domain and the range of the homomorphism, e.g. $i:H_3(C_0;\Z_d)\to H_3(M_\varphi;\Z_d)$.

We denote by $i_{A,X},j_{A,X},\partial_{A,X}$ or shortly by $i_A,j_A,\partial_A$ or shortly by $i,j,\partial$,
the homomorphisms from the exact sequence of the pair $(X,A)$.
If $A=C_k$ or $A=C_f$, then we shorten the subscript $C_k$ or $C_f$ to just $k$ or just $f$, respectively.
Denote by $\ex \colon H_q(X,A)\to H_q(X-B,A-B)$ the excision isomorphism, where $B$ is a subset of $A$.

For a $p$-manifold $P$ denote $H_q(P,\partial):=H_q(P,\partial P)$.

Let $P$ and $Q$ be $p$- and $q$-manifolds.
Denote by
$$
\text{PD} \colon H^n(P)\to H_{p-n}(P,\de)\quad
\text{and} \quad  \text{PD} \colon H^n(P,\de)\to H_{p-n}(P),
$$
the Poincar\'e duality isomorphisms.
We sometimes identify homology and cohomology groups by Poincar\'e duality.
We choose to work mostly with homology classes, since this has technical advantages for our arguments, see  \cite[Remark 2.3]{CS11}.

For a map $\xi \colon P\to Q$ denote the `preimage' homomorphism by
$$\xi^!:=\text{PD}\circ\xi\circ \text{PD}^{-1}:H_n(Q,\partial)\to H_{p-q+n}(P,\partial),$$
where $\xi$ is the homomorphism induced in cohomology.

We now consider simplicial cycles or cycles represented by maps of manifolds, the intersection of transverse cycles and linking number of disjoint cycles \cite{MAI}.


For set-theoretic intersection we write $X \cap Y$.
(This notation is also used for restriction, see \S\ref{s:prelemmas-not}.)
For the algebraic intersection of chains or integer cycles or oriented manifolds in an ambient manifold $M$ we write $X\capM{M}Y$.
Recall that $X \capM{M} Y=(-1)^{\codim X\codim Y}Y\capM{M} X$, and that if $X,Y$ are cycles, then
$X\capM{M}Y$ depends only on the homology classes represented by $X$ and $Y$.

Let $A$ and $B$ be integer $a$- and $b$-cycles in $\R^m$ having disjoint supports with $a+b+1=m$.
Define {\it the linking number} of $A$ and $B$ by $\lk(A,B):= A \capM{\R^m}\beta$, where $\beta$ is a $(b{+}1)$-cycle in $\R^m$ with $\de\beta=B$.
It is easy to check that $\lk(A,B)=\alpha \capM{\R^m} B$, where $\alpha$ is an $(a{+}1)$-cycle in $\R^m$ with $\de\alpha=A$.
Recall that $\lk(A,B)=(-1)^{(m-a)(m-b)}\lk(B,A)$.

\subsection{Definitions of the $\varkappa$-  and $\lambda$-invariants}\label{s:defandplan-kl}

Let $\zeta \colon N_0 \to \nus^{-1}N_0$ be a `partial' section of $\nus:\de C\to N$.
Consider the following diagram:
$$\xymatrix{
\Z\cong H_4(N_0,\partial) \ar[r]^{\zeta} & H_4(\nus^{-1}N_0,\partial)&
\ar[l]_{\ex} H_4(\partial C,\nus^{-1}B^4) & \ar[l]_(0.375){j_{\partial C}}  H_4(\partial C)  \ar[r]^{i_C} & H_4(C)}. $$
Here $j_{\partial C}$ and $\ex$ are isomorphisms.
The composition $N_0\overset{\zeta}\to\nus^{-1}N_0\overset{\subset}\to\de C$ of $\zeta$
and the inclusion is called a {\bf weakly unlinked section} if
$i_Cj_{\partial C}^{-1}\ex^{-1}\zeta=0 \in H_4(C)$.

We remark that

$\bullet$ a section $\zeta:N_0\to\nus^{-1}N_0$ exists because the Euler class of $\nu$ is zero, since vector bundle associated to $\nu$ is 3-dimensional, and $N_0$ retracts to a 3-polyhedron;

$\bullet$ any section $\zeta:N_0\to\nus^{-1}N_0$ is weakly unlinked for $N=S^1\times S^3$ because
there is an isomorphism $H_4(S^7-f(S^1\times S^3))\cong H_2(S^1\times S^3)=0$.
Cf. Lemma \ref{l:weun}.a.

\begin{Lemma} \label{l:weunl} A weakly unlinked section exists and is unique up to vertical homotopy over
the 2-skeleton of any triangulation of $N$.
\end{Lemma}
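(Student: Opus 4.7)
The plan is to combine obstruction theory for sections of the $S^2$-bundle $\nus^{-1}N_0 \to N_0$ with Alexander duality and the Gysin sequence of this bundle. Existence of some section $\zeta_0$ is granted by the bullets preceding the lemma, so the task reduces to adjusting $\zeta_0$ to satisfy the weakly-unlinked condition and to showing that such a $\zeta_0$ is unique modulo vertical $2$-skeletal homotopy.

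First I would parametrise sections modulo vertical homotopy over the $2$-skeleton by standard obstruction theory. Since $\pi_0(S^2)=\pi_1(S^2)=0$, any two sections are vertically homotopic over the $1$-skeleton, and the only obstruction to continuing such a homotopy across the $2$-skeleton is a difference class in $H^2(N_0;\pi_2(S^2))=H^2(N_0;\Z)$, which Poincar\'e--Lefschetz duality and excision (using that $B^4$ is contractible) identify with $H_2(N_0,\partial N_0)\cong H_2$. Thus sections of $\nus^{-1}N_0\to N_0$ modulo vertical $2$-skeletal homotopy form a torsor over $H_2$; surjectivity of the natural map from homotopy classes of sections on all of $N_0$ to this torsor uses the vanishing of the higher obstructions in $H^3(N_0;\Z)$ (this is the Euler class of $\nus$) and in $H^4(N_0;\Z)=0$ (from the $3$-dimensional homotopy type of $N_0$).

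The main step will be to show that $a(\zeta):=i_C j_{\de C}^{-1}\ex^{-1}\zeta_*[N_0,\partial]\in H_4(C)$ descends to this torsor and is an affine bijection onto $H_4(C)$. The descent follows from the homology Gysin sequence
\[
0 \to H_2 \to H_4(\nus^{-1}N_0, \partial) \to H_4(N_0,\partial) \to 0,
\]
which forces $\zeta_*[N_0,\partial]$ to be a lift of $[N_0,\partial]$ whose $H_2$-component is precisely the $2$-skeletal difference class and is independent of cell data in dimensions $\ge 3$. For the linear part I would compute the composition $H_2 \to H_4(\de C) \xrightarrow{i_C} H_4(C) \cong H_2$: the Gysin image of a $2$-cycle $\Sigma\subset N$ is the sphere subbundle $\nus^{-1}(\Sigma)$, which bounds the disk subbundle $D(\nus)|_\Sigma$ in $S^7$, so for any $\Sigma'\in H_2$ one has $\lk_{S^7}(\nus^{-1}(\Sigma),f(\Sigma'))=D(\nus)|_\Sigma\capM{S^7} f(\Sigma')=\Sigma\capM{N}\Sigma'$; hence under the Alexander-duality identification $H_4(C)\cong H_2$ the composition is the identity.

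With the affine bijection in hand, existence and uniqueness are immediate. Starting from any section $\zeta_0$ one modifies by $-a(\zeta_0)\in H_2\cong H^2(N_0;\Z)$ over the $2$-cells to produce a weakly unlinked section, and any two weakly unlinked sections have equal $a$, hence equal $2$-skeletal class, so are vertically homotopic over the $2$-skeleton. The hard part will be the affine-bijection claim of the previous paragraph: checking insensitivity of $a$ to modifications in cells of dimension $\ge 3$, and identifying the linear part with the identity. Both amount to bookkeeping with the Gysin sequence, Poincar\'e duality, and the geometric description of Alexander duality via linking numbers, but each needs a careful compatibility check that constitutes the genuine content of the proof.
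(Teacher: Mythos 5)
Your argument is correct and is essentially the argument of Bo\'echat--Haefliger that the paper invokes by citation ([BH70, Proposition 1.3]) and reproduces for the relative analogue in the proof of Lemma 3.3.b: parametrize sections by their difference classes in $H_2(N_0,\partial)\cong H_2$, identify the change of $i_Cj_{\partial C}^{-1}\ex^{-1}\zeta[N_0]$ under such a modification with the Alexander duality isomorphism $\widehat A=i_C\nus^!\colon H_2\to H_4(C)$, and conclude existence and uniqueness from bijectivity of $\widehat A$. The compatibility checks you flag (insensitivity to cells of dimension $\ge 3$, and the identification of the linear part) are exactly [BH70, Lemme 1.2] and the relation $i_C\nus^!=\widehat A$ recorded in the paper's Alexander duality diagram, so no new idea is needed beyond what you outline.
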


\begin{proof} This holds by
\cite[Proposition 1.3]{BH70} because by \cite[Remark 2.4 and footnote 14]{Sk10} our definition of a weakly unlinked section is equivalent to the original definition \cite{BH70}.
Cf. proof of Lemma \ref{l:weun}.b and \cite[the Unlinked Section Lemma (a)]{Sk08'}.
\end{proof}

{\bf Definition of the Bo\'echat-Haefliger invariant $\varkappa:E^7(N)\to H_2$.}
Take a weakly unlinked section $\xi:N_0\to\partial C$.
Recall that $H_2$ is torsion-free.
By Poincar\'e duality $\capM{N}\colon H_2 \times H_2 \to \Z$ is unimodular.
Hence $\varkappa(f)\in H_2$ can be defined by the equation
$$
\varkappa(f)\capM{N}[X]= \lk\phantom{}_{S^7}(fN,\xi X),
$$
for any 2-cycle $X\subset N_0$.

This is well-defined, i.e.~is independent of the choice of $\xi$, by Lemma \ref{l:La}.$\varkappa'$.
This definition is equivalent to those of \cite{BH70, Sk10, CS11} by Lemma \ref{l:La}.$\varkappa'$,e.%
\footnote{Those definitions do not require the assumption that $H_1$ is torsion free.
In those papers the invariant was denoted by $w_f$ \cite{BH70}, or $BH(f)$ \cite{Sk10}, or $\aleph(f)$ \cite{CS11}, instead of $\varkappa(f)$.}
Clearly, the map $\varkappa:E^7(N)\to H_2$ is well-defined by $\varkappa([f]):=\varkappa(f)$.

\smallskip
{\bf Definition of the Seifert form $\lambda \colon E^7(N)\to B(H_3)$.}
Represent classes $x,y\in H_3$ by closed oriented 3-submanifolds (or integer 3-cycles) $X,Y\subset N_0$.
Take a weakly unlinked section $\xi:N_0\to\partial C$.
Define
$$\lambda(f)(x,y):=\lk\phantom{}_{S^7}(fX,\xi Y)\in\Z.$$
This is well-defined; i.e.~is independent of the choice of $\xi$, by Lemma \ref{l:La}.$\lambda'$.
Clearly, the pairing $\lambda(f):H_3\times H_3\to\Z$ is indeed a bilinear form.
Clearly, the map $\lambda:E^7(N)\to B(H_3)$ is well-defined by $\lambda([f]):=\lambda(f)$.
Cf. \cite{Sa99, To10}.

\begin{proof}[Proof of Lemma \ref{l:calam}] Part (c) follows because $\xi Y$ and $fY$ are homologous in $S^7-fX$.
Part (a) follows by (c).
Part (b) follows by (a).
\end{proof}

We give equivalent definitions of $\varkappa$ and $\lambda$ in Lemma \ref{l:La}.

\begin{Lemma}[$\varkappa$-symmetry; proved in \S\ref{s:prelemmas-equ}]\label{l:Lam}
We have
$\lambda(f)(y,x)=\lambda(f)(x,y)-\varkappa (f)\capM{N} x\capM{N} y$.
\end{Lemma}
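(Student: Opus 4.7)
The plan is to compute the difference $\lambda(f)(y,x)-\lambda(f)(x,y)$ by introducing an auxiliary symmetric linking in $S^7$ and then identifying the resulting obstruction term with $-\varkappa(f)\capM{N}x\capM{N}y$ via a local calculation in the normal sphere bundle.

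First I would choose closed oriented 3-submanifolds $X,Y\subset N_0$ representing $x,y\in H_3$, arranged to be transverse in $N$ so that the 2-cycle $X\cap Y\subset N_0$ represents $x\capM{N}y\in H_2$, and fix a weakly unlinked section $\xi\colon N_0\to\partial C$. Then by the definitions in \S\ref{s:defandplan-kl}, $\lambda(f)(x,y)=\lk_{S^7}(fX,\xi Y)$ and $\lambda(f)(y,x)=\lk_{S^7}(fY,\xi X)$. Next I would introduce a second section $\xi'\colon N_0\to\partial C$ that agrees with $\xi$ outside a tubular neighborhood of $Y$ in $N$ and is a small generic perturbation of $\xi$ over $Y$, so that $\xi X$ and $\xi'Y$ meet transversally in $\partial C$; a slight radial push of $\xi'Y$ into $S^7$ then makes them disjoint. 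Since the homotopy from $\xi Y$ to $\xi'Y$ stays disjoint from $fX$, one has $\lk_{S^7}(fX,\xi'Y)=\lambda(f)(x,y)$; and by the symmetry $\lk_{S^7}(A,B)=\lk_{S^7}(B,A)$ for disjoint 3-cycles in $S^7$ (which holds since $(7{-}3)(7{-}3)=16$ is even), also $\lk_{S^7}(\xi X,fY)=\lambda(f)(y,x)$.

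I would then compute the auxiliary linking $\lk_{S^7}(\xi X,\xi'Y)$ in two ways using bounding 4-chains. Choose 4-chains $V_X,V_Y$ in $S^7$ with $\partial V_X=fX$ and $\partial V_Y=fY$, and let $C_X$ (resp.\ $C_{\xi'}$) be the radial cylinder of $\xi|_X$ (resp.\ $\xi'|_Y$) in the tubular neighborhood of $fN$, so that $\partial C_X=\xi X-fX$ and $\partial C_{\xi'}=\xi'Y-fY$. Using $V_X+C_X$ as a bounding 4-chain for $\xi X$, and $V_Y+C_{\xi'}$ as a bounding 4-chain for $\xi'Y$, I obtain
\[
\lk_{S^7}(\xi X,\xi'Y)\;=\;\lambda(f)(x,y)+C_X\capM{S^7}\xi'Y\;=\;\lambda(f)(y,x)+\xi X\capM{S^7}C_{\xi'},
\]
and subtraction yields
\[
\lambda(f)(y,x)-\lambda(f)(x,y)\;=\;C_X\capM{S^7}\xi'Y\;-\;\xi X\capM{S^7}C_{\xi'}.
\]

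Both intersections on the right are supported over the 2-cycle $X\cap Y\subset N$, since the cylinders $C_X$ and $C_{\xi'}$ live in the tubular neighborhood of $fN$ over $X$ and $Y$ respectively. A local analysis in the $S^2$-sphere bundle $\partial C\to N$ will identify this signed count with the obstruction to extending $\xi$ as a section of the normal 3-plane bundle $\nu$ over the 2-cycle $X\cap Y$; by the defining relation $\varkappa(f)\capM{N}z=\lk_{S^7}(fN,\xi Z)$ applied to $z=x\capM{N}y$ represented by $Z=X\cap Y$, this obstruction equals $-\varkappa(f)\capM{N}x\capM{N}y$. The hard part will be this final local identification: carefully matching the orientations of the two cylinder intersections along $X\cap Y$ with the defining equation for $\varkappa(f)$ requires a direct computation in coordinates adapted to the transverse intersection $X\cap Y\subset N$ and to a local trivialization of $\nu$ along it. The preceding steps are formal manipulations of the linking pairing in $S^7$ and of the choice of weakly unlinked section.
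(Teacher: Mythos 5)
Your formal reduction is sound and close in spirit to the paper's: the paper also expresses the difference $\lambda(f)(x,y)-\lambda(f)(y,x)$ as an intersection number localized near $f(N)$, using the observation $\lambda(f)(y,x)=\lk_{S^7}(fY,\xi X)=\lk_{S^7}(fX,-\xi Y)$ and the cylinder $Y_\xi$ joining $\xi Y$ to $-\xi Y$, so that the difference becomes $fX\capM{S^7}Y_\xi$. Up to your two-cylinder bookkeeping this is the same manipulation of the linking pairing.

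However, there is a genuine gap: the ``final local identification'' that you defer is not a routine orientation check but is precisely the content of the lemma, and you have not carried it out. Moreover, the obstruction you name is not the right one. The class you describe, ``the obstruction to extending $\xi$ as a section of the normal $3$-plane bundle $\nu$ over the $2$-cycle $X\cap Y$'', vanishes identically, since $\xi$ is already a global section over $N_0\supset X\cap Y$. What actually appears is the self-intersection $\xi x\capM{\de C}\xi y$ of the section inside the sphere bundle $\de C$, equivalently the Euler number of the $2$-plane complement $\xi^\perp$ of $\xi$ in $\nu$ restricted to $X\cap Y$; one must then prove that $e^*(\xi^\perp)=\varkappa(f)$, which is Lemma \ref{l:La}.e and rests on Lemma \ref{l:La}.($\varkappa$), Lemma \ref{l:La}.a and intersection Alexander duality (Lemma \ref{l:inaldu}). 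Simply quoting the defining relation $\varkappa(f)\capM{N}z=\lk_{S^7}(fN,\xi Z)$ for $Z=X\cap Y$ does not connect to your localized cylinder intersections: passing from that global linking number with all of $fN$ to the local count over $X\cap Y$ requires exactly the chain of identities $\xi x\capM{\de C}\xi y=\xi[N_0]\bigcap\xi[N_0]\bigcap\nus^!(x\capM{N}y)=A[N]\capM{C}i_C\xi(x\capM{N}y)=\varkappa(f)\capM{N}x\capM{N}y$ that the paper proves in \S\ref{s:prelemmas-equ}. To complete your argument you would need to supply this computation (or an equivalent one), together with the sign verification you acknowledge omitting.
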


Cf. \cite[Lemma 2.2]{Sa99}, \cite[Theorem 1.5(2) and Lemmas 1.6 and 2.10]{To10}.

\begin{Lemma}[Additivity of $\lambda$ and $\varkappa$]\label{l:Addl}
For every pair of  embeddings $g:S^4\to S^7$ and $f:N\to S^7$
$$\varkappa(f\#g)=\varkappa(f)\quad\text{and}\quad\quad\lambda(f\#g)=\lambda(f).$$
\end{Lemma}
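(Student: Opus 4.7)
The plan is to arrange the connected sum so that $f\#g$ agrees with $f$ on $N_0$, and then to compute $\varkappa$ and $\lambda$ using a single weakly unlinked section.

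First, choose the embedded $4$-ball $B^4\subset N$ in the definition of $N_0=\Cl(N-B^4)$ so that the connected sum construction modifies $f$ only inside $f(B^4)$; equivalently, $(f\#g)|_{N_0}=f|_{N_0}$. Then the normal sphere bundles of $f$ and $f\#g$ coincide over $N_0$ as subsets of $S^7$, and any section $\xi\colon N_0\to\de C_f|_{N_0}=\de C_{f\#g}|_{N_0}$ is simultaneously a section for both embeddings. Arrange in addition that $g(S^4)$ together with the $5$-dimensional connected sum cobordism $W\subset S^7$ with $\de W=(f\#g)N-(fN\sqcup gS^4)$ is contained in a small ball $B^7_0\subset S^7$ disjoint from $\xi(N_0)$.

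Next, I would verify that such a section $\xi$ is weakly unlinked for $f$ if and only if it is weakly unlinked for $f\#g$. Since $H_1$, and therefore $H_2$, is torsion free, Alexander duality together with Poincar\'e duality gives an isomorphism $H_4(C)\cong H^2(N)\cong H_2$ under which the image of $[\xi(N_0)]$ in $H_4(C_f)$ corresponds to the functional $x\mapsto\lk_{S^7}(fX,\xi(N_0))$ for $X\subset N_0$ representing $x$. The analogous functional computing the image in $H_4(C_{f\#g})$ is $x\mapsto\lk_{S^7}((f\#g)X,\xi(N_0))$, which is equal to the first because $fX=(f\#g)X$ for $X\subset N_0$. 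Hence the two vanishing conditions in $H_4(C_f)$ and $H_4(C_{f\#g})$ are equivalent, and a weakly unlinked section $\xi$ for $f$ (which exists by Lemma \ref{l:weunl}) is also weakly unlinked for $f\#g$.

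Third, fix such a $\xi$. For $x\in H_2$ represented by $X\subset N_0$, the cobordism $W\subset B^7_0$ lies in $S^7-\xi X$, so $[(f\#g)N]=[fN]+[gS^4]$ in $H_4(S^7-\xi X)$, and hence
$$\varkappa(f\#g)\capM{N}x=\lk_{S^7}((f\#g)N,\xi X)=\lk_{S^7}(fN,\xi X)+\lk_{S^7}(gS^4,\xi X)=\varkappa(f)\capM{N}x,$$
where $\lk_{S^7}(gS^4,\xi X)=0$ because $g(S^4)\subset B^7_0$ and $\xi X$ lie in disjoint balls. Since the intersection form on $H_2$ is unimodular and $H_2$ is torsion free, this gives $\varkappa(f\#g)=\varkappa(f)$. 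For $\lambda$, let $x,y\in H_3$ have representatives $X,Y\subset N_0$. Then $(f\#g)X=fX$ and $\xi$ is unchanged, so
$$\lambda(f\#g)(x,y)=\lk_{S^7}((f\#g)X,\xi Y)=\lk_{S^7}(fX,\xi Y)=\lambda(f)(x,y).$$

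The main obstacle is the second step: showing that a single section over $N_0$ is weakly unlinked for $f$ exactly when it is weakly unlinked for $f\#g$. The Alexander-duality identification handles this cleanly under the torsion-free hypothesis; without it the maps $H_4(\de C_f)\to H_4(C_f)$ and $H_4(\de C_{f\#g})\to H_4(C_{f\#g})$ would need to be compared more carefully.
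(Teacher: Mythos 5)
Your proof is correct and follows essentially the same route as the paper's: arrange the connected sum so that $f\#g=f$ on $N_0$ and the modification takes place in a small ball missing $\partial C_f$, check that a weakly unlinked section for $f$ remains weakly unlinked for $f\#g$, and then read both invariants off the unchanged linking numbers. The only imprecision is that $\xi(N_0)$ is not a cycle, so in your detection argument it should first be capped off inside $\nus^{-1}B^4$ (where $H_4$ vanishes, so the choice of cap does not affect the linking numbers with $fX$); with that understood, the unimodularity of $\capM{N}$ on the torsion-free group $H_2$ makes the detection faithful, exactly as you use it.
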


\begin{proof}
We may assume that $g(S^4)\cap C_f=\emptyset$ and $\nu_f=\nu_{f\#g}$ over $N_0$.
Then additivity for $\lambda$ and $\varkappa$ follows because a weakly unlinked section for $f$
is also a weakly unlinked section for $f\#g$.
\end{proof}


\subsection{Definition of the $\beta$-invariant and the map $\beta_{u,l}$}\label{s:defandplan-b}

Take a small oriented disk $D^3_f\subset\R^7$ whose intersection with $f(N)$ consists of exactly one point
of sign $+1$ and such that $\partial D^3_f\subset\partial C_f$.
Define {\bf the meridian of $f$} by
$$
S^2_f := [\partial D^3_f]\in H_2(C_f).
$$
Then $S^2_f$ is a generator of  $H_2(C_f)$ (by homology Alexander duality, see \S\ref{s:prelemmas-not}).

Recall that $f_0,f_1:N\to S^7$ are embeddings.
For a bundle isomorphism $\varphi:\de C_0\to\de C_1$ define the closed oriented $7$-manifold
$$
M=M_\varphi:=C_0\cup_\varphi(-C_1).
$$
A {\bf joint Seifert class for a bundle isomorphism $\varphi:\de C_0\to\de C_1$} is a class
$$
Y\in H_5(M_\varphi)\quad\text{such that}\quad Y\capM{M_\varphi} [\partial D^3_{f_0}] = 1.
$$
We shall omit the phrase `for a bundle isomorphism $\varphi$' if its choice is clear from the context.


\begin{Lemma}[proved in \S\ref{s:prelemmas-hss}] \label{l:jhss}
If $\varkappa(f_0)=\varkappa(f_1)$ and $\varphi \colon \de C_0\to\de C_1$ is a bundle isomorphism, then
there is a joint Seifert class $Y\in H_5(M_\varphi)$.
\end{Lemma}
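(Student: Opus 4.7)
The plan is to construct $Y$ as a lift of $A_0[N]\in H_5(C_0,\de C_0)$ through the long exact sequence of $(M_\varphi,C_1)$, with the hypothesis $\varkappa(f_0)=\varkappa(f_1)$ entering through the Agreement Lemma, and then to read off the required intersection with the meridian from the Alexander duality definition of $A_0[N]$.

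First, I would consider
$$
H_5(C_1)\to H_5(M_\varphi)\xrightarrow{j} H_5(M_\varphi,C_1)\xrightarrow{\de}H_4(C_1),
$$
and identify $H_5(M_\varphi,C_1)\cong H_5(C_0,\de C_0)$ by excision. Since $M_\varphi=C_0\cup_\varphi(-C_1)$, naturality of the boundary map shows that under this identification $\de$ becomes $i_{\de C_1,C_1}\circ\varphi_*\circ\de_{\de C_0,C_0}$. I would then apply the Agreement Lemma \ref{l:Agr}.a (with $q=4$), whose hypothesis is exactly $\varkappa(f_0)=\varkappa(f_1)$, to obtain $\varphi_*\de A_0[N]=\de A_1[N]$ in $H_4(\de C_1)$. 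Since $\de A_1[N]$ is a boundary in $(C_1,\de C_1)$, exactness gives $i_{\de C_1,C_1}\de A_1[N]=0$. Thus $\de(A_0[N])=0$, and exactness of the first sequence produces $Y\in H_5(M_\varphi)$ with $j(Y)=A_0[N]$ under the excision identification.

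It remains to verify the intersection condition. After pushing $S^2_{f_0}=[\de D^3_{f_0}]$ slightly into the interior of $C_0$ by enlarging $D^3_{f_0}$, the intersection
$$
Y\capM{M_\varphi} i_{\de C_0,M_\varphi}S^2_{f_0}=j(Y)\capM{C_0}S^2_{f_0}=A_0[N]\capM{C_0}S^2_{f_0}=1
$$
holds, where the final equality uses that $A_0[N]$ represents a homology Seifert surface for $f_0(N)$ (by its definition via Alexander duality applied to $[N]$) and that the meridian $S^2_{f_0}$ bounds a disk meeting $f_0(N)$ transversely in a single point of sign $+1$. The main obstacle is the identification of the connecting homomorphism and the careful invocation of the Agreement Lemma; both are essentially formal, given the orientation conventions set up in \S\ref{s:prelemmas-not}.
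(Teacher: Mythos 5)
Your proof is correct, but it takes a different route from the paper's primary argument. The paper proves Lemma \ref{l:jhss} ``dually'': it uses the Agreement Lemma \ref{l:Agr}.b for $q=2$ (which under the footnote there needs only $\varkappa(f_0)=\varkappa(f_1)$) to see that $i_{C_0,M_\varphi}\colon H_2(C_0)\to H_2(M_\varphi)$ is split injective, hence $i_{C_0,M_\varphi}S^2_{f_0}$ is primitive, and then invokes Poincar\'e duality to produce some $Y\in H_5(M_\varphi)$ with $Y\capM{M_\varphi}i_{C_0,M_\varphi}S^2_{f_0}=1$. You instead lift $A_0[N]$ through the exact sequence of $(M_\varphi,C_1)$, killing the connecting homomorphism via the Agreement Lemma \ref{l:Agr}.a for $q=4$ (again needing only $\varkappa(f_0)=\varkappa(f_1)$), and then verify the meridian condition by localizing the intersection in $C_0$; the final equality $A_0[N]\capM{C_0}S^2_{f_0}=1$ can also be seen formally as $[N]\capM{N}[\mathrm{pt}]=1$ via intersection Alexander duality (Lemma \ref{l:inaldu}), since $S^2_{f_0}=\widehat A_0[\mathrm{pt}]$. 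This is precisely the alternative the paper records in the Remark after its proof (``Lemma \ref{l:jhss} also follows from Lemmas \ref{l:Agr}.c and \ref{l:desei}.a''): your exact-sequence lift is the content of Lemma \ref{l:Agr}.c for $x=[N]$, and your intersection check is the ``if'' part of Lemma \ref{l:desei}.a. What your route buys is a $Y$ with the additional geometric property $Y\cap C_0=A_0[N]$ (and hence, by Lemma \ref{l:desei}.a, also $Y\cap C_1=A_1[N]$), rather than a class produced abstractly by duality; what the paper's route buys is brevity, since it never needs to identify the connecting homomorphism of the glued manifold.
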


We call a bundle isomorphism $\varphi:\de C_0\to\de C_1$ a {\bf $\pi$-isomorphism} if $M_\varphi$ is parallelizable.

\begin{Lemma}[proved in \S\ref{s:lemmas-sf}]\label{l:piso}
If $\varkappa(f_0)=\varkappa(f_1)$ and $\lambda(f_0)=\lambda(f_1)$, then there is a $\pi$-isomorphism
$\varphi:\de C_0\to\de C_1$.
A $\pi$-isomorphism is unique (up to vertical homotopy through linear isomorphisms) over $N_0$.
\end{Lemma}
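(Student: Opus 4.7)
The existence of a spin-preserving bundle isomorphism $\varphi_0\colon\partial C_0\to\partial C_1$ is provided by Lemma \ref{l:spbuis}. Under the assumption $\varkappa(f_0)=\varkappa(f_1)$, the boundary Seifert classes $\partial A_k[N]\in H_3(\partial C_k)$ are matched by $\varphi_0$ (Lemma \ref{l:Agr} referenced in \S\ref{s:intro-strat}), which by Lemma \ref{l:jhss} produces a joint Seifert class $Y\in H_5(M_0)$, where $M_0:=M_{\varphi_0}$. The manifold $M_0$ is closed spin of dimension $7$, and for such manifolds parallelizability reduces (modulo handling the few high-dimensional obstructions absorbed by the spin and orientation data) to the vanishing of the first Pontryagin class $p_1\in H^4(M_0;\Z)$. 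The plan is then to adjust $\varphi_0$ by a gauge transformation $g\in[N,SO(3)]$ of the sphere bundle of $\nu_{f_1}$ so as to arrange $p_1(M_\varphi)=0$.

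\textbf{The obstruction computation.} Using the joint Seifert class $Y$ and a Mayer--Vietoris analysis for $M_0=C_0\cup_{\varphi_0}(-C_1)$, I would express $p_1(M_0)\in H^4(M_0;\Z)\cong H_3(M_0;\Z)$ as a sum of three contributions: one from $C_0$, one from $C_1$, and a gluing term supported near $\partial C_0$. The $C_i$-contributions are governed by the Seifert forms $\lambda(f_i)$ - this is the precise geometric content of $\lambda$ as a linking form of $3$-cycles with a section, translated into a self-intersection statement for $Y$ in $M_0$. Under $\lambda(f_0)=\lambda(f_1)$ the two $C_i$-contributions cancel. The remaining gluing term lies in the subgroup of $H_3(M_0;\Z)$ that is hit by gauge transformations $g\in[N,SO(3)]$, and can be killed by choosing $g$ appropriately; setting $\varphi:=\varphi_0\cdot g$ produces the required $\pi$-isomorphism.

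\textbf{Uniqueness over $N_0$.} Two $\pi$-isomorphisms $\varphi,\varphi'$ differ by a gauge transformation $g\colon N\to SO(3)$ of the sphere bundle. Restricted to $N_0$, which has the homotopy type of a $3$-complex, the obstructions to vertically homotoping $g|_{N_0}$ to the identity through linear bundle maps lie in $H^i(N_0;\pi_i(SO(3)))$ for $i\le 3$, nontrivially only for $i=1$ (coefficients $\Z/2$) and $i=3$ (coefficients $\Z$). The $i=1$ obstruction records the change of spin structure and vanishes because both $\varphi,\varphi'$ preserve spin; the $i=3$ obstruction records the change $p_1(M_\varphi)-p_1(M_{\varphi'})$ and vanishes because both $\varphi,\varphi'$ are $\pi$-isomorphisms. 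Hence $g|_{N_0}$ is vertically homotopic to the identity.

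\textbf{Main obstacle.} The central difficulty is the explicit characteristic-class computation identifying $p_1(M_0)$ with the described Mayer--Vietoris assembly of the $\lambda$-invariants plus a $[N,SO(3)]$-term, and verifying that the gauge freedom is exactly large enough to cancel the residue. This requires careful tracking of the normal bundle of the Seifert class $Y$ in $M_0$, of the gluing data along $\partial C_0$, and of how a gauge twist $g$ affects the Pontryagin class.
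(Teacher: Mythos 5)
Your overall strategy (reduce parallelizability of $M_\varphi$ to killing a degree-four characteristic class, then twist the bundle isomorphism by a gauge transformation $g\in[N,SO_3]$) is the right one, and your uniqueness argument has the correct skeleton. But both halves contain the same genuine gap, and it is exactly the point you defer to as the ``main obstacle'': a gauge twist by $v\in H^3(N;\pi_3(SO_3))=H_1$ changes the relevant obstruction only by $2v$, because the stabilization $\pi_3(SO_3)\to\pi_3(SO)$ is multiplication by $2$ (equivalently, $p_1\colon\pi_3(SO_3)\to\Z$ is multiplication by $4$). So the subgroup of $H_3(M_\varphi)$ ``hit by gauge transformations'' is only the image of $2H_1$, and your existence argument needs, but does not supply, the fact that the obstruction lands there. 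In the paper this is Lemma \ref{l:sp}: for a closed spin $7$-manifold the spin characteristic class $p^*_M$ (half of the Pontryagin class) is even, proved via $\rho_2(p_M)=w_4(M)=v_4(M)=0$. One then also needs $H_1$ torsion free in order to divide by $2$ and produce the correcting twist (Lemma \ref{l:strdif}). Without these two inputs the residue need not be cancellable, since the gauge orbit has index $2$ rather than index $1$.

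Second, your proposed Mayer--Vietoris decomposition of $p_1(M_0)$ into ``$C_i$-contributions governed by $\lambda(f_i)$'' which cancel is not how the hypothesis $\lambda(f_0)=\lambda(f_1)$ enters, and I do not see how to make it work: the complements $C_k\subset S^7$ are stably parallelized by restriction from $S^7$, so there are no interior Pontryagin contributions at all --- the entire class $p^*_{M_\varphi}$ is the ``gluing term'', namely $i_{C_0,M_\varphi}\widehat A_0 d(\Phi_*\mathrm{st}_0,\mathrm{st}_1)$, where $d(\Phi_*\mathrm{st}_0,\mathrm{st}_1)\in H_1$ is the difference of the two stable tangent framings (Lemma \ref{l:pondif}). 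The hypotheses $\varkappa(f_0)=\varkappa(f_1)$ and $\lambda(f_0)=\lambda(f_1)$ are used instead to guarantee, via the Agreement Lemma \ref{l:Agr}.b, that $i_{C_0,M_\varphi}\colon H_3(C_0)\to H_3(M_\varphi)$ is injective, so that the obstruction is faithfully recorded by a class in $H_1$ at all. The same factor of $2$ undermines your uniqueness step: the primary obstruction $d(\varphi,\varphi')\in H^3(N_0;\pi_3(SO_3))=H_1$ does not equal the change in $p_1$; rather that change is the image of $4\,d(\varphi,\varphi')$, so the vanishing of both Pontryagin classes only yields $2\,d(\varphi,\varphi')=0$, and the torsion-freeness of $H_1$ --- never invoked in your proposal --- is essential to conclude $d(\varphi,\varphi')=0$.
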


{\bf Definitions of
$\beta(f_0,f_1)$.}
For $u\in H_2$ and $l\in B(H_3)$ recall that $\overline l \colon H_3 \to H_1$ is the adjoint of $l$ and that
$\rho_{d} \colon H_1 \to H_1(N; \Z_d)$ is reduction modulo $d$.
Assume that $\varkappa(f_0)=\varkappa(f_1)$ and that $\lambda(f_0)=\lambda(f_1)$.
Denote $d:=\di(\varkappa(f_0))$.
By Lemmas \ref{l:jhss} and \ref{l:piso} there is a joint Seifert class $Y\in H_5(M_\varphi)$ and a $\pi$-isomorphism $\varphi:\de C_0\to\de C_1$.
Define
$$
\beta(f_0,f_1):=[(i_{\de C_0,M_\varphi}\nus_0^!)^{-1}\rho_dY^2]\in C_{\varkappa(f_0),\lambda(f_0)}
$$
using the composition
$H_1(N;\Z_d)\xrightarrow{~\nus_0^!~} H_3(\de C_0;\Z_d)\xrightarrow{~i_{\de C_0,M_\varphi}~} H_3(M_\varphi;\Z_d).$

\begin{Lemma}[proved in \S\ref{s:lemmas-jhss}]\label{l:webeta}
The class $\beta(f_0,f_1)$ is well-defined; i.e.

$\bullet$ for every joint Seifert class $Y$ and $\pi$-isomorphism $\varphi$ there is a unique element
$b_{\varphi,Y}\in H_1(N;\Z_d)$ such that $i_{\de C_0,M_\varphi}\nus_0^!b_{\varphi,Y}=\rho_dY^2\in H_3(M_\varphi;\Z_d)$,

$\bullet$ $[b_{\varphi,Y}]\in C_{\varkappa(f_0),\lambda(f_0)}$ is independent of the choice of joint Seifert class $Y$ and $\pi$-isomorphism $\varphi$.
\end{Lemma}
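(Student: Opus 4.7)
The plan is to establish both parts by analyzing the Mayer--Vietoris sequence of the decomposition $M_\varphi=C_0\cup_\varphi(-C_1)$ with $\Z$- and $\Z_d$-coefficients, together with the Gysin-like behavior of the $S^2$-bundle $\nus_0\colon\de C_0\to N$. First I would set $d:=\di\varkappa(f_0)$, fix a $\pi$-isomorphism $\varphi$ and a joint Seifert class $Y$, and describe $H_\ast(M_\varphi)$ in the relevant degrees via Mayer--Vietoris. The hypothesis $Y\capM{M_\varphi} i_{\de C_0,M_\varphi}S^2_{f_0}=1$ combined with $d\mid\varkappa(f_0)$ pins down the position of $\rho_d Y$ relative to the image of $H_5(\de C_0)$, and this drives the whole argument.

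For the existence part of (1), I would show that $\rho_d Y^2\in H_3(M_\varphi;\Z_d)$ lies in the image of $i_{\de C_0,M_\varphi}\nus_0^!$. The natural route is to find a geometric representative of $Y$ whose reduction modulo $d$ can be pushed into a tubular neighborhood of $\de C_0$, using that the obstruction to doing this on the Seifert level is the Bo\'echat--Haefliger class $\varkappa(f_0)$, which vanishes modulo $d$. Then $\rho_d Y^2$ is represented by a self-intersection cycle contained in $\de C_0$, and the Gysin sequence for $\nus_0$ identifies such classes with $\nus_0^!(H_1(N;\Z_d))$. For the uniqueness in (1), I would verify that $i_{\de C_0,M_\varphi}\nus_0^!\colon H_1(N;\Z_d)\to H_3(M_\varphi;\Z_d)$ is injective; since $\nus_0^!$ is a split injection and $H_1$ is torsion-free, this reduces to a Mayer--Vietoris computation showing that classes in $\nus_0^!H_1(N;\Z_d)$ are not killed by the inclusion into $M_\varphi$.

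For (2), independence from the choice of joint Seifert class: if $Y,Y'$ are two joint Seifert classes, then $Y-Y'$ lies in the kernel of $\capM{M_\varphi} i_{\de C_0,M_\varphi}S^2_{f_0}$, which via Mayer--Vietoris is described in terms of $H_5(C_0)\oplus H_5(C_1)$ and ultimately in terms of $H_3$-classes of $N$. Expanding $(Y')^2=Y^2-2Y\capM{M_\varphi}(Y-Y')+(Y-Y')^2$, the cross term should contribute an element of $2\,\overline{\lambda(f_0)}(H_3)$ after pushing the intersections onto $N$ via $\nus_0$ and identifying them with linking numbers in $S^7$ by Lemma \ref{l:calam}(c); the quadratic term $(Y-Y')^2$ is absorbed in $dH_1$ by the same tubular-neighborhood argument used for existence. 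For independence from $\varphi$, Lemma \ref{l:piso} says that any two $\pi$-isomorphisms agree up to vertical homotopy over $N_0$, so their difference is supported over the $4$-ball $N\setminus N_0$ and contributes nothing in $H_1$ modulo $2\overline{\lambda(f_0)}(H_3)+dH_1$.

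The main obstacle will be the cross-term computation: identifying $2\,Y\capM{M_\varphi}(Y-Y')\bmod d$ as precisely $2\rho_d\overline{\lambda(f_0)}$ applied to the $H_3$-coordinate of $Y-Y'$. This requires carefully tracking the correspondence between intersections in the closed $7$-manifold $M_\varphi$ and linking numbers in $S^7$ computed via weakly unlinked sections, which rests on the geometric interpretation of a joint Seifert class as a homological joint Seifert surface for both $f_0$ and $f_1$. Everything else should be careful but essentially algebraic bookkeeping with the Mayer--Vietoris sequences.
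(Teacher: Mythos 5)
Your overall route --- decomposing $M_\varphi=C_0\cup_\varphi(-C_1)$, parametrizing the joint Seifert classes by $H_3$, extracting $2\overline{\lambda(f_0)}$ from the cross term, discarding the quadratic term, and reducing $\varphi$-independence to the uniqueness of $\pi$-isomorphisms over $N_0$ --- is essentially the paper's. The gap is in the step you call ``a Mayer--Vietoris computation showing that classes in $\nus_0^!H_1(N;\Z_d)$ are not killed by the inclusion into $M_\varphi$.'' This injectivity is \emph{not} formal: a class $\widehat A_0a=i_{\de C_0,C_0}\nus_0^!a$ dies in $H_3(M_\varphi;\Z_d)$ exactly when $(\widehat A_0a,0)$ lies in the image of $H_3(\de C_0;\Z_d)\to H_3(C_0;\Z_d)\oplus H_3(C_1;\Z_d)$, and to compute that image one must split $H_3(\de C_0;\Z_d)\cong\nus_0^!H_1\oplus\xi_0H_3$ by a weakly unlinked section $\xi_0$, use $i_{\de C_k,C_k}\xi_k=\widehat A_k\overline{\lambda(f_k)}$ (Lemma \ref{l:La}.$\lambda$) and the fact that $\varphi\xi_0$ is again weakly unlinked for $f_1$ (Lemma \ref{l:phi-unl}); only then does the hypothesis $\lambda(f_0)=\lambda(f_1)$ force the kernel to vanish. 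Indeed the footnote to Lemma \ref{l:desei} warns that $i_{C_0,M_\varphi}$ need not be injective when $\lambda(f_0)\ne\lambda(f_1)$, so any argument that does not invoke this hypothesis must fail. This is precisely the Agreement Lemma \ref{l:Agr}.b, which the paper uses to kill $\partial\colon H_4(M_\varphi,C_0;\Z_d)\to H_3(C_0;\Z_d)$; the same exact sequence also gives existence of $b_{\varphi,Y}$ more directly than your pushing argument, since $Y^2\cap C_1=(A_1[N])^2=A_1\varkappa(f_1)$ is divisible by $d$ and hence $j\rho_dY^2=0$.

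The second under-justified step is $\varphi$-independence. Two $\pi$-isomorphisms agreeing over $N_0$ still produce two \emph{different} closed manifolds $M_\varphi$ and $M_{\varphi'}$, so ``contributes nothing in $H_1$'' requires transporting a joint Seifert class from one to the other and comparing the squares; the paper does this in Lemma \ref{l:jhssn0} via the common piece over $N_0$, excision, and the restriction maps, and some such comparison is unavoidable. Two smaller points: the quadratic term $(Y-Y')^2$ is exactly zero, because $Y-Y'$ is carried by the two-sided hypersurface $\de C_0$ whose normal bundle in $M_\varphi$ is trivial --- this has nothing to do with the divisibility of $\varkappa(f_0)$ by $d$ invoked in your existence step; and your cross-term computation is correct in outline and is exactly Lemma \ref{l:desei}.b,c combined with Lemma \ref{l:La}.$\lambda'$.
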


\begin{Lemma}[Calculation of $\beta$; proved in \S\ref{s:lemmas-calc}] \label{l:calc}

(a) $\beta(\tau(0,0),\tau(0,b))=b[S^1\times1_3]\in H_1(S^1\times S^3)$.

(b) $\beta(\tau(l,b'),\tau(l,b))=\rho_{2l}(b-b')[S^1\times1_3]\in H_1(S^1\times S^3;\Z_{2l})$
(cf. Remark before Lemma \ref{l:Bet}).
\end{Lemma}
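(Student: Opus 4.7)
The plan is to prove (b) directly and deduce (a) as the special case $l=0$, $b'=0$, in which $\di\varkappa(\tau(0,0))=0$ so $K_{0,0}\cong H_1(S^1\times S^3)$ and $\rho_d=\id$.

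\emph{Step 1: Seifert surfaces.} For $\alpha\colon S^3\to V_{4,2}$ I extend the adjunction map to the disk by
$$
\widehat\alpha\colon D^2\times S^3\to S^7,\qquad
\widehat\alpha(s,t,x):=\frac{(x,\alpha_1(x)s+\alpha_2(x)t)}{\sqrt{1+s^2+t^2}}.
$$
Orthonormality of $(\alpha_1,\alpha_2)$ makes $\widehat\alpha$ a smooth embedding restricting to $\tau_\alpha$ on the boundary, so $W_\alpha:=\widehat\alpha(D^2\times S^3)$ is a Seifert surface for $\tau_\alpha$, meeting the chosen meridian disk transversely in one positive point.

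\emph{Step 2: joint Seifert class.} Set $f_0=\tau_{\alpha'}$, $f_1=\tau_\alpha$ with $\alpha,\alpha'$ representing $(l,b'),(l,b)$. By Lemma \ref{l:piso} pick a $\pi$-isomorphism $\varphi\colon\de C_0\to\de C_1$; using its uniqueness over $N_0$, after precomposing with a bundle automorphism supported over the top cell I arrange that $\varphi$ identifies the push-offs of $W_0,W_1$ along $\nus_0^{-1}N_0$. The 5-chain $Y:=(W_0\cap C_0)\cup_\varphi -(W_1\cap C_1)$ is then a 5-cycle in $M_\varphi$, and Step 1 shows that it is a joint Seifert class.

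\emph{Step 3: self-intersection.} I compute $Y^2\in H_3(M_\varphi)$ by intersecting $Y$ with a transverse push-off. The contributions inside $C_0$ and $-C_1$ are controlled by the Seifert forms $\lambda(\tau_{\alpha'})=\lambda(\tau_\alpha)=l$ (Lemma \ref{l:calam}(b)) and cancel up to a discrepancy concentrated along $\de C_0$. The residual 3-cycle encodes the difference of normal framings of $W_0,W_1$ on their boundaries, which is governed by $\alpha(\alpha')^{-1}\in\pi_3(V_{4,2})$; the $\pi_3(S^3)$-component vanishes because the $l$-classes agree, leaving the Hopf class $b-b'\in\pi_3(S^2)$ acting on the meridional $S^2$-direction. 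An explicit computation on the trivial $S^2$-bundle $\nus_0^{-1}N_0\cong N_0\times S^2$ then identifies the residual class as $(b-b')\cdot i_{\de C_0,M_\varphi}\nus_0^!([S^1\times 1_3])$. Applying $(i\nus_0^!)^{-1}$ and reducing modulo $2\overline l(H_3)=2l\Z$ yields (b), and (a) is the special case.

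The main obstacle is the framing bookkeeping in Step 3: one must identify the twist contribution from the gluing precisely with the Hopf class $b-b'$, and verify that the resulting 3-cycle is homologous to $(b-b')\nus_0^!([S^1\times 1_3])$ in $H_3(\de C_0)$. This in turn reduces to the classical description of the generator of $\pi_3(S^2)=\Z$ via the linking of circle preimages in $S^3$.
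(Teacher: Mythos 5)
There is a genuine gap in Step 3, and it sits exactly where the real difficulty of this lemma lies. Your plan is to take a $\pi$-isomorphism $\varphi$, glue the two Seifert surfaces into a joint Seifert class $Y$, and read off $\beta$ from $Y^2$ via a framing comparison over $\nus_0^{-1}N_0$. But the bundle isomorphism that actually matches the normal framings of $W_0$ and $W_1$ along their boundaries (the one for which your glued $5$-cycle and its push-off can be made disjoint, so that the intersection pattern is visible) is in general \emph{not} a $\pi$-isomorphism when $b\ne b'$: for that choice one computes $Y^2\capM{M_\varphi}Y_3=0$, and the entire invariant is carried by the correction term $-\tfrac14 p_1^*(M_\varphi)$, i.e.\ by the failure of $M_\varphi$ to be parallelizable. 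This is how the paper's proof of part (a) goes: it constructs exactly your framed Seifert surfaces, shows the self-intersection against $[\Sigma^4]$ vanishes, computes $p_1^*(M_\varphi)\capM{M_\varphi}[\Sigma^4]=-4b$ from the twisting of the normal bundle of $\Sigma^4$, and then invokes Lemma \ref{l:Bet} to convert this into $\beta$. If instead you insist on a genuine $\pi$-isomorphism, it differs from the framing-compatible one by a twist in $\pi_3(SO_3)$ supported over the top cell $B^4\subset N$ --- precisely where the Hopf class $b-b'$ lives and outside the region $\nus_0^{-1}N_0$ on which your "explicit computation on the trivial $S^2$-bundle" takes place. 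Quantifying how that twist changes $Y^2\capM{M_\varphi}Y_3$ is the content of Lemma \ref{l:cobordbeta} (the cobordism to $\C P^3\times S^1$), and your proposal supplies no substitute for it; the uniqueness clause of Lemma \ref{l:piso} only controls $\varphi$ over $N_0$ and cannot be used to pin down this top-cell contribution.

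A second, smaller issue: proving (b) directly for $l\ne 0$ requires the analogue of Lemma \ref{l:Bet} with $\lambda\ne 0$, which the paper only states as an unused remark. When $\overline{\lambda}\ne 0$ the joint Seifert class is ambiguous up to $i\widehat A_0 y$, which shifts $Y^2$ by $2i\widehat A_0\overline{\lambda}(y)$; this is exactly why $\beta$ lands in the quotient $K_{u,l}$, and your argument never confronts it. The paper avoids both problems by proving (a) first (where $\lambda=0$), feeding it into the parametric additivity of $\beta$ (Lemma \ref{l:Pad}), and then deducing (b) from Remark \ref{t:par}. I would advise you either to follow that route, or, if you want a direct proof of (b), to first establish the general spin-bundle-isomorphism formula $\beta(f_0,f_1)=[\widehat A_0^{-1}i^{-1}\rho_d(Y^2-\tfrac14p_1^*(M_\varphi))]$ and then run your framed-Seifert-surface computation against it.
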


\begin{Theorem}[Isotopy Classification Modulo Knots; proved in \S\ref{s:lemmas-isotmod}]\label{l:isomk}
If we have $\lambda(f_0)=\lambda(f_1)$, $\varkappa(f_0)=\varkappa (f_1)$ and $\beta(f_0,f_1)=0$,
then there is an embedding $g:S^4\to S^7$ such that $f_0$ is isotopic to $f_1\#g$.
\end{Theorem}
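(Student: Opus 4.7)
The plan is to realize, step by step, the strategy outlined in \S\ref{s:intro-strat} by translating each of the three hypotheses $\varkappa(f_0)=\varkappa(f_1)$, $\lambda(f_0)=\lambda(f_1)$ and $\beta(f_0,f_1)=0$ into successively stronger control over a modified-surgery problem for the complements $C_0$ and $C_1$. By the Isotopy Reduction Lemma \ref{l:isored} it suffices to produce an embedding $g\colon S^4\to S^7$ together with an orientation-preserving diffeomorphism $C_{f_0}\to C_{f_1\mk g}$ whose restriction to the boundary is a bundle isomorphism. The natural framework is Kreck's modified surgery applied to the normal $2$-smoothings coming from the inherited spin structures and from the Seifert classes $A_k[N]\in H_5(C_k,\de C_k)$.

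First, since $\lambda(f_0)=\lambda(f_1)$ and $\varkappa(f_0)=\varkappa(f_1)$, Lemma \ref{l:piso} supplies a $\pi$-isomorphism $\varphi\colon\de C_0\to\de C_1$, i.e.\ a bundle isomorphism for which $M_\varphi=C_0\cup_\varphi(-C_1)$ is parallelizable. By Lemma \ref{l:spbuis} we may additionally arrange that $\varphi$ preserves spin structures, and by Lemma \ref{l:Agr} the equality $\varkappa(f_0)=\varkappa(f_1)$ forces $\varphi_*\de A_0[N]=\de A_1[N]$. Lemma \ref{l:jhss} then produces a joint Seifert class $Y\in H_5(M_\varphi)$. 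Consequently $(C_0,A_0[N])$ and $(C_1,A_1[N])$ represent the same spin bordism class over $\C P^\infty$ relative to the boundary; since $\Omega_7^{\Spin}(\C P^\infty)=0$, they cobound a spin bordism $(W,z)$ whose restriction to $\de W=M_\varphi$ is determined by $Y$ and by the two Seifert classes.

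Next I would run Kreck's modified surgery obstruction theorem on $(W,z)$. By standard modified-surgery manipulations one surgers $(W,z)$ below the middle dimension until the only non-trivial kernel sits in $H_4(W)$. The equality $\lambda(f_0)=\lambda(f_1)$ ensures that the obstruction can live only in the subform of the $\Lambda$-intersection pairing on $H_4(W)$ that is not detected by $\lambda$, while the vanishing of $\beta(f_0,f_1)$ translates into the vanishing of the obstruction in the singular summand of $H_4(W)$ coming from $H_4(\de W)$: this is the meaning of the identity $\rho_dY^2=i_{\de C_0,M_\varphi}\nus_0^!b_{\varphi,Y}$ read backwards through the definition of \S\ref{s:defandplan-b}, the quotient $K_{\varkappa(f_0),\lambda(f_0)}=\coker(2\rho_d\ol{\lambda(f_0)})$ being exactly the indeterminacy coming from the other choices of $Y$ and $\varphi$. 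After these reductions only an obstruction supported on the unimodular summand of the intersection form on $H_4(W)$ remains.

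Finally, to kill this residual unimodular obstruction I would connected-sum $f_1$ with a suitable knot $g\colon S^4\to S^7$. By the Additivity Lemma \ref{l:Addl} the invariants $\varkappa$ and $\lambda$ are unchanged by $\mk g$, so all the structure produced above persists; the operation $\mk g$ alters the unimodular part of $H_4$ of the new bordism $(W',z')$ by a class accessible through the finite group $E^7(S^4)\cong\Z_{12}$, and, extending the arguments of \cite{CS11}, one checks that the correction can be realized by a suitable $g$. The resulting $(W',z')$ is bordant to an $h$-cobordism between $C_{f_0}$ and $C_{f_1\mk g}$ relative to the boundary; by the $h$-cobordism theorem \cite{Mi65} and Lemma \ref{l:isored} this yields the desired isotopy $f_0\simeq f_1\mk g$. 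The main obstacle is the second step: identifying Kreck's algebraic obstruction on the singular part of $H_4(W)$ with the class $\beta(f_0,f_1)$. The non-triviality of $H_1$ produces a genuine singular summand that was absent in \cite{CS11}, so one must carefully track how joint Seifert classes and $\pi$-isomorphisms interact with $\nus_0^!$ to confirm that the recipe of \S\ref{s:defandplan-b} is exactly the modified-surgery obstruction in that summand.
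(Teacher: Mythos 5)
Your outline reproduces the heuristic strategy of \S\ref{s:intro-strat}, which the paper itself warns is ``not formally used in the rest of this paper''; the actual proof in \S\ref{s:eta-mk} has to do substantially more, and the places where you write ``one checks'' are exactly where the content lies. Concretely, three things are missing. First, after producing the $\pi$-isomorphism $\varphi$ and a $d_0$-class $Y$ (Lemmas \ref{l:piso} and \ref{l:fc}), the residual obstruction is not ``a class accessible through $E^7(S^4)\cong\Z_{12}$'' in any automatic sense: one must \emph{define} a numerical invariant $\eta(\varphi,Y)\in\Z_{\gcd(d_0,24)}$ from a spin null-bordism $(W,z)$ of $(M_\varphi,Y)$, prove it is well defined, prove it is always \emph{even} (Lemma \ref{l:etaeven}.a), and prove that connected sum of $f_1$ with a generator of $E^7(S^4)$ changes it by exactly $2$ (Lemma \ref{l:etaeven}.b). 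Only the conjunction of these facts lets you normalize $\eta$ to zero by a knot. Second, when $d_0$ is even there is a secondary $\Z_2$-valued obstruction $\eta'(\varphi)$ that \emph{cannot} be killed by connected sum with a knot; it is killed by changing the $\pi$-isomorphism $\varphi$ itself (Lemma \ref{l:welleta'}.b), after which one must re-choose the $d_0$-class compatibly (Lemma \ref{l:etaeven}.c). Your proposal has no counterpart of $\eta'$ at all, so for $d_0$ even the argument as written would fail.

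Third, and most seriously, ``standard modified-surgery manipulations'' do not suffice here precisely because $H_1(N)\neq 0$ makes $H_4(W)$ infinite: this is the new analytic difficulty relative to \cite{CS11}, flagged in the Remark after Lemma \ref{l:isored}. The paper resolves it with the Almost Diffeomorphism Theorem \ref{t:aldi}, whose proof requires producing a subgroup $U\subset\ker\overline{S\nu}\subset H_4(W)$ on which the pair $j_{M_k,W}|_U$ is \emph{elementary}; this in turn needs the Pre-elementary class Lemma \ref{l:modif} (an algebraic normalization of $z$ achieved by further connected sums with model $8$-manifolds such as $\Ha P^2$, $\overline E_8$ and $S^4\t\times S^4$) and the Elementary pair Lemma \ref{l:easur}, which uses the auxiliary class $Q\in H_4(\de C_0)$ with $i_MQ\cap_M Y^2=d_0$ to handle the singular summand of the intersection form. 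Verifying the hypotheses of Theorem \ref{t:aldi} — freeness of the relevant homology groups, surjectivity and equality of kernels of the maps $\t j_k$ — is where $\varkappa(f_0)=\varkappa(f_1)$ and $\lambda(f_0)=\lambda(f_1)$ actually enter, via the Agreement Lemma \ref{l:Agr}. Without this machinery your second step is a restatement of the goal rather than a proof.
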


\begin{Lemma}[Additivity of $\beta$; proved in \S\ref{s:lemmas-jhss}]\label{l:Add}
For every pair of  embeddings $g:S^4\to S^7$ and $f:N\to S^7$ we have $\beta(f\#g,f)=0$.
\end{Lemma}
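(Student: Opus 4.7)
The strategy is to use the locality of connected sum to exhibit a $\pi$-isomorphism $\varphi$ and a joint Seifert class $Y$ with $\rho_d Y^2=0$. As in the proof of Lemma~\ref{l:Addl}, I arrange $g$ so that $g(S^4)$ lies in a small ball $B^7\subset S^7$ disjoint from a tubular neighborhood of $f(N)$, and form $f\#g$ via an arc joining $f(N)$ to $g(S^4)$; then for a suitable choice of the ball $B^4$ defining $N_0$, $f|_{N_0}=(f\#g)|_{N_0}$ and $\nus_f=\nus_{f\#g}$ over $N_0$. Next, take Seifert $5$-manifolds $V_f\subset S^7$ and $V_g\subset B^7$ for $f(N)$ and $g(S^4)$, chosen as closures of preimages of regular values of maps $S^7\setminus f(N)\to S^2$ and $B^7\setminus g(S^4)\to S^2$; such hypersurfaces have trivial normal bundles in $S^7$, with natural trivializations given by the tangent space at the regular value. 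Form $V_{f\#g}$ by boundary-connect-summing $V_f$ and $V_g$ along a regular neighborhood of the joining arc: it is a Seifert $5$-manifold for $(f\#g)(N)$ with trivial normal bundle, agreeing with $V_f$ outside a neighborhood of the arc.

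After a small isotopy to achieve transversality to the boundary, set $V_f^\sharp:=V_f\cap C_f$ and $V_{f\#g}^\sharp:=V_{f\#g}\cap C_{f\#g}$; their boundaries are sections of the normal sphere bundles $\nus_f$ and $\nus_{f\#g}$, coinciding over $N_0$. I then choose a bundle isomorphism $\varphi:\de C_{f\#g}\to\de C_f$ satisfying three conditions: (i) $\varphi$ is the identity over $N_0$; (ii) $\varphi$ carries $\de V_{f\#g}^\sharp$ to $\de V_f^\sharp$ over $B^4$; (iii) $\varphi$ is a $\pi$-isomorphism. Conditions (i)--(ii) determine $\varphi$ up to a bundle automorphism over $B^4$ trivial on $\de B^4$, and the remaining freedom (classified by $\pi_4(\SO_3)=\Z_2$) can be used to arrange (iii). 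With such $\varphi$, the union $V:=V_{f\#g}^\sharp\cup_\varphi(-V_f^\sharp)\subset M_\varphi$ is a closed oriented $5$-submanifold whose normal bundle is glued from two trivial bundles; by coordinating the regular values in the two preimage constructions, the trivializations match along $\de V_{f\#g}^\sharp$ and $\nu_V$ is trivial. Therefore $[V]^2=0\in H_3(M_\varphi)$.

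Finally, $[V]$ is a joint Seifert class: a meridian $S^2_{f\#g}$ is a fiber of $\nus_{f\#g}$ and $\de V_{f\#g}^\sharp$ is a section, meeting it transversely in one point. By Lemma~\ref{l:webeta}, $\beta(f\#g,f)=[(i_{\de C_0,M_\varphi}\nus_0^!)^{-1}\rho_d[V]^2]=0$. The main obstacle is the simultaneous arrangement (i)--(iii) of $\varphi$: one must verify that the $\pi$-isomorphism condition is compatible with the section-matching condition over $B^4$, using the obstruction theory on the contractible base $B^4$. A secondary technical point is ensuring that the two trivializations of $\nu_V$ glue to a global trivialization across $\de V_{f\#g}^\sharp$; if direct coordination of the regular values fails to achieve this, one can instead compute $[V]^2=\mathrm{PD}(e(\nu_V))\cap[V]$ via the Euler class and verify divisibility by $d$, since both pieces of $V$ have Euler class supported in a region that vanishes after passing to $M_\varphi$.
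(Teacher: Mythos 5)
Your construction breaks down at the very first step: a Seifert hypersurface for $f$ with \emph{trivial} normal bundle does not exist in general. If $V_f^\sharp:=V_f\cap C_f$ had trivial normal bundle in $C_f$, you could push it off itself disjointly, and the relative self-intersection $A_f[N]\capM{C_f}A_f[N]\in H_3(C_f,\de)$ would vanish. But by Lemma \ref{l:La}.$\varkappa'$ this self-intersection equals $A_f\varkappa(f)$, which is nonzero whenever $\varkappa(f)\ne0$ (e.g.\ for every embedding of $\C P^2$ or of $S^2\times S^2$ with $\varkappa\ne 0$). Equivalently, the generator of $H^2(C_f)$ cannot be represented by a map $C_f\to S^2$ when its cup square is nonzero, so the preimage-of-a-regular-value construction is unavailable. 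Hence $\nu_V$ is nontrivial for general $f$, and the conclusion $[V]^2=0$ fails. Your fallback — that $e(\nu_V)$ restricted to each half is divisible by $d$ — cannot rescue the argument: the restrictions $Y^2\cap C_k=(A_k[N])^2=A_k\varkappa$ are divisible by $d$ for \emph{any} pair of embeddings with equal $\varkappa$, yet $\beta$ is not identically zero. The content of $\beta$ lives precisely in the part of $Y^2$ invisible to these restrictions, namely in $\im\bigl(i_{\de C_0,M_\varphi}\nus_0^!\bigr)$, so no argument that only controls $Y^2\cap C_k$ can prove $\beta=0$. (There is also an unverified compatibility between your conditions (ii) and (iii) on $\varphi$, and an unaddressed mismatch of the two normal trivializations along $\de V^\sharp$, but these are secondary.)

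The paper's proof avoids geometric Seifert hypersurfaces entirely. It first observes that one may take $C_{f\#g}\supset C_f$ with $H_5(C_{f\#g},C_f)=0$ and a $\pi$-isomorphism $\varphi$ which is the identity over $N_0$ (obtained from a regular homotopy rel $N_0$, using the String Lemma \ref{l:frbuis}); Lemma \ref{l:jhssn0} then identifies $\beta(f\#g,f)$ with $\beta(f,f)$. The latter is computed on the double $M_{\id\de C_f}=\de(C_f\times I)$ with the joint Seifert class $Y=\de(A_f[N]\times I)$, whose square is $\de(A_f\varkappa(f)\times I)$ and hence divisible by $d=\di\varkappa(f)$ \emph{globally}, not merely half-by-half. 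That global divisibility, coming from the product structure, is the mechanism your approach is missing.
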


\begin{Lemma}[Transitivity of $\beta$; proved in \S\ref{s:lemmas-jhss}]\label{l:Dif}
For every triple of embeddings $f_0,f_1,f_2:N\to S^7$ having the same values of $\varkappa$- and $\lambda$-invariants
we have $\beta(f_2,f_0)=\beta(f_2,f_1)+\beta(f_1,f_0)$.
\end{Lemma}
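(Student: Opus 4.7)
The plan is to choose compatible data across the three pairs and observe a telescoping cancellation of self-intersections. By Lemma \ref{l:piso} pick $\pi$-isomorphisms $\varphi_{21}\colon \partial C_2\to\partial C_1$ and $\varphi_{10}\colon \partial C_1\to\partial C_0$, and set $\varphi_{20}:=\varphi_{10}\circ\varphi_{21}$. The manifold $M_{\varphi_{20}}$ is related to $M_{\varphi_{21}}$ and $M_{\varphi_{10}}$ by a cut-and-paste along a common $\partial C_1$: namely, $M_{\varphi_{21}} \sqcup M_{\varphi_{10}}$ is surgery-equivalent to $M_{\varphi_{20}} \sqcup D$, where $D := (-C_1)\cup_{\mathrm{id}} C_1$ is the double. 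Since $D$ bounds $C_1\times[0,1]$ and both $M_{\varphi_{21}}, M_{\varphi_{10}}$ are parallelizable, so is $M_{\varphi_{20}}$; hence $\varphi_{20}$ is itself a $\pi$-isomorphism. By Lemma \ref{l:webeta} these choices suffice to compute the three $\beta$-invariants.

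Using the $\varphi_{ij}$ to identify the sphere bundles $\partial C_0\cong\partial C_1\cong\partial C_2$, pick a single 4-cycle $W$ on this identified boundary representing the Alexander-dual of $[N]$, and for each $i$ pick a 5-chain $V_i\subset C_i$ with $\partial V_i=W$. Then $Y_{ij}:=V_i-V_j$ is a joint Seifert class in $M_{\varphi_{ij}}$ (it is a cycle by boundary cancellation and meets $S^2_{f_i}$ once). Perturbing each $V_i$ into $\Int C_i$ to a transverse $V_i'$, the cross intersections $V_i\cap V_j'$ in $M_{\varphi_{ij}}$ vanish, and the orientation conventions yield
\[
Y_{ij}^2 \;=\; V_i^2 \;-\; V_j^2 \qquad\text{in}\qquad H_3(M_{\varphi_{ij}}),
\]
where $V_k^2$ denotes the image in $H_3(M_{\varphi_{ij}})$ of the self-intersection $V_k\cap V_k'$ inside $C_k$. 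The alternating sum telescopes exactly:
\[
Y_{20}^2 - Y_{21}^2 - Y_{10}^2 \;=\; (V_2^2-V_0^2) - (V_2^2-V_1^2) - (V_1^2-V_0^2) \;=\; 0.
\]

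Finally, pull back via $(i\,\nu^!)^{-1}$ to $H_1(N;\Z_d)$. The three $\beta$-invariants use $\nu_2^!$, $\nu_2^!$ and $\nu_1^!$ respectively; these pullbacks agree over $N_0$ (the $\pi$-isomorphisms $\varphi_{ij}$ are bundle isomorphisms commuting with $\nu^!$ there), and corrections over the top 4-cell vanish since $H_1(S^4;\Z_d)=0$. Hence the pullback of the alternating sum is zero in $H_1(N;\Z_d)$, which gives $\beta(f_2,f_0)=\beta(f_2,f_1)+\beta(f_1,f_0)$ in $K_{\varkappa(f_0),\lambda(f_0)}$. The main obstacle is this final compatibility of pullbacks through the different $\nu_i^!$'s: it requires a careful diagram chase using the naturality of $\nu^!$ with respect to the $\pi$-isomorphisms together with the long exact sequences of the pairs $(M_{\varphi_{ij}},C_k)$.
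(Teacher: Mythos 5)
Your overall strategy --- fix compatible boundary data via the $\varphi_{ij}$, write each joint Seifert class as a difference $V_i-V_j$, and telescope the self-intersections --- is a reasonable chain-level shadow of the actual argument, but two steps do not hold up as written.

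First, your justification that $\varphi_{20}$ is a $\pi$-isomorphism is not valid: parallelizability is not preserved by cobordism or surgery equivalence, so "$M_{\varphi_{21}}\sqcup M_{\varphi_{10}}$ is surgery-equivalent to $M_{\varphi_{20}}\sqcup D$ and $D$ bounds" does not let you conclude that $M_{\varphi_{20}}$ is parallelizable. The correct route is the String Lemma \ref{l:frbuis}: $\varphi$ is a $\pi$-isomorphism iff its extension carries $\str_0$ to $\str_1$, and that property is manifestly closed under composition. Second, and more seriously, the telescoping identity is only formal. The three classes $Y_{20}^2$, $Y_{21}^2$, $Y_{10}^2$ live in three different closed manifolds, and the individual terms ``$V_k^2$'' are not well-defined absolute homology classes: the self-intersection of a $5$-chain with boundary $W$ depends on the push-off of $W$, so $V_k\cap V_k'$ is at best a relative $3$-cycle modulo a fixed $2$-cycle in $\partial C_k$ (and your prescription of pushing both $V_i$ and $V_j$ into the respective interiors does not even produce a perturbation of the cycle $Y_{ij}$). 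Moreover, only the differences $Y_{ij}^2$ lie in the image of $i_{\partial C,M}\nu^!$ (this is the content of Lemma \ref{l:webeta}, via $jY^2=A u$), so the individual pieces cannot be pulled back to $H_1(N;\Z_d)$ and compared there. The "careful diagram chase" you defer to is therefore not a routine compatibility check --- it is essentially the whole lemma. The paper closes exactly this gap by a different device: it builds an explicit $8$-dimensional cobordism $V$ with $\partial V=M_{\varphi_{10}}\sqcup M_{\varphi_{21}}\sqcup M_{\varphi_{02}}$, chooses for each $x\in H_3$ auxiliary joint Seifert classes $Y_{3}$ extending over $V$, and uses cobordism invariance of the characteristic number $Y_3\cap Y_4^2$ to get the additivity of the integers $x\cap_N b_{ij}$; the class-level statement then follows because $\cap_N\colon H_3\times H_1\to\Z_d$ detects elements (using that $H_1$ is torsion free), and one passes to the quotient by $2\overline{\lambda(f_0)}(H_3)$ at the end. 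If you want to keep your telescoping picture, you would need to reproduce some version of this pairing (or a genuinely chain-level, boundary-relative bookkeeping) to make the cross-manifold comparison meaningful.
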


{\bf Definitions of the maps}
$$
\beta:[(\varkappa\times\lambda)^{-1}(u,l)]^2\to C_{u,l},\quad
\beta_{\mk}:[(\varkappa_{\mk}\times\lambda_{\mk})^{-1}(u,l)]^2\to  C_{u,l},\quad
\beta_{u,l}:(\varkappa\times\lambda)^{-1}(u,l)\to  C_{u,l}.
$$
Clearly, the map $\beta$ is well-defined by $\beta([f],[g]):=\beta(f,g)$.

The map $\beta_{\mk}$ is well-defined by $\beta=\beta_{\mk}(q_{\mk}\times q_{\mk})$,
according to the additivity and the transitivity of $\beta$ (Lemmas \ref{l:Add} and \ref{l:Dif}).

Take an embedding $f':N\to S^7$ representing an isotopy class in
$(\varkappa\times\lambda)^{-1}(u,l)$.
Let $\beta_{u,l}[f]:=\beta(f,f')$.
The map $\beta_{u,l}$ depends on $f'$ but we do not indicate this in the notation.

\subsection{Parametric connected sum and parametric additivity}\label{s:defandplan-pr}

In our proof of the realizability of the invariants we extensively use the so called {\it parametric connected sum operation} defined below.
We first recall, with minor modifications, some definitions and results of \cite[\S2]{Sk07}, \cite[\S2.1]{Sk15}, \cite{MAP}.

\smallskip
{\bf Definition of a standardized map.}
The base point $\ast$ of $V_{4,2}$ is the standard inclusion $\R^2\to\R^4$.
Take the embedding $\tau_0:S^1\times S^3\to S^7$ for the constant map
$\alpha_0:S^3\to V_{4,2}$ (as defined in \S\ref{s:intro-main}).
Clearly, $\tau_0(S^1\times D^3_{\pm})\subset D^7_{\pm}$.
For an embedding $s:S^1\times D^3_-\to N$, a map $h:N\to S^7$ is called {\it $s$-standardized} if
$$h(N-\im s)\subset\Int D^7_+\quad\text{and}\quad h\circ s=\tau_0|_{S^1\times D^3_-}.$$

\begin{Lemma}\label{l:stand}
For every embedding $s:S^1\times D^3_-\to N$, any $f$ is isotopic to an $s$-standardized embedding $\t f:N\to S^7$.
\end{Lemma}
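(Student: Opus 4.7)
The plan is to construct the isotopy in two stages.

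\emph{Stage 1 (agreement on $\im s$).} Any two smooth embeddings $S^1\times D^3_- \hookrightarrow S^7$ are ambient isotopic. Indeed, since $2\cdot 7 \ge 3\cdot 1 + 4$, all embeddings $S^1 \hookrightarrow S^7$ are isotopic, so their cores can be aligned by an ambient isotopy; extensions to a tubular neighborhood then correspond to $3$-frames in the rank-$6$ trivial normal bundle along the core, classified up to isotopy by $\pi_1(V_{6,3}) = 0$ (as $V_{6,3}$ is $2$-connected). Applying isotopy extension to the resulting isotopy from $f\circ s$ to $\tau_0|_{S^1\times D^3_-}$ and composing with $f$ produces an embedding $f_1$, isotopic to $f$, with $f_1 \circ s = \tau_0|_{S^1\times D^3_-}$.

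\emph{Stage 2 (pushing into $\Int D^7_+$).} Starting from $f_1$, I further isotope, fixing $\tau_0(S^1\times D^3_-)$, so that $f_1(N-\im s) \subset \Int D^7_+$. First enlarge $D^3_-$ slightly to a disk $D_\epsilon\subset S^3$ with $D^3_-\subset \Int D_\epsilon$ and $D_\epsilon - D^3_-\subset \Int D^3_+$. Extend $s$ to $s^+:S^1\times D_\epsilon\hookrightarrow N$ and, by a small isotopy supported near $\partial \im s$, arrange $f_1\circ s^+ = \tau_0|_{S^1\times D_\epsilon}$; then the collar piece $\tau_0(S^1\times(D_\epsilon - D^3_-))$ lies in $\Int D^7_+$. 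Now $f_1(N-\im s^+)$ is compact and disjoint from $\tau_0(S^1\times D^3_-)$, so pick an open neighborhood $U\supset \tau_0(S^1\times D^3_-)$ in $S^7$ with $U\cap f_1(N-\im s^+)=\emptyset$ and $-1_7\notin U$. By general position ($\dim N = 4 < 7$), a further small perturbation makes $f_1(N-\im s^+)$ avoid the south pole $-1_7\in \Int D^7_-$. Then the flow of the vector field $X = \phi\cdot\nabla x_1$ on $S^7$, with $\phi$ a smooth cutoff equal to $0$ on $U$ and $1$ outside a slightly larger open set not meeting $-1_7$, fixes $\tau_0(S^1\times D^3_-)$ pointwise; since its critical set lies in $U\cup\{\pm 1_7\}$, compactness of $f_1(N-\im s^+)$ yields a finite flow time $T>0$ pushing $f_1(N-\im s^+)$ into $\Int D^7_+$. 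The resulting embedding is the required $\t f$.

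\emph{Main obstacle.} The principal technical step is Stage~2: ensuring that the flow of $X$ transports the compact $4$-submanifold $f_1(N-\im s)$ entirely into $\Int D^7_+$ in finite time while fixing a neighborhood of $\tau_0(S^1\times D^3_-)$. This rests on the general-position perturbation avoiding the repelling fixed point $-1_7$, combined with a partition-of-unity cutoff of $\nabla x_1$ near $\tau_0(S^1\times D^3_-)$.
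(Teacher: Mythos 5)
Your Stage 1 is correct and is exactly the paper's first step (the paper simply invokes that any two embeddings $S^1\times D^3\to S^7$ are isotopic and arranges $fs=\tau_0$ on $S^1\times D^3_-$ by isotopy extension). The gap is in Stage 2. A trajectory of $X=\phi\cdot\nabla x_1$ is an upward meridian arc which stops (converges without arriving) as soon as it reaches the closure of the zero set of $\phi$. Hence every point of $f_1(N-\im s^+)$ lying in the downward ``shadow'' of $U$ --- a point whose upward meridian meets $\overline{U}$ before reaching the equator --- stays trapped below $U$ for all time, no matter how large $T$ is. You only address the repelling pole $-1_7$, but the cutoff region is the real obstruction: the shadow of $\tau_0(S^1\times D^3_-)$ under the meridian flow is a $5$-dimensional set, and a $4$-dimensional submanifold of $S^7$ meets a $5$-dimensional set along a $2$-dimensional set in general position, so no small perturbation of $f_1$ can make $f_1(N-\im s^+)$ avoid it. Nor can $U$ be chosen to have empty shadow, since $\tau_0(S^1\times D^3_-)$ genuinely lies inside $D^7_-$.

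What is missing is a reason why the rest of $f_1(N)$ can be expelled from $D^7_-$ \emph{relative to} the standard piece, and this is where the paper's proof differs essentially from yours. The paper introduces the $2$-disk $\Delta_1=\inc(D^2\times\{-1_3\})\subset\Int D^7_-$, whose boundary is the circle $fs(S^1\times\{-1_3\})\subset f(\im s)$. Since $2+4<7$, general position gives $f(N)\cap\Delta_1=\partial\Delta_1$, so a small $7$-ball neighborhood $H(D^7_-)$ of $\Delta_1$ meets $f(N)$ only in a standard collar of this circle inside $f(\im s)$. The ambient isotopy expanding this small ball to all of $D^7_-$, combined with a compensating isotopy $h_t$ of the domain shrinking $S^1\times D^3_-$, then pushes $f(N-\im s)$ into $\Int D^7_+$. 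Some device of this kind --- a spanning $2$-disk missed by the rest of $f(N)$ by general position, used to identify a standard ball that can be inflated to $D^7_-$ --- is needed; a cutoff gradient flow alone cannot do it.
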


This is a smooth version of \cite[Standardization Lemma]{Sk07} which is proved analogously in
\jonly{\cite[Remark 2.24.a]{I},} \aronly{Remark \ref{t:par}.a,}
cf. \cite[Standardization Lemma 2.1.a]{Sk15}.


\smallskip
{\bf Definition of parametric connected sum $[f]+_s\tau(l,b)$.}
Take a map $\alpha:S^3\to V_{4,2}$ representing an element $(l,b)\in\Z^2=\pi_3(V_{4,2})$ such that $\alpha(D^3_-)=\ast$.
Then the embedding $\tau_\alpha$ is $i$-standardized for the inclusion $i:S^1\times D^3_-\to S^1\times S^3$.
Take an embedding $s:S^1\times D^3_-\to N$ and an $s$-standardized embedding $\t f:N\to S^7$ isotopic to $f$.
Let $R:\R^m\to\R^m$ be the symmetry of $\R^m$ with respect to the hyperplane given by equations
$x_1=x_2=0$, i.e., $R$ is defined by $R(x_1,x_2,x_3,\dots,x_m):=(-x_1,-x_2,x_3,\dots,x_m)$.
Define the embedding
$$h:N\to S^7 \quad\text{by}\quad
h(a):=\begin{cases} \t f(a) &a\not\in\im s,\\
R\tau_\alpha(x,Ry)          &a=s(x,y). \end{cases}$$
The two formulas agree on $\de\im s$ because $\tau_0(x,y)=R\tau_0(x,Ry)$.
Clearly, $h$ is a smooth embedding, i.e.~it is injective, differentiable and has non-degenerate dervative.

Let $[f]+_s\tau(l,b)\subset E^7(N)$ be the set of isotopy classes of the embeddings $h$,
for all choices of $\t f$ and $\alpha$ as above.
(In fact, $[h]$ clearly does not depend on the choice of $\alpha$, for fixed $l,b,\t f,s$.
Still, $[h]$ may depend on $\t f$, i.e. $+_s$ is not defined at the level of isotopy for embeddings of 4-manifolds into $S^7$, as opposed to
other situations \cite{Sk07, Sk15, MAP}.
Cf.~Corollary \ref{t:rem}.c,d,e.)

\begin{Lemma}[Parametric additivity; proved in \S\ref{s:lemmas-pad}] \label{l:Pad}
For any embedding $s:S^1\times D^3_-\to N$ let $[s]:=[s|_{S^1\times0_3}]\in H_1$.
Then for any embedding $h\in f+_s\tau(l,b)$ and $x,y\in H_3$ we have
$$\varkappa(h)=\varkappa (f),\quad \lambda(h)(x,y) = \lambda(f)(x,y)+l([s]\cap_N x)([s]\cap_N y)\quad\text{and,}$$
$$\text{for} \quad l=0,\quad\beta(f,h)=b[s]\in C_{\varkappa(f),\lambda(f)}.$$
\end{Lemma}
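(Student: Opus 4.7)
The plan exploits the localized nature of the parametric connected sum. By Lemma \ref{l:stand} and the isotopy invariance of $\varkappa$, $\lambda$, and $\beta(\cdot,\cdot)$, we may replace $f$ by the $s$-standardized embedding $\t f$ used to construct $h$; then $f$ and $h$ coincide on $N-\Int\im s$, both send $N-\im s$ into $\Int D^7_+$, and both send $\im s$ into $D^7_-$, so the $4$-chain $hN-fN$ is supported in the ball $D^7_-$ and bounds there.

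For $\varkappa(h)=\varkappa(f)$: represent $x\in H_2$ by a $2$-cycle $X\subset N_0$ disjoint from $\im s$, which is possible by general position since $2+1<4$. Choose a weakly unlinked section $\xi$ for $h$ that extends a section for $f$ near $X$ (Lemma \ref{l:weunl}). Then $\xi X\subset\Int D^7_+$ while a bounding $5$-chain for $hN-fN$ can be chosen in $D^7_-$, so they are disjoint and $\lk\phantom{}_{S^7}(hN,\xi X)=\lk\phantom{}_{S^7}(fN,\xi X)$. This gives $\varkappa(h)\capM{N}x=\varkappa(f)\capM{N}x$ for every $x$.

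For the $\lambda$-formula: represent $x,y\in H_3$ by disjoint closed oriented $3$-submanifolds $X,Y\subset N_0$ transverse to $s(S^1\times 0_3)$, with signed intersection points $(p_i,\varepsilon_i)$ and $(q_j,\varepsilon'_j)$, and $\{p_i\}\cap\{q_j\}=\emptyset$ in $S^1$. By an isotopy supported in $\im s$, standardize $X\cap\im s=\bigsqcup_i\{p_i\}\times D^3_-$ and $Y\cap\im s=\bigsqcup_j\{q_j\}\times D^3_-$ in $s$-coordinates, so that $[s]\capM{N}x=\sum_i\varepsilon_i$ and $[s]\capM{N}y=\sum_j\varepsilon'_j$. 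By Lemma \ref{l:calam}(c), $\lambda(h)(x,y)-\lambda(f)(x,y)=\lk\phantom{}_{S^7}(hX,hY)-\lk\phantom{}_{S^7}(fX,fY)$. Using Seifert $4$-chains for $hY$ and $fY$ that agree in $D^7_+$, the difference localizes to a linking pairing inside $D^7_-$ between the local disks under $\tau_\alpha$ versus $\tau_0$. Each pair $(p_i,q_j)$ contributes $\varepsilon_i\varepsilon'_j$ times a model linking number which, via Lemma \ref{l:calam}(a,b) applied to $\tau(l,b)$, equals $l$; summation yields the correction $l([s]\capM{N}x)([s]\capM{N}y)$.

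For $\beta(f,h)=b[s]$ when $l=0$: since $\varkappa(h)=\varkappa(f)$ and $\lambda(h)=\lambda(f)$, $\beta(f,h)\in K_{\varkappa(f),\lambda(f)}$ is defined by Lemma \ref{l:webeta}. Choose a $\pi$-isomorphism $\varphi\colon\partial C_f\to\partial C_h$ (Lemma \ref{l:piso}) that is standard outside the local model, and build a joint Seifert class $Y\in H_5(M_\varphi)$ by gluing $5$-chains that agree off $\im s$. The class $\rho_dY^2\in H_3(M_\varphi;\Z_d)$ then localizes inside the two copies of $D^7_-$ in $M_\varphi$ to the model self-intersection for the pair $(\tau(0,0),\tau(0,b))$, which by Lemma \ref{l:calc}(a) equals $b[S^1\times 1_3]\in H_1(S^1\times S^3;\Z_d)$; the $1$-cycle inclusion $s|_{S^1\times 0_3}\colon S^1\to N$ transports this to $b[s]\in H_1(N;\Z_d)$, giving $\beta(f,h)=b[s]$. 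The hardest step is this last one: choosing $\varphi$ and $Y$ compatibly with the standard data outside $\im s$ so that the local contribution to $\rho_dY^2$ reproduces Lemma \ref{l:calc}(a) exactly while the complementary background contributes nothing modulo $2\ol{\lambda(f)}(H_3)+dH_1$, with independence from the choices furnished by Lemma \ref{l:webeta}.
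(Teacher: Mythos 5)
Your arguments for $\varkappa$ and for $\lambda$ take essentially the paper's route: standardize $f$, split along $S^7=D^7_+\cup D^7_-$, and reduce the $D^7_-$ contribution to the model computation for $\tau_\alpha$ via Lemma \ref{l:calam}. The one point you pass over too quickly is the section: to compare $\lk_{S^7}(hN,\xi X)$ with $\lk_{S^7}(fN,\xi X)$ (and similarly for $\lambda$) you need a section over $N_+$ which is simultaneously the restriction of a weakly unlinked section for $f$ and of one for $h$; Lemma \ref{l:weunl} gives existence for each embedding separately, and the linking numbers in the definitions of $\varkappa$ and $\lambda$ do change if the section fails to be weakly unlinked. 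Supplying this compatibility is exactly the role of the paper's relative theory of weakly unlinked sections for proper embeddings into $D^7_\pm$ (Lemma \ref{l:weun}, in particular parts (a) and (c)), and it is a genuine ingredient, not a formality. With that added, your first two parts are the paper's proof.

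The genuine gap is in the $\beta$ statement. You assert that $\rho_dY^2$ ``localizes inside the two copies of $D^7_-$'' and there reproduces the class of Lemma \ref{l:calc}.a, the complement contributing nothing modulo $2\ol{\lambda(f)}(H_3)+dH_1$. This is not justified and is not a routine localization: a joint Seifert class is glued from Seifert surfaces filling all of $C_f$ and all of $C_h$, so its self-intersection receives contributions from the $D^7_+$ parts as well; even in the trivial case $h=f$, $\varphi=\id\de C_f$, $Y=\de(A_f[N]\times I)$, the vanishing $\rho_dY^2=0$ comes from the homological identity $(A_f[N])^2=A_f\varkappa(f)$ (Lemma \ref{l:La}.$\varkappa'$), not from the class being geometrically supported in $D^7_-$. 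The paper's mechanism is different: it first produces a $\pi$-isomorphism $\psi\colon\de C_\alpha\to\de C_0$ compatible with the $D^7_\pm$ decomposition (this does not follow from Lemma \ref{l:piso} alone but is proved analogously), then builds an explicit $8$-dimensional bordism $V$ with $\de V=M_\varphi\sqcup(-M_\psi)\sqcup(-M_{\id\de C_f})$, and evaluates $\beta$ through the cobordism-invariant characteristic numbers $Y_3\capM{M}Y_4^2$ exactly as in the proof of transitivity (Lemma \ref{l:Dif}), feeding in $\beta(f,f)=0$ and $\beta(\tau_0,\tau_\alpha)=b[S^1\times1_3]$ from Lemma \ref{l:calc}.a. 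Some version of this bordism-of-triples argument (or of the restriction argument of Lemma \ref{l:jhssn0}) is what your ``hardest step'' actually requires; as written, the localization claim on which your conclusion rests is unsupported.
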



\begin{Corollary} \label{t:rem}
(a) For every $u\in H_2^{DIFF}$ every isotopy class in $\varkappa^{-1}(u)$ can be obtained from a single
isotopy class in $\varkappa^{-1}(u)$ by a finite sequence of parametric connected sum operations.

(b) Denote by $B_0(H_3)$ the group of symmetric bilinear forms $H_3\times H_3\to\Z$.
Then there is a surjection
$$\tau_{\mk}:H_1\times H_2^{DIFF}\times B_0(H_3)\to E_{\mk}^7(N)\quad\text{such that}$$
$$\tau_{\mk}(b,u,l)=\tau_{\mk}(b',u',l')\quad\Leftrightarrow\quad
u=u',\quad l=l'\quad\text{and}\quad b-b'\in C_{u,l+\lambda_{\mk}\tau_{\mk}(0,u,0)}.$$

(c) The set $q_{\mk}([f]+_s\tau(l,b))$ is independent of $s$ for fixed $l,b,\t f$ and $[s|_{S^1\times 0_3}]\in H_1$
(cf.~\cite{Sk14}).

(d) Both sets $[f]+_s\tau(l,b)$ and $q_{\mk}([f]+_s\tau(l,b))$ may consist of more than one element,
i.e.~$q_{\mk}[h]$ from the definition of $[f]+_s\tau(l,b)$ can depend on the choice of $\t f$,
even for $N=S^1\times S^3$.

(e) The set $q_{\mk}(\tau(0,b')+_i\tau(0,b))$ consists of one element, i.e.~$q_{\mk}[h]$ from the definition of
$\tau(0,b')+_i\tau(0,b)$ does not depend on the choice of $\t f$.
\end{Corollary}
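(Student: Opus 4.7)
The plan is to derive Corollary \ref{t:rem} from Theorem \ref{t:gen}, Lemma \ref{l:Pad} (parametric additivity), Theorem \ref{l:isomk} (MK Isotopy Classification), and (for (d), (e)) Theorem \ref{t:s1s3}.

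For (a), fix $[f_0]\in\varkappa^{-1}(u)$ and take any $[f_1]\in\varkappa^{-1}(u)$. The difference $\lambda(f_1)-\lambda(f_0)$ lies in $B_0(H_3)$, since both $(u,\lambda(f_i))$ are symmetric pairs. Via Poincar\'e duality $H_1\cong H_3^*$ (valid since $H_1$ torsion free forces $H_3$ torsion free), the rank-one forms $l\,([s]\capM{N}\cdot)\,([s]\capM{N}\cdot)$ produced by Lemma \ref{l:Pad} range over $l\,v^{\otimes 2}$ with $v\in H_3^*$, $l\in\Z$, and these generate $B_0(H_3)$ over $\Z$. Iterated parametric sums $+_s\tau(l_i,0)$ thus yield $[f_0']$ with $\lambda(f_0')=\lambda(f_1)$ while preserving $\varkappa$. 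Next, parametric sums $+_s\tau(0,b_i)$ leave $\varkappa,\lambda$ unchanged and shift $\beta$ by $b_i[s_i]$, spanning $K_{u,\lambda(f_1)}$ as $(b_i,[s_i])$ varies. A final application of Theorem \ref{l:isomk} identifies the result with $[f_1\#g]$ for some knot $g\in E^7(S^4)$, and the remaining knot is absorbed by a parametric sum $+_s\tau(0,0)$ with $[s]=0$: different choices of $\widetilde f$ in the $s$-standardization can differ by a knot, so some choice yields $[f_1]$ itself.

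For (b), choose, for each $u\in H_2^{DIFF}$, a reference embedding $f_u$ and set $\tau_{\mk}(0,u,0):=[f_u]_{\mk}$, writing $\lambda_u:=\lambda_{\mk}\tau_{\mk}(0,u,0)$. Define $\tau_{\mk}(b,u,l)$ using the parametric-sum recipe of (a) starting from $f_u$ so that the resulting invariants are $\varkappa=u$, $\lambda=\lambda_u+l$, and $\beta=[b]\in K_{u,\lambda_u+l}$. Surjectivity and the stated equality criterion are direct translations of Theorem \ref{t:gen}: $\tau_{\mk}(b,u,l)=\tau_{\mk}(b',u',l')$ iff $u=u'$, $l=l'$ (giving equal $\lambda$), and $[b]=[b']\in K_{u,\lambda_u+l}$.

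For (c), first use Lemma \ref{l:Pad} to conclude that $\varkappa(h)$ and $\lambda(h)$ for $h\in[f]+_s\tau(l,b)$ depend on $s$ only through $[s|_{S^1\times 0_3}]$. For $l\ne 0$ Lemma \ref{l:Pad} does not pin down $\beta$, but analyzing the $\widetilde f$-ambiguity in Lemma \ref{l:stand} shows that the attainable $\beta$-values form a coset in $K_{\varkappa(f),\lambda(h)}$ depending only on $f, l, b$ and $[s|_{S^1\times 0_3}]$, so Theorem \ref{l:isomk} converts this into the required equality of $q_{\mk}$-sets. For (e), $l=0$ in $\tau(0,b)$ forces $\beta(\tau(0,b'),h)=b[i]=b[S^1\times 1_3]$ by Lemma \ref{l:Pad}; combined with Lemma \ref{l:calc}(a) this makes every $h\in\tau(0,b')+_i\tau(0,b)$ share the same $(\varkappa,\lambda,\beta)$ and hence (Theorem \ref{t:gen}) the same $q_{\mk}$-class. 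For (d), take $N=S^1\times S^3$, $f=\tau(0,0)$, $s=i$, $(l,b)=(1,0)$: Lemma \ref{l:Pad} gives $\varkappa(h)=0,\lambda(h)=1$ but leaves $\beta(\tau(0,0),h)\in K=\Z/2$ undetermined; exhibiting two standardizations of $f$ producing $h_0,h_1$ with $\beta=0,1$ respectively distinguishes $[h_0]_{\mk}=\tau_{\mk}(1,0)$ from $[h_1]_{\mk}=\tau_{\mk}(1,1)$ via Theorem \ref{t:s1s3}.

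The hard part will be the knot-absorption step in (a) and the construction of two standardizations realizing both $\beta$-values in (d); both require a careful analysis of the ambiguity in Lemma \ref{l:stand}, showing that different $s$-standardized representatives of the isotopy class of $f$ can differ by a nontrivial element of $E^7(S^4)$, which then propagates through the parametric-sum construction to a nontrivial shift in $\beta$.
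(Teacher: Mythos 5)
Your overall strategy coincides with the paper's: Corollary \ref{t:rem} is deduced from Theorem \ref{t:gen}, the parametric additivity (Lemma \ref{l:Pad}), Remark \ref{t:par} and Theorem \ref{t:s1s3}, and your treatments of (b) and (e) are essentially the intended ones. The genuine problem is the mechanism you invoke for the remaining parts, namely that different $s$-standardized representatives of $[f]$ ``differ by a nontrivial element of $E^7(S^4)$, which then propagates \dots to a nontrivial shift in $\beta$''. This cannot work for (d): if two admissible choices $\t f_0,\t f_1$ differ by a knot $g$ attached away from $\im s$, then the outputs satisfy $[h_1]=[h_0]\#[g]$ (the parametric sum changes nothing outside $\im s$), hence $q_\mk[h_1]=q_\mk[h_0]$ and $\beta(h_0,h_1)=0$ by Lemma \ref{l:Add} --- no shift in $\beta$ arises, whereas (d) asserts precisely that $q_\mk([f]+_s\tau(l,b))$ can have more than one element, i.e.\ that the outputs can differ by \emph{more} than a knot. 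The true source of the ambiguity is that two $s$-standardized embeddings isotopic to $f$ need not be isotopic \emph{through} $s$-standardized embeddings; the paper does not construct such a pair explicitly but reads (d) off Remark \ref{t:par}, which places explicit classes $\tau(l+l',b+b')$ in the sets $\tau(l,b)+_i\tau(l',b')$, combined with Lemma \ref{l:Pad}. So your two admitted ``hard parts'' are not refinements but the entire content of (a) and (d) as you have set them up, and the tool you propose for them points in the wrong direction.

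Two further points. For (c) with $l\ne0$ your argument is circular: that ``the attainable $\beta$-values form a coset \dots depending only on $f,l,b$ and $[s|_{S^1\times 0_3}]$'' is exactly the statement to be proved, and Lemma \ref{l:Pad} gives no control on $\beta$ when $l\ne0$. For (a), the conclusion you need at the last step is the opposite of (d): you must show the iterated parametric-sum sets contain the specific isotopy class $[f_1]$ and not merely some $[f_1\# g]$; ``some choice yields $[f_1]$ itself'' is unsubstantiated, since nothing established so far computes which knots are realized by varying $\t f$. (The paper's own proof of (a) cites only Theorem \ref{t:gen} and Lemma \ref{l:Pad}, i.e.\ it is implicitly read at the level of the multivalued operation; to get (a) for honest isotopy classes you must either supply the absorption argument or settle for the statement in $E^7_\mk(N)$.) Your reduction of symmetric forms to the rank-one forms $m_p$ is correct and matches the Claim in the proof of Theorem \ref{t:gen}.
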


\begin{proof}
Parts (a,c) follow from Theorem \ref{t:gen} and the parametric additivity (Lemma \ref{l:Pad}).
Part (b) follows from (a,c) and \jonly{\cite[Remark 2.21.c]{I}} \aronly{Remark \ref{r:bil}.c}.
Part (d) follows from \jonly{\cite[Remark 2.24.b]{I}} \aronly{Remark \ref{t:par}.b}
and the parametric additivity (Lemma \ref{l:Pad}).
Part (e) follows from Theorem \ref{t:s1s3} and the parametric additivity (Lemma \ref{l:Pad}).
\end{proof}

\subsection{Proof of  Theorems \ref{t:s1s3} and \ref{t:gen} assuming lemmas} \label{s:defandplan-s1s3}

\begin{proof}[Proof of  Theorem \ref{t:s1s3}: the surjectivity of $\tau_{\mk}$]
Take any embedding $f:S^1\times S^3\to S^7$.
Identify $B(H_3(S^1\times S^3))$ with $\Z$.
Denote $l:=\lambda(f)\in\Z$.
Take a representative $\alpha:S^3\to V_{4,2}$ of $(l,0)\in\pi_3(V_{4,2})$.
Then $[\tau_\alpha]=\tau(l,0)$.
By the calculation of $\lambda$ (Lemma \ref{l:calam}.b) we have $\lambda(\tau_\alpha)=l$.

We have $\im\overline{\lambda(f)}=l\Z[S^1\times1_3]$ and $\varkappa(f)=\varkappa(\tau_\alpha)=0$.
Hence $C_{\varkappa(f),\lambda(f)}\cong H_1(S^1\times S^3;\Z_{2l})\cong\Z_{2l}$
 and the class $\beta(f,\tau_\alpha)\in C_{\varkappa(f),\lambda(f)}$ is defined.
Take an integer $b$ such that $\beta(f,\tau_\alpha)=-\rho_{2l}b[S^1\times1_3]$.
By the transitivity of $\beta$ (Lemma \ref{l:Dif}) and the calculation of $\beta$ (Lemma \ref{l:calc}.b)
$$\beta([f],\tau(l,b))=
\beta(f,\tau_\alpha)+\beta(\tau(l,0),\tau(l,b)) =\rho_{2l}(-b+b)[S^1\times1_3]=0.$$
Hence by the MK Isotopy Classification Theorem \ref{l:isomk} $q_{\mk}[f]=\tau_{\mk}(l,b)$.
\end{proof}

\begin{proof}[Proof of  Theorem \ref{t:s1s3}: description of preimages of $\tau_{\mk}$]
Denote $\tau:=\tau_{\mk}(l,b)$ and $\tau'=\tau_{\mk}(l',b')$.
By the calculation of $\lambda$ (Lemma \ref{l:calam}.b) we have $\lambda(\tau)=l$ and $\lambda(\tau')=l'$.
So for $l=l'$ by the calculation of $\beta$ (Lemma \ref{l:calc}.b) we have $\beta(\tau',\tau)=\rho_{2l}(b-b')[S^1\times1_3]$.
Hence $$\tau=\tau'\quad\Leftrightarrow\quad l=l'\text{ and }\beta(\tau',\tau)=0\quad\Leftrightarrow\quad
l=l'\text{ and }b\equiv b'\mod2l$$
by the MK Isotopy Classification Theorem \ref{l:isomk}
\end{proof}

\begin{proof}[Proof of Theorem \ref{t:gen}]
The map $\beta_{u,l,\mk}$ is surjective by the parametric additivity (Lemma \ref{l:Pad})
and is injective by the MK Isotopy Classification Theorem \ref{l:isomk}.

In this proof denote $\cdot=\capM{N}$.
By Lemma \ref{l:La}.$\varkappa'$,e our definition of $\varkappa$ is equivalent to that of \cite{BH70}, cf. \cite{Sk10, CS11}.
Hence by \cite{BH70} $\im\varkappa_{\mk}=\im\varkappa=H_2^{DIFF}$.
So it remains to prove that for every $u\in \im\varkappa$
$$\lambda(\varkappa^{-1}(u))=
\{l\in B(H_3)\ :\ l(y,x)=l(x,y)+u\cdot x\cdot y\text{ for every }x,y\in H_3\}.$$
By the $\varkappa$-symmetry (Lemma \ref{l:Lam}) and \jonly{\cite[Remark 2.21.c]{I}} \aronly{Remark \ref{r:bil}.c}
this is implied by the following claim.

{\bf Claim.} {\it For every embedding $f:N\to S^7$ and every symmetric bilinear form $m:H_3\times H_3\to\Z$
there is an embedding $g=g(f,m):N\to S^7$ such that $\varkappa(g)=\varkappa(f)$ and $\lambda(g)=\lambda(f)+m$.}

{\it Proof of claim.}
We can set $g(f,m_1+m_2):=g(g(f,m_1),m_2)$.
Thus it suffices to construct $g(f,m)$ only for basic forms
$$m_p(x,y)=(p\cdot x)(p\cdot y)\quad\text{and}
\quad m_{p,q}(x,y)=(p\cdot x)(q\cdot y)+(p\cdot y)(q\cdot x),\quad\text{where}\quad p,q\in H_1.$$
Take embeddings $s,u,v:S^1\times D^3_-\to N$ whose restrictions to $S^1\times 0$ represent elements $p,q,p+q\in H_1$, respectively.
By the parametric additivity (Lemma \ref{l:Pad}) we can take as $g(f,m_p)$ and $g(f,m_{p,q})$ any elements of
$$[f]+_s\tau(1,0)\quad\mbox{and}
\quad \big(([f]+_v\tau(1,0))+_s\tau(-1,0)\big)+_u\tau(-1,0),\quad\text{respectively},$$
where the latter set is the the set $h_1+_u\tau(-1,0)$ for some $h_1\in h_2+_s\tau(-1,0)$ and for some
$h_2\in [f]+_v\tau(1,0)$.
\end{proof}

\aronly{

\subsection{Appendix: on regular homotopy and the Compression problem}\label{s:defandplan-reho}

\begin{Proposition}[Regular homotopy classification] \label{l:reho}
If $f_0,f_1:N\to S^7$ are embeddings and $\left(\lambda(f_0)-\lambda(f_1)\right)(x,x)\underset2\equiv0$ for
all $x\in H_3$, then $f_0$ and $f_1$ are
regular homotopic.
\end{Proposition}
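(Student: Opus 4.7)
The plan is to invoke the Smale--Hirsch theorem to translate the regular-homotopy classification of immersions $N \to \R^7$ into an obstruction-theoretic problem for sections of a $V_{7,4}$-bundle over $N$, and then to identify the resulting cohomological obstruction with the diagonal of $\lambda$ modulo $2$.

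By Smale--Hirsch, regular-homotopy classes of immersions $N \to \R^7$ are in natural bijection with homotopy classes of sections of the bundle $V(TN, \underline{\R^7}) \to N$ with fiber $V_{7,4}$, associated to $TN$. The long exact sequence of the fibration $SO(3) \to SO(7) \to V_{7,4}$, combined with the classical fact that $SO(3) \hookrightarrow SO(7)$ acts as multiplication by $2$ on $\pi_3 \cong \Z$, shows that $V_{7,4}$ is $2$-connected with $\pi_3(V_{7,4}) = \Z_2$ and $\pi_4(V_{7,4}) = 0$. Standard obstruction theory then applies to the sections $\sigma_{f_0}, \sigma_{f_1}$ induced by $df_0, df_1$: they agree automatically on the $2$-skeleton of $N$, the only obstruction to a global homotopy is a difference class $d(f_0,f_1) \in H^3(N;\Z_2)$, and there is no secondary obstruction since $\pi_4(V_{7,4}) = 0$. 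Because $H_1$ is torsion-free, Poincar\'e duality and UCT give that $H_2, H_3$ are also torsion-free and $H^3(N;\Z_2) \cong \Hom(H_3, \Z_2)$.

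It therefore suffices to establish
\[
 d(f_0,f_1)(x) \;\equiv\; \big(\lambda(f_0) - \lambda(f_1)\big)(x,x) \pmod 2 \quad\text{for all } x \in H_3,
\]
from which the proposition follows immediately. To prove this identity, both sides are additive under parametric connected sums. By the parametric additivity (Lemma~\ref{l:Pad}), for $h \in f +_s \tau(l,b)$ one has
$\lambda(h)(x,x) - \lambda(f)(x,x) = l\,([s]\cap_N x)^2 \equiv l\,([s]\cap_N x) \pmod 2$.
For the Smale--Hirsch difference class, a direct analysis of the construction of $\tau_\alpha$ in \S\ref{s:intro-main} identifies the change under $+_s\tau(l,b)$ with the image of $(l,b) \in \pi_3(V_{4,2})$ under the natural map $\pi_3(V_{4,2}) \to \pi_3(V_{7,4}) = \Z_2$, which a direct computation shows to be $l \bmod 2$, translating by naturality to $l\,([s]\cap_N x) \bmod 2$ on $H_3 \to \Z_2$. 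Both invariants therefore transform identically under parametric sums.

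The principal obstacle is that $f_0, f_1$ may have $\varkappa(f_0) \ne \varkappa(f_1)$, whereas parametric connected sums preserve $\varkappa$ (Lemma~\ref{l:Addl}), so one cannot connect them directly via a chain of parametric sums to reduce the identification to the base case. The remedy is to argue the identification geometrically rather than by reduction: for $x \in H_3$ represented by a closed oriented $3$-submanifold $X \subset N_0$, both $d(f_0,f_1)(x)$ and $(\lambda(f_0)-\lambda(f_1))(x,x) \bmod 2$ are computable from the difference of normal framings of $X$ induced by $f_0, f_1$ together with weakly unlinked sections $\xi_0, \xi_1$; that $\pi_3(SO(3)) \to \pi_3(V_{7,4}) = \Z_2$ is reduction mod $2$ then yields the required agreement, and the parametric-sum calculation above gives independent verification on the universal examples $\tau_\alpha$.
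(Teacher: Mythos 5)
Your Smale--Hirsch setup matches the paper's: both reduce the proposition to showing that the primary (and, since $\pi_4(V_{7,4})=0$, only) obstruction $d(f_0,f_1)\in H^3(N;\Z_2)$ evaluates on each $x\in H_3$ to $\rho_2\bigl(\lambda(f_0)-\lambda(f_1)\bigr)(x,x)$. The gap is that you never actually prove this identity. Your first attempt, via parametric connected sum, is blocked for the reason you yourself state ($\varkappa$ is preserved by $+_s\tau(l,b)$, so arbitrary pairs $f_0,f_1$ are not connected by such sums); moreover, even on a fixed $\varkappa$-fiber, knowing that parametric sums act transitively is Corollary~\ref{t:rem}.a, i.e.\ essentially the main classification theorem, so this route would at best verify the identity on a family of examples rather than prove it. Your fallback --- that both sides are ``computable from the difference of normal framings of $X$ together with weakly unlinked sections,'' plus the fact that $\pi_3(SO_3)\to\pi_3(V_{7,4})$ is reduction mod $2$ --- is an assertion of the conclusion, not an argument: it does not explain how the tangential Smale--Hirsch difference class over $X$ is converted into normal data, nor how the (global) weakly unlinked section enters, nor why the resulting count is a difference of self-linking numbers.

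The missing content is precisely the paper's Lemma~\ref{l:lawh}: identify the obstruction with the Whitney invariant $W_0'(f_0,f_1)=[\Cl S(H)]$ for a generic homotopy $H$, reduce the evaluation on $[X]$ to the case $P=X\times I$, use the strong Whitney isotopy theorem to arrange $f_0=f_1$ on $X$, and observe that the singular set of $H$ is then the zero set of a homotopy of normal vector fields on $f_0(X)$, whose mod-$2$ count is $\rho_2\bigl[\lk_{\R^7}(f_0X,f_0X')-\lk_{\R^7}(f_1X,f_1X')\bigr]$ with $X'$ the push-off of $X$ in $N$. Combining this with Lemma~\ref{l:calam}.c, which gives $\lambda(f_k)(x,x)=\lk_{S^7}(f_kX,f_kX')$, yields the identity. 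Some version of this argument (or an equivalent one) is indispensable; without it your proof establishes only the formal framework and the behaviour of both sides under a family of modifications that does not exhaust all pairs $(f_0,f_1)$.
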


Define the map $W:E^7(N)\to H_1(N;\Z_2)$ by $\rho_2\lambda(f)(x,x)=W(f)\cap_Nx$ for all $x\in H_3(N;\Z_2)$.
By Proposition \ref{l:reho} $W$ induces an injection on the set of regular homotopy classes of embeddings.
By Theorem \ref{t:gen} $\im W$ consists of those $y\in H_1(N;\Z_2)$ for which there is $u\in H_2^{DIFF}$ and a $u$-symmetric bilinear form $l\in B(H_3)$ such that $\rho_2l(x,x)=y\cap_Nx$ for every $x\in H_3(N;\Z_2)$.
It would be interesting to obtain a more direct description of $\im W$.

\smallskip
{\bf Definition of the Whitney invariant $W_0'$.} (See \cite[\S1]{Sk10'}.)
The {\it singular set} of a smooth map $H:X\to Y$ between manifolds is
$S(H):=\{x\in X\ :\ d_xH\text{ is degenerate}\}$.

Let $f_0,f_1:P\to S^7$ be immersions of a 4-manifold $P$.
Take a general position homotopy $H:P\times I\to S^7\times I$ between $f_0$ and $f_1$.
By general position, $\Cl S(H)$ is a closed 1-submanifold.
Define $W'_0(f_0,f_1):=[\Cl S(H)]\in H_1(P,\partial;\Z_2)$.

(It is well-known that $W'_0(f_0,f_1)$ is indeed independent of $H$ for fixed $f_0$ and $f_1$.)


\begin{Lemma} \label{l:lawh}
Let $f_0,f_1:P\to\R^7$ be embeddings of a 4-manifold $P$ and $X\subset P$
a closed connected 3-submanifold.
Take the normal vector field of $X$ in $P$ defined by the orientations of $X$ and $P$.
Let $X'$ be the shift of $X$ along this vector field.
Then
$$W_0'(f_0,f_1)\cap_P \rho_2[X]=\rho_2[\lk\phantom{}_{\R^7}(f_0X,f_0X')-\lk\phantom{}_{\R^7}(f_1X,f_1X')].$$
\end{Lemma}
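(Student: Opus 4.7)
The plan is to pick a generic smooth homotopy $F : P \times I \to \R^7$ between $f_0$ and $f_1$ with lift $H(p,t) := (F(p,t), t)$, and compute both sides in terms of $F$. By Thom transversality, the singular set $S := S(H) = \{(p,t) : d_p F_t \text{ is not injective}\} \subset P \times I$ is a closed $1$-submanifold (closed since $f_0, f_1$ are embeddings). By general position I arrange that $S$ meets $X \times I$ transversally in a finite set $\Sigma$, and that at each point $(x,t) \in \Sigma$ the kernel line $\ell \subset T_x P$ of $d_x F_t$ is transverse to $T_x X$ (the opposite condition imposes a further codimension on the $0$-dimensional $\Sigma$ and is generically avoided). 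By the definition of $W_0'$ as $\pi_*[\Cl S] \in H_1(P; \Z_2)$, with $\pi\colon P \times I \to P$ the projection, and the projection formula for intersection products,
\[
W_0'(f_0,f_1) \capM{P} \rho_2[X] = \rho_2 |\Sigma|.
\]

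For the right hand side I will use the standard trace formula. Consider the $4$-chains $\mathcal{G} := \{(F_t(x), t) : x \in X,\ t \in I\}$ and $\mathcal{G}' := \{(F_t(x'), t) : x' \in X',\ t \in I\}$ in $\R^7 \times I$, with $\partial \mathcal{G} = f_1 X - f_0 X$ and $\partial \mathcal{G}' = f_1 X' - f_0 X'$. Since $\ell \not\subset T_x X$ at every point of $\Sigma$, the restriction $H|_{X \times I}$ is an immersion (and likewise for $X'$), so $\mathcal{G}, \mathcal{G}'$ are immersed $4$-manifolds. Taking a Seifert chain $V \subset \R^7 \times \{0\}$ for $f_0 X'$, observing that $V + \mathcal{G}'$ is a Seifert chain for $f_1 X'$ in $\R^7 \times I$, and comparing $\mathcal{G} \capM{\R^7 \times I} V$ with $\mathcal{G} \capM{\R^7 \times I} (V + \mathcal{G}')$ yields
\[
\lk_{\R^7}(f_0 X, f_0 X') - \lk_{\R^7}(f_1 X, f_1 X') = \pm\, \mathcal{G} \capM{\R^7 \times I} \mathcal{G}'.
\]

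The main step is to prove $\rho_2 |\mathcal{G} \cap \mathcal{G}'| = \rho_2 |\Sigma|$. The self-linking is independent of the length $\epsilon$ of the normal shift defining $X' = X_\epsilon$ for $\epsilon > 0$ small, so I fix such $\epsilon$. Each intersection point is a triple $(x, x_\epsilon', t)$ with $x \in X$, $x_\epsilon' \in X_\epsilon$ and $F_t(x) = F_t(x_\epsilon')$; I classify these by whether $x$ is close to the ``foot'' $x_0' \in X$ of $x_\epsilon'$. In the \emph{far} case, the triple $(x, x_0', t)$ is an unordered double point of the generic map $F|_{X \times I}\colon X \times I \to \R^7$, a finite set by dimension count $3+3+1-7 = 0$; at each transverse double point the implicit function theorem produces, for small $\epsilon$, precisely two nearby intersections (one for each ordering of the double point), contributing $0 \bmod 2$. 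In the \emph{close} case, if $F_t$ were immersive at $x_0'$ then local injectivity together with $X \cap X_\epsilon = \emptyset$ would prevent any solution, so close-type intersections arise only near points of $\Sigma$. A local normal-form analysis of the generic rank-$3$ singularity at $(x_0, t_0) \in \Sigma$ — choosing coordinates with $X = \{x_4 = 0\}$ and $\ell = \R\partial_{x_4}$, and using transversality $\ell \not\subset T_{x_0} X$ to make the linearization of $F_t(x) - F_t(x_\epsilon')$ along $\partial_{x_4}$ non-degenerate — shows that each point of $\Sigma$ contributes exactly one transverse intersection of $\mathcal{G}$ with $\mathcal{G}_\epsilon$ for small $\epsilon$. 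Summing, $\rho_2 |\mathcal{G} \cap \mathcal{G}'| = \rho_2 |\Sigma|$, which combined with the first two paragraphs proves the lemma. The hard part is this close-case verification: one has to check that the transversality $\ell \not\subset T_{x_0} X$ is exactly what forces the local equation to have a single nearby solution (not zero, two, or more) for each small $\epsilon > 0$.
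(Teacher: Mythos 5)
Your argument is correct in outline but takes a genuinely different route from the paper's. The paper first reduces to the case where $P$ is a tubular neighbourhood $X\times I$ of $X$ (both sides of the identity are local near $X$, and $W_0'$ is independent of the homotopy), then uses the Whitney unknotting theorem to arrange $f_0=f_1$ on $X$ and takes the homotopy to be fixed on $X$. After these reductions the trace of $X$ is the constant cylinder $f_0(X)\times I$, every intersection of the trace of $X'$ with it is automatically of your ``close'' type, and the whole count collapses to the zero set of a single generic homotopy $G$ of normal vector fields along $f_0(X)$. You instead work with an arbitrary generic homotopy of all of $P$, which forces the decomposition of $\mathcal G\cap\mathcal G'$ into far contributions (double points of the track of $X$, correctly seen to contribute $0\bmod 2$) and close contributions accumulating on $\Sigma$. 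What your approach buys is that it avoids the unknotting theorem and the locality/independence reductions; what it costs is exactly the local analysis you yourself flag as ``the hard part''.

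That close-case step is correct as stated, but it is the one place where your write-up asserts rather than proves, so let me record what completes it. Writing $q'=q+\epsilon\nu(q)$ and rescaling $p-q=\epsilon w$, the equation $F_t(p)=F_t(q')$ becomes, after dividing by $\epsilon$, an equation $\tilde\Psi_\epsilon(w,q,t)=0$ with $\tilde\Psi_0(w,q,t)=d_qF_t(w-\nu(q))$. The condition $\ell\not\subset T_{x_0}X$ gives exactly one $w_0\in T_{x_0}X$ with $w_0-\nu(x_0)\in\ell$, i.e.\ a unique zero of $\tilde\Psi_0$ over each $(x_0,t_0)\in\Sigma$; but the nondegeneracy of that zero (hence the ``exactly one transverse intersection for small $\epsilon>0$'' conclusion) uses, in addition, the transversality of $S$ to $X\times I$: modulo $\im d_{x_0}F_{t_0}$, the $(q,t)$-derivative of $\tilde\Psi_0$ is the derivative of $(q,t)\mapsto d_qF_t(\xi_0)$ for $\xi_0$ spanning $\ell$, restricted to $T(X\times I)$, and this is onto $\coker d_{x_0}F_{t_0}$ precisely because $T_{(x_0,t_0)}S\not\subset T(X\times I)$. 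So it is not only the condition $\ell\not\subset T_{x_0}X$ that does the work, contrary to your closing remark. One must also rule out intermediate-scale solutions with $\epsilon\ll|p-q|\ll 1$ (dividing the equation by $|p-q|$ and passing to the limit would produce a kernel vector of $d_{x_0}F_{t_0}$ inside $T_{x_0}X$, contradicting $\ell\not\subset T_{x_0}X$). With these points written out your proof is complete.
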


\begin{proof}
It suffices to prove this equality for $P=X\times I$, $X=X\times0$ and $X'=X\times 1$.
By the strong Whitney Isotopy Theorem \cite[Theorem 2.2.b]{Sk08} $f_0|_X$ and $f_1|_X$ are isotopic.
Since both sides of the required equality do not change under isotopy of $\id\R^7$,
we may assume that $f_0=f_1$ on $X$.
Take a general position homotopy $H:X\times I\times I\to \R^7\times I$ between $f_0$ and $f_1$
that is fixed on $X$.
The homotopy $H$ gives a homotopy $G:X\times I\to\R^7$ of a normal vector field on $f_0(X)\subset\R^7$
(through normal vector fields which are not assumed to be non-zero).
Since $H|_{X\times t}$ is an embedding, for every $x\in X$ and $t\in I$ the differential $d_{(x,t)}H$
is degenerate if and only if $G(x,t)=0$.
By general position, $G^{-1}(0)=H(X'\times I)\cap f_0(X)$ is a finite number of points.
Then
$$W_0'(f_0,f_1)\cap_P \rho_2[X]=
\rho_2 |H(X'\times I)\cap f_0(X)|=\rho_2[\lk\phantom{}_{\R^7}(f_0X,f_0X')-\lk\phantom{}_{\R^7}(f_1X,f_1X')].$$
\end{proof}

\begin{proof}[Proof of Proposition \ref{l:reho}] The following statements are equivalent:

(i) $f_0$ and $f_1$ are regular homotopic;

(ii) $f_0|_{N_0}$ and $f_1|_{N_0}$ are regular homotopic;

(iii) $W_0'(f_1|_{N_0},f_0|_{N_0})=0$;

(iv) $\lambda(f_0)(x,x)\equiv\lambda(f_1)(x,x)\mod2$ for every $x\in H_3$.

Indeed,

$\bullet$ (i)$\Leftrightarrow$(ii) because by the Smale-Hirsch classification of immersions \cite{Hi60} the complete obstruction to extension of a regular homotopy from $N_0$ to $N$ assumes values in $\pi_4(V_{7,4})=0$ \cite{Pa56}.

$\bullet$ (ii)$\Leftrightarrow$(iii) because by the Smale-Hirsch classification of immersions \cite{Hi60}
the first obstruction to regular homotopy between $f_0|_{N_0}$ and $f_1|_{N_0}$ assumes values in $H_1(N_0,\partial;\pi_3(V_{7,4}))$ and is complete, and because this obstruction clearly coincides with $W_0'(f_1|_{N_0},f_0|_{N_0})$.

$\bullet$ (iii)$\Leftrightarrow$(iv) by Lemma \ref{l:lawh} because by the calculation of $\lambda$
(Lemma \ref{l:calam}.c) $\lambda(f_k)([X],[X])=\lk(f_k|_X,f_k|_{X'})$, so $W_0'(f_0,f_1)\cap_Nx=\rho_2\left(\lambda(f_0)-\lambda(f_1)\right)(x,x)$ for all $x\in H_3(N;\Z_2)$.
\end{proof}

\begin{Problem}[Compression problem] \label{t:com}
For an integer $j\in\{1,2\}$ describe those embeddings $N\to S^7$ which are isotopic to embeddings with image in
$S^{7-j} \subset S^7$.
\end{Problem}

Clearly, $\lambda(f)=\varkappa(f)=0$ for an embedding $f:N\to\R^7$ such that $f(N)\subset\R^6$.

\begin{Proposition}\label{e:comb}
There are embeddings $f_0,f_1:N\to S^7$ such that $f_0(N)\cup f_1(N)\subset S^6$ and $\beta(f_0,f_1)\ne0$.
\end{Proposition}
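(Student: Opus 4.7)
The plan is to realize both isotopy classes $\tau(0,0)$ and $\tau(0,1)$ of embeddings $S^1\times S^3\to S^7$ by explicit representatives whose images lie in a common $S^6\subset S^7$, and then conclude $\beta\ne 0$ from Lemma \ref{l:calc}.a. Take $N=S^1\times S^3$, identify $\R^4$ with $\Ha$ and $\R^8=\Ha\times\Ha$, and set $S^6:=S^7\cap(\Ha\times\mathrm{Im}(\Ha))$, i.e.\ the intersection of $S^7$ with the hyperplane on which the real part of the second $\Ha$-factor vanishes.

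For $f_0$ I take the constant map $\alpha_0:S^3\to V_{4,2}$, $\alpha_0(x)=(i,j)$, and set $f_0:=\tau_{\alpha_0}$. Then $f_0(e^{i\theta},x)=(x,\cos\theta\cdot i+\sin\theta\cdot j)/\sqrt 2$ has image in $S^6$ because the second $\Ha$-factor lies in $\mathrm{Im}(\Ha)$; and $[f_0]=\tau(0,0)$ since $\alpha_0$ is null-homotopic. For $f_1$ I take $\alpha:S^3\to V_{4,2}$ with $\alpha(x):=(xi\bar x,\,xj\bar x)$, arising from the inclusion $\SO(3)\hookrightarrow V_{4,2}$ via the quaternion double cover $x\mapsto(v\mapsto xv\bar x)$. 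The two vectors are orthonormal since $v\mapsto xv\bar x\in\SO(3)\subset\SO(4)$, and both lie in $\mathrm{Im}(\Ha)$, so $\tau_\alpha$ also has image in $S^6$.

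The main step is to identify the class of $\alpha$ in $\pi_3(V_{4,2})=\pi_3(S^3)\oplus\pi_3(S^2)$, where the splitting uses the section $y\mapsto(y,yi)$. The first coordinate $\alpha_1(x)=xi\bar x$ is the Hopf map $S^3\to S^2\subset S^3$, hence of degree $0$ as a map $S^3\to S^3$. Trivializing the $S^2$-bundle by the section, the second coordinate becomes
$$x\mapsto\alpha_1(x)^{-1}\alpha_2(x)=-(xi\bar x)(xj\bar x)=-xk\bar x,$$
by a short quaternion calculation. This is, up to sign, the Hopf map, with Hopf invariant $\pm 1$, so $[f_1]=\tau(0,\pm 1)$.

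Combining everything, Lemma \ref{l:calc}.a yields $\beta(f_0,f_1)=\pm[S^1\times 1_3]\ne 0$ in $K_{0,0}=H_1(S^1\times S^3)\cong\Z$, as required. The only non-routine step is the quaternion identification of the homotopy class of $\alpha$; everything else is immediate from the definitions of $\tau$ and the invariants.
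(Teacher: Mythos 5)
Your proof is correct. The overall strategy coincides with the paper's: both arguments exhibit representatives of $\tau(0,0)$ and of $\tau(0,\pm1)$ inside a great $6$-sphere and then quote the calculation of $\beta$ (Lemma \ref{l:calc}.a). The difference is in how the representative of $\tau(0,\pm1)$ is produced. The paper takes the model $T^2(e^{i\theta},y)=\eta(y)\cos\theta+\sin\theta$ of Remark \ref{r:s1s3}, whose image is \emph{not} contained in a $6$-sphere, and then isotopes it into $S^6$ by a normal-vector-field argument (Lemma \ref{l:cota}). You stay entirely inside the $\tau_\alpha$-framework: since $\alpha(x)=(xi\bar x,xj\bar x)$ takes values in $V_{4,2}\cap(\mathrm{Im}\,\Ha)^2$, the embedding $\tau_\alpha$ automatically has image in $\inc(S^3\times S^2)$, and the work is shifted to identifying $[\alpha]\in\pi_3(V_{4,2})$. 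That computation is right: $\deg\alpha_1=0$ because $\alpha_1$ factors through $S^2\subset S^3$; the trivialization $(x,y)\mapsto(x,x^{-1}y)$ is exactly the one carrying the paper's section $x\mapsto(x,xi)$ to the constant section, so it computes the second coordinate of the paper's splitting; and $x\mapsto -xk\bar x$ has Hopf invariant $\pm1$, being the Hopf map composed with self-maps of $S^2$ of degree $\pm1$, which multiply the Hopf invariant by the square of the degree. Your route eliminates the isotopy argument of Lemma \ref{l:cota} and the (unproved in the text) identification $[\inc\circ(\pr_2\times T^2)]=\tau(0,1)$ of Remark \ref{r:s1s3}, at the cost of the bookkeeping with the splitting of $\pi_3(V_{4,2})$; the sign ambiguity you leave open is harmless, since $b=\pm1$ already gives $\beta(f_0,f_1)=\pm[S^1\times 1_3]\ne0$ in $K_{0,0}=H_1(S^1\times S^3)\cong\Z$. (Your $6$-sphere $S^7\cap(\Ha\times\mathrm{Im}\,\Ha)$ differs from the paper's standard $S^6$ only by a rotation of $S^7$, which does not affect the statement.)
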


\begin{proof}
This follows by Lemma \ref{l:cota} below because $\beta(\tau(0,0),\tau(0,1))\ne0$
by the calculation of $\beta$ (Lemma \ref{l:calc}.a).
\end{proof}

\begin{Lemma}\label{l:cota}
There is a representative of $\tau(0,1)$ whose image is in $S^6\subset S^7$.
\end{Lemma}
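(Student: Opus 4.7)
The plan is to realize $\tau(0,1)$ by a map $\tau_\alpha$ for an $\alpha\colon S^3 \to V_{4,2}$ that factors through the subspace $V_{3,2} \subset V_{4,2}$ of orthonormal $2$-frames in the pure imaginary quaternions $\R^3 = \mathrm{Im}(\mathbb{H}) \subset \mathbb{H} = \R^4$. Once this is achieved, $\bar\alpha(s,t,x) = s\alpha_1(x) + t\alpha_2(x)$ takes values in $\R^3$, so
\[
\tau_\alpha(s,t,x) = (x, \bar\alpha(s,t,x))/\sqrt{2} \;\in\; \R^4 \times \R^3 \;\subset\; \R^4 \oplus \R^4 = \R^8,
\]
and the image lies in the coordinate hyperplane where the real component of the second $\mathbb{H}$-summand vanishes, i.e.\ in some $S^6 \subset S^7$.

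The construction: take the conjugation double cover $\rho\colon S^3 \to SO(3) = V_{3,2}$, $\rho(x)(v) = xv\bar x$, and define $\alpha(x) := (xi\bar x,\, xj\bar x)$. I claim $[\alpha] \in \pi_3(V_{4,2}) = \Z \oplus \Z$ equals $(0, \pm 1)$. First, $\alpha_1(x) = xi\bar x$ takes values in $S^2 \subset S^3$, hence is null-homotopic in $S^3$, so $l = 0$. For $b$, trivialize $V_{4,2} \cong S^3 \times S^2$ via $(v_1, v_2) \mapsto (v_1, \bar v_1 v_2)$, which carries the section $x \mapsto (x, xi)$ to a constant section. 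Using $\overline{xi\bar x} = -xi\bar x$ and $ij = k$, in this trivialization $\alpha$ becomes $x \mapsto (xi\bar x, -xk\bar x)$. The map $x \mapsto xk\bar x$ is the composition $S^3 \xrightarrow{\rho} SO(3) \xrightarrow{A \mapsto A(k)} S^2$; the first factor is the universal cover of $SO(3)$ and induces an isomorphism on $\pi_3$, and the second induces an isomorphism since its fiber $S^1$ satisfies $\pi_3(S^1) = \pi_2(S^1) = 0$. Hence $b = \pm 1$.

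If $b = +1$, we are done. If $b = -1$, replace $\alpha$ by $\alpha'(x) := \alpha(\bar x) = (\bar x i x,\, \bar x j x)$: this still factors through $V_{3,2}$ (both entries remain pure imaginary unit quaternions), and since $x \mapsto \bar x$ is an orientation-reversing involution of $S^3$, we have $[\alpha'] = -[\alpha] = (0, 1)$. The main obstacle is thus pinning down the sign of $b$, which is resolved either by a direct Hopf-invariant calculation using $\eta(z_1, z_2) = [z_1:z_2]$ or by the case analysis above.
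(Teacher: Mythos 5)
Your proof is correct, but it takes a genuinely different route from the paper's. The paper starts from the model $T^2(e^{i\theta},y)=\eta(y)\cos\theta+\sin\theta$ of Remark \ref{r:s1s3} and isotopes $\inc\circ(\pr_2\times T^2)$ into $S^6$ by deforming the direction field $n\circ\eta$ of the round circles $T^2(S^1\times x)$ to a field $v$ tangent to $S^2\subset S^3$ (existence of $v$ by obstruction theory), the linear homotopy of direction fields inducing an isotopy of embeddings. You instead exhibit a map $\alpha(x)=(xi\bar x,\,xj\bar x)\colon S^3\to V_{3,2}\subset V_{4,2}$ in the class $(0,\pm1)$, so that $\tau_\alpha$ itself already lands in a great $6$-sphere, and you compute the class via the trivialization $(v_1,v_2)\mapsto(v_1,\bar v_1v_2)$ and the double cover $S^3\to SO_3$. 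The computation is sound: that trivialization carries the paper's section $x\mapsto(x,xi)$ to a constant and restricts to the identity on the fiber over $1$, so it does realize the standard splitting of $\pi_3(V_{4,2})$; $\alpha_1$ is non-surjective, hence $l=0$; and both factors of $S^3\xrightarrow{\rho}SO_3\xrightarrow{A\mapsto A(k)}S^2$ are isomorphisms on $\pi_3$, so $b=\pm1$, with the sign ambiguity correctly disposed of by precomposing with the degree $-1$, basepoint-preserving map $x\mapsto\bar x$ (which preserves the factorization through $V_{3,2}$). What your approach buys is self-containedness: it works directly with the definition of $\tau$ and does not invoke Remark \ref{r:s1s3}, whose claim that $\inc\circ(\pr_2\times T^2)$ represents $\tau(0,1)$ the paper states without proof; the paper's approach, in exchange, makes the compression visible as an explicit isotopy of the round-circle model. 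One cosmetic point common to both arguments: the image lies in a coordinate great $6$-sphere which coincides with the paper's standard $S^6=\{x_8=0\}$ only after a rotation of $S^7$; this is harmless.
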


\begin{proof}[Proof of Lemma \ref{l:cota}]
We use the construction of $\tau(0,1)$ from Remark \ref{r:s1s3}.
Denote by $n:S^2\to TS^3$ a non-zero vector field normal to $S^2\subset S^3$ and
looking to the northern hemisphere of $S^3$.
Then

$\bullet$ for every $x\in S^3$ the image $T^2(S^1\times x)$ is the round circle in $S^3$ passing through $x$ in the direction $n(\eta(x))$, and

$\bullet$ $T^2$ is uniform on this circle.

Consider the normal bundle of $\id S^3\times \eta:S^3\to S^3\times S^2$.
The obstructions to the existence of a non-zero section of this bundle are in $H^{i+1}(S^3,\pi_i(S^1))=0$.
Hence there is such a section $v(x)\in T_{\eta(x)}S^2$, $x\in S^3$.
Define a map $T^3:S^1\times S^3\to S^3$ by setting

$\bullet$ for every $x\in S^3$ the image $T^3(S^1\times x)$ to be the round circle in $S^3$ passing through $x$ in the direction $v(x)$, and

$\bullet$ $T^3$ to be `linear' uniform on this circle.

We have $T^3(S^3)\subset S^2$, hence
$\inc\circ(\pr_2\times T^3)(S^1\times S^3)\subset \inc(S^3\times S^2)\subset S^6\subset S^7$.

Take a linear homotopy $v_t(x):=\dfrac{tn(\eta(x))+(1-t)v(x)}{|tn(\eta(x))+(1-t)v(x)|}\in T_{\eta(x)}S^3$
between non-zero vector fields $n(\eta(x))$ and $v(x)$ on $S^2\subset S^3$.
This homotopy defines a homotopy between $T^2$ and $T^3$ which keeps the image of $S^1\times x$ embedded.
The latter homotopy defines an isotopy from a representative $\inc\circ(\pr_2\times T^2)$ of $\tau(0,1)$ to the embedding
$\inc\circ(\pr_2\times T^3)$ whose image is in $S^6\subset S^7$.
\end{proof}


\subsection{Appendix: some remarks to \S\ref{s:intro} and \S\ref{s:defandplan}}\label{s:defandplan-rem}

\begin{Remark}[to \S\ref{s:intro-kno}]\label{r:tos1}
In this paper `smooth' means `$C^1$-smooth'.
Recall that a smooth embedding is `orthogonal to the boundary'.
For each $C^\infty$-manifold $N$ the forgetful map from the set of $C^\infty$-isotopy classes of $C^\infty$-embeddings  $N\to\R^m$ to $E^m(N)$ is a 1--1 correspondence.
For a (possibly folklore) proof of this result see \cite{Zh16}.
\end{Remark}

\begin{Remark}[to \S\ref{s:intro-main}]\label{r:bil} (a) Theorem \ref{t:s1s3} is not an immediate corollary of Theorem \ref{t:gen}, see Remark \ref{r:t1t2}.

(b) For every $u\in H_2^{DIFF}$ the set $\lambda(\varkappa^{-1}(u))$ consists of those $l\in B(H_3)$ for which $(u,l)$ is symmetric.

(c) For fixed $u$, the set of symmetric pairs $(u, l)$ is in bijection
with the group $B_0(H_3)$.
Indeed $B_0(H_3)$ acts freely and transivitely on this set, for if
$(u,l)$ and $(u,l')$ are symmetric pairs, then $l-l'$ is a symmetric form.

(d) Theorem \ref{t:gen} has a restatement similar to Theorem \ref{t:s1s3}, see Corollary \ref{t:rem}.b.

(e) {\it Deduction of the italicized statement in p. \pageref{pst:bh}.}
Suppose that $H_1 = 0$.
The forgetful map from $E^7_{\mk}(N)$ to the set of PL isotopy classes is injective for $H_1 = 0$ \cite[p. 141]{Bo71},  \cite{Ha68}.
Bo\'echat and Haefliger classified PL embeddings $f \colon N \to S^7$ up to PL isotopy \cite[Theorem 1.6]{BH70}.
They also characterized smoothable PL embeddings  \cite[Theorem 2.1]{BH70}.
All this implies the above result.
An alternative proof of the injectivity of $\varkappa_\mk$ is given in \cite{CS11}.
\end{Remark}

\begin{Remark}[to \S\ref{s:intro-strat}]\label{r:tobh}
Note that $\varkappa$-invariant and $\lambda$-invariants can alternatively be defined using the intersection in the homology of the complements $C_0,C_1$.
However, that definition corresponds to the classical surgery not modified surgery approach.
\end{Remark}

\begin{Remark}[to \S\ref{s:defandplan-b}]\label{r:tob}
The property of $Y$ identified in Lemma \ref{l:desei}.a below provides an equivalent definition of
a joint Seifert class which explains the name and which was used in \cite{Sk10, CS11}, together with the name
`joint homology Seifert surface'.
\end{Remark}

\begin{Remark}[to \S\ref{s:defandplan-pr}]\label{t:par}
(a) {\it Proof of Lemma \ref{l:stand}.}
Define
$$\inc:\sqrt2 D^2\times D^4\to S^7\quad\text{by}\quad \inc(x,y):=(y\sqrt{2-|x|^2},0,0,x)/\sqrt2.$$
(No confusion with the map $\inc$ defined in \S\ref{s:intro-main} will appear.)
Then $\inc=\tau_0$ on $S^1\times D^3$.
For $\gamma\le\sqrt2$ denote $\Delta_\gamma:=\inc(\gamma D^2\times\{-1_3\})\subset \Int D^7_-$.

In this proof we omit the sign $\circ$ for composition.

Any two embeddings $S^1\times D^3\to S^7$ are isotopic.
So we can make an isotopy and assume that $fs=\inc$ on $S^1\times D^3_-$.

Since $7>2\cdot1+3+1$, by general position we may assume that $f(N)\cap \Delta_1=\partial \Delta_1$.
Then there is $\gamma$ slightly greater than 1 such that $f(N)\cap \Delta_\gamma=\partial \Delta_1$.
Take the standard $3$-framing on $\Delta_\gamma$ tangent to $\inc(\gamma D^2\times S^3)$
whose restriction to $\partial\Delta_1$ is the standard normal $3$-framing of $\partial\Delta_1$ in $f(N)$.
Then the standard $2$-framing normal to $\inc(\gamma D^2\times S^3)$ is a $2$-framing on $\partial\Delta_1$ normal to $f(N)$.
Using these framings we construct

$\bullet$ an orientation-preserving embedding $H:D^7_-\to D^7_-$ onto a sufficiently small neighborhood of $\Delta_1$ in $D^7_-$,
and

$\bullet$ an isotopy $h_t$ of $\id(S^1\times S^3)$ shrinking $S^1\times D^3_-$ to a sufficiently small neighborhood of
$S^1\times\{-1_3\}$ in $S^1\times D^3_-$ such that
$$H(\Delta_{\sqrt2})=\Delta_\gamma,\quad H\inc(S^1\times D^3_-)=H(D^7_-)\cap f(N)
\quad\text{and}\quad H\inc=\inc h_1 \quad\text{on}\quad S^1\times D^3_-.$$
The embedding $H$ is isotopic to $\id D^7_-$ by \cite[Theorem 3.2]{Hi76}.
This isotopy extends to an isotopy $H_t$ of $\id S^7$ by the Isotopy Extension Theorem \cite[Theorem 1.3]{Hi76}.
Then $H_t^{-1}fh_t$ is an isotopy of $f$.
Let us prove that the embedding $H_1^{-1}fh_1$ is standardized.

We have $H_1^{-1}fh_1=H_1^{-1}\inc h_1=\inc$ on $S^1\times D^3_-$.
Also if $H_1^{-1}fh_1(N-\im s)\not\subset \Int D^m_+$, then there is $x\in N-\im s$
such that $fh_1(x)\in H(D^7_-)$.
Then $fh_1(x)=H\inc(y)=\inc h_1(y)=fh_1(y)$ for some $y\in S^1\times D^3_-$.
This contradicts the fact that $fh_1$ is an embedding.

(b) Note that $[f]\in [f]+_s\tau(0,0)$ for every $s$.
Also
$\tau(l+l',b+b')\in\tau(l,b)+_{[S^1\times1_3]}\tau(l',b')$, and in particular, $\tau(l,b')\in\tau(l,b)+_{[S^1\times1_3]}\tau(0,b'-b)$.

(c) The calculation of $\lambda$ and $\beta$ (Lemma \ref{l:calam}.b and \ref{l:calc}.b) follows by (b) and the parametric additivity (Lemma \ref{l:Pad}).
However, Lemma \ref{l:Pad} for $\lambda$ and $\beta$ is proved using Lemma \ref{l:calam}.b and the particular case Lemma \ref{l:calc}.a of Lemma \ref{l:calc}.b, respectively.
For this reason, as well as for applications, it is convenient to state Lemma \ref{l:calam}.b and Lemmas \ref{l:calc}.a,b separately.

(d) In Corollary \ref{t:rem}.b it would be interesting to canonically construct at least part of the map $\tau_{\mk}$, and to give an algebraic (possibly non-canonical) construction of an $u$-symmetric form instead of $\lambda\tau_{\mk}(0,u,0)$.

(e) Using parametric connected sum one can define a map
$E^7(S^1\times S^3)^2\to 2^{E^7(S^1\times S^3)}$, and the same statement holds with $E^7$ replaced by $E^7_{\mk}$, cf.   \cite[\S2.1]{Sk15}, \cite{MAP}.
Corollary \ref{t:rem}.de means that this map

$\bullet$ is not single-valued for either $E^7$ or $E^7_{\mk}$,
this is unlike the situation in other dimensions \cite[\S2.1]{Sk15}, \cite{MAP},

$\bullet$ is  single-valued on $\lambda_\#^{-1}(0)\subset E^7_{\mk}(S^1\times S^3)$;
then it defines a group structure on $\lambda_\#^{-1}(0)$ (an unpublished direct proof was sketched by
S.~Avvakumov).

Cf.~\cite{II, III}
for smooth and PL analogues.
\end{Remark}

\begin{Remark}[to \S\ref{s:defandplan-s1s3}]\label{r:t1t2} Theorem \ref{t:s1s3} also follows from the construction of the map $\tau$, the
calculation of $\lambda$ and $\beta$ (Lemmas \ref{l:calam}.b and \ref{l:calc}.b), together with a version of Theorem \ref{t:gen} stating that $\beta_{u,l,\mk}$ is a 1--1 map for {\it every} representative $f'$ of an isotopy class in
$(\varkappa_{\mk}\times\lambda_{\mk})^{-1}(u,l)$ (such a version is essentially proved in the proof of Theorem \ref{t:gen}); we take $f'=\tau(0,l)$.
\end{Remark}

}

\section{Proofs of lemmas}\label{s:prelemmas}

\subsection{More notation}\label{s:prelemmas-not}

Recall that some notation was introduced in \S\S \ref{s:intro-main}, \ref{s:intro-strat} and \ref{s:defandplan-not}.

Denote by $\nud=\nud_f:S^7-\Int C_f\to N$ the oriented normal disk bundle of $f$ (the orientation of $\nud$ is inherited from the orientation of $S^7$ and $N$).

\smallskip
{\bf Definition of homology Alexander duality.}
Consider the following diagram:
$$
\xymatrix{& H_{q-2}(N) \ar[d]^{\nus^!} \ar@{=}[r]^{\text{PD}} \ar[dr]^{\widehat A} & H^{6-q}(N) \ar[d]^{AD}\\
H_{q+1}(C,\de) \ar[r]^{\de_C} \ar@{=}[d]^{\text{PD}} &H_q(\de C) \ar[d]^{\nus} \ar[r]^{i_C} &H_q(C) \\
H^{6-q}(C)  & H_q(N) \ar[l]^{AD} \ar[ul]^{A}}
$$
Here $AD$ is Alexander duality and $A=A_f,\widehat A=\widehat A_f$ are homology Alexander duality isomorphisms.\footnote{We use this name because they are compositions of the `ordinary  Alexander duality' and canonical (Poincar\'e or Poincar\'e-Lefschetz) isomorphisms.
If we identify canonically isomorphic groups, then the `homological Alexander duality' is the same as the `ordinary  Alexander duality'.
Our definition of $A,\widehat A$ reveals that the geometric meanings of the `ordinary Alexander duality'  isomorphisms $H_q(X)\to H^{n-q}(S^n-X)$ and $H^q(X)\to H_{n-q}(S^n-X)$ are different, so the isomorphism are not as analogous to each other as it seems from the notation.}
The lines are exact and the triangles are commutative by the well-known Alexander Duality Lemmas of \cite{Sk08', Sk10}.


Fix an orientation on $N$ and
denote by $[N]\in H_4$ and $[N_0]\in H_4(N_0,\partial)$ the corresponding fundamental classes.
We often use the class $A[N]\in H_5(C,\partial)$ which may be called the {\it homology Seifert surface} of $f$.


\begin{Lemma}[Intersection Alexander duality]\label{l:inaldu}
For every $y\in H_q$ and for every $z\in H_{4-q}$ we have $y\cap_N z=Ay\cap_C\widehat Az$.
\end{Lemma}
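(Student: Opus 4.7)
My plan is geometric. I will represent both $y$ and $z$ by transverse submanifolds of $N$, give explicit chain representatives of $Ay$ and $\widehat{A}z$, and then identify the intersection $Ay \capM{C} \widehat{A}z$ with a linking number in $S^7$, which in turn reduces to the intersection $y \capM{N} z$.

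Choose closed oriented submanifolds $Y, Z \subset N$ of dimensions $q$ and $4-q$ representing $y$ and $z$ respectively and meeting transversely, so that $y \capM{N} z = \#(Y \cap Z)$ with signs. From the factorization $\widehat{A} = i_C \circ \nus^!$ in the defining diagram and the geometric meaning of the Umkehr $\nus^!$ as preimage under the submersion $\nus\colon\de C \to N$, the class $\widehat{A}z \in H_{6-q}(C)$ is represented by $\nus^{-1}(Z) \subset \de C \subset C$. For $Ay \in H_{q+1}(C,\de)$, I use the vanishing of $H_q(S^7)$ (in the relevant range $0 < q < 7$) to pick a compact oriented $(q{+}1)$-chain $S_Y \subset S^7$ with $\partial S_Y = Y$, transverse to $\partial C$, and claim $Ay = [S_Y \cap C]$. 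To verify, I trace through the composition
$$H_{q+1}(C,\de) \xleftarrow[\cong]{\mathrm{exc}} H_{q+1}(S^7,\nud) \xrightarrow[\cong]{\partial} H_q(\nud) \cong H_q(N),$$
which agrees with $A^{-1}$: the chain $S_Y \cap C$, viewed as a relative cycle in $(S^7, \nud)$, has boundary $[S_Y \cap \de C]$; the auxiliary chain $S_Y \cap \nud \subset \nud$ has total boundary $Y \pm (S_Y \cap \de C)$, so $\nus_*[S_Y \cap \de C] = \pm[Y] = \pm y$ in $H_q(N)$.

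Armed with these representatives, I compute the intersection by a small perturbation that pushes $\nus^{-1}(Z)$ off $\de C$ into the interior of $C$; since all transverse intersections of $S_Y$ and $\nus^{-1}(Z)$ lie in $\de C$ to begin with, the count is $S_Y \capM{S^7} \nus^{-1}(Z)$. Because $Y \subset N$ is disjoint from $\nus^{-1}(Z) \subset \de\nud$, this is precisely $\lk_{S^7}(Y, \nus^{-1}(Z))$. I then evaluate the linking number using the restricted disk bundle $\nud^{-1}(Z)$ as a Seifert chain for $\nus^{-1}(Z) = \partial \nud^{-1}(Z)$, obtaining $Y \capM{S^7} \nud^{-1}(Z)$. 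Since $\nud^{-1}(Z) \cap N = Z$ and the fiber direction of $\nud^{-1}(Z)$ in $S^7$ coincides with the normal direction of $N$ in $S^7$, transversality of $Y$ and $Z$ inside $N$ promotes to transversality of $Y$ and $\nud^{-1}(Z)$ inside $S^7$, with signs matching under the product orientation $TS^7 = TN \oplus \nu$. Therefore $Y \capM{S^7} \nud^{-1}(Z) = Y \capM{N} Z = y \capM{N} z$.

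The main obstacle I anticipate is the identification $Ay = [S_Y \cap C]$ and the attendant sign bookkeeping. Signs enter at each invocation of excision, Lefschetz/Thom duality, the boundary map $\partial_C$, and oriented intersection; they must be pinned down to match the convention chosen for $A$ and $\widehat{A}$ via the homological Alexander duality diagram (and against the standard orientations inherited on $C$ and $\de C$). Once the sign conventions are fixed, each of the reductions above is a routine general-position argument, so the whole identity follows.
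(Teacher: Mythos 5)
Your argument is correct in outline, but it takes a genuinely different route from the paper's. The paper's proof is a three-line representative-free computation: it applies the push--pull (projection) formula $\nus(x\cap_{\de C}\nus^! z)=\nus x\capM{N} z$ in the sphere bundle $\nus\colon\de C\to N$ to the class $x=\de_C Ay$, uses $\nus\,\de_C A=\id$ from the homological Alexander duality diagram together with the fact that $y\capM{N} z$ lives in $H_0$ (so $\nus$ acts as the identity on it), and finishes with the identity $\de_C Ay\cap_{\de C}\nus^! z=Ay\capM{C} i_C\nus^! z=Ay\capM{C}\widehat Az$ relating boundary and interior intersections. You instead build explicit chain representatives --- $\widehat Az=[\nus^{-1}Z]$ and $Ay=[S_Y\cap C]$ for a Seifert chain $S_Y$ --- identify the intersection number with $\lk_{S^7}(Y,\nus^{-1}Z)$, and evaluate that linking number using the disk bundle $\nud^{-1}(Z)$ as a Seifert chain. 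What the paper's version buys is that there is no sign bookkeeping and no need to justify $Ay=[S_Y\cap C]$ (which is essentially the ``homology Seifert surface'' description of $A$ quoted from \cite{Sk08', Sk10}); what your version buys is a transparent geometric picture that also explains why $\lambda$ and $\varkappa$ are computable by linking numbers (cf.~Lemma \ref{l:calam}.c and Lemma \ref{l:La}). Two caveats: your Seifert-chain construction needs $q\ge1$ (a point does not bound in $S^7$), so the degenerate case $y\in H_0$ must be checked separately, whereas the paper's proof covers it uniformly; and, as you yourself note, the signs at the excision, boundary and intersection steps must be pinned to the conventions of the duality diagram --- this is routine but it is where the real work of your version lives.
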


\begin{proof} For every $x\in H_r(\partial C)$ we have $\nus(x\cap_{\partial C}\nus^!z)=\nus x\cap_N z$.
Take $x=\partial Ay$.
Since $\nus x=y$ and $y\cap_N z\in\Z$, we obtain
$y\cap_N z=\partial Ay\cap_{\partial C}\nus^!z=Ay\cap_C\widehat Az$.
\end{proof}

{\bf Definition of the restriction homomorphism $r$.}
If $P$ is a (compact oriented) codimension $c$ submanifold of a manifold $Q$ and either $y\in H_k(Q)$ or
$y\in H_k(Q,\de)$, denote
$$r_{P,Q}(y)=r_P(y)=y\cap P:=\text{PD}((\text{PD}y)|_P)\in H_{k-c}(P,\de).$$
If $y$ is represented by a closed submanifold $Y\subset Q$ transverse to $P$, then $y\cap P$ is represented by $Y\cap P$.
Clearly, $y\cap_Q[P]=i_{P,Q}(y\cap P)$.

\smallskip
{\bf Definition of the difference class $d(\xi,\xi')$.}
(This definition is not used until \S\ref{s:lemmas-pad}.)
Let $Q$ be a $q$-manifold, and $\xi,\xi'$ non-zero sections of a $k$-dimensional vector bundle
over $Q$.
We define the {\em difference class}
$$
d(\xi,\xi')\in H_{q-k+1}(Q,\de) = H^{k-1}(Q) = H^{k-1}(Q;\pi_{k-1}(S^{k-1}))
$$
of $\xi$ and $\xi'$ to be the class of the preimage of the zero section under a general position homotopy from $\xi$ to $\xi'$.
This class is the {\em homology primary obstruction} to a vertical homotopy from $\xi$ to $\xi'$, and is equal to the Poincar\'{e} dual of the {\em cohomology primary obstruction} to a vertical homotopy from $\xi$ to $\xi'$, which is defined in  \cite[Theorem 6.4 Ch.~VI]{Wh78}.

Difference classes between other structures, e.g.~spin structures or framings,
on (a part of) a manifold are defined analogously.
(In fact, such structures can be represented as sections of certain bundles.
Then one can use the homological or cohomological definition of the primary obstruction
to vertical homotopy, the two definitions being related by Poincar\'{e} duality.)

\smallskip
 {\bf Definition of cobordism of homology classes together with supporting manifolds.}
(This definition is not used until \S\ref{s:lemmas-calc}.)
Assume that $P$ and $Q$ are closed manifolds and $x_j\in H_{k_j}(P)$, $y_j\in H_{k_j}(Q)$ for $j=1,\ldots,n$.
The tuples $(P,x_1,\ldots,x_n)$ and $(Q,y_1,\ldots,y_n)$ are called {\it cobordant} if there is a manifold $V$ and classes $v_j\in H_{k_j+1}(V,\partial)$ such that
$$\de V=P\sqcup(-Q),\quad \partial v_j\cap P=x_j \quad\mbox{and}\quad \partial v_j\cap Q=y_j
\quad\mbox{for every}\quad j=1,\dots,n.$$


The following definitions of a spin structure on a manifold $Q$ and of the spin characteristic
class $p^*_Q$  will not be used until \S\ref{s:lemmas-sf}.
Take a manifold $Q$ and its triangulation.
We write `skeleta of $Q$' for `skeleta of the triangulation'.

\smallskip
{\bf Definition of a spin structure.}
A {\it stable tangent spin structure} on $Q$ is a stable equivalence class of framing of the tangent bundle of $Q$ over the $1$-skeleton of $Q$, which extends to the $2$-skeleton of $Q$.
Two such framings are {\it stably equivalent} if they are homotopic, perhaps after addition of the trivial bundle with trivial framing to the tangent bundle of $Q$.
For brevity we omit `stable tangent'.
(This will not lead to a confusion because our manifolds $Q$ will have dimensions at least 3, so stable tangent spin structures are in 1--1 correspondence with tangent spin structures in the ordinary sense. See also Lemma \ref{l:krst}.)

The trivial spin structure on $S^{7}$ is the one induced from the spin structure on $D^8$ compatible with the orientation. (Note that this is the only spin structure on $S^7$.)

If $P \subset Q$ is a compatibly triangulated codimension zero submanifold, then a spin structure on $Q$ {\em induces} a spin structure on $P$ by restricting the framing over the 1-skeleton of $Q$ to a framing over the $1$-skeleton of $P$.
If $Q$ has boundary $\de Q$, then a spin structure on $Q$ induces a spin structure on $\de Q$.

If $F \colon Q \to P$ is a diffeomorphism with differential $dF$ and $s$ is a spin structure on $Q$, then the
{\it induced} spin structure $F_*s$ on $P$ is obtained by applying $dF$ to the vector fields over the
$1$-skeleton of $Q$ which define $s$.

\smallskip
{\bf Remark on spin structures via maps to $B\Spin$.}
Take a manifold $Q$.
A {\it stable oriented vector bundle} over $Q$ is a sequence $\xi_j$, $j\ge q$, of $j$-dimensional  oriented vector bundles over $Q$, together with isomorhisms $\xi_j\oplus n\to\xi_{n+j}$ of $(n+j)$-dimensional oriented vector bundles for any $n$ (here $q$'s can be different for different bundles, and $n$ is the trivial bundle of dimension $n$) \cite[\S18.10]{KL05}.\footnote{In this paper (like in textbooks) it is convenient to identify isomorphic bundles.
Let $n$ be the trivial bundle of dimension $n$.
Consider the equivalence on the set of oriented vector bundles over  $Q$
(of all different dimensions) generated by identifying bundles $x$ and $x\oplus1$.
Then the equivalence classes are in 1-1 correspondence with stable oriented vector bundles over $Q$
(indeed, to an equivalence class $(x,x\oplus1,x\oplus2,\ldots)$ there corresponds the stable oriented vector bundle $(x,x\oplus1,x\oplus2,\ldots), (\id,\id,\id,\ldots)$).}
Analogously one defines a stable spin vector bundle.
Let $B\SO$ and $B\Spin$ be the classifying spaces for stable oriented and spin vector bundles respectively.
Recall that $B\Spin=B\SO\!\left<4\right>$ is the (unique up to homotopy) 3-connected space for which there exists a fibration $\gamma:B\Spin\to B\SO$ inducing an isomorphism on $\pi_i$ for every $i\ge4$.

Let $\tau'_Q$ be the oriented tangent bundle of $Q$.
Let $\tau_Q$ be the stable oriented vector bundle which is the sequence $\tau'_Q\oplus n$, $n\ge0$, with the identical isomorhisms.
We use the same notation $\tau_Q$ for the classifying map $Q \to B\SO$ of $\tau_Q$, which is
the composition of the canonical map $B\SO_q \to B\SO$ with the classifying map of $\tau'_Q$.
A {\it spin lift} of $\tau_Q$ is a map $\overline{\tau}_Q \colon Q \to B\Spin$ such that
$\tau_Q = \gamma \circ \overline{\tau}_Q$.
Obstruction theory shows that a spin structure on $Q$ may be regarded as a homotopy class of a spin lift $\overline{\tau}_Q$ of the map $\tau_Q$, up to homotopy through spin lifts of $\tau_Q$.






\smallskip
{\bf Definition of $p_Q^*$ for a spin $q$-manifold $Q$.}
It is well-known that there is a generator $p\in H^4(B\Spin)\cong\Z$ such that $2p$
is the pull back in $H^4(B\Spin)$ of the universal first Pontryagin class $p_1\in H^4(B\SO)$
\cite[\S3, proof of Lemma 2.11]{CS11}.
Take a map $\overline{\tau}_Q \colon Q\to B\Spin$ corresponding to a spin structure on $Q$ and define
$$p_Q^*:=\text{PD}\overline{\tau}_Q p\in H_{q-4}(Q,\partial).$$
We remark that  $p^*_Q$ does not depend of the choice of spin structure on $Q$ \cite[page~170]{CCV08}
(for simply-connected $Q$ this is obvious).

\subsection{Lemmas on the $\varkappa$- and $\lambda$-invariants (\ref{l:La} and \ref{l:Lam})}
\label{s:prelemmas-equ}

In this subsection

$\bullet$ the larger intersection symbol $\bigcap$ denotes the intersection of homology classes in $\nus^{-1}N_0$;

$\bullet$ we identify $H_q(N_0,\de)$ with $H_q$ by the isomorphism $r_{N_0,N}$ for each $q\in\{1,2,3\}$;

$\bullet$ for a section $\xi \colon N_0\to\nus^{-1}N_0$ we use without mention that
$\xi[N_0]\bigcap\nus^!y=\xi y$ for each $q\in\{1,2,3\}$ and $y\in H_q$;

$\bullet$ we shorten $\lambda(f)$, $\overline{\lambda(f)}$ and
$\varkappa(f)$ to $\lambda$, $\overline\lambda$ and $\varkappa$ respectively;

$\bullet$ we define $\overline\varkappa:H_2\to\Z$ by $\overline\varkappa(y):=\varkappa\cap_N y$;

$\bullet$ before we prove that $\lambda$ and $\varkappa$ are independent of $\xi$ (Lemma \ref{l:La}.$\lambda'$,$\varkappa'$) we denote them by $\lambda_\xi$ and $\varkappa_\xi$ respectively.

\begin{Lemma}\label{l:La}
Let $\xi:N_0\to\nus^{-1}N_0$ be a section such that $i_C\xi$ is weakly unlinked.

(a) $i_C(\xi[N_0]\bigcap x)=A[N]\cap_Ci_Cx$ for each $q\in\{1,2,3\}$ and $x\in H_q(\nus^{-1}N_0)$.

($\lambda$)
$\overline{\lambda_\xi}=\widehat A^{-1}i_C\xi$ on $H_3$.

($\varkappa$)
$\overline{\varkappa_\xi}=\widehat A^{-1}i_C\xi$ on $H_2$.

($\lambda'$) $\overline\lambda_\xi(y)=\widehat A^{-1}(A[N]\cap_C\widehat Ay)$ for every $y\in H_3$.

($\varkappa'$) $\varkappa_\xi=A^{-1}(A[N]\cap_CA[N])$.

(e) $\varkappa=e^*(\xi^\perp)$, where $\xi^\perp$ is the normal bundle of $\xi:N_0\to\partial C$
(or, equivalently, the orthogonal complement to $\xi$ in $\nud|_{N_0}$).
\end{Lemma}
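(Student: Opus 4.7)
My plan is to deduce all six parts from a single geometric identity: under the weakly-unlinked hypothesis, $\xi[N_0]=\partial A[N]$ in $H_4(\partial C)$. This identity is what weak unlinking is designed to enforce, and once in hand everything else follows by formal naturality of cap and intersection products together with the linking-intersection dictionary. I would establish it as follows: both $\xi[N_0]$ and $\partial A[N]$ map to $[N]\in H_4(N)$ under $\nus$---for $\xi[N_0]$ by the section property, for $\partial A[N]$ by commutativity of the Alexander-duality diagram---and both lie in $\ker i_C$---for $\xi[N_0]$ by weak unlinking, for $\partial A[N]$ by exactness. The Gysin sequence of the $S^2$-bundle $\nus\colon\partial C\to N$ has vanishing Euler class (since $\nus$ admits a section), so it reduces to a short exact sequence $0\to H_2(N)\xrightarrow{\nus^!} H_4(\partial C)\xrightarrow{\nus} H_4(N)\to 0$. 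Hence $\xi[N_0]-\partial A[N]=\nus^! v$ for some $v\in H_2(N)$; applying $i_C$ gives $\widehat A v=0$, and since $\widehat A$ is an isomorphism, $v=0$.

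For (a), I would represent $A[N]$ by a $5$-chain $\Sigma\subset C$ with $\partial\Sigma$ a cycle for $\xi[N_0]$ in $\partial C$; for a transverse cycle $X$ representing $x\in H_q(\nus^{-1}N_0)$, the intersection $\Sigma\cap X$ lies in $\partial\Sigma\cap X\subset\partial C$, and this common chain represents $\xi[N_0]\bigcap x$ in $\nus^{-1}N_0$ while, after $i_C$, represents $A[N]\cap_C i_C x$ in $C$. Specializing (a) to $x=\nus^! y$ and using both the bullet-point identity $\xi[N_0]\bigcap\nus^! y=\xi y$ and the diagram identity $i_C\nus^!=\widehat A$ yields the key equation
$$i_C\xi y\;=\;A[N]\cap_C\widehat Ay\qquad(y\in H_q,\ q=2,3).$$
The identifications ($\lambda$) and ($\varkappa$) then follow by comparing the linking-number definitions of $\overline\lambda$ and $\overline\varkappa$ with this equation: for $y\in H_3$, the dictionary $\lk(fX,\xi Y)=Ax\cap_C i_C\xi y$ combined with Lemma~\ref{l:inaldu} forces $\widehat A\overline\lambda(y)=i_C\xi y$; for $y\in H_2$, the class $i_C\xi y\in H_2(C)=\Z\cdot S^2_f$ has multiplicity $\lk(fN,\xi Y)=\overline\varkappa(y)$, so $i_C\xi y=\overline\varkappa(y)\cdot S^2_f=\widehat A\overline\varkappa(y)$ using $\widehat A[\mathrm{pt}]=S^2_f$. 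Part ($\lambda'$) is immediate from ($\lambda$) and the key equation; ($\varkappa'$) follows by applying the key equation twice, since $(A[N]\cap_C A[N])\cap_C\widehat A z=A[N]\cap_C i_C\xi z=\varkappa\cap_N z=A\varkappa\cap_C\widehat A z$ for all $z\in H_2$, and non-degeneracy of the intersection pairing $H_3(C,\partial)\times H_4(C)\to\Z$ (valid since $H_2$ is torsion-free) yields $A[N]\cap_C A[N]=A\varkappa$. Parts ($\lambda''$) and ($\varkappa''$) drop out from ($\lambda$), ($\varkappa$) and the Gysin short exact sequence applied to $\xi y-\partial A y\in\ker i_C$.

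For (e), Step~1 allows us to take $\Sigma$ with $\partial\Sigma=\xi(N_0)$, so $\lk(fN,\xi X)=\xi(N_0)\cap_{\partial C}\xi(X)$ for each $X\subset N_0$ representing $x\in H_2$. Perturbing $\xi$ to a generic section $\xi'$ of $\nud|_{N_0}$, this intersection counts zeros of $\xi'-\xi$ on $X$, equal to $e(\xi^\perp|_X)[X]=e^*(\xi^\perp)\cap_N x$. Hence $\varkappa\cap_N x=e^*(\xi^\perp)\cap_N x$ for every $x\in H_2$, and unimodularity of the intersection form on the torsion-free group $H_2$ yields $\varkappa=e^*(\xi^\perp)$. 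The main obstacle is Step~1, the identity $\xi[N_0]=\partial A[N]$: while the Gysin-sequence argument is conceptually clean, executing it rigorously requires careful tracking of the bullet-point identifications $H_q(N_0,\partial)\cong H_q$ and of the commutativity of the various Alexander-duality squares; after that, the rest of the proof reduces to formal algebra.
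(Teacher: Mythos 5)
Your proof is correct and follows essentially the same route as the paper's: the paper's argument for (a) rests on the identity $A[N]\cap\nus^{-1}N_0=\xi[N_0]$, which it imports from \cite[Lemma 2.5.a]{Sk10} and which your Step~1 re-proves inline via the Gysin sequence of $\nus$ with vanishing Euler class, while the remaining parts --- the linking--intersection dictionary for ($\lambda$) and ($\varkappa$), the double application of intersection Alexander duality for ($\varkappa'$), and the Euler-class self-intersection of the section for (e) --- coincide with the paper's computations. One small slip: in your one-clause treatment of ($\lambda''$) and ($\varkappa''$), the class $\xi y-\partial Ay$ lies in $\ker\nus=\im\nus^!$, not in $\ker i_C$ (indeed $i_C(\xi y-\partial Ay)=\widehat A\,\overline\lambda(y)$, and it is precisely this value that pins down the preimage under $\nus^!$), but the intended argument is the one you already carried out correctly in Step~1.
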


\begin{proof}[Proof of (a)]
By \cite[Section Lemma 2.5.a]{Sk10} $A[N]\cap \nus^{-1}N_0 = \xi [N_0]$.
Hence we have the equalities
$A[N]\cap_Ci_Cx =i_C\left((A[N]\cap \nus^{-1}N_0)\bigcap x\right)= i_C(\xi[N_0]\bigcap x)$.
\end{proof}

\begin{proof}[Proof of ($\lambda$) and ($\varkappa$)]
The formulas follow because for every closed oriented $q$-submanifold $X\subset N$, $q\in\{3,4\}$,
$$
\lk\phantom{}_{S^7}(fX,\xi Y)\overset{(1)}= \lk\phantom{}_{S^7}(\partial AX,\xi' Y)\overset{(2)}=
A[X]\cap_Ci_C\xi y\overset{(3)}= [X]\cap_N\widehat A^{-1}i_C\xi y$$
where

$\bullet$ $AX$ is any $(q{+}1)$-chain in $C$ whose boundary is in  $\partial C$ and represents $\partial A[X]$ there;

$\bullet$ $\xi'Y$ is a small shift of $\xi Y$ into the interior of $C$;

$\bullet$ (1) holds because $\nu\partial A[X]=[X]$, so
$fX$ is homologous to $\partial AX$ in
$S^7-\Int C$, and then because $\xi Y$ is homologous to $\xi'Y$ in $C$;

$\bullet$ (2) holds by definition of the linking coefficient;

$\bullet$ (3) holds by intersection Alexander duality (Lemma \ref{l:inaldu}).
\end{proof}

\begin{proof}[Proof of ($\lambda'$) and ($\varkappa'$)]
By (a)
for $x=\nus^!y$
we have $A[N]\cap_C\widehat Ay = i_C(\xi[N_0]\bigcap\nus^!y) = i_C\xi y\in H_q(C)$ for each $q\in\{2,3\}$.
So ($\lambda$) implies ($\lambda'$).
Also ($\varkappa$) implies that for every $y\in H_2$ we have
$$\varkappa\cap_N y= \widehat A^{-1}(A[N]\cap_C\widehat Ay)=
A[N]\cap_C(A[N]\cap_C\widehat Ay)= A^{-1}(A[N]\cap_CA[N])\cap_Ny.$$
Here the second and the third equalities follow by intersection Alexander duality (Lemma \ref{l:inaldu}).
This proves ($\varkappa'$).
\end{proof}

\begin{proof}[Proof of (e)]
Part (e) follows because for every $y\in H_2$
$$
\varkappa\cap_N y\overset{(1)}=A[N]\cap_Ci_C\xi y\overset{(2)}=\xi[N_0]\bigcap \xi y=
\xi[N_0]\bigcap\xi[N_0]\bigcap\nus^!y=e^*(\xi^\perp)\cap_N y.
$$
Here (1) holds by ($\varkappa$) and intersection Alexander duality (Lemma \ref{l:inaldu}), (2) holds by (a)
and the other two equalities are obvious.
\end{proof}

\begin{proof}[Proof of $\varkappa$-symmetry (Lemma \ref{l:Lam})]
Let $\xi \colon N_0 \to \de C$ be a weakly unlinked section obtained from a section $\zeta\colon N_0\to\nus^{-1}N_0$ by composing with the inclusion $\nus^{-1}N_0\to\de C$.
Let $-\xi \colon N_0 \to \de C$ be the weakly unlinked section
obtained by composing $-\zeta$ with the inclusion $N_0 \to \de C$.
If the homology classes $x,y \in H_3$ are represented by closed oriented 3-submanifolds (or integer 3-cycles) $X,Y\subset N_0$, then
$$
\lambda(y,x)=\lk\phantom{}_{S^7}(fY,\xi X)=\lk\phantom{}_{S^7}(\xi X,fY)=\lk\phantom{}_{S^7}(fX,-\xi Y).
$$
Hence
$$
\lambda(x,y)-\lambda(y,x)\overset{(1)}=
fX\cap_{S^7}Y_\xi\overset{(2)}=
\xi X\cap_{S^7}Y_\xi'\overset{(3)}=
\xi x\cap_{\partial C}\xi y\overset{(4)}=$$
$$=\xi[N_0]\bigcap\nus^!x\bigcap\xi[N_0]\bigcap\nus^!y\overset{(5)}=
\xi[N_0]\bigcap\xi[N_0]\bigcap\nus^!(x\cap_N y)\overset{(6)}=$$
$$=\xi[N_0]\bigcap\xi(x\cap_N y)\overset{(7)}=A[N]\cap_C  i_C\xi(x\cap_N y)\overset{(8)}=
 \varkappa\cap_N x\cap_N y,$$
where

$\bullet$ $Y_\xi\subset S^7$ is the 4-submanifold (with boundary) that is the union over $a\in Y$ of segments joining $\xi a$ to $(-\xi)a$
(or $Y_\xi$ is the corresponding integer 4-chain); we have $Y_\xi\cong Y\times I$;

$\bullet$ the algebraic intersection of submanifolds (or the cycle and the chain) in $S^7$ is defined because the first one does not intersect the boundary of the second one;

$\bullet$ (1) holds by definition of the linking coefficient and the above formula for $\lambda(y,x)$;

$\bullet$ $Y_\xi'$ is obtained from $Y_\xi$ by a small shift along $\xi-f$ considered as vector field on $fN$;

$\bullet$ (2), (4), (6) are clear;

$\bullet$ (3) holds because $Y_\xi'\cap\partial C=\xi Y$;

$\bullet$ (5) holds because $\dim\nus^{-1}N_0-\dim\xi[N_0]$ is even, so we can exchange
the order of terms in the cap product without changing the sign,
and because $\nus^!x\bigcap\nus^!y=\nus^!(x\cap_N y)$;

$\bullet$ (7) holds by Lemma \ref{l:La}.a;

$\bullet$ (8) holds by Lemma \ref{l:La}.$\varkappa$ and intersection Alexander duality (Lemma \ref{l:inaldu}).
\end{proof}

\subsection{Parametric additivity of $\varkappa$ and $\lambda$ (Lemma \ref{l:Pad})}\label{s:lemmas-pad}

Let $V$ be a 4-manifold with non-empty boundary.
Recall that an embedding $v:V\to D^7_+$ is called {\it proper}, if $f^{-1}\partial D^7_+=\partial V$.
Denote by

$\bullet$ $C=C_v$ the closure of the complement in $D^7_+$ to a tubular neighborhood of $v(V)$;

$\bullet$ $\nus=\nus_v:\Cl(\de C_v-\de D^7_+)\to V$ the restriction of the oriented normal vector bundle of $v$.

A section $\zeta:V\to \nus^{-1}V=\Cl(\de C_v-\de D^7_+)$ of $\nus$ is called {\it weakly unlinked} if
$$
i_{\de C,C}\zeta[V]=0\in H_4(C,\de D^7_+\cap\de C).
$$
We remark that if we would take $\de V=\emptyset$ in this definition, we would not obtain the definition of a weakly unlinked section for a closed manifold.

\begin{Lemma} \label{l:weun}
(a) Any section is weakly unlinked for any proper embedding $v:D^1\times S^3\to D^7_+$.

(b) For any proper embedding $v:V\to D^7_+$ of a connected 4-manifold $V$ with non-empty boundary
and torsion free $H_1(V,\de)$, a weakly unlinked section exists and is unique up to vertical homotopy
over any 2-skeleton of $V$.

(c) Let $g \colon N\to S^7$ and $s \colon D^1\times S^3\to N$ be embeddings such that $g|_{g^{-1}(D^7_\pm)}$ is a proper embedding into $D^7_\pm$ and $g^{-1}(D^7_-)=\im s$.
Then any weakly unlinked section for the abbreviation of $g$,%
\footnote{For a map $f:X\rightarrow Y$ and $A\subset X$, $f(A)\subset B\subset Y$, the {\it abbreviation}  $g:A\rightarrow B$ of $f$ is defined by  $g(x):=f(x)$.}
$N-\Int(\im s)\to D^7_+$, extends over $N_0:=N-\Int s(D^1\times D^3_-)$ to a weakly unlinked section for $g$.
\end{Lemma}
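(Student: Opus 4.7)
The plan is to prove the three parts in order, each using a distinct mix of duality and obstruction theory.

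For part (a), I would show that the entire ambient group $H_4(C_v, \de D^7_+ \cap \de C_v)$ vanishes when $V = D^1 \times S^3$, making the weak-linking condition vacuous. By excision this group is isomorphic to $H_4(D^7_+, v(V) \cup \de D^7_+)$; contractibility of $D^7_+$ and the long exact sequence of the pair identify it with $H_3(v(V) \cup \de D^7_+)$. A Mayer--Vietoris computation for the cover of $v(V) \cup \de D^7_+$ by $v(V)$ and $\de D^7_+ \cong S^6$, meeting in $v(\de V) = v(\{0,1\} \times S^3)$, then annihilates the group: each of the two boundary $3$-spheres is homologous inside $v(V)$ to the generator of $H_3(v(V)) \cong \Z$, so the map $H_3(v(\de V)) = \Z^2 \to H_3(v(V)) \oplus H_3(S^6) = \Z$ is surjective.

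For part (b), existence of some section $\zeta$ of the $S^2$-bundle $\nus^{-1}V \to V$ comes by obstruction theory: the obstructions lie in $H^i(V; \pi_{i-1}(S^2))$, which vanish for $i = 1,2$ since $\pi_0(S^2) = \pi_1(S^2) = 0$, for $i = 4$ since $H^4(V;\Z) = H_0(V, \de V) = 0$ when $V$ is connected with non-empty boundary, and for $i = 3$ using that the normal bundle of a codimension-three submanifold of the contractible $D^7_+$ admits a nowhere-zero section over the $3$-skeleton (by stable triviality and a Wu-type argument). To arrange weak-linkedness, I would modify $\zeta$ by a difference class in $H^3(V, \de V) \cong H_1(V, \de V)$ --- this is where torsion-freeness of $H_1(V, \de)$ enters, to identify the difference class group cleanly with $H_1$ --- and show that this variation surjects onto the subgroup of $H_4(C_v, \de D^7_+ \cap \de C_v)$ in which the initial obstruction lies, paralleling \cite[Proposition~1.3]{BH70}. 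For uniqueness up to vertical homotopy on the $2$-skeleton, two weakly unlinked sections have difference class in $H^2(V; \pi_2(S^2)) = H^2(V)$ over the $2$-skeleton, and the weak-linking condition combined with an intersection Alexander duality argument in the style of Lemma~\ref{l:La} forces this class to vanish.

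For part (c), the extra region $s(D^1 \times D^3_+) = N_0 \cap \im s$ deformation-retracts inside $N_0$ onto its frontier, which lies in $N - \Int(\im s)$, so any section of the normal $S^2$-bundle over the abbreviation extends over $N_0$. The weak-linking condition for the extension follows by naturality: the inclusion of complements carries the obstruction class for the abbreviation, which vanishes by hypothesis, onto the obstruction class for $g$, while the additional contribution from $s(D^1 \times D^3_+)$ lies in a group that is trivial by the same Mayer--Vietoris argument as in (a). The main obstacle will be in (b): identifying the image of the variation map $H^3(V, \de V) \to H_4(C_v, \de D^7_+ \cap \de C_v)$ induced by modifying $\zeta$ through difference classes, and showing that this image contains the initial weak-linking obstruction. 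The torsion-freeness hypothesis on $H_1(V, \de V)$ enters precisely here, and the verification amounts to re-running the \cite{BH70} construction in the setting of a proper embedding into a half-disk.
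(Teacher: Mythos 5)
Your overall strategy (duality computation for (a), obstruction theory plus difference classes for (b), relative obstruction theory plus a vanishing group for (c)) is the paper's, but the homological bookkeeping goes wrong at the crux, which is part (b). First, the primary difference class of two sections of the normal $S^2$-bundle $\nus^{-1}V\to V$ lies in $H^2(V;\pi_2(S^2))=H^2(V)\cong H_2(V,\de)$, not in $H^3(V,\de V)$ (and in any case $H^3(V,\de V)\cong H_1(V)$, not $H_1(V,\de V)$). This is not cosmetic: the whole argument rests on the identity $d(\xi,\xi')=\pm\widehat A^{-1}(\xi-\xi')[V]$, where $\widehat A\colon H_2(V,\de)\to H_4(C_v,\de D^7_+\cap\de C_v)$ is the relative Alexander duality \emph{isomorphism}, so that varying the section over a neighbourhood of the $2$-skeleton sweeps out exactly the group in which the obstruction $i_{\de C,C}\zeta[V]$ lives; that is what yields both existence (cancel the obstruction) and uniqueness over the $2$-skeleton. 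With $H^3(V,\de V)$ in place of $H^2(V)$ the variation map you propose cannot even be set up, let alone shown surjective. Second, the torsion-freeness of $H_1(V,\de)$ is needed precisely at the step you wave away: the $i=3$ obstruction to a section is the normal Euler class $\overline e(v)\in H^3(V)\cong H_1(V,\de)$, which satisfies $2\overline e(v)=0$ because the bundle has odd rank, and one concludes $\overline e(v)=0$ only because the target is torsion free. ``Stable triviality and a Wu-type argument'' is not a substitute (the normal bundle is only the stable inverse of $TV$, which need not be stably trivial), and the hypothesis plays no role in identifying the difference-class group, contrary to your claim.

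Part (a) also has an incorrect identification. Gluing $D^7_+=C_v\cup N(vV)$ along the sphere bundle shows that excision identifies $H_4(D^7_+,\,v(V)\cup\de D^7_+)$ with $H_4(C_v,\de C_v)$, i.e.\ homology rel the \emph{entire} boundary, whereas the obstruction lives in $H_4(C_v,\de D^7_+\cap\de C_v)$, rel only the face in $\de D^7_+$; vanishing of the former does not imply vanishing of the latter, since the comparison map has kernel receiving $H_4(\de C_v,\de D^7_+\cap\de C_v)$. The correct computation identifies the obstruction group with $H_2(V,\de)$ by relative Alexander duality, which is indeed zero for $V=D^1\times S^3$ --- so your conclusion is true, but your derivation proves the wrong group is zero. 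Part (c) is right in spirit, though $s(D^1\times D^3_+)$ does not deformation retract onto the part of its frontier lying in $N-\Int(\im s)$ (the face $s(D^1\times\de D^3_+)$ is interior to $\im s$); what one actually uses is that the relative obstruction groups of the pair $(D^1\times D^3_+,\,S^0\times D^3_+)$ vanish, and then the exact sequence of $(C_g,C_g\cap D^7_-)$ together with $H_4(C_g\cap D^7_-)\cong H_2(D^1\times S^3)=0$ to conclude that the extended section is weakly unlinked.
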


\begin{proof}[Proof of (a)]
Part (a) follows because
$$
H_4(C_v,\de D^7_+\cap\de C_v)\cong H_4(D^7-\inc(D^1\times S^3),\de D^7_+-\inc(S^0\times S^3))
\cong H_2(D^1\times S^3,\de)=0
$$
by Alexander duality, the homology exact sequence of a pair and the 5-lemma.
(Cf.~the proof of additivity of $\beta$, Lemma \ref{l:Add}, in \S\ref{s:lemmas-jhss}.)
\end{proof}

\begin{proof}[Proof of (b)]
(The proof is analogous to the `absolute' case \cite[Proposition 1.3]{BH70}.)
For sections $\xi$ and $\xi'$  of the normal bundle of $v$ the {\em difference class}
$d(\xi,\xi')\in H_2(V,\de)$ is defined in \S\ref{s:prelemmas-not}.
Alexander duality $\widehat A\colon H_q(V,\de)\to H_{q+2}(C_v,\de D^7_+\cap\de C)$ is defined analogously to the absolute case.
Then $d(\xi,\xi')=\pm\widehat A^{-1}(\xi-\xi')[V,\de V]$ analogously to \cite[Lemme 1.2]{BH70}.
This implies the uniqueness of a weakly unlinked section for $v$.

Let us prove the existence of a weakly unlinked section for $v$.
The normal Euler class $\overline e(v)$ assumes values in $H^3(V)\cong H_1(V,\de)$.
Since the normal bundle of $v$ is odd-dimensional, $2\overline e(v)=0$.%
\footnote{Alternatively, since $\overline e=0$ for embeddings of closed manifolds,
$2\overline e(v)=\overline e(2v)=0$ for the `double' $2v$ of $v$.}
Since $H_1(V,\de)$ is torsion free, $\overline e(v)=0$.
Since $V$ is connected and has non-empty boundary, it retracts to a 3-dimensional subpolyhedron.
Hence there is a section $\zeta\colon V \to\nus^{-1}V$ of $\nus$.
Denote by $U$ a closed neighbourhood of a 2-skeleton in $V$.
Construct a section $\xi'\colon U\to \nus^{-1}V$ such that
$d(\xi',\zeta|_U)=\mp\widehat A^{-1}\zeta[V]\cap U\in H_2(U,\de)$.
By \cite[Theorem 37.4]{St99} there is an extension $\xi$ of $\xi'$ to $V$ such that
$d(\xi,\zeta)=\mp \widehat A^{-1}\zeta[V]$.
Since $d(\xi,\zeta)=\pm\widehat A^{-1}(\xi-\zeta)[V]$, the extension $\xi$ is weakly unlinked.
\end{proof}

\begin{proof}[Proof of (c)]
Since $H^q(D^1\times D^3_-,\de D^1\times D^3_-;\pi_{q-1}(S^2))=0$ for every $q$,
obstruction theory entails that there is a section
$\xi\colon N_0\to\partial C_g$ extending a given weakly unlinked section for the abbreviation of $g$.
Define $x \in H_4(C_g)$ by $x:=i_{C_g}j_{\de C_g}^{-1}\ex^{-1}\xi[N_0,\de]$,
as in the definition of a weakly unlinked section for closed manifolds (\S\ref{s:defandplan-kl}).
We have
$$
x\cap D^7_+=\xi[g^{-1}(D^7_+)]=0\in H_4(C_g\cap D^7_+,C_g\cap \de D^7_+).
$$
Consider the following part of the homology exact sequence of the pair $(C_f, C_f \cap \de D^7_-)$:
$$
\xymatrix{H_4(C_g\cap D^7_-) \ar[r] \ar@{=}[d]^{\widehat A_g} & H_4(C_g) \ar[r]^{j}
& H_4(C_g,C_g\cap D^7_-) \ar@{=}[d]^{\ex_+}\\
H_2(D^1\times S^3)=0 & & H_4(C_g\cap D^7_+,C_g\cap \de D^7_+)}
$$
Since $0=x\cap D^7_+=\ex_+jx$, we have $x=0$, i.e.~$\xi$ is weakly unlinked for $g$.
\end{proof}

{\it Some notation for the proof of parametric additivity.}
Recall that $s\colon S^1\times D^3\to N$ is an embedding realizing $[s]\in H_1$ and
$R$ is the symmetry of $S^m$ with respect to the subspace defined by $x_1=x_2=0$.
Let $N_+:=\Cl(N-\im s)$ and $N_-:=\im s\subset N$.
For a cycle $X$ representing $[X]\in H_l$ and in general position to $N_+\cap N_-$ denote $X_\pm:=X\cap N_\pm$ a relative cycle representing
$[X]\cap N_\pm\in H_l(N_\pm,\partial)$.

By Lemma \ref{l:stand} we may assume that $f$ and an embedding $\tau_\alpha$ representing $\tau(l,b)$ are $s$-standardized and $i$-standardized, respectively.
Take the embedding $h$ given in the definition of parametric connected sum in \S\ref{s:defandplan-pr}.
Then both $f$ and $h$ satisfy the assumptions of Lemma \ref{l:weun}.c.
We have $f=h$ on $N_+$.
Using Lemma \ref{l:weun}.abc we can form a weakly unlinked section $\xi_h$ for $h$ as follows: we take the union of

$\bullet$ a weakly unlinked section for the abbreviation $N_+\to D^7_+$ of $h$ with

$\bullet$ the restriction to $s(D^1_+\times D^3_-)$ of a weakly unlinked section for the abbreviation $N_-\to D^7_-$ of $h$.

A weakly unlinked section $\xi_f$ for $f$ can be constructed analogously: simply replace $h$ by $f$.

\begin{proof}[Completion of the proof of parametric additivity for $\varkappa$]
For every $x\in H_2$ take an integer 2-cycle (or closed oriented 2-submanifold) $X\subset N$ representing $x$.
By general position we may assume that $X\subset N_+$.
There is an integer 3-chain $X'$ in $D^7_+$ such that $\partial X'=\xi_h X$.
So parametric additivity for $\varkappa$ holds because
$$
\varkappa(h)\cap_N x=\lk\phantom{}_{S^7}(hN,\xi_h X) = hN\cap_{S^7} X'\overset{(3)}=
fN\cap_{S^7} X'\overset{(4)}=\varkappa(f)\cap_N x,$$
where

$\bullet$ the equality (3) follows because $h=f$ on $N_+$ and $hN_-,fN_-\subset D^7_-$;

$\bullet$ the equality (4) is proved in the same ways as the first two equalities,
with $h$ replaced by $f$.
\end{proof}


\begin{proof}[Completion of the proof of parametric additivity for $\lambda$]
for every $x,y\in H_3$ take integer 3-cycles (or closed oriented 3-submanifolds) $X,Y\subset N$ representing $x,y$.
There are integer 4-chains $Y'_\pm$ in $D^7_{\pm}$ such that
$\partial(Y'_++Y'_-)=\xi_h Y$ and $\partial Y'_\pm\cap hN=\emptyset$.
We have
$$\lambda(h)(x,y)\overset{(a)}=\lk\phantom{}_{S^7}(hX,\xi_hY)=hX_+\cap_{S^7} Y'_+ + hX_-\cap_{S^7} Y'_-.$$
So parametric additivity for $\lambda$ follows because
$$
hX_+\cap_{S^7} Y'_+\overset{(*)}=fX_+\cap_{S^7} Y'_+\overset{(**)}=\lambda(f)(x,y)\quad\text{and}
$$
$$
hX_-\cap_{S^7} Y'_-\overset{(1)}= \tau_\alpha s^{-1}X_-\cap_{S^7} RY'_-\overset{(2)}=
(\lambda\tau_\alpha)(x_s,y_s)\overset{(3)}= l([s]\cap_N x)([s]\cap_N y).
$$
Here

$\bullet$ equality (*) holds because because $h=f$ on $N_+$ and $hN_-,fN_-\subset D^7_-$;

$\bullet$ equality (**) holds by equality (a) for $h$ replaced by $f$, because $fs=\tau_0|_{S^1\times D^3_-}$,
so $fX_-\cap_{S^7} Y'_-=0$ analogously to the calculation of $\lambda$ (Lemma \ref{l:calam}.c);

$\bullet$ equality (1) holds because $R$ preserves the orientation;

$\bullet$ $x_s:=([s]\cap_N x)[1_1\times S^3]\in H_3(S^1\times S^3)$ and analogously define $y_s$;

$\bullet$ equality (2) is proved below;

$\bullet$ equality (3) holds by the calculation of $\lambda$ (Lemma \ref{l:calam}.b).

To prove equality (2), we first apply the analogue of equality (**) for $f$ replaced by $\tau_\alpha$.
Observe that $R\xi s$ is a weakly unlinked section for $\tau_\alpha$.
This shows that the left hand side of equality (2) equals to the value of $\lambda \tau_\alpha$ on certain
homology classes in $H_3(S^1 \times S^3)$.
We have the equality $[s^{-1}X_-]=([s]\cap_N x)[1_1\times D^3_+]=x_s\cap(1_1\times D^3_+)$ and the same with $X,x$ replaced by $Y,y$.
Hence these homology classes are $x_s$ and $y_s$.
\end{proof}

\subsection{Agreement of Seifert classes (Lemmas \ref{l:jhss} and \ref{l:Agr})}\label{s:prelemmas-hss}

\begin{Lemma}\label{l:phi-unl}
If $\varkappa(f_0)=\varkappa(f_1)$ and $\lambda(f_0)=\lambda(f_1)$, \ $\varphi:\de C_0\to\de C_1$ is a bundle isomorphism and $\xi:N_0\to\de C_0$ a weakly unlinked section for $f_0$,
then $\varphi\xi$ is a weakly unlinked section for $f_1$.
\end{Lemma}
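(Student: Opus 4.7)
The plan is to show that $(\varphi\xi)_*[N_0]=\de A_1[N]$ in $H_4(\de C_1)$ (after applying the identification $(\ex j_1)^{-1}\colon H_4(\nus_1^{-1}N_0,\partial)\to H_4(\de C_1)$, which is an isomorphism since $H_4(\nus_1^{-1}B^4)=0$). Weak unlinkedness for $\varphi\xi$ follows at once, because $\de A_1[N]$ lies in $\mathrm{image}(\de_{C_1})\subseteq\ker i_{C_1,*}$ by exactness of the pair $(C_1,\de C_1)$.

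First I would check naturality: since $\varphi$ is a bundle isomorphism covering $\mathrm{id}_N$, it restricts to diffeomorphisms $\nus_0^{-1}N_0\to\nus_1^{-1}N_0$ and $\nus_0^{-1}B^4\to\nus_1^{-1}B^4$, so $\varphi\xi$ is a section of $\nus_1$ over $N_0$ and $\varphi_*$ intertwines the excision and inclusion maps $\ex$ and $j$ appearing in the definition of weak unlinkedness. Hence it suffices to prove $\varphi_*\xi_*[N_0]=\de A_1[N]$ in $H_4(\de C_1)$.

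Next I would establish that $\xi_*[N_0]=\de A_0[N]$ in $H_4(\de C_0)$. By the Alexander duality diagram of \S\ref{s:prelemmas-not}, $H_5(C_0,\de C_0)\cong H_4(N)\cong\mathbb{Z}$, so $\ker i_{C_0,*}$ is the cyclic subgroup $\langle\de A_0[N]\rangle$, and weak unlinkedness of $\xi$ places $\xi_*[N_0]$ in this kernel. The Gysin sequence for the oriented $2$-sphere bundle $\de C_0\to N$ (whose cap-with-Euler connecting map vanishes because the $3$-dimensional oriented normal bundle satisfies $2e=0$ while $H_1$ is torsion free) yields a short exact sequence $0\to H_2\xrightarrow{\nus_0^!} H_4(\de C_0)\xrightarrow{\nus_{0,*}} H_4(N)\to 0$. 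Since both $\xi_*[N_0]$ and $\de A_0[N]$ project under $\nus_{0,*}$ to the fundamental class $[N]$ (using $\nus_0\xi=\mathrm{id}_{N_0}$ and the Alexander-duality definition of $A_0[N]$), their difference is $\nus_0^!(u)$ for some $u\in H_2$; applying $i_{C_0,*}$ then gives $\widehat{A}_0(u)=0$, and the injectivity of $\widehat{A}_0=i_{C_0}\circ\nus_0^!\colon H_2\to H_4(C_0)$ forces $u=0$.

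The final and principal obstacle is to show $\varphi_*\de A_0[N]=\de A_1[N]$ in $H_4(\de C_1)$; this is the $q=4$ case of the agreement of Seifert classes (Lemma \ref{l:Agr}.a, compare \cite[Agreement Lemma 2.5]{CS11}), which is where the hypothesis $\varkappa(f_0)=\varkappa(f_1)$ enters. The argument parallels the previous step: the two classes project to the same $[N]\in H_4(N)$ under $\nus_{1,*}$ (because $\nus_1\varphi=\nus_0$), so they differ by $\nus_1^!(v)$ for some $v\in H_2$; combining the formulas $\nus_k^!\overline{\varkappa(f_k)}=\xi-\de A_k$ on $H_2$ (Lemma \ref{l:La}($\varkappa''$)) with the relation $\varphi_*\nus_0^!=\nus_1^!$ pins $v$ down to $\varkappa(f_1)-\varkappa(f_0)=0$. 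Combining the three steps yields $(\varphi\xi)_*[N_0]=\de A_1[N]$ and hence the weak unlinkedness of $\varphi\xi$ for $f_1$. Note that the hypothesis $\lambda(f_0)=\lambda(f_1)$ does not actually enter this argument; it is included in the lemma statement for uniformity with later applications, notably to the $\pi$-isomorphisms produced by Lemma \ref{l:piso}.
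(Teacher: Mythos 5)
Your steps 1 and 2 are fine (step 2 is essentially \cite[Section Lemma 2.5.a]{Sk10}, quoted in the paper's proof of Lemma \ref{l:La}.a), and you are right that $\lambda(f_0)=\lambda(f_1)$ is never used. The problem is step 3. As stated, it is circular: Lemma \ref{l:Agr}.a is proved in the paper (in both versions given in \S\ref{s:prelemmas-hss}) by \emph{first} invoking Lemma \ref{l:phi-unl} to know that $\varphi\xi_0$ is weakly unlinked for $f_1$, so you cannot quote the $q=4$ case of the Agreement Lemma here. The same circularity infects your appeal to Lemma \ref{l:La}($\varkappa''$): that identity is stated for a weakly unlinked section, and it concerns degree-$2$ classes (``on $H_2$''), not the degree-$4$ classes $\de A_k[N]$ you are comparing. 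If you instead apply it (or rather its degree-$4$ analogue, your step 2) to a genuine weakly unlinked section $\xi_1$ for $f_1$, you are reduced to showing that the difference class $d(\varphi\xi,\xi_1)\in H_2$ vanishes --- and note that the obvious test, applying $i_{C_1}$ to $\varphi\xi[N_0]-\de_1A_1[N]=\nus_1^!(v)$, is useless because $i_{C_1}\varphi\xi[N_0]$ is exactly the quantity whose vanishing is the conclusion of the lemma.

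The missing ingredient is the factor of $2$. The actual relation between normal Euler classes and difference classes of sections of the $S^2$-bundle $\de C_1\to N$ is $e^*(\zeta^\perp)-e^*(\eta^\perp)=\pm 2\,d(\zeta,\eta)$ (\cite[Lemme 1.7]{BH70}); combined with $e^*(\xi^\perp)=\varkappa(f)$ (Lemma \ref{l:La}.e) this gives only $2v=\pm(\varkappa(f_0)-\varkappa(f_1))=0$, not $v=\varkappa(f_1)-\varkappa(f_0)$ as you assert. Two sections with equal normal Euler class need not have zero difference class in general; one concludes $v=0$ only because $H_2$ has no $2$-torsion (which follows from the standing hypothesis that $H_1$ is torsion free). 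This is exactly the paper's proof --- compare $e^*((\varphi\xi)^\perp)=\varkappa(f_0)=\varkappa(f_1)=e^*(\xi_1^\perp)$, divide by $2$ using torsion-freeness, and conclude $i_{C_1}j_{\de C_1}^{-1}\ex^{-1}\varphi\xi=i_{C_1}j_{\de C_1}^{-1}\ex^{-1}\xi_1=0$ --- so your argument needs this Euler-class/difference-class step spelled out and the torsion-freeness hypothesis invoked; without it the proof does not close.
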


\begin{proof}
The proof we give follows the same line of reasoning as \cite[\S3, end of proof of Lemma 2.5]{CS11}.
By Lemma \ref{l:weunl} there exists a weakly unlinked section $\xi_1$ for $f_1$.
By Lemma \ref{l:La}.e
$$e^*((\varphi\xi)^\perp)=e^*(\xi^\perp)=\varkappa (f_0)=\varkappa (f_1)=e^*(\xi_1^\perp).$$
For any pair of sections $\zeta,\eta:N_0\to\partial C_1$ we have
$$ e^*(\zeta^\perp)-e^*(\eta^\perp)=\pm2d(\zeta,\eta)=\pm2i_Cj_{\partial C}^{-1}\ex\phantom{}^{-1}(\zeta-\eta),$$
where $d(\zeta,\eta)\in H_2(N_0;\pi_2(S^2))$ is the difference class \cite[Lemme 1.7]{BH70},
defined in \S\ref{s:prelemmas-not} above.
We apply this for $\zeta=\xi_1$ and $\eta=\varphi\xi$.
Since $H_2(N)$ has no 2-torsion, we obtain the equation
$i_Cj_{\partial C}^{-1}\ex^{-1}\varphi\xi=i_Cj_{\partial C}^{-1}\ex^{-1}\xi_1=0$; i.e.~$\varphi\xi$ is weakly unlinked.
\end{proof}

{\bf Definition of a joint Seifert class.}
A {\it joint Seifert class for $x\in H_q$ and a bundle isomorphism $\varphi:\de C_0\to\de C_1$}
is an element
$$X\in H_{q+1}(M_\varphi)\quad\text{such that}\quad X\cap C_k=A_kx\in H_{q+1}(C_k,\partial)\quad\text{for each}\quad k=0,1.$$
When the bundle isomorphism $\varphi$ is clear from the context,
we shall simply call $X$ a joint Seifert class for $x \in H_q$.
Note that a joint Seifert class, as defined in \S\ref{s:defandplan-b}, is
a joint Seifert class for $[N] \in H_4$ by Lemma \ref{l:desei}.a below.

\begin{Lemma}[Agreement of Seifert classes] \label{l:Agr}
Assume that $\varkappa(f_0)=\varkappa(f_1)$, that $\lambda(f_0)=\lambda(f_1)$,%
\footnote{Of these assumptions we need none for (a,b) and $q\not\in\{2,3,4\}$,
and only $\varkappa(f_0)=\varkappa(f_1)$ for (a,b) and $q\in\{2,4\}$.}
and that $\varphi:\partial C_0\to\partial C_1$ a bundle isomorphism.
Assume that the coefficients are $\Z$ or $\Z_d$ for some $d$; they are omitted from the notation.
Let $\de_k:=\de_{\de C_k,C_k}$.

(a) $\varphi\partial_0 A_0=\partial_1 A_1:H_q\to H_q(\de C_1)$.

(b) $\partial:H_{q+1}(M_\varphi,C_0)\to H_q(C_0)$ is zero for each $q$.

(c) For every $x\in H_q$ there is a joint Seifert class for $x$.
\end{Lemma}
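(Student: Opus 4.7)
The plan is to establish (a) directly from the sphere-bundle structure of $\nus_k$ and the interaction of $\varphi$ with the invariants $\varkappa, \lambda$, and then to deduce (b) and (c) by standard diagram chases.

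For part (a), I would start with the observations that $\varphi$ intertwines the two sphere bundles ($\nus_1 \varphi = \nus_0$ and $\varphi \circ \nus_0^! = \nus_1^!$) and that the identity $\nus_k \de_k A_k = \id_{H_q}$ can be read off from the homological Alexander duality diagram of \S\ref{s:prelemmas-not}. Together these yield $\nus_1(\varphi \de_0 A_0 - \de_1 A_1) = 0$. The Gysin sequence for the oriented $S^2$-bundle $\nus_1 \colon \de C_1 \to N$, whose Euler class vanishes since the oriented normal $3$-plane bundle has order-$2$ Euler class and $H_1$ is torsion-free, identifies $\ker \nus_1$ with $\im \nus_1^!$; hence the difference lies in $\nus_1^! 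H_{q-2}(N)$, which is zero for $q \notin \{2, 3, 4\}$. For $q \in \{2, 3\}$ I would apply Lemma \ref{l:phi-unl} to promote a weakly unlinked section $\xi_0$ for $f_0$ to the weakly unlinked section $\xi_1 := \varphi \xi_0$ for $f_1$, and then use the identities $\nus_k^! \overline{\varkappa_k} = \xi_k - \de_k A_k$ on $H_2$ and $\nus_k^! \overline{\lambda_k} = \xi_k - \de_k A_k$ on $H_3$ from Lemma \ref{l:La}.$\varkappa''$,$\lambda''$. These collapse the difference to $\nus_1^!(\overline{\varkappa_1} - \overline{\varkappa_0})$ or $\nus_1^!(\overline{\lambda_1} - \overline{\lambda_0})$, vanishing by hypothesis.

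For $q = 4$ the section $\xi_k$ is defined only over $N_0$, so I would instead use a uniqueness argument: because $\nus_k^{-1} B^4 \simeq S^2$ has $H_4 = 0$, the class $\xi_k[N_0] \in H_4(\nus_k^{-1} N_0, \de)$ admits a unique lift $\tilde\xi_k \in H_4(\de C_k)$, and I claim $\tilde\xi_k = \de_k A_k[N]$. Both lifts project to $[N]$ under $\nus_k$, hence differ by $\nus_k^! u$ for some $u \in H_2$; the weakly unlinked condition gives $i_{C_k} \tilde\xi_k = 0$, while $i_{C_k} \de_k A_k[N] = 0$ by exactness, so $\widehat{A}_k u = i_{C_k} \nus_k^! u = 0$, forcing $u = 0$ by injectivity of $\widehat{A}_k$. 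Then $\varphi \tilde\xi_0 = \tilde\xi_1$ (from $\xi_1 = \varphi \xi_0$ together with uniqueness of the lift) yields the desired equality. This identification at $q = 4$ is the main technical step of the proof.

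Part (b) follows by a diagram chase: excision gives $H_{q+1}(M_\varphi, C_0) \cong H_{q+1}(C_1, \de)$, under which the connecting map becomes, up to sign, $i_0 \circ \varphi^{-1} \circ \de_1$. Since $A_1 \colon H_q \to H_{q+1}(C_1, \de)$ is an Alexander isomorphism, it suffices to evaluate on $A_1 x$, where by (a) the composition becomes $i_0 \de_0 A_0 x$, vanishing by exactness of the long exact sequence of $(C_0, \de C_0)$. For (c), part (b) makes $H_{q+1}(M_\varphi) \to H_{q+1}(M_\varphi, C_0) \cong H_{q+1}(C_1, \de)$ surjective, so I lift $A_1 x$ to $X' \in H_{q+1}(M_\varphi)$ with $X' \cap C_1 = A_1 x$; part (a) then forces $\de_0(X' \cap C_0) = \de_0 A_0 x$, so $(X' \cap C_0) - A_0 x \in \ker \de_0 = \im j_0$. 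Adjusting $X'$ by the image in $H_{q+1}(M_\varphi)$ of any preimage $z \in H_{q+1}(C_0)$ eliminates this discrepancy while preserving $X' \cap C_1$, producing the required joint Seifert class.
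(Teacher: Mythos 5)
Your proposal is correct, but it follows the route of the paper's \emph{alternative} proof of parts (a) and (b) rather than the main one. You establish (a) first, by showing that the difference $\varphi\de_0A_0-\de_1A_1$ lies in $\im\nus_1^!$ and then killing the $H_{q-2}$-component; the paper's main proof instead first computes $i_k$ on the splitting $\xi_k\oplus\nus_k^!$ of $H_q(\de C_k)$ to get $\varphi\ker i_0=\ker i_1$, deduces (b), and only then (a). For $q\in\{2,3\}$ your use of the identities $\nus_k^!\overline{\lambda_k}=\xi_k-\de_kA_k$ and $\nus_k^!\overline{\varkappa_k}=\xi_k-\de_kA_k$ (Lemma \ref{l:La}.$\lambda''$,$\varkappa''$, which the paper flags as existing precisely for the alternative proof) is exactly that argument. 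For $q=4$ your uniqueness-of-lift identification of $j_{\de C_k}^{-1}\ex^{-1}\xi_k[N_0]$ with $\de_kA_k[N]$ (via $H_4(\nus_k^{-1}B^4)=H_3(\nus_k^{-1}B^4)=0$, the weakly unlinked condition, and injectivity of $\widehat A_k$) is sound and is a self-contained replacement for the paper's citation of \cite[Lemma 2.5.b]{Sk10}; that is a genuine small gain in self-containedness. Your derivations of (b) from (a) and of (c) by lifting $A_1x$ and correcting the $C_0$-restriction by $i_{C_0,M_\varphi}z$ are equivalent to the paper's exact-sequence and Mayer--Vietoris arguments. One imprecision to fix: your stated reason that the difference vanishes for $q\notin\{2,3,4\}$ because $H_{q-2}(N)=0$ fails for $q=5,6$, where $H_3$ and $H_4$ are nonzero; in that range the correct (and trivial) reason is that the source $H_q(N)$ itself vanishes, as the paper notes.
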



\begin{proof}
Part (c) follows by (a) and the Mayer-Vietoris sequence for $M_\varphi$:
$$H_5(M_\varphi)
\xrightarrow{~r_0\oplus r_1~} H_5(C_0,\partial)\oplus H_5(C_1,\partial)
\xrightarrow{~\partial_0-\partial_1~} H_4(\partial C_0).$$

{\it For $q\ge5$} part (a) is trivial and part (b) follows because $H_{q+1}(M_\varphi,C_0)\cong H_q=0$.

{\it For $q=1$} part (b) is trivial and part (a) follows because $\varphi\partial_0 A_0=\partial_1 A_1=\nus_0^{-1}$.

{\it Now assume that $q\in\{2,3,4\}$.}
Let $\xi_0:N_0\to\partial C_0$ be a weakly unlinked section for $f_0$.
Since $\varkappa (f_0)=\varkappa (f_1)$, $\lambda(f_0)=\lambda(f_1)$ and $H_2$ has no 2-torsion,
by Lemma \ref{l:phi-unl} $\xi_1:=\varphi\xi_0$ is a weakly unlinked section for $f_1$.
In this proof $k\in\{0,1\}$.
The map
$$
\xi_k\oplus\nus^!_k \colon H_q\oplus H_{q-2}\to H_q(\partial C_k)
$$
is an isomorphism for each $q\in\{2,3\}$.
The map
$$
(j_k^{-1}\ex\phantom{}^{-1}_k\xi_k)\oplus\nus^!_k \colon H_4(N_0,\de)\oplus H_2\to H_4(\partial C_k)
$$
is an isomorphism.
Let $i_k:=i_{\de C_k,C_k}$.
We have $i_k\nus^!_k=\widehat A_k$.
By Lemma \ref{l:La}.$\lambda$,$\varkappa$ and \cite[Lemma 2.5.b]{Sk10} we have
$$i_k\xi_k=\widehat A_k\overline{\varkappa(f_k)}\quad\text{on}\quad H_2,\quad i_k\xi_k=\widehat A_k\overline{\lambda(f_k)}\quad\text{on}\quad H_3
 \quad\text{and}\quad i_kj_k^{-1}\ex\phantom{}^{-1}_k\xi_k[N_0]=i_k\de_kA_k[N]=0.
$$
Hence $\varphi\ker i_0=\ker i_1=\im\partial_1$.
Then the following commutative diagram
$$
\xymatrix{
& H_{q+1}(M_\varphi, C_0) \ar@{=}^{\ex}[d] \ar[r]^{\partial} & H_q(C_0) \\
H_q \ar[r]^{A_1} & H_{q+1}(C_1,\partial) \ar[r]^{\partial_1}
& H_q(\partial C_1) \ar[u]^{i_0\varphi^{-1}} \ar[r]^{i_1} & H_q(C_1), }
$$
shows that $i_0\varphi^{-1}\partial_1=0$, which implies (b).

Since $\nus_1\varphi\partial_0A_0=\nus_0\partial_0A_0=\id H_q=\nus_1\partial_1A_1$, we have
$\varphi\partial_0 A_0-\partial_1 A_1=\nus_1^!y$ for some map $y:H_q\to H_{q-2}$.
Applying $i_1$ to both sides and using that $\varphi\partial_0 A_0x\in\varphi\ker i_0=\ker i_1$
we obtain $0=\widehat A_1y$.
Hence $y=0$, i.e.~(a) holds.
\end{proof}


\begin{proof}[Proof of Lemma \ref{l:jhss}]
By Lemma \ref{l:Agr}.b, $i_{C_0, M_\varphi}$ is injective.
Since $H_2(M_\varphi,C_0) \cong H_2(C_1,\de)\cong H_1$ is torsion free, $i_{C_0, M_\varphi}$ is split injective.
As $S^2_{f_0} \in H_2(C_0) \cong \Z$ is primitive, $i_{C_0,M_\varphi} S^2_{f_0}\in H_2(M_\varphi)$ is primitive.
So by Poincar\'e duality there is a joint Seifert class $Y\in H_5(M_\varphi)$.
\end{proof}

\subsection{Spin bundle isomorphisms (Lemma \ref{l:spbuis})}


\newcommand{\spn}{\mathrm{sp}} 

For $k=0,1$ let $\spn_k$ be the (stable tangent) spin structure on $\de C_k$ induced from the trivial spin structure on $S^7$.

A bundle isomorphism $\varphi\colon \de C_0\to\de C_1$ is called {\bf spin} if it carries
$\spn_0$ to $\spn_1$.

\begin{Lemma}[Spin Lemma]\label{l:spbuis}
(a) For a bundle isomorphism $\varphi\colon  \de C_0 \to \de C_1$ the manifold $M_\varphi$ is spin if and only if $\varphi$ is spin.
Moreover, for every spin bundle isomorphism $\varphi \colon \de C_0 \to \de C_1$, there is a unique spin structure on
$M_\varphi$ whose restrictions to $C_0,C_1$ are induced from $S^7$.

(b) A spin bundle isomorphism exists and is unique (up to homotopy) over the 2-skeleton of any triangulation of $N$.
\end{Lemma}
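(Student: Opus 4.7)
For the forward direction, I would glue: if $\varphi$ is spin, the stable framings over the $2$-skeleta of $C_0$ and $C_1$ defining $\spn_0, \spn_1$ match along $\de C_0 = \de C_1$ by the spin condition on $\varphi$, producing a spin structure on $M_\varphi$. For the reverse direction, the key observation is that $H^1(C_k;\Z_2) = 0$: Alexander duality in $S^7$ gives $H_1(C_k;\Z)\cong\widetilde H^5(N;\Z)=0$ since $\dim N=4$, and universal coefficients then promote this to $H^1(C_k;\Z_2)=0$. Thus the inherited $\spn_k$ is the unique spin structure on $C_k$; any spin structure on $M_\varphi$ must restrict to $\spn_k$, and the boundary match through $\varphi$ then forces $\varphi$ to carry $\spn_0$ to $\spn_1$. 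For the uniqueness clause I would run the Mayer--Vietoris sequence
\[
H^0(C_0;\Z_2)\oplus H^0(C_1;\Z_2)\to H^0(\de C_0;\Z_2)\to H^1(M_\varphi;\Z_2)\to H^1(C_0;\Z_2)\oplus H^1(C_1;\Z_2),
\]
in which the first map is surjective by connectedness of $\de C_0$, so that $\ker\bigl(H^1(M_\varphi;\Z_2)\to H^1(C_0;\Z_2)\oplus H^1(C_1;\Z_2)\bigr) = 0$ and the spin structure on $M_\varphi$ inducing $(\spn_0,\spn_1)$ is unique.

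\textbf{Plan for (b), existence.} First I would verify $\nu_{f_0}\cong\nu_{f_1}$ as oriented $3$-plane bundles over $N$: both have vanishing Euler class (odd rank), and since $\nu_{f_k}\oplus TN$ is stably isomorphic to $TS^7|_N$ (hence stably trivial), one reads off $w_2(\nu_{f_k})=w_2(N)$ and $p_1(\nu_{f_k})=-p_1(TN)$, both independent of $k$. These classify oriented $3$-plane bundles over a $4$-complex via the low-dimensional Postnikov tower of $BSO(3)$, so I fix some bundle isomorphism $\varphi_0$. The failure of $\varphi_0$ to be spin is measured by the difference $(\varphi_0)_*\spn_0-\spn_1\in H^1(\de C_1;\Z_2)\cong H^1(N;\Z_2)$ (the isomorphism via Leray--Hirsch, since the $S^2$-fiber has $H^1=0$). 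Composing $\varphi_0$ with a bundle automorphism $\psi$ of $\de C_0$ shifts this difference by the primary obstruction of $\psi$ in $H^1(N;\pi_1(SO(3)))=H^1(N;\Z_2)$; under the torsion-free hypothesis on $H_1$ the reduction $H^1(N;\Z)\to H^1(N;\Z_2)$ is surjective, and every integral class $N\to S^1$ composed with the inclusion $S^1\subset SO(2)\subset SO(3)$ realizes a bundle automorphism whose primary obstruction is the mod-$2$ reduction. So the difference can be killed, producing a spin bundle isomorphism.

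\textbf{Plan for (b), uniqueness over $N^{(2)}$, and the main obstacle.} Given two spin bundle isomorphisms $\varphi_0,\varphi_1$, the composition $\psi=\varphi_0^{-1}\varphi_1$ is a bundle automorphism of $\de C_0$ preserving $\spn_0$. Standard obstruction theory for sections of the adjoint bundle (fiber $SO(3)$) shows $\psi\simeq\id$ over $N^{(2)}$ iff its primary obstruction in $H^1(N;\Z_2)$ vanishes, since the next obstruction lies in $H^2(N;\pi_2(SO(3)))=0$. But $\psi$ preserves $\spn_0$ precisely iff this primary obstruction is zero, so $\varphi_0\simeq\varphi_1$ on $N^{(2)}$. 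The main technical point to verify carefully is the identification, used twice, of the spin-theoretic difference $\varphi_*\spn_0-\spn_1$ (a class on the $6$-manifold $\de C_0$) with the mod-$2$ primary obstruction on the base $N$; this rests on the Leray--Hirsch identification $H^1(\de C_0;\Z_2)\cong H^1(N;\Z_2)$ together with the fact that $\pi_1(SO(3))=\Z_2$ is the kernel of the double cover $\Spin(3)\to SO(3)$.
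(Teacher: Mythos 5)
Your overall strategy is sound and for the most part coincides with the paper's. In part (a) your reverse implication is a genuinely different (and cleaner) argument: instead of computing $w_2^*(M_\varphi)=i_{C_0,M_\varphi}\widehat A_0\,d(\varphi_*\mathrm{sp}_0,\mathrm{sp}_1)$ and invoking injectivity of $i_{C_0,M_\varphi}$ in degree $5$, you use that each $C_k$ is simply connected (general position, or your Alexander duality computation), hence carries a unique spin structure, which forces any spin structure on $M_\varphi$ to restrict to the ones induced from $S^7$ and therefore to match along the boundary; this is correct and shorter. Your Mayer--Vietoris uniqueness argument is the same as the paper's, and your Dold--Whitney derivation of the existence of \emph{some} bundle isomorphism replaces the paper's citation of \cite[Lemma 2.4]{CS11} harmlessly.

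The genuine gap is in part (b), at exactly the point you flag at the end: the identification of the spin-structure difference $d(\psi_*\mathrm{sp}_0,\mathrm{sp}_0)\in H^1(\de C_0;\Z_2)\cong H^1(N;\Z_2)$ with the primary obstruction $d(\psi,\id)\in H^1(N;\pi_1(SO_3))$ of the bundle automorphism. Both your existence step (killing the difference by composing with a gauge twist) and your uniqueness step (``$\psi$ preserves the spin structure iff its primary obstruction vanishes'') rest on this equality, and it is not a formality: one must show that the map ``gauge obstruction $\mapsto$ induced change of spin structure'' is the identity on $H^1(N;\Z_2)$ rather than, say, zero. The danger is real --- the exactly analogous statement for framings (Lemma \ref{l:strdif}.b) reads $d(\Psi_*\mathrm{st},\Phi_*\mathrm{st})=2\,d(\Psi,\Phi)$, the factor $2$ coming from the stabilization $\pi_3(SO_3)\to\pi_3(SO)$. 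Here one needs that a twist by the generator of $\pi_1(SO_3)$ along a fiber segment changes the spin structure by the generator of $\Z_2$; the paper (Lemma \ref{l:spdif}.b) proves this by choosing the twisting loop inside $SO_2\subset SO_3$ so that it fixes a point $b$ of the fiber $S^2$ and then evaluating the relative difference class on a segment $a\times D^1\times b$. The remark that $\Z_2$ is the kernel of $\Spin(3)\to SO(3)$ is not by itself a proof of this. A second, smaller, issue: a map $N\to SO_3$ does not act on the (generally non-trivial) bundle $\nu_0$, so your ``$N\to S^1\subset SO_2\subset SO_3$'' construction must be localized to a tubular neighborhood of an oriented $3$-submanifold dual to the class, where $\nu_0$ is trivial --- this is how the paper realizes an arbitrary $v\in H^1(N;\Z_2)$, and it is also where the torsion-freeness of $H_1$ enters, to produce the integral lift.
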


{\bf Remark.}
(a) The `if' part of the Spin Lemma \ref{l:spbuis}.a can be proved as follows:
If $\varphi$ is spin, then the spin structures on $C_0,C_1$ coming from $S^7$ agree
up to homotopy on the boundaries.
Hence they can be glued together to give a spin structure on $M_\varphi$.

Below we give another proof together with the proof of the `only if' and the `moreover' parts.

(b) In order to illustrate the main idea of the Spin Lemma \ref{l:spbuis}.b let us sketch of a proof for
$N=S^1\times S^3$.
(The sketch is not formally used in the proof.)
Take a smooth map $\alpha \colon S^1\to SO_3$ representing the generator of $\pi_1(SO_3)$.
Identify $\partial C_f$ and $S^1\times S^3\times S^2$.
Define a bundle automorphism $f_\alpha$ of $S^1\times S^3\times S^2$ by
the formula $f_\alpha(x,y,z) = (x,y,\alpha(x)z)$.
The manifold $\de C_f$ has precisely two stable tangent spin structures
and the self-bundle-isomorphism $f_\alpha$ acts by exchanging these.
This implies the existence.  The uniqueness follows from the fact
that every spin bundle isomorphism is isotopic to $f_\alpha$ or the identity.

\smallskip
{\bf Definition of the difference class $d(\spn,\spn')$.}
Let $Q$ be a $q$-manifold.
For spin structures $\spn$ and $\spn'$ on  $Q$ their {\it difference} in
$$
H_{q-1}(Q,\de;\Z_2)=H^1(Q;\Z_2)=H^1(Q;\pi_1(SO))
$$
is the primary obstruction to homotopy from $\spn$ to  $\spn'$, cf.~\S\ref{s:prelemmas-not}.
(This is the homology class represented by the degeneracy set of a general position homotopy, through ordered $(q{-}1)$-sets of vectors, from $\spn$ to  $\spn'$.)

The following facts about spin structures are well known, follow by elementary obstruction theory, and will be used without mention:

$\bullet$ if the difference of $\spn$ and $\spn'$ is zero, then $\spn$ and $\spn'$ are equivalent, and

$\bullet$ for a $q$-manifold $Q$ the difference with a fixed spin structure is a 1--1 correspondence between $H_{q-1}(Q,\de;\Z_2)=H^1(Q;\Z_2)$ and spin structures on $Q$ up to equivalence.


For spin structures $\spn$ and $\spn'$ on $\de C_1$ let
$$
d(\spn,\spn')\in H_3(N; \Z_2)
$$
be the  preimage of the difference class in $H_5(\de C_1;\Z_2)$ under the isomorphism $\nus^!$.

\begin{proof}[Proof of the Spin Lemma \ref{l:spbuis}.a]
We have
$$\varphi\text{ is spin}\quad\Leftrightarrow\quad
d(\varphi_*\spn_0,\spn_1)=0\quad\Leftrightarrow\quad
w_2^*(M_\varphi)=0\quad\Leftrightarrow\quad
M_\varphi\text{ is spin}.$$
Here the second equivalence holds because

$\bullet$ $w_2^*(M_\varphi)=i_{C_0,M_\varphi}\widehat A_0d(\varphi_*\spn_0,\spn_1)$
by the naturality of the primary obstruction (the details are analogous to Lemma \ref{l:pondif} below), and

$\bullet$ $H_6(M_\varphi,C_0)\cong H_5=0$, so $i_{C_0,M_\varphi}$ is injective
(cf.~the Agreement Lemma \ref{l:Agr}.b for $s=5$).

Let us prove the `moreover' part.

Existence follows by the proof of the `if' part above.

Let us prove uniqueness.
The Mayer-Vietoris sequence for $M_\varphi=C_0\cup(-C_1)$ gives that the sum of the restriction homomorphisms
$H^1(M_\varphi;\Z_2) \to H^1(C_0;\Z_2) \oplus H^1(C_1;\Z_2)$ is injective.
So a spin structure on $M_\varphi$ is determined up to equivalence by its restrictions to $C_0$ and $C_1$.
Hence the required spin structure is unique.
\end{proof}

For the proof of the Spin Lemma \ref{l:spbuis}.b we need the following definitions and lemmas.

For bundle isomorphisms $\varphi,\psi\colon \de C_0\to \de C_1$ their {\it difference}
$$
d(\varphi,\psi) \in H^1(N;\pi_1(SO_3))=H_3(N; \Z_2)
$$
is the primary obstruction to homotopy of bundle isomorphisms between $\varphi$ and $\psi$.

\begin{Lemma}\label{l:spdif}
(a) For every bundle isomorphism $\varphi\colon \de C_0\to\de C_1$ and $v\in H_3(N;\Z_2)$ there is a bundle isomorphism
$\varphi_v\colon \de C_0\to\de C_1$ such that $d(\varphi_v,\varphi)=v$.

(b) For every pair of bundle isomorphisms $\varphi,\psi\colon D_0\to D_1$ and every spin structure $\spn$ on $\de C_0$ we have $d(\psi_*\spn,\varphi_*\spn)=d(\psi,\varphi)$.
\end{Lemma}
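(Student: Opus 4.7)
I would prove part (b) first, since it reduces to naturality of the primary obstruction; then I would realize $v$ in part (a) by a geometric twist along a 3-submanifold.

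\textbf{Proof of (b).} Set $\theta := \psi \circ \varphi^{-1} \colon \partial C_1 \to \partial C_1$, so that $\psi_* \spn = \theta_*(\varphi_* \spn)$. Then
\[
d(\psi_* \spn, \varphi_* \spn) = d(\theta_*(\varphi_* \spn), \varphi_* \spn) \quad \text{and} \quad d(\psi, \varphi) = d(\theta, \id),
\]
so both classes are primary obstructions built from the single automorphism $\theta$. On a 1-simplex $\sigma$ of $N$ over which the bundle is trivialized, $d(\theta, \id)(\sigma) \in \pi_1(SO_3) = \Z_2$ is the homotopy class of the loop $\theta|_\sigma$. Horizontally lifting $\sigma$ to $\partial C_1$, the class $d(\theta_*(\varphi_* \spn), \varphi_* \spn)$ records the rotation of the stable framing $\varphi_* \spn$ induced by $\theta$, which is the image of $\theta|_\sigma$ under the stabilization $\pi_1(SO_3) \to \pi_1(SO)$; since this stabilization is the identity on $\Z_2$, the two cocycles agree. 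After the Poincar\'e duality identification $H^1(N; \Z_2) = H_3(N; \Z_2)$ and the isomorphism $\nu^!$, both difference classes coincide in $H_3(N; \Z_2)$.

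\textbf{Proof of (a).} It is enough to construct a bundle automorphism $\theta_v \colon \partial C_1 \to \partial C_1$ with $d(\theta_v, \id) = v$; then $\varphi_v := \theta_v \circ \varphi$ satisfies $d(\varphi_v, \varphi) = v$ by naturality. I would represent $v \in H_3(N; \Z_2)$ by a closed embedded unoriented 3-submanifold $W \subset N$ Poincar\'e-dual to $v \in H^1(N; \Z_2)$. After modifying $W$ within its $\Z_2$-homology class if needed, arrange that the normal bundle $\nu_1$ is trivial on a tubular neighborhood $T \cong W \times (-1, 1)$ and fix a trivialization $\partial C_1|_T \cong T \times S^2$. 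Pick a loop $\rho \colon [-1, 1] \to SO_3$ based at $\id$, constant near $\pm 1$, and representing the generator of $\pi_1(SO_3)$. Define $\theta_v$ to be the identity outside $T$ and to act on the fiber over $(w, t) \in T$ by rotation by $\rho(t)$. For a 1-simplex $\sigma$ of $N$ transverse to $W$ in $k$ points, $\theta_v|_\sigma$ traces $\rho$ exactly $k$ times, so $d(\theta_v, \id)(\sigma) = k \bmod 2$; hence $d(\theta_v, \id)$ is the mod 2 intersection cocycle of $W$, which represents $v$.

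\textbf{Main obstacle.} The technical heart of (a) is arranging the trivialization of $\nu_1|_T$, since $w_2(\nu_1|_W) \in H^2(W; \Z_2)$ need not vanish for an arbitrary choice of $W$. One resolution is to modify $W$ geometrically within its $\Z_2$-homology class until $w_2(\nu_1|_W) = 0$; an alternative is a cellular obstruction-theoretic construction on a triangulation of $N$ using a 1-cocycle representative of $v$, the cocycle condition across 2-cells, the vanishing $\pi_2(SO_3) = 0$ across 3-cells, and local modification on a 4-disk to absorb any residual obstruction in $H^4(N; \pi_3(SO_3)) = H^4(N; \Z)$ (which has a degree-type meaning supported on a disk and is therefore cancellable). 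Part (b), by contrast, is purely formal once both classes are rewritten in terms of the single automorphism $\theta = \psi \circ \varphi^{-1}$.
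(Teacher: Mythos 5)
Your part (b) is essentially correct. Reducing both difference classes to the single automorphism $\theta=\psi\varphi^{-1}$ and using that the stabilization $\pi_1(SO_3)\to\pi_1(SO)$ is an isomorphism of $\Z_2$'s is the same key fact the paper uses; the paper just reaches it differently, by running (b) through the explicit twist constructed in (a) and computing the local contribution of the spin difference on an arc $a\times D^1\times b$ via a loop in $SO_2\subset SO_3$. Your version has the advantage of not depending on the proof of (a).

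Part (a), however, has a genuine gap, which you have located but not closed. Your construction needs (i) a product tubular neighborhood $T\cong W\times(-1,1)$, which fails when the $\Z_2$-dual hypersurface $W$ is one-sided (its normal line bundle in $N$ has $w_1=v|_W$), and (ii) a trivialization of the sphere bundle over $T$, which is obstructed by $w_2$ of the normal bundle of $f$ restricted to $W$. Neither proposed fix works as stated: modifying $W$ within its $\Z_2$-homology class to kill $w_2$ is asserted without argument, and the cellular alternative is wrong in its last step --- the final obstruction lies in $H^4(N;\pi_3(SO_3))\cong\Z$, is well defined once the automorphism is fixed over the $2$-skeleton (changes over the $3$-skeleton only alter the obstruction cocycle by coboundaries, and a modification supported in a top cell rel its boundary is an element of $\pi_4(SO_3)$), so it cannot be ``absorbed on a disk''; showing it vanishes is essentially equivalent to the existence statement you are trying to prove. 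The missing idea is to exploit the standing hypothesis that $H_1$ is torsion free: then $H_2$ has no $2$-torsion, so $\rho_2\colon H_3\to H_3(N;\Z_2)$ is onto, and one lifts $v$ to an integral class $\overline v\in H_3\cong H^1(N)\cong[N,S^1]$ and takes $V$ to be the preimage of a regular value of a map $N\to S^1$ representing $\overline v$. Such a $V$ is oriented with a trivialized normal bundle $V\times D^1$ in $N$, disposing of (i); and the restriction of the normal bundle of $f$ to $V$ is an oriented $3$-plane bundle over a $3$-complex which is stably trivial with zero Euler class, hence trivial because the only remaining obstruction is $w_2=0$ (as $\pi_2(SO_3)=0$), disposing of (ii). With that representative your twist along $V\times D^1$ is exactly the paper's construction.
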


\begin{proof}[Proof of (a)]
Since $H_3$ has no torsion, $v=\rho_2\overline v$ for some $\overline v\in H_3$.
Since $H_3 \cong H^1(N) \cong [N, S^1]$, the class $\overline v$ is represented by an oriented 3-submanifold
$P\subset N$ that is  the preimage of a regular value of a map $N \to S^1$ representing $\overline v$.
Denote by $V\times D^1$ a tubular neighborhood of $V$ in $N$.
We have that $e(\nu_0|_V)=e(\nu_0)\cap V=0$ and that $\nu_0|_V$ is stably equivalent to the stable normal bundle of a parallelizable manifold.
Hence $\nu_0|_V$ is trivial.
Take a trivialization of $\nu_0$ over $V\times D^1$, i.e.~identify $\nus_0^{-1}(V\times D^1)$ and
$V\times D^1\times S^2$.
Take a smooth map $\alpha\colon D^1\to SO_3$ which maps a neighbourhood of the boundary to the identity
and which, modulo the boundary, represents the generator of $\pi_1(SO_3)\cong\Z_2$.
Then define
$$\varphi_1(x):=
\begin{cases}\varphi(x) & x\not\in\nus_0^{-1}(V\times D^1),\\
\varphi(a, t, \alpha(t)z) &x=(a,t,z)\in V\times D^1\times S^2=\nus_0^{-1}(V\times D^1).
\end{cases}$$
By construction $d(\varphi_1, \varphi) = v$.
So part (a) follows by taking $\psi:=\varphi_1$.%
\footnote{By (b) the equivalence class of $\varphi_1$ depends only on $v$ not on $V$ and on the trivialization.}
\end{proof}

\begin{proof}[Proof of (b)]
Carry out the construction of (a) for $\varphi$ and $v:=d(\psi,\varphi)$.
Now (b) follows because
$$d(\psi_*\spn,\varphi_*\spn)=d(\varphi_{1*}\spn,\varphi_*\spn)=vd=v,$$
where

$\bullet$ the first equality follows because $\psi$ is equivalent to $\varphi_1$ over the 1-skeleton;

$\bullet$ $a\in V$, $b\in S^2$ are any points and $d\in H^1(a\times D^1\times b,\de;\Z_2)\cong\Z_2$ is
the relative difference class of the spin structures
$\varphi_{1*}\spn|_{a\times D^1\times b}$, $\varphi_*\spn|_{a\times D^1\times b}$ which coincide on the boundary;

$\bullet$ the second equality follows by the naturality of the primary obstruction;

$\bullet$ the last equality is proved as follows.
Take a smooth map $\alpha\colon D^1\to SO_3$ which maps a neighbourhood of the boundary to the identity
and which, modulo the boundary, represents the generator of $\pi_1(SO_3)\cong\Z_2$.
Since the standard inclusion  $SO_2 \to SO_3$ induces an epimorphism $\pi_1(SO_2)\to\pi_1(SO_3)$,%
\footnote{There is an alternative proof not using this fact, cf.~proof of Lemma \ref{l:strdif} below.}
we may assume that $\alpha(t) \in SO_2$ for all $t \in D^1$;
i.e.~that $\alpha(t)b=b$ for all $t\in D^1$.
So $\varphi_v=\varphi$ over $V\times D^1\times b$, thus $d=1$ by definition of a spin structure.
\end{proof}

\begin{proof}[Proof of the Spin Lemma \ref{l:spbuis}.b]
The result \cite[Lemma 2.4]{CS11} asserts that there is a bundle isomorphism $\varphi'\colon \de C_0\to \de C_1$.
Hence using Lemma \ref{l:spdif} we can modifiy $\varphi'$ over the $1$-skeleton of $N$ to obtain
a spin bundle isomorphism $\varphi$.
Applying Lemma \ref{l:spdif} again, and using the fact that $\pi_2(SO_3) = 0$, we obtain that $\varphi$ is unique
up to vertical homotopy over the $2$-skeleton of $N$.
\end{proof}

\subsection{String bundle isomorphisms (Lemmas \ref{l:piso} and \ref{l:frbuis})}\label{s:lemmas-sf}

\newcommand{\str}{\mathrm{st}}


For $k=0,1$ let $D_k:=S^7-\Int C_k$ and let $\str_k$ be the homotopy class of the restriction to $D_k$ of the stable tangent framing on $S^7$.




The following String Lemma \ref{l:frbuis} can be regarded as a `complex' version of the Spin Lemma \ref{l:spbuis}.

\begin{Lemma}[String Lemma]\label{l:frbuis}
Assume that $\varkappa(f_0)=\varkappa(f_1)$ and $\lambda(f_0)=\lambda(f_1)$.
A bundle isomorphism $\varphi\colon \de C_0\to \de C_1$ is a $\pi$-isomorphism if and only if its extension $\Phi \colon D_0\to D_1$
carries $\str_0$ to $\str_1$.
\end{Lemma}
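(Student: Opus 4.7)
My plan is to mimic the structure of the Spin Lemma \ref{l:spbuis}.a, with spin structures replaced by stable tangent framings and $w_2$ replaced by the integral spin Pontryagin class $p \in H^4(B\Spin)$ (with $2p = p_1$), using the invariant $p^*_Q$ introduced in \S\ref{s:prelemmas-not}. The key observation is that $TS^7|_{\partial C_k}$ is simultaneously identified with $TC_k|_{\partial C_k}$ and $TD_k|_{\partial D_k}$, so that the restriction of $\str_k$ to $\partial D_k$ agrees (stably) with the restriction to $\partial C_k$ of the $S^7$-framing on $C_k$. Both conditions in the statement will be identified with the vanishing of a single difference class in $H_3(N)$.

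For the ``if'' direction, suppose $\Phi_\ast \str_0 = \str_1$. Restricting to $\partial D_0 = \partial C_0$, I would conclude that $\varphi$ carries the $S^7$-framing on $\partial C_0$ to that on $\partial C_1$. In particular, all the stable framing obstructions up to and including the $\pi_3(O) = \Z$ one vanish on the boundary, so $\varphi$ is a spin bundle isomorphism (Spin Lemma \ref{l:spbuis}.a). The $S^7$-framings on $C_0$ and $-C_1$ then glue via $\varphi$ to a stable framing on $M_\varphi = C_0 \cup_\varphi (-C_1)$, so $M_\varphi$ is parallelizable.

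For the ``only if'' direction, assume $M_\varphi$ is parallelizable. Since parallelizable implies spin, $\varphi$ is spin by the Spin Lemma \ref{l:spbuis}.a, and hence $\Phi_\ast \str_0$ and $\str_1$ agree over the $2$-skeleton of $D_1 \simeq N$. As $\pi_2(O) = 0$, the primary obstruction to $\Phi_\ast \str_0 = \str_1$ is the difference class $\delta := d(\Phi_\ast \str_0, \str_1) \in H^4(N; \pi_3(O)) = H^4(N; \Z) \cong H_0(N) \cong \Z$ coming from the $p$-defect. Using that $p^*_{C_k} = 0$ (since the $S^7$-framing trivializes $p$ on $C_k$) together with naturality of $p$ under the gluing $M_\varphi = C_0 \cup_\varphi (-C_1)$, I would prove the identity
\[
p^*_{M_\varphi} = \pm\, i_{C_0,M_\varphi}\widehat{A}_0(\delta) \in H_3(M_\varphi),
\]
analogous to the formula $w_2^*(M_\varphi) = i_{C_0,M_\varphi}\widehat{A}_0 d(\varphi_\ast\spn_0,\spn_1)$ used in the proof of the Spin Lemma. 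Since $M_\varphi$ is parallelizable, $p^*_{M_\varphi} = 0$. The hypotheses $\varkappa(f_0) = \varkappa(f_1)$ and $\lambda(f_0) = \lambda(f_1)$ guarantee via the Agreement Lemma \ref{l:Agr}.b that $i_{C_0, M_\varphi}$ is injective on the relevant piece, and $\widehat{A}_0$ is an isomorphism, so $\delta = 0$. This gives $\Phi_\ast \str_0 = \str_1$ over $D_1$ up to the remaining obstructions of degree $\le 4$, which vanish for dimensional reasons on $N$.

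The main obstacle is the careful verification of the naturality formula $p^*_{M_\varphi} = \pm\, i_{C_0,M_\varphi}\widehat{A}_0(\delta)$. This requires expressing $p^*_{M_\varphi}$ as a relative obstruction to framing $TM_\varphi$ starting from the already-trivialized $TC_k$, and identifying this relative obstruction, under Alexander duality, with the boundary mismatch $\delta$ of the two $S^7$-framings; tracking signs through the orientation convention $M_\varphi = C_0 \cup_\varphi (-C_1)$ and through Poincar\'e--Alexander duality is the place where care is needed. A secondary point is confirming that the assumptions $\varkappa(f_0) = \varkappa(f_1)$ and $\lambda(f_0) = \lambda(f_1)$ are used precisely where the Agreement Lemma \ref{l:Agr}.b for $q=3$ enters, so the statement of the String Lemma has exactly the hypotheses needed.
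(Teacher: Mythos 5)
Your overall strategy is exactly the paper's: reduce to spin $\varphi$ via the Spin Lemma \ref{l:spbuis}.a, measure the failure of $\Phi_*\str_0=\str_1$ by a primary difference class, prove a naturality formula expressing $p^*_{M_\varphi}$ as the image of that class under $i_{C_0,M_\varphi}\widehat A_0$ (this is precisely Lemma \ref{l:pondif}), and then use injectivity of $i_{C_0,M_\varphi}$ from the Agreement Lemma \ref{l:Agr}.b. However, there is a concrete error in your identification of the obstruction group. For two stable framings agreeing over the $2$-skeleton, the next obstruction to a vertical homotopy lies in degree $3$ with coefficients $\pi_3$, i.e.\ in $H^3(N;\pi_3(SO))=H^3(N)\cong H_1(N)$ (as the paper records just before Lemma \ref{l:pondif}), not in $H^4(N;\pi_3(O))\cong H_0(N)\cong\Z$ as you write. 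Your own next formula betrays the inconsistency: for $i_{C_0,M_\varphi}\widehat A_0(\delta)$ to land in $H_3(M_\varphi)$ you need $\delta\in H_1(N)$, since $\widehat A_0\colon H_{q-2}(N)\to H_q(C_0)$ sends $H_0$ to $H_2(C_0)$, not to $H_3(C_0)$. This is not merely cosmetic: the fact that $\delta$ lives in $H_1(N)$ (which can be large) rather than in $\Z$ is exactly why the hypotheses $\varkappa(f_0)=\varkappa(f_1)$ and $\lambda(f_0)=\lambda(f_1)$ are needed --- they give injectivity of $i_{C_0,M_\varphi}$ on $H_3(C_0)\cong H_1(N)$ via the Agreement Lemma for $q=3$, which is the only nontrivial input in concluding $\delta=0$ from $p^*_{M_\varphi}=0$.

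Two smaller points. First, your claim that the remaining obstructions ``vanish for dimensional reasons on $N$'' is not quite right: after the degree-$3$ obstruction the next one lies in $H^4(N;\pi_4(SO))$, and $H^4(N)\cong\Z$ is nonzero; it is the coefficient group $\pi_4(SO)=0$ that kills it (the paper phrases this as ``completeness of the obstruction''). Second, your sketch never addresses the equivalence $p^*_{M_\varphi}=0\Leftrightarrow M_\varphi$ parallelizable in the direction you need it; in your set-up this is harmless because in the ``if'' direction you glue framings directly and in the ``only if'' direction you only use the trivial implication, but the paper's proof of the full equivalence (via $\pi_l(SO_7)=0$ for $l=4,5,6$) is worth keeping in mind if you restructure. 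With the obstruction group corrected to $H^3(N;\pi_3(SO))\cong H_1(N)$, your argument coincides with the paper's proof.
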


{\bf Remarks.} (a) The `if' direction of the String Lemma \ref{l:frbuis} is simple:
When $\Phi_*\str_0=\str_1$,
the stable tangent framings on $C_0,C_1$ coming from $S^7$ agree up to homotopy after identifying $\de C_0$ and $\de C_1$ by
$\varphi$.
So these framings can be glued together to give a stable tangent framing on $M_\varphi$.

(b) For a $\pi$-isomorphism $\varphi \colon \de C_0 \to \de C_1$, a stable tangent framing on $M_\varphi$ whose restrictions to $C_0, C_1$ are induced from from $S^7$ is not unique.

\smallskip
For the proof of Lemmas \ref{l:frbuis} and \ref{l:piso} we need the following definitions and lemmas.

\begin{Lemma} \label{l:p1}
For every map $\alpha\colon S^3\to\SO_3$ denote by $\xi(\alpha)$ the oriented 3-dimensional vector bundle over $S^4$

$\bullet$ whose total space is $(\R^3\times D^4_+) \cup_{\phi(\alpha)} (\R^3\times (-D^4_-))$, where $\phi(\alpha)(v, x) = (\alpha(x)v,x)$,

$\bullet$ and whose projection maps $(v,x)$ to $x$.

For a map $\alpha_1\colon S^3\to\SO_3$ representing $1\in\pi_3(\SO_3)=\Z$ we have
$p_1^*(\xi(\alpha_1))=4\in H_0(S^4)=\Z$.
\end{Lemma}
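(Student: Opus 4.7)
The strategy is to observe that $\alpha \mapsto p_1^*(\xi(\alpha))$ defines a group homomorphism $\pi_3(SO_3) \to H_0(S^4) \cong \Z$, so the claim reduces to evaluating $p_1$ on a single generator. Additivity follows from the fact that $\xi(\alpha)$ is the pullback of the universal $SO_3$-bundle along the map $S^4 \to BSO_3$ whose Hurewicz image represents $\alpha \in \pi_3(SO_3) = \pi_4(BSO_3)$; then $p_1^*(\xi(\alpha))$ is the Kronecker pairing of that image with the universal Pontryagin class, which is manifestly a group homomorphism in $\alpha$.

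As a convenient generator I take $\alpha_1(q)(v) := qvq^{-1}$ for $q \in S^3 \subset \Ha$ and $v \in \mathrm{Im}\,\Ha = \R^3$. This is the universal double cover $\Spin(3) = S^3 \to SO_3$, which as a covering induces an isomorphism on $\pi_3$, so $\alpha_1$ represents a generator of $\pi_3(SO_3) \cong \Z$. The clutching description then identifies $\xi(\alpha_1)$ with the adjoint bundle $\mathrm{ad}(H)$ of the principal $Sp(1)$-bundle $H\colon S^7 \to S^4 = \Ha P^1$ (the quaternionic Hopf fibration): $H$ has clutching function $\id_{S^3}\colon S^3 \to Sp(1)$, so the associated bundle via the adjoint representation on $\mathrm{Im}\,\Ha$ has clutching function $q \mapsto \mathrm{Ad}_q = \alpha_1$.

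To compute $p_1^*(\mathrm{ad}(H))$, I complexify. Viewing $H$ as a complex rank-$2$ $SU(2)$-bundle via $\C \subset \Ha$, the standard fact is that $c_1(H) = 0$ and $|c_2(H)| = 1$, since $H$ corresponds to the generator of $\pi_3(S^3)$ classifying principal $S^3$-bundles over $S^4$. There is an isomorphism $\mathrm{ad}(H) \otimes_\R \C \cong \mathrm{End}_0(H)$ (trace-free complex endomorphisms of the underlying complex bundle), and a splitting-principle computation with formal Chern roots $\pm x$ of $H$ (so $x^2 = -c_2(H)$) yields $\mathrm{End}_0(H)$ having Chern roots $\pm 2x, 0$, whence $c_2(\mathrm{End}_0(H)) = -4x^2 = 4 c_2(H)$. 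Combined with $p_1(V) = -c_2(V \otimes_\R \C)$ for a real bundle $V$, this gives $|p_1^*(\xi(\alpha_1))| = 4$; with the orientation conventions of the paper, the sign comes out to $+4$.

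The main difficulty will be tracking sign conventions—both the sign of $c_2(H)$ (which depends on the orientation of $S^4$) and the sign of the chosen generator $\alpha_1$ of $\pi_3(SO_3)$—since the combination of these two choices determines whether the final answer is $+4$ or $-4$. Once the identification $\pi_3(SO_3) \cong \Z$ and the orientation on $S^4$ used elsewhere in the paper are matched, the computation above yields exactly $+4$.
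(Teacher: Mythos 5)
Your reduction to a generator via the homomorphism $\pi_3(SO_3)\to\Z$, $\alpha\mapsto p_1^*(\xi(\alpha))$, and your computation of the magnitude are correct and take a different route from the paper: you compute $p_1(\mathrm{ad}(H))=-c_2(\mathrm{End}_0(H))=-4c_2(H)$ by complexifying the adjoint bundle of the quaternionic Hopf bundle, whereas the paper simply cites Milnor for $p_1^*(\xi(\alpha_1))=\pm4$. That part of your argument is fine and arguably more self-contained.

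The genuine gap is the sign, which is the actual content of the lemma beyond the citation. You have \emph{two} unresolved sign ambiguities, and by your own homomorphism observation each one flips the answer: (i) whether your chosen clutching map $q\mapsto(v\mapsto qvq^{-1})$ represents $+1$ or $-1$ under the paper's specific identification $\pi_3(SO_3)\cong\pi_3(S^2)\cong\Z$ (action on $S^2$ followed by the Hopf isomorphism, with $S^2$ identified with the unit imaginary quaternions); and (ii) the sign of $c_2(H)$ relative to the orientation of $S^4$ built into the clutching construction $(\R^3\times D^4_+)\cup_{\phi(\alpha)}(\R^3\times(-D^4_-))$. Writing ``the sign comes out to $+4$'' is an assertion, not a proof, and experience with exactly these conventions shows they are easy to get wrong. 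The paper determines the sign by a convention-free trick: for the associated $S^2$-bundle $S\xi(\alpha_1)$, Wall's classification of certain $6$-manifolds gives $p_1^*(S\xi(\alpha_1))\cap z\equiv 4z^3\pmod{24}$ for a generator $z$, and since $+4\not\equiv-4\pmod{24}$ this congruence, combined with $p_1^*=\pm4$, forces the sign to be $+$. (This congruence is invariant under $z\mapsto-z$, so no choice of generator or orientation needs to be tracked.) To complete your argument you would either need to carry out the two convention checks explicitly against the identifications fixed in \S\ref{s:defandplan-not}, or import a sign-rigid fact such as Wall's mod~$24$ congruence as the paper does.
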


\begin{proof} The lemma is well-known; we present the proof for completeness.
We start with the identity $p_1^*:=p_1^*(\xi(\alpha_1))= \pm4 \in H_0(S^4)=\Z$ \cite{Mi56}.
To determine the sign in this equation let $S\xi(\alpha_1)$ be the total space of the oriented $S^2$-bundle associated to $\xi(\alpha_1)$ and $z\in H_4(S\xi(\alpha_1))\cong\Z$ a generator.
By \cite[Theorem 5]{Wa66} $p_1^*(S\xi(\alpha_1))\cap z \equiv 4 z^3 \mod 24$. This and $p_1^*=\pm4$ imply that
$p_1^*(S\xi(\alpha_1)) = 4z^2$, consequently $p_1^*=+4$.
\end{proof}

For stable tangent framings $\str$ and $\str'$ on $D_1$ which are homotopic on the 2-skeleton of $D_1$ their {\it difference}
$$
d(\str,\str')\in H^3(D_1;\pi_3(SO))=H^3(N)=H_1,
$$
is the primary obstruction to vertical homotopy between them, cf.~\S\ref{s:prelemmas-not}.
Here we use the zero section $N\to D^1$ to identify the cohomology groups.

A bundle isomorphism $\Phi\colon D_0\to D_1$ is called {\it spin} if its restriction to the boundary is spin.
Since the restriction induces an isomorphism $H^1(D_k; \Z_2)\to H^1(C_k;\Z_2)$, this is equivalent to carrying the
spin structure on $D_0$ induced from $S^7$ to the spin structure on $D_1$ induced from $S^7$.

Thus if $\Phi\colon D_0\to D_1$ is a spin bundle isomorphism, then the difference class
$d(\Phi_*\str_0,\str_1) \in H_1$ is well-defined by the uniqueness statements in the Spin Lemma \ref{l:spbuis}.a,b.

\begin{Lemma} \label{l:pondif}
For a spin bundle isomorphism $\Phi\colon D_0\to D_1$ we have
$p_{M_\varphi}^*=i_{C_0,M_\varphi}\widehat A_0d(\Phi_*\str_0,\str_1)$.
\end{Lemma}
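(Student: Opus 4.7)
The plan is to express $\overline{\tau}_{M_\varphi}^* p \in H^4(M_\varphi)$ via the cohomology Mayer--Vietoris sequence for $M_\varphi = C_0 \cup_\varphi (-C_1)$, identify the resulting boundary class with $\nus_0^* d(\Phi_* \str_0, \str_1)$, and then pass to homology via Poincar\'e duality.

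By the Spin Lemma \ref{l:spbuis}.a the spin structure on $M_\varphi$ restricts on each $C_k$ to the spin structure induced from $S^7$, which (since $S^7$ is stably parallelizable) lifts canonically to a stable tangent framing on $C_k$ coming from the ambient framing on $S^7$; on $\de C_k$ this framing agrees stably with the restriction of $\str_k$ from $D_k$, because the outward normal to $\de C_k$ in $C_k$ coincides with the inward normal in $D_k$. These framings trivialize $\overline{\tau}_{M_\varphi}^* p|_{C_k}$ with explicit null-cochains, so in the Mayer--Vietoris sequence
\[
H^3(C_0) \oplus H^3(C_1) \to H^3(\de C_0) \xrightarrow{\delta} H^4(M_\varphi) \to H^4(C_0) \oplus H^4(C_1)
\]
one obtains $\overline{\tau}_{M_\varphi}^* p = \delta(\theta)$, where $\theta \in H^3(\de C_0)$ is the difference on the boundary of the two null-cochains, equivalently of the two boundary framings. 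By naturality of the primary obstruction, and using that $D_k$ deformation retracts to $N$ along the zero section, this difference is $\theta = \nus_0^* d(\Phi_* \str_0, \str_1)$.

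Under Poincar\'e duality, $\text{PD}\colon H^3(\de C_0) \cong H_3(\de C_0)$ and $\text{PD}\colon H^4(M_\varphi) \cong H_3(M_\varphi)$, the connecting $\delta$ corresponds (up to sign) to the inclusion-induced map $i_{\de C_0, M_\varphi} \colon H_3(\de C_0) \to H_3(M_\varphi)$; this can be verified using a bicollar of $\de C_0$ and the chain-level identity $[M_\varphi] = [C_0] - [C_1]$, with a check on the 2-sphere model $S^2 = D^2_+ \cup D^2_-$ fixing the sign. Similarly the pullback $\nus_0^*$ corresponds to $\nus_0^!$ by the definition of the latter in \S\ref{s:prelemmas-not}. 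Combining these identifications yields $p_{M_\varphi}^* = i_{\de C_0, M_\varphi} \nus_0^! d(\Phi_* \str_0, \str_1)$, which equals $i_{C_0, M_\varphi} \widehat A_0 d(\Phi_* \str_0, \str_1)$ by the identity $\widehat A_0 = i_{\de C_0, C_0} \circ \nus_0^!$ read off the Alexander duality diagram.

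The main obstacle will be pinning down the integer normalization, since $p$ satisfies $2p = \gamma^* p_1$ while $d(\Phi_* \str_0, \str_1)$ takes values in $\pi_3(SO) = \Z$. For this I would reduce to a local model: arrange $\Phi_* \str_0$ and $\str_1$ to differ by the generator $\alpha_1$ of $\pi_3(SO_3)$ over a small 4-disk in $N$, so that the corresponding piece of $M_\varphi$ becomes (after stabilization) the total space of the rank-3 sphere bundle $S\xi(\alpha_1) \to S^4$ of Lemma \ref{l:p1}. That lemma gives $p_1^*(\xi(\alpha_1)) = 4$, hence $p^* = 2$, which matches the factor $2$ arising from the stabilization map $\pi_3(SO_3) \to \pi_3(SO)$ and confirms the coefficient in the final formula. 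A final sign bookkeeping completes the proof.
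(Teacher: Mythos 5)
Your main argument --- trivialize $\overline{\tau}_{M_\varphi}^*p$ over $C_0$ and $C_1$ by the framings coming from $S^7$, localize the class to the seam via the Mayer--Vietoris connecting map, identify the resulting class in $H^3(\de C_0)$ with the difference of the two boundary framings by naturality of the primary obstruction, and translate by Poincar\'e duality using $\widehat A_0=i_{\de C_0,C_0}\nus_0^!$ --- is correct and is essentially the paper's proof: the paper realizes the same localization geometrically, writing $M_\varphi\cong C_0\cup(\de C_0\times I)\cup(-C_1)$ and comparing the obstruction $\gamma_\varphi$ on the collar with the suspension of $d(\Phi_*\str_0,\str_1)$.

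The problem is your final paragraph. First, the "local model" is dimensionally wrong: $d(\Phi_*\str_0,\str_1)$ lives in $H^3(N;\pi_3(SO))$ and is detected on $3$-cycles of $N$, so a framing discrepancy concentrated "over a small $4$-disk in $N$" contributes nothing to it (rel boundary it would be an element of $\pi_4(SO)$, and absolutely its support is a null-homologous $3$-sphere). The rank-$3$ bundle $S\xi(\alpha_1)\to S^4$ of Lemma \ref{l:p1} arises in this paper when one twists the \emph{bundle isomorphism} $\varphi$ by $\pi_3(SO_3)$ (Lemmas \ref{l:strdif} and \ref{l:cobordbeta}); that is a different operation from changing the stable \emph{framing}, and the relevant piece of $M_\varphi$ here is the collar $\de C_0\times I$, not such a bundle. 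Second, the factor of $2$ you try to "match" is a red herring: both $d(\Phi_*\str_0,\str_1)$ and the universal class $p$ are already $\pi_3(SO)$-valued, and the asserted identity has coefficient $1$; the stabilization factor $\pi_3(SO_3)\to\pi_3(SO)$ enters only in Lemma \ref{l:strdif}.b, not here. The normalization you actually need is the single fact that $p$ (the generator of $H^4(B\Spin)\cong\Z$, not $p_1=2p$) \emph{is} the universal primary obstruction to lifting $\overline\tau_Q$ to a stable framing; this follows because both classes generate $H^4(B\Spin)\cong\pi_4(B\Spin)\cong\Z$ up to sign, and it is exactly what the paper invokes parenthetically. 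Replacing your last paragraph by this observation closes the gap; as written, the proposed computation would not produce the required coefficient.
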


\begin{proof}
Let

$\bullet$ $\delta_\varphi \in H^4(D_0\times I,D_0\times\de I)$ be the primary obstruction to the extension of
$\str_0|_{D_0\times0}\cup\str_1|_{D_0\times1}$ to a stable tangent framing of $D_0\times I$, and

$\bullet$ $\gamma_\varphi \in H^4(\de C_0\times I,\de)$ be the primary obstruction
to the extension of
$\str_0|_{\de C_0=\de C_0\times0}\cup  \str_1|_{\de C_1=\de C_0\times1}$ to a stable tangent framing of
$\de C_0\times I$.

We consider the following commutative diagram:
$$\xymatrix{
\delta_\varphi\in H^4(D_0\times I,D_0\times\de I) \ar[d]^{\text{restriction}}
& H^4(\Sigma(D_0\times I)) \ar[l]_(0.375){\cong} & H^3(D_0) \ar[l]_(0.35){\cong}
& H_1 \ar[l]_(0.375){\cong} \ar[dl]_{\nus^!} \ar[d]^{i_{C_0,M_\varphi}\widehat A_0} & \ni d(\Phi_*\str_0,\str_1) \\
\gamma_\varphi\in H^4(\de C_0\times I,\de)\ar[r]^(0.55){\cong} & H_3(\de C_0\times I)\ar[r]^(0.55){\cong}
& H_3(\de C_0)\ar[r]^(0.525){i_{\de C_0,M_\varphi}} & H_3(M_\varphi) & \ni p_{M_\varphi}^*
}.$$
The lemma follows because by the naturality of the primary obstruction

$\bullet$ the image of $d(\Phi_*\str_0,\str_1)$ under the first line of isomorphisms is $\delta_\varphi$;

$\bullet$ the restriction of $\delta_\varphi$ is $\gamma_\varphi$;

$\bullet$ the image of $\gamma_\varphi$ under the second line homomorphisms is  $p_{M_\varphi}^*$.

The latter statement follows because
$$M_\varphi\cong
C_0\bigcup\limits_{\de C_0=\de C_0\times0}\de C_0\times I\bigcup\limits_{\varphi\colon \de C_0\times1\to\de C_1}(-C_1)$$
and $p_{M_\varphi}^*$ is the primary obstruction to extending the spin structure on $M_\varphi$ to a stable tangent framing on $M_\varphi$.
(To see the latter, observe that class $p_{M_\varphi}^*$ is the primary obstruction to lifting $\overline\tau_{M_\varphi}\colon M_\varphi\to BSpin$ to $\gamma^*EO$, where  $\gamma^*EO\to BSpin$ is
the pullback to $BSpin$ of the universal $O$-bundle along the canonical map $\gamma \colon BSpin \to BO$.)
\end{proof}

\begin{proof}[Proof of the String Lemma \ref{l:frbuis}]
By the Spin Lemma \ref{l:spbuis}.a, it suffices to prove the result for spin $\varphi$.
For every spin bundle isomorphism $\varphi$ with extension $\Phi\colon D_0\to D_1$ we have
$$
\Phi_*\str_0=\str_1\quad\Leftrightarrow\quad
d(\Phi_*\str_0,\str_1)=0\quad\Leftrightarrow\quad
p_{M_\varphi}^*=0\quad\Leftrightarrow\quad
M_\varphi\text{ is parallelizable}.
$$
Here

$\bullet$ the first equivalence is the completeness of the obstruction;

$\bullet$ the second equivalence holds by Lemma \ref{l:pondif} because
$\varkappa(f_0)=\varkappa(f_1)$, $\lambda(f_0)=\lambda(f_1)$ and $H_2$ has no torsion,
so $i_{C_0,M_\varphi}$ is injective by the Agreement Lemma \ref{l:Agr}.b for $s=3$;

$\bullet$ the last equivalence holds because $\pi_l(SO_7)=0$ for $l=4,5,6$.
\end{proof}

By the Spin Lemma \ref{l:spbuis}.b every two spin bundle isomorphisms $\Phi,\Psi\colon D_0\to D_1$ are homotopic over a neighborhood of the 2-skeleton of some triangulation of $N$.
Hence the primary (and only) obstruction,
$$
d(\Phi,\Psi) \in H^3(N;\pi_3(SO_3))=H_1,
$$
to homotopy of spin bundle isomorphisms from $\Phi$ to $\Psi$ is well-defined.

\begin{Lemma}\label{l:strdif}
(a) For every $v\in H_3$ and spin bundle isomorphism $\Phi\colon D_0\to D_1$,
there is a spin bundle isomorphism $\Phi_v\colon D_0\to D_1$ such that $d(\Phi_v,\Phi)=v$.

(b) For every pair of spin bundle isomorphisms $\Phi,\Psi\colon D_0\to D_1$ and
for every stable tangent framing $\str$ on $D_0$, we have $d(\Psi_*\str,\Phi_*\str)=2d(\Psi,\Phi)$.
\end{Lemma}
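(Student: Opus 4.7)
The plan is to adapt the proof of Lemma \ref{l:spdif} to the higher-dimensional \emph{string} setting. Where Lemma \ref{l:spdif} used a codimension-$1$ submanifold of $N$ and a map $D^1\to SO_3$ representing the generator of $\pi_1(SO_3)$, we will use a codimension-$3$ submanifold of $N$ (i.e.\ an embedded loop) and a map $D^3\to SO_3$ representing the generator of $\pi_3(SO_3)$. The factor $2$ in (b) then comes from Lemma \ref{l:p1}, which via the relation $2p=p_1$ encodes the fact that the stabilization map $\pi_3(SO_3)\to\pi_3(SO)$ is multiplication by $\pm 2$.

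For (a), identify $H^3(N;\pi_3(SO_3))=H^3(N)$ with $H_1$ via Poincar\'e duality, and represent $v$ by an embedded oriented loop $\gamma\subset N$ obtained as the preimage of a regular value of a map $N\to S^1$ classifying the Poincar\'e dual of $v$. Over a tubular neighborhood $\gamma\times D^3\subset N$ the bundle $\nu_0$ is trivial (it pulls back from a $3$-plane bundle over $S^1$, which is orientable with vanishing Euler class), so identify $D_0$ over $\gamma\times D^3$ with $\gamma\times D^3\times D^3$. Choose a smooth $\alpha\colon D^3\to SO_3$ equal to the identity near $\de D^3$, representing the generator of $\pi_3(SO_3)$ modulo the boundary, and (using $\pi_2(SO_3)=0$) trivialized on a $2$-skeleton of $D^3$. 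Define $\Phi_v$ to agree with $\Phi$ outside $D_0|_{\gamma\times D^3}$ and by $(\gamma(s),t,z)\mapsto\Phi(\gamma(s),t,\alpha(t)z)$ on $\gamma\times D^3\times D^3$. Since $\alpha$ is trivial on a $2$-skeleton of $D^3$, the induced change of spin structure on $\de C_0$ vanishes on a $2$-skeleton of $N$, so $\Phi_v$ remains a spin bundle isomorphism; the naturality of the primary obstruction then gives $d(\Phi_v,\Phi)=v$.

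For (b), using the cocycle (transitivity) property of the difference class and the fact that $d(\Psi,\Phi)$ depends only on the homotopy class of $\Psi$ over the $3$-skeleton of $N$, part (a) allows us to reduce to the case $\Psi=\Phi_v$ constructed above. The stable tangent framings $\Phi_{v*}\str$ and $\Phi_*\str$ agree outside $\Phi(D_0|_{\gamma\times D^3})$, and over a slice $\{\gamma(s)\}\times D^3\times D^3$ they differ by the action of $\alpha(t)\in SO_3\subset SO$ on the vertical $D^3$-factor. Hence the local contribution at each point of $\gamma$ to the primary obstruction in $\pi_3(SO)$ is the image of the generator of $\pi_3(SO_3)$ under stabilization. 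By Lemma \ref{l:p1} combined with the relation $p_1=2p$, where $p$ is the generator of $H^4(B\Spin;\Z)$, this image equals $\pm 2$ times the generator of $\pi_3(SO)$. Integrating along $\gamma$ yields $d(\Phi_{v*}\str,\Phi_*\str)=\pm 2\,[\gamma]=\pm 2\,d(\Phi_v,\Phi)$, as claimed.

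The main technical point is the naturality computation that identifies the primary obstruction for stable framings with the stabilization of the local obstruction for bundle isomorphisms, together with a local-model calculation to fix the sign (the sign convention chosen for the identification $H^3(N;\pi_3(SO))=H_1$ determines whether the coefficient is $+2$ or $-2$). Once these are in place, both parts reduce to bookkeeping essentially parallel to the proof of Lemma \ref{l:spdif}.
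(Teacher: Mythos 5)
Your proposal is correct and follows essentially the same route as the paper: represent $v$ by an embedded circle, trivialize the normal disk bundle over its tubular neighbourhood, twist by a map $D^3\to SO_3$ representing the generator of $\pi_3(SO_3)$, and for (b) reduce to this model and use that the stabilization $\pi_3(SO_3)\to\pi_3(SO)$ is multiplication by $2$ (which you justify via Lemma \ref{l:p1} and $p_1=2p$, whereas the paper cites the fact directly). The only cosmetic difference is your residual sign $\pm 2$, which is harmless for the application in Lemma \ref{l:piso}.
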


\begin{proof}[Proof of (a)]
Take an oriented circle $V\subset N$ representing $v$.
Denote by $V\times D^3$ a tubular neighborhood of $V$ in $N$.
The restriction $\nud_0|_V$ is an oriented bundle over a circle and so is trivial.
Take a trivialization of $\nud_0$ over $V\times D^3$, i.e.~identify $(\nud_0)^{-1}(V\times D^3)$ and
$V\times D^3\times B^3$.
Take a smooth map $\alpha\colon D^3\to SO_3$ which
maps the boundary to the identity and represents (modulo the boundary) the generator $1 \in \pi_3(SO_3)\cong\Z$.
We define $\Phi_1$ by
$$\Phi_1(x):=\begin{cases}
\Phi(x) & x\not\in (\nud_0)^{-1}(V\times D^3),\\
\Phi((a,t,\alpha(t)z)) &x=(a,t,z)\in V\times D^3\times B^3=(\nud_0)^{-1}(V\times D^3).
\end{cases}$$
By construction $d(\Phi_1, \Phi) = v$.
\end{proof}

\begin{proof}[Proof of (b)]
Make the construction of (a) for $v:=d(\Psi,\Phi)$.
Now (b) follows because
$$d(\Psi_*\str,\Phi_*\str)=d(\Phi_{1*}\str,\Phi_*\str)=vd=2v,$$
where

$\bullet$ first first equation follows because $\Psi$ is equivalent over $N_0$ to $\Phi_1$;

$\bullet$ $a\in V$ is any point and $d\in H^3(a\times D^3\times 0,\de)\cong\Z$ is the relative difference class of stable tangent framings $\Phi_{1*}\str\,|_{a\times D^3\times0}$, $\Phi_*\str\,|_{a\times D^3\times0}$ which coincide
on the boundary;

$\bullet$ the second equation follows by the naturality of the primary obstruction;

$\bullet$ the last equality is proved as follows.
The relative difference class of tangent framings $\Phi_{1*}\str'|_{a\times D^3\times0}$,
$\Phi_*\str\,|_{a\times D^3\times0}$ coinciding on the boundary is 1.
Since the stabilization homomorphism $\pi_3(SO_3)\to \pi_3(SO)$ is identified with multiplication by 2, we have $d=2$.
\end{proof}

\begin{Lemma} \label{l:sp}
If $M$ is a closed spin 7-manifold, then $p_M^*$ is divisible by 2.
\end{Lemma}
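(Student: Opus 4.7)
The plan is to reduce the divisibility of $p_M^*$ by $2$ to the vanishing of a mod-$2$ characteristic class, identify this class with $w_4(M)$, and then verify $w_4(M) = 0$ using Wu's formula together with the spin and dimension assumptions.

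First I would reduce to showing $\rho_2 p_M = 0 \in H^4(M;\Z/2)$. From the short exact coefficient sequence $0 \to \Z \xrightarrow{2} \Z \to \Z/2 \to 0$, the kernel of $\rho_2 \colon H^4(M;\Z) \to H^4(M;\Z/2)$ equals $2\, H^4(M;\Z)$, so $p_M \in 2H^4(M;\Z)$ if and only if $\rho_2 p_M = 0$. Since $p_M^* = p_M \cap [M]$, naturality of the cap product with respect to coefficient reduction then gives $p_M^* \in 2H_3(M)$.

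Next I would identify $\rho_2 p$ with $w_4$ in $H^4(B\Spin;\Z/2)$. Since $H^*(BSO;\Z/2) = \Z/2[w_2, w_3, w_4, \ldots]$, the group $H^4(BSO;\Z/2)$ has $\Z/2$-basis $\{w_2^2, w_4\}$. Pulling back along $\gamma \colon B\Spin \to BSO$ kills $w_2$, so $H^4(B\Spin;\Z/2) \cong \Z/2$ is generated by $w_4$. Because $p$ is a generator of $H^4(B\Spin;\Z) \cong \Z$, its mod-$2$ reduction is nonzero and therefore equals $w_4$. Applying $\overline{\tau}_M^{\,*}$ yields $\rho_2 p_M = w_4(M)$.

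Finally I would show $w_4(M) = 0$ using the Wu formula $w_k = \sum_i \Sq^i v_{k-i}$. Orientability of $M$ gives $v_1 = w_1 = 0$; the spin condition gives $v_2 = w_2 = 0$ and hence $v_3 = w_3 = \Sq^1 w_2 = 0$. Because $\dim M = 7$, the Wu class $v_4$ vanishes for dimensional reasons ($v_i = 0$ whenever $2i > n$, since $\Sq^i$ acts as $0$ on $H^{n-i}(M;\Z/2)$ in that range). The Wu formula for $w_4$ therefore collapses to $w_4 = v_4 + \Sq^1 v_3 + \Sq^2 v_2 = 0$, as needed.

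The step that needs most care is the identification $\rho_2 p = w_4$ in $H^4(B\Spin;\Z/2)$; this is a standard low-degree calculation in the cohomology of $B\Spin$ but may warrant an explicit citation. Everything else then follows mechanically from Wu's formula together with the spin and dimension assumptions.
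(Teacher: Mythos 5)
Your proof is correct and follows essentially the same route as the paper's: both reduce the claim to $\rho_2 p_M = w_4(M) = v_4(M) = 0$, where the last vanishing comes from $\Sq^4$ being zero on $H^3(M;\Z_2)$. The only difference is that you prove the identification $\rho_2 p = w_4$ in $H^4(B\Spin;\Z_2)$ directly (a standard computation, correctly flagged as the step deserving a citation), whereas the paper cites \cite[\S3, Proof of Lemma 2.11.b]{CS11} for it.
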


\begin{proof}
We have $\rho_2(p_M)=w_4(M)=v_4(M)=0$.
Here

$\bullet$ the first equality is proved in \cite[\S3, Proof of Lemma 2.11.b]{CS11};

$\bullet$ the second equality holds because $M$ is spin;

$\bullet$ the third equality holds because $Sq^4\colon H^3(M;\Z_2)\to H^7(M;\Z_2)$ is trivial.
\end{proof}

\begin{proof}[Proof of Lemma \ref{l:piso}]
We use the String Lemma \ref{l:frbuis}.
Since $H_1$ has no 2-torsion, the uniqueness follows by Lemma \ref{l:strdif}.b.
Let us prove the existence.

By the Spin Lemma \ref{l:spbuis}.b there is a spin bundle isomorphism $\varphi \colon \de C_0 \to \de C_1$.
Let $\Phi$ be the extension of $\varphi$.
By Lemmas \ref{l:pondif} and \ref{l:sp},
$i_{C_0M_\varphi}\widehat A_0d(\Phi_*\str_0,\str_1)=p_{M_\varphi}^*$ is even.
Since $\varkappa(f_0)=\varkappa(f_1)$, $\lambda(f_0)=\lambda(f_1)$ and $H_2$ has no torsion,
$i_{C_0,M_\varphi}$ is injective by the Agreement Lemma \ref{l:Agr}.b for $s=3$.
Since we assume that $H_3(M_\varphi,C_0)\cong H_3(C_1,\de)\cong H_2$ is torsion free, it follows that
$d(\Phi_*\str_0,\str_1)$ is also even; i.e., $d(\Phi_*\str_0,\str_1)=2v$ for some $v\in H_1$.
By Lemma \ref{l:strdif}.a there is a spin bundle isomorphism $\Psi\colon D_0\to D_1$ such that $d(\Phi,\Psi)=-v$.
Then by Lemma \ref{l:strdif}.b
$$d(\Psi_*\str_0,\str_1)=d(\Phi_*\str_0,\str_1)+d(\Phi_*\str_0,\Psi_*\str_0)=2v-2v=0.$$
Then by the String Lemma \ref{l:frbuis} $\Psi$ is a $\pi$-isomorphism.
\end{proof}

\newcommand{\xra}{\xrightarrow}

\subsection{Joint Seifert classes (Lemmas \ref{l:webeta}, \ref{l:Add}, \ref{l:Dif} and \ref{l:desei})} \label{s:lemmas-jhss}

\begin{Lemma}[Description of joint Seifert classes] \label{l:desei}
Let $\varphi \colon \de C_0\to \de C_1$ be a bundle isomorphism and $i:=i_{C_0,M_\varphi}$.%

(a) A class $Y\in H_5(M_\varphi)$ is a joint Seifert class if and only if $Y\cap C_k=A_k[N]$ for each $k=0,1$.

(b) Let $Y\in H_5(M_\varphi)$ be a joint Seifert class.
A class $Y'\in H_5(M_\varphi)$ is a joint Seifert class if and only if
$$Y'=Y_y:=Y+i\widehat A_0y\quad\text{for some}\quad y\in H_3.$$

(c)  Let $Y\in H_5(M_\varphi)$ be a joint Seifert class.
Then $Y_y^2-Y^2=2i\widehat A_0\overline{\lambda(f_0)}(y)$ for every $y\in H_3$.

(d) If $p\in H_4(M_\varphi)$ and $q\in H_3(C_0)$,
then $p\cap_{M_\varphi} iq=A_0^{-1}(p\cap C_0)\cap_N\widehat A_0^{-1}q$.
\end{Lemma}

\begin{proof}[Proof of (a)] The `if' part follows because
$Y\cap_{M_\varphi} iS^2_{f_0}=(Y\cap C_0)\cap_{C_0}S^2_{f_0}=A_0[N]=1$.

Let us prove the `only if' part.
Since $H_1(C_k)=0$ and $H_2(C_k)\cong\Z$, we have $H_5(C_k,\de)\cong\Z$.
Since $(Y\cap C_k)\cap S^2_{f_k}=1$, the class $Y\cap C_k$ equals the generator $A_k[N]$ of $H_5(C_k,\de)$.
\end{proof}

\begin{proof}[Proof of (b)]
Look at the segment of (the Poincar\'e-Lefschetz dual to) the Mayer-Vietoris sequence:
$$H_5(\partial C_0)\xra{~i_{\de C_0,M_\varphi}~} H_5(M_\varphi)
\xra{~r_0\oplus r_1~}
H_5(C_0,\partial)\oplus H_5(C_1,\partial)\xra{~\partial_0-\partial_1~} H_4(\partial C_0).$$
The `only if' part follows because
$(Y'-Y)\cap S^2_{f_k}=0$ and $\nus_0^!\colon H_3(N)\to H_5(\de C_0)$ is an isomorphism, so
$Y'-Y\in\ker(r_0\oplus r_1)=\im i_{\de C_0,M_\varphi}=\im(i\widehat A_0)$.

The `if' part follows analogously because $iS^2_{f_k}\cap \im i_{\de C_0,M_\varphi}=0$.
\end{proof}

\begin{proof}[Proof of (c)]
We have
$$Y_y^2-Y^2=2Y\cap_{M_\varphi} i\widehat A_0y=2i((Y\cap C_0)\cap_{C_0}\widehat A_0y) \overset{(3)}=
2i(A_0[N]\cap_{C_0}\widehat A_0y) \overset{(4)}= 2i\widehat A_0\overline{\lambda(f_0)}(y).$$
Here (3) holds by (a) and (4) holds by Lemma \ref{l:La}.$\lambda'$.
\end{proof}

\begin{proof}[Proof of (d)]
We have $p\cap_{M_\varphi} iq = (p\cap C_0)\cap_{C_0}q = A_0^{-1}(p\cap C_0)\cap_N\widehat A_0^{-1}q.$
Here the last equality holds by intersection Alexander duality (Lemma \ref{l:inaldu}).
\end{proof}

\begin{proof}[Proof of Lemma \ref{l:webeta}]
Consider the following diagram:
$$\xymatrix{ H_4(M_\varphi,C_0;\Z_d) \ar[r]^(0.55){\partial} & H_3(C_0;\Z_d) \ar[r]^(0.475){i} & H_3(M_\varphi;\Z_d) \ar[r]^(0.45){j} &
H_3(M_\varphi, C_0;\Z_d) \ar[r]^{\ex} & H_3(C_1, \partial;\Z_d)}.$$
By Lemmas \ref{l:desei}.a and  \ref{l:La}.$\varkappa'$ we have $\ex j_{C_0,M_\varphi}Y^2=Y^2\cap C_1=(A_1[N])^2=A_1u$ with $\Z$-coefficients (these maps $\ex$ and $j_{C_0,M_\varphi}$ are not to be confused with the above $\Z_d$-homomorphisms $\ex$ and $j$ which are used elsewhere in this proof).
Hence $j\rho_dY^2=0$.
By the Agreement Lemma \ref{l:Agr}.b for $s=3$ $\partial=0$.
So $i$ is an isomorphism onto $\ker j$.
Now the existence and the uniqueness of $b_{\varphi,Y}$ follow because
$\widehat A_0=i_{\de C_0,C_0}\nus^!_0$ is an isomorphism.

The independence of $\beta(f_0,f_1)$ from $Y$ for fixed $\varphi$ follows by Lemma \ref{l:desei}.b,c because
a change of $Y$ by $Y_y$ leads to a change of $b_{\varphi,Y}=\widehat A_0^{-1}i_M^{-1}\rho_dY^2$ by $2\rho_d\overline{\lambda(f_0)}(y)$.

The independence of $\beta(f_0,f_1)$ from $\varphi$ is implied by the uniqueness of Lemma \ref{l:piso}, the independence
of $Y$ for fixed $\varphi$ and the following Lemma \ref{l:jhssn0}  applied to $f_0=f_1$ and $d=\di(\varkappa(f_0))$
(then $C_0=C_1$ but $\varphi_0\ne\varphi_1$ is possible).
\end{proof}

\begin{Lemma}\label{l:jhssn0}
Assume that $C_1\supset C_0$, $H_5(C_1,C_0)=0$,
and $\varphi_k\colon \de C_f\to \de C_k$, $k=0,1$, are bundle isomorphisms coinciding over $N_0$.
Then for every joint Seifert class $Y_0\in H_5(M_{\varphi_0})$ there is a joint Seifert class
$Y_1\in H_5(M_{\varphi_1})$ such that
$i_{C_f,M_{\varphi_1}}^{-1}\rho_dY_1^2\subset i_{C_f,M_{\varphi_0}}^{-1}\rho_dY_0^2\subset H_3(C_f;\Z_d)$ for every  $d$.
\end{Lemma}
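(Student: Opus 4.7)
The plan is to construct $Y_1$ explicitly from $Y_0$ by re-gluing a chain representative of $A_f[N]$ along $\varphi_1$, and then exploit the agreement $\varphi_0 = \varphi_1$ over $N_0$ together with the Mayer--Vietoris sequences for $M_{\varphi_k} = C_f\cup_{\varphi_k}(-C_k)$ to deduce the inclusion of preimage sets.

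First I would construct the desired representatives. By general position, I choose a 5-chain $Z_f\subset C_f$ representing $A_f[N]\in H_5(C_f,\de)$ with $\de Z_f\subset\nu_f^{-1}(N_0)$, and a 5-chain $Z_0\subset -C_0$ representing $A_0[N]$ with $\de Z_0=\varphi_0(\de Z_f)$, so that $Y_0$ is represented by $Z_f\cup_{\varphi_0}Z_0$. The long exact sequence of the pair $(C_1,C_0)$ together with the vanishing $H_5(C_1,C_0)=0$ shows that the inclusion induces a surjection $H_5(C_0,\de)\to H_5(C_1,\de)$ sending $A_0[N]$ to $A_1[N]$ (by naturality of homological Alexander duality). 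Hence I may extend $Z_0$ to a 5-chain $Z_1\subset -C_1$ representing $A_1[N]$ with $\de Z_1=\de Z_0\subset\nu^{-1}(N_0)$, and define $Y_1:=[Z_f\cup_{\varphi_1}Z_1]\in H_5(M_{\varphi_1})$, using $\varphi_0=\varphi_1$ on $\nu_f^{-1}(N_0)$. By Lemma \ref{l:desei}(a), $Y_1$ is a joint Seifert class.

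Second I would compute $\rho_dY_k^2\in H_3(M_{\varphi_k};\Z_d)$ by intersecting generic perturbations of the representative chains. By arranging the perturbations so that the chains are disjoint near the interface $\de C_f$ (using a normal vector field along $\de Z_f$ tangent to the fibers of $\nu_f$), the self-intersection decomposes as
\[
\rho_dY_k^2 = i_{C_f,M_{\varphi_k}}(\zeta_f) + i_{C_k,M_{\varphi_k}}(\zeta_k),
\]
where $\zeta_f\in H_3(C_f;\Z_d)$ is represented by $Z_f\cap Z_f'$ and is independent of $k$, while $\zeta_k\in H_3(C_k;\Z_d)$ depends on $k$. This produces a distinguished common lift $\zeta_f$ in both preimage sets.

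Third, I would deduce the inclusion as follows. Given $z\in H_3(C_f;\Z_d)$ with $i_{C_f,M_{\varphi_1}}(z)=\rho_dY_1^2$, the relation $i_{C_f,M_{\varphi_1}}(z-\zeta_f)=i_{C_1,M_{\varphi_1}}(\zeta_1)$ combined with the Mayer--Vietoris sequence for $M_{\varphi_1}$ in degree 3 constrains $z-\zeta_f$ to a subspace detected by classes supported in $\nu_f^{-1}(\partial N_0)$ plus contributions from $\nu_f^{-1}(B^4)$. Because $\varphi_0=\varphi_1$ over $N_0$ and the two Mayer--Vietoris connecting maps to $H_2(\de C_f)$ differ only on the $\nu_f^{-1}(B^4)$-part, a parallel computation for $M_{\varphi_0}$ then yields $i_{C_f,M_{\varphi_0}}(z)=\rho_dY_0^2$.

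The principal obstacle will be Step two: ensuring that $Z_f\cap Z_f'$ genuinely defines an absolute class in $H_3(C_f;\Z_d)$ and that the decomposition $\rho_dY_k^2=i_{C_f}(\zeta_f)+i_{C_k}(\zeta_k)$ holds with $\zeta_f$ intrinsic and $k$-independent. This forces one to arrange the perturbation carefully along $\de C_f$, using a fiberwise normal vector field so that the $C_f$-part of the intersection closes up as a cycle, while the $C_k$-parts absorb all the boundary contributions via $\varphi_k$. The common value of $\varphi_0,\varphi_1$ over $N_0$ (which contains $\de Z_f$) is precisely what guarantees that the $C_f$-side cycle $Z_f\cap Z_f'$ is the same in both $M_{\varphi_0}$ and $M_{\varphi_1}$, providing the shared lift required for the preimage inclusion.
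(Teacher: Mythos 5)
Your strategy---building $Y_1$ from $Y_0$ at the chain level and comparing the two self-intersections through a common piece supported in $C_f$---is different from the paper's, which introduces the intermediate manifolds $\overline M_k:=C_f\cup_{\varphi_k|_{N_0}}(-C_k)$ glued only over $N_0$ (where $\varphi_0=\varphi_1$, so that $\overline M_0\subset\overline M_1$), transports $Y_0$ through the restriction maps $H_5(M_{\varphi_0})\to H_5(\overline M_0,\de)\leftarrow H_5(\overline M_1,\de)\leftarrow H_5(M_{\varphi_1})$ using $H_5(C_1,C_0)=0$ for surjectivity, and compares the squares via the injectivity of $H_3(M_{\varphi_0})\to H_3(\overline M_0,\de)$.

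However, your argument has a gap at the very first step which the rest of the proposal inherits. You require a $5$-chain $Z_f$ representing $A_f[N]\in H_5(C_f,\de)$ with $\de Z_f\subset\nus_f^{-1}(N_0)$. This is impossible: by the homological Alexander duality diagram, $\nus_f\de_f A_f[N]=[N]\ne0\in H_4(N)$, whereas the pushforward of any $4$-cycle supported in $\nus_f^{-1}(N_0)$ lands in the image of $H_4(N_0)\to H_4(N)$, which is zero because $N_0$ is a connected $4$-manifold with non-empty boundary. Hence the boundary of every homology Seifert surface must pass over $\nus_f^{-1}(B^4)$---precisely the region where $\varphi_0$ and $\varphi_1$ may disagree---so $\varphi_1(\de Z_f)\ne\varphi_0(\de Z_f)$ in general, and there is no common boundary along which to extend $Z_0$ to $Z_1$. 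The same obstruction undermines Step two: the ``$C_f$-part'' $Z_f\cap Z_f'$ of the self-intersection cannot be arranged to be an absolute cycle independent of $k$, because the interface contribution over $B^4$ genuinely depends on $\varphi_k$; and Step three is too vague to check as written. The paper's passage to $H_*(\overline M_k,\de)$ is exactly the device for quarantining this unavoidable $B^4$-contribution (it is killed in the relative group), after which the hypothesis $H_5(C_1,C_0)=0$ makes the comparison of $Y_0$ and $Y_1$ possible; some such relativization is needed to repair your argument.
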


\begin{proof}
Denote
$$M_k:=M_{\varphi_k}\quad\text{and}\quad
\overline M_k:=C_0\bigcup\limits_{\varphi_k|_{N_0}\colon \nus_f^{-1}(N_0)\to \nus_k^{-1}(N_0)}(-C_1)\quad\mbox{so that}
\quad M_k=\overline M_k\cup_{S^2\times\partial B^5}S^2\times B^5.$$
Since $C_1\supset C_0$, we have $\overline M_1\supset\overline M_0$.
Consider the following diagram, where $r=r_{\overline M_0,\overline M_1}$ and the coefficients are $\Z$ or $\Z_d$:
$$\xymatrix{
H_q(M_1) \ar[r]^{r_{M_1}} & H_q(\overline M_1,\de) \ar[r]^r & H_q(\overline M_0,\de) & \ar[l]_{r_{M_0}} H_q(M_0)\\
& H_q(\overline M_1) \ar[u]^{j_{\de\overline M_1,\overline M_1}} & \ar[l]_{i_{\overline M_0,\overline M_1}} H_q(\overline M_0)
\ar[u]^{j_{\de\overline M_0,\overline M_0}} & \\
&  H_q(C_f) \ar[uul]^{i_{C_f,M_1}} \ar[u]_{i_{\overline M_1}} \ar[ur]^{i_{\overline M_0}} \ar@(r,d)[rruu]_{i_{C_f,M_0}} }.$$
From the (Poincar\'e dual of the cohomology) exact sequence of the pair $(M_1,\overline M_1)$ we obtain that
$r_{M_1}$ is an epimorphism for $q=5$.
Analogously $r_{M_0}$ is a monomorphism for $q=3$.
By excision $H_5(\overline M_1,\overline M_0)\cong H_5(C_1,C_0)=0$.
Hence from the homology and the cohomology exact sequences of the pair $(\overline M_1,\overline M_0)$ we obtain that
$r$ is an epimorphism for $q=5$.
So we can take $Y_1\in r_{M_1}^{-1}r^{-1}r_{M_0}Y_0\in H_5(M_1)$.
Clearly, $Y_1\cap S^2_f=1$, i.e., $Y_1$ is a joint Seifert class for $\varphi_1$.
The required inclusion follows from
$\rho_dY_1^2\in r_{M_1}^{-1}r^{-1}r_{M_0}\rho_dY_0^2\in H_3(\overline M_1,\de)$
and  the commutativity of the diagram because $r_{M_0}$ is a monomorphism for $q=3$.
\end{proof}

\begin{proof}[Proof of the additivity of $\beta$ (Lemma \ref{l:Add})]
Denote $h:=f\#g$.
We have
$$\beta(h,f)=\beta(f,f)=0.$$
Let us prove the second equality. (It also follows by the transitivity of $\beta$, Lemma \ref{l:Dif}.)
The manifold $M_{\id\de C_f}=\de(C_f\times I)$ is parallelizable.
Take $Y:=\de(A_f[N]\times I)\in H_5(M_{\id\de C_f})$.
Clearly, $Y$ is a joint Seifert class for $\id\de C_f$.
By Lemma \ref{l:La}.$\varkappa'$ we have $A_f[N]^2=A_f\varkappa(f)\in H_3(C_f,\de)$.
Then $Y^2\in d(\varkappa(f))H_3(M_{\id\de C_f})$.
Hence $\beta(f,f)=0$.

Let us prove the first equality.
We may assume that $g(S^4)\cap C_f=\emptyset$, $\nu_f=\nu_h$ over $N_0$ and $C_h\supset C_f$.
Since $\pi_4(V_{7,4})=0$ \cite{Pa56}, all embeddings $S^4\to S^7$ are regular homotopic \cite{Sm59}.
Hence $f$ and $h$ are regular homotopic identically on $N_0$.
A regular homotopy between them extends to a regular homotopy of a tubular neighborhood of $fN$ in $S^7$, identical over $\nu_f^{-1}N_0$.
The bundle isomorphism $\varphi\colon \partial C_f\to\partial C_h$ defined by this regular homotopy is
identical over $N_0$.
Extend $\varphi$ to a bundle isomorphism $S^7-\Int C_f\to S^7-\Int C_h$ identical over $N_0$.
Then
by the String Lemma \ref{l:frbuis} $\varphi$ is a $\pi$-isomorphism.
Now the first equality holds by Lemma \ref{l:webeta} and Lemma \ref{l:jhssn0} for $f_0=f$, $\varphi_0=\id\de C_f$, $Y_0$ any Seifert class, $f_1=h$,  $\varphi_1=\varphi$ and $d=d(\varkappa(f))$.
The assumptions of Lemma \ref{l:jhssn0} are fulfilled because
$$H_5(C_h,C_f)\overset{\ex}\cong H_5(B^4\times D^3-h(B^4),B^4\times\partial D^3)\cong
H_5(B^4\times D^3-B^4\times 0,B^4\times\partial D^3)=0.$$
Here the second isomorphism holds by the 5-lemma
because of the Alexander duality isomorphism
$H_q(B^4\times D^3-h(B^4))\cong H_{6-q}(B^4,\partial)\cong H_q(B^4\times D^3-B^4\times0)$.
\end{proof}



\begin{proof}[Proof of the transitivity of $\beta$ (Lemma \ref{l:Dif})]
By Lemma \ref{l:piso} we have that there are $\pi$-isomorphisms
$\varphi_{01}\colon \partial C_0\to\partial C_1$ and $\varphi_{21}\colon \partial C_2\to\partial C_1$.
Let
$$
V:=M_{\varphi_{01}}\times[-1,0]\bigcup\limits_{-C_1\times 0=-C_1\times0}  M_{\varphi_{21}}\times[0,1].
$$
Denote $\varphi_{kl}:=\varphi_{lk}^{-1}$.
Observe that $M_{\varphi_{kl}}=-M_{\varphi_{lk}}$.
Then $\partial V=M_{\varphi_{10}}\sqcup M_{\varphi_{21}}\sqcup M_{\varphi_{02}}$.

In this paragraph $k\in\{10,21\}$.
Take any $x\in H_3$.
By Lemmas \ref{l:jhss} and \ref{l:Agr}.c there are joint Seifert classes $Y_{4,k}\in H_5(M_{\varphi_k})$
and $Y_{3,k}\in H_4(M_{\varphi_k})$, $Y_{3,k}$ for $x$.
Denote $I_{10}=[-1,0]$ and $I_{21}=[0,1]$.
Applying the Mayer-Vietoris sequences for $V$ we see that there is a class
$\overline Y_q\in H_{q+2}(V,\partial)$ such that
$$
\overline Y_q\cap (M_{\varphi_k}\times I_k)=Y_{q,k}\times I_k\in H_{q+2}(M_{\varphi_k}\times I_k,\partial)
\quad\mbox{for each}\quad k\in\{10,21\},\quad q\in\{3,4\}.
$$
Then for each $q\in\{3,4\}$ the class $Y_{q,20}:=\de\overline Y_q\cap M_{\varphi_{20}}\in H_{q+1}(M_{\varphi_{20}})$ is a joint Seifert class for $f_2$, $f_0$ and  $\varphi_{20}$, where $Y_{3,20}$ corresponds to $x$.
So the triple $(V,\overline Y_4,\overline Y_3)$ is a cobordism  between
$$(M_{\varphi_{20}},Y_{4,20},Y_{3,20})\quad\text{and}\quad
(M_{\varphi_{10}},Y_{4,10},Y_{3,10})\sqcup(M_{\varphi_{21}},Y_{4,21},Y_{4,21}).$$
Since $Y^2_{4,rl}\cap Y_{3,rl}$ is a characteristic number of such triples,
$$Y_{3,20}\cap_{M_{\varphi_{20}}} Y_{4,20}^2 =
Y_{3,10}\cap_{M_{\varphi_{10}}} Y_{4,10}^2+Y_{3,21}\cap_{M_{\varphi_{10}}} Y_{4,21}^2\in\Z.$$
Denote $d:=\di(\varkappa(f_0))$.
By Lemma \ref{l:webeta} there are
$b_{rl}:=\widehat A_l^{-1}i_{C_l,M_{\varphi_{rl}}}^{-1}\rho_dY^2_{4,rl}\in H_1(N;\Z_d)$.
Then by Lemma \ref{l:desei}.d $Y_{3,rl}\cap_{M_{\varphi_{rl}}} Y_{4,rl}^2=x\cap_N b_{rl}\in\Z_d$.
Hence $x\cap_N(b_{20}-b_{10}-b_{21})=0\in\Z_d$.
Since this holds for every $x\in H_3$ and $H_1$ is torsion free, $b_{20}=b_{10}+b_{21}\in H_1(N;\Z_d).$
By the String Lemma \ref{l:frbuis} the composition $\varphi_{20}\colon =\varphi_{01}^{-1}\varphi_{21}$ is a $\pi$-isomorphism.
Hence taking the quotients modulo $2\overline{\lambda(f_0)}(H_3)$ of both sides we obtain
$\beta(f_2,f_0)=\beta(f_1,f_0)+\beta(f_2,f_1)$.
\end{proof}


\subsection{Calculations of the $\beta$-invariant (Lemmas \ref{l:calc} and \ref{l:Pad} for $\beta$)} \label{s:lemmas-calc}

\begin{Lemma}[proved below in \S\ref{s:lemmas-calc}]\label{l:Bet}
Let $f_0,f_1\colon S^1\times S^3\to S^7$ be embeddings such that $\lambda(f_0)=\lambda(f_1)=0$.
Then
\footnote{There is a unique $x\in H_3(M_\varphi)$ such that $4x=p_1^*(M_\varphi)$.
This follows by the proof of the lemma (or because $p_1(M_\varphi)$ is divisible by 4 by Lemma \ref{l:sp} and  $H_3(M_\varphi)\cong\Z$; one proves the latter using the agreement of Seifert classes, Lemma \ref{l:Agr}.b, and the exact sequence of pair $(M_\varphi,C_0)$).}
$$
i_{C_0,M_\varphi}\widehat A_0\beta(f_0,f_1)=Y^2-\frac14 p_1^*(M_\varphi)\in H_3(M_\varphi)
$$
for any bundle isomorphism $\varphi\colon \de C_0\to\de C_1$ such that $M_\varphi$ is spin
and any joint Seifert class $Y\in H_5(M_\varphi)$.
\end{Lemma}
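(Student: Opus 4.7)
The plan is to reduce to the case where $\varphi$ is a $\pi$-isomorphism, in which case the statement is essentially the definition of $\beta$, and to control the correction term $\tfrac14 p_1^*(M_\varphi)$ via Lemmas \ref{l:pondif}--\ref{l:strdif} together with a local cobordism computation. Since $H_2(S^1\times S^3)=0$, we automatically have $\varkappa(f_k)=0$ and $d:=\di\varkappa(f_0)=0$, so $\beta(f_0,f_1)\in K_{0,0}=H_1$ is well-defined in integer coefficients by Lemma \ref{l:webeta}, and the map $i_{C_0,M_\varphi}\widehat A_0\colon H_1\to H_3(M_\varphi)$ is injective by the Agreement Lemma \ref{l:Agr}.b. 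If $\varphi$ is already a $\pi$-isomorphism, then $M_\varphi$ is parallelizable, hence $p_1^*(M_\varphi)=0$, and the desired formula reduces to $i_{C_0,M_\varphi}\widehat A_0\beta(f_0,f_1)=Y^2$, which is exactly the defining equation from Lemma \ref{l:webeta}.

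For a general spin bundle isomorphism $\varphi$ with extension $\Phi\colon D_0\to D_1$, Lemma \ref{l:pondif} gives $p_{M_\varphi}^*=i_{C_0,M_\varphi}\widehat A_0\,d(\Phi_*\str_0,\str_1)$, while Lemma \ref{l:sp} shows $p_{M_\varphi}^*$ is even; injectivity of $i_{C_0,M_\varphi}\widehat A_0$ and torsion-freeness of $H_1$ then produce a unique $v\in H_1$ with $d(\Phi_*\str_0,\str_1)=2v$. Since $2p_{M_\varphi}^*=p_1^*(M_\varphi)$ by the definition of $p^*$, we obtain $\tfrac14 p_1^*(M_\varphi)=\tfrac12 p_{M_\varphi}^*=i_{C_0,M_\varphi}\widehat A_0\,v$. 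By Lemma \ref{l:strdif}.a I choose a spin bundle isomorphism $\sigma$ with extension $\Sigma$ satisfying $d(\Sigma,\Phi)=-v$; Lemma \ref{l:strdif}.b then yields $d(\Sigma_*\str_0,\str_1)=0$, so the String Lemma \ref{l:frbuis} identifies $\sigma$ as a $\pi$-isomorphism. The formula for $\varphi$ is now equivalent to
$$Y_\varphi^2-Y_\sigma^2=i_{C_0,M_\varphi}\widehat A_0\,v\in H_3(M_\varphi),$$
where $Y_\sigma\in H_5(M_\sigma)$ is a joint Seifert class satisfying $Y_\sigma^2=i_{C_0,M_\sigma}\widehat A_0\,\beta(f_0,f_1)$ (by the $\pi$-iso case), and we identify $H_3(M_\sigma)$ with $H_3(M_\varphi)$ via the canonical isomorphism $i_{C_0,M_{(\cdot)}}\widehat A_0\colon H_1\to H_3(M_{(\cdot)})$ (both are infinite cyclic, as noted in the footnote).

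To establish this last identity, I would build a spin cobordism $W^8$ with $\partial W=M_\varphi\sqcup(-M_\sigma)$ by gluing $C_0\times I$ to $-C_1\times I$ along a bundle isomorphism on $\partial C_0\times I$ interpolating between $\varphi$ and $\sigma$, arranged so that the twist is concentrated in a tubular neighborhood of $P\times I$, where $P\subset N$ is a closed oriented 3-submanifold representing $v$ as in the twist construction of Lemma \ref{l:strdif}.a. I extend $Y_\varphi$ and $Y_\sigma$ to a joint Seifert class $Z\in H_6(W)$ using a cobordism analogue of Lemma \ref{l:Agr}.c applied to the pair $(W,\partial W)$; then $Y_\varphi^2-Y_\sigma^2$ is the restriction of $Z^2\in H_4(W)$ to $\partial W$. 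The main obstacle, and the crux of the proof, is evaluating this restriction: a local calculation on a neighborhood of $P\times I$ shows that $Z^2$ is concentrated there and is detected by the model bundle $\xi(\alpha_1)$ from Lemma \ref{l:p1}. The numerical coincidence that the generator of $\pi_3(SO_3)$ contributes $p_1^*(\xi(\alpha_1))=4$ while contributing $1$ (exactly one quarter of this) to the change in self-intersection of the Seifert class is precisely what makes the factor $\tfrac14$ in the statement come out correctly.
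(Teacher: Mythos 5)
Your overall strategy is the paper's: reduce to the case of a $\pi$-isomorphism, where the formula is the definition of $\beta$ (Lemma \ref{l:webeta}) because $p_1^*(M_\varphi)=0$, and then control how both sides change under twisting. Your bookkeeping of the Pontryagin term is a legitimate reorganization: using Lemmas \ref{l:pondif} and \ref{l:sp} (and surjectivity of $i_{C_0,M_\varphi}$ on $H_3$, which holds here since $H_3(M_\varphi,C_0)\cong H_2(S^1\times S^3)=0$) to write $\frac14p_1^*(M_\varphi)=i_{C_0,M_\varphi}\widehat A_0v$ with $v$ half the framing difference class, and then invoking Lemma \ref{l:strdif} and the String Lemma \ref{l:frbuis} to produce the comparison $\pi$-isomorphism $\sigma$, correctly reduces everything to the single identity $Y_\varphi^2-Y_\sigma^2=i\widehat A_0 v$. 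This mirrors the proof of Lemma \ref{l:piso} and is fine.

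The gap is in the crux, which you only gesture at. First, the cobordism you describe --- $C_0\times I$ glued to $-C_1\times I$ ``along a bundle isomorphism interpolating between $\varphi$ and $\sigma$'' with $\de W=M_\varphi\sqcup(-M_\sigma)$ --- does not exist: $\varphi$ and $\sigma$ are not homotopic as bundle isomorphisms, so the interpolation can only be performed off a neighbourhood of $V\times D^3$ (where $V$ is a \emph{circle} representing $v$, since $d(\Phi,\Sigma)\in H^3(N;\pi_3(SO_3))=H_1$, not a $3$-submanifold $P$ as you write), and the gluing then unavoidably produces a third boundary component; in the paper's Lemma \ref{l:cobordbeta} this component is identified with $\C P^3\times S^1$. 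Second, and more importantly, the number you actually need --- that a unit twist changes $Y^2$, evaluated against the codimension-one joint Seifert class $Y_3$, by exactly $1$ --- is asserted but never computed, and the lemma you cite computes the wrong quantity: $p_1^*(\xi(\alpha_1))=4$ from Lemma \ref{l:p1} concerns the Pontryagin class of the local model, which in your setup is already absorbed into Lemma \ref{l:pondif}. What remains is the self-intersection computation $[\C P^2\times S^1]^2\cap_{\C P^3\times S^1}[\C P^2\times 1_1]=[\C P^1\times S^1]\cap_{\C P^3\times S^1}[\C P^2\times 1_1]=1$, which requires identifying the glued-up local piece with $\C P^3$ via the clutching construction for $+1\in\pi_3(SO_3)$ and tracking that the extended Seifert classes restrict there to $[\C P^2\times S^1]$ and $[\C P^2\times 1_1]$. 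That identification and tracking is the substantive content of the paper's Lemma \ref{l:cobordbeta} and is missing from your argument.
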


\begin{Lemma}[proved below in \S\ref{s:lemmas-calc}]\label{l:cobordbeta}
Let $f_0,f_1\colon S^1\times S^3\to S^7$ be embeddings and suppose that $\varphi,\varphi'\colon \de C_0\to \de C_1$ are bundle isomorphisms
such that $\varphi=\varphi'$ over $S^1\times D^3_-$, and over $S^1\times D^3_+$ the bundle isomorphism $\varphi$ is obtained from $\varphi'$
by twisting with the $+1\in\pi_3(SO_3)=\Z$ (i.e.~for the extension $\Phi:D_0\to D_1$, $N=S^1\times S^3$ and $V:=S^1\times 1_3$
we have $\varphi'=\Phi_1|_{\de C_0}$ in the notation of the proof of Lemma \ref{l:strdif}.a.).
Then the triple
$$
(M_{\varphi'},Y_4',Y_3')\quad\text{is cobordant to}\quad
(M_\varphi,Y_4,Y_3)\sqcup(\C P^3\times S^1,[\C P^2\times S^1],[\C P^2\times1_1])
$$
for some joint Seifert classes $Y_q\in H_{q+1}(M_\varphi)$ and $Y_q'\in H_{q+1}(M_{\varphi'})$, $q=3,4$,
where $Y_3$ and $Y_3'$ are those for $1_1\times S^3$.
\end{Lemma}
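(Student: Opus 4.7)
The plan is to construct an 8-dimensional cobordism $W$ with $\partial W=-M_{\varphi'}\sqcup M_\varphi\sqcup(\C P^3\times S^1)$, together with relative classes $\bar Y_4\in H_6(W,\partial W)$ and $\bar Y_3\in H_5(W,\partial W)$ whose boundary intersections on the three components give the claimed triples. The appearance of $\C P^3\times S^1$ is forced by the following identification: the rank-$3$ oriented bundle $\xi(\alpha_1)\to S^4$ classified by a generator $\alpha_1\in\pi_3(SO_3)$ (Lemma \ref{l:p1}) has sphere bundle $S\xi(\alpha_1)\cong\C P^3$, via the Hopf-type fibration $\C P^1\to\C P^3\to\mathbb{H}P^1=S^4$. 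Indeed, using the splitting $T\C P^3=T_{\mathrm{fib}}\oplus\pi^*TS^4$, the stable triviality of $TS^4$, and $p_1(\C P^3)=4h^2$, the underlying $\R^3$-bundle $\eta$ of this fibration satisfies $p_1^*(\eta)=\pm 4$; since rank-$3$ oriented bundles over $S^4$ are classified by $p_1^*/4\in\Z$ (Lemma \ref{l:p1}), we conclude $S\xi(\alpha_1)\cong\C P^3$. Crossing with $V:=S^1\times 1_3\subset N$ gives the 8-manifold $V\times D\xi(\alpha_1)$ with boundary $V\times\C P^3=\C P^3\times S^1$.

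Next I build $W$ by splicing $V\times D\xi(\alpha_1)$ into $M_\varphi\times I$ in a neighborhood of $\nu_0^{-1}(V)\times\{1\}\subset M_\varphi\times\{1\}$. Using the decomposition $S^4=D^4_+\cup D^4_-$, the restriction of $\xi(\alpha_1)$ to $D^4_+$ is trivial, giving a trivial 7-dimensional subset $V\times D^4_+\times S^2$ of the boundary $V\times\C P^3$ of $V\times D\xi(\alpha_1)$. Identify this subset with an 8-dimensional thickened tubular neighborhood of $\nu_0^{-1}(V)\times[1-\delta,1]\subset M_\varphi\times I$, using the trivializations of $\nu_0$ and $\nu_1$ over $V\times D^3$ and the identification $D^4_+\cong D^3\times I$ so that the $I$-factor matches the collar direction through $\de C_0$ into both $C_0$ and $-C_1$. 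By construction of $\xi(\alpha_1)$ and the hypothesis relating $\varphi$ and $\varphi'$, the clutching across $V\times\de D^4_+$ by $\alpha_1$ realizes precisely the bundle-isomorphism difference $\varphi^{-1}\varphi'$; consequently, the piece lying over $V\times D^4_-$ effectively glues $C_0$ to $-C_1$ via $\varphi'$ instead of $\varphi$, transforming the top boundary from $M_\varphi$ into $M_{\varphi'}$ and leaving the remaining part of $V\times\C P^3$ as the extra boundary component $\C P^3\times S^1$.

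Finally, to produce the homology classes, take arbitrary joint Seifert classes $Y_4\in H_5(M_\varphi)$ (Lemma \ref{l:jhss}) and $Y_3\in H_4(M_\varphi)$ for $1_1\times S^3$ (Lemma \ref{l:Agr}.c) and extend them as $Y_q\times I$ over the $M_\varphi\times I$ part of $W$. On the spliced piece $V\times D\xi(\alpha_1)$, extend by relative classes supported on the zero section $V\times S^4\hookrightarrow V\times D\xi(\alpha_1)$ and a slice $V\times\{\mathrm{pt}\}\hookrightarrow V\times S^4$; their restrictions to the boundary $V\times\C P^3$ are the Poincar\'e duals of (respectively) the hyperplane $[\C P^2]\in H_4(\C P^3)$ — the generator, Poincar\'e dual to the $S^2$-fiber of $\C P^3\to S^4$ — and its point-slice, giving $[\C P^2\times S^1]$ and $[\C P^2\times 1_1]$. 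The combined relative classes $\bar Y_4,\bar Y_3$ restrict on the $M_{\varphi'}$-side to classes $Y_4',Y_3'$, which are joint Seifert classes because the meridian $S^2_{f_1}$ can be taken disjoint from the splicing region so that its intersection pairing is preserved. The main obstacle is executing the splicing construction so that the $\alpha_1$-clutching realizes the twist $\varphi^{-1}\varphi'$ on the nose (this is ultimately forced by both being generators of $\pi_3(SO_3)=\Z$), and verifying that precisely the hyperplane class $[\C P^2]$ — not a multiple — appears as the boundary contribution.
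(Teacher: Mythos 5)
Your overall strategy is the paper's: build an $8$-dimensional cobordism by a twisted gluing, and identify the extra boundary piece as $S^1$ times the total space of the $S^2$-bundle over $S^4$ with clutching a generator of $\pi_3(SO_3)$, which is $\C P^3$ (your Pontryagin-class identification of $S\xi(\alpha_1)$ with $\C P^3$ is exactly the paper's footnoted alternative justification). But the construction of $W$ itself has a fatal accounting error. The boundary of the glued-in piece $V\times D\xi(\alpha_1)$ is $V\times\C P^3=(V\times D^4_+\times S^2)\cup(V\times D^4_-\times S^2)$ --- exactly two halves and nothing more. You attach the first half to a neighbourhood $R$ of $\nus_0^{-1}V$ in $M_\varphi\times\{1\}$ (incidentally, identifying this $7$-dimensional set with an ``$8$-dimensional thickened tubular neighbourhood'' is dimensionally impossible; I read you as meaning a $7$-dimensional neighbourhood in $M_\varphi\times\{1\}$), and you yourself say that the second half is absorbed into the modified top boundary, which is what turns $M_\varphi$ into $M_{\varphi'}$. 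After that there is no ``remaining part of $V\times\C P^3$'': the resulting $W$ has boundary $M_\varphi\sqcup(-M_{\varphi'})$ with only \emph{two} components, and the component $\C P^3\times S^1$ never appears. More structurally, you glue two pieces along a single connected region meeting each relevant boundary in a connected set, so the boundary of the union cannot acquire a third closed component.

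The paper's construction is different in precisely the way needed to fix this: one forms $(C_0\times I)\cup_{\overline\alpha}(C_1\times I)$, where the identification of $\de C_0\times I$ with $\de C_1\times I$ is \emph{omitted} over $S^2\times S^1\times\Int D^3_+\times[\frac13,\frac23]$ and twisted by $\alpha$ over $[\frac23,1]$. The third boundary component then consists of \emph{two} copies of $\Sigma=S^2\times D^3\times[\frac13,\frac23]$ --- one contributed by $\de(C_0\times I)$ and one by $\de(C_1\times I)$ --- glued to each other by the clutching $\widehat\alpha$ to form $E_\alpha\times S^1\cong\C P^3\times S^1$; it is these two caps, not the two halves of $\de(V\times D\xi(\alpha_1))$, that assemble into $\C P^3$. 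Two further points would still need repair: (i) arbitrary joint Seifert classes on $M_\varphi$ need not extend over the cobordism; the paper uses the Agreement Lemma \ref{l:Agr}.a and Mayer--Vietoris to produce relative classes $\overline Y_q$ on the cobordism first and \emph{defines} $Y_q$, $Y_q'$ as their boundary restrictions (also, a class ``supported on the zero section $V\times S^4$'' has the wrong dimension for $\overline Y_4\in H_6(W,\de)$, and the class that actually appears on the $\C P^3$ factor is a \emph{section} class of $\C P^3\to S^4$, not the zero section); (ii) ``both being generators of $\pi_3(SO_3)=\Z$'' determines the twist and the class $[\C P^2]$ only up to sign, whereas the statement asserts $+[\C P^2\times S^1]$ and $+[\C P^2\times 1_1]$; the paper pins down the sign by checking $y_4\cap[S^2\times0]=+1$ and by the sign determination $p_1^*(\xi(\alpha_1))=+4$ in Lemma \ref{l:p1}.
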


Lemmas \ref{l:cobordbeta}, \ref{l:cobordeta} and \cite[Cobordism Lemma]{Sk08} are analogous.

\begin{proof}[Proof of Lemma \ref{l:Bet}]
By Lemma \ref{l:piso} there is a $\pi$-isomorphism $\de C_0\to\de C_1$.
If $\varphi:\de C_0\to\de C_1$ is a $\pi$-isomorphism, then $M_\varphi$ is parallelizable, so $p_1(M_\varphi)=0$,
hence the required equality holds by definition of $\beta(f_0,f_1)$.



The proof of Lemma \ref{l:piso} shows that {\it every spin bundle isomorphism $\varphi:\de C_0\to\de C_1$
can be modified by a sequence of twistings with $\pm1\in\pi_3(SO_3)=\Z$ to obtain a $\pi$-isomorphism $\varphi'$.}
Hence it suffices to prove that
{\it if $\varphi,\varphi'$ are spin bundle isomorphisms and $\varphi$ is obtained from $\varphi'$ by twisting with $\pm1\in\pi_3(SO_3)=\Z$, then
$i_{C_0,M_\varphi}^{-1}(Y^2-\frac14 p_1^*(M_\varphi))=i_{C_0,M_{\varphi'}}^{-1}((Y')^2-\frac14 p_1^*(M_{\varphi'}))$
for some joint Seifert classes $Y\in H_5(M_\varphi)$ and $Y'\in H_5(M_{\varphi'})$.}

Let us prove this assertion.
Take any joint Seifert classes $Y_3\in H_4(M_\varphi)$ and $Y_3'\in H_4(M_{\varphi'})$ for $1_1\times S^3$.
The map $(i_{C_0,M_\varphi}\widehat A_0)^{-1}:H_3(M_\varphi)\to H_1(S^1\times S^3)=\Z$
is an isomorphism coinciding with $x\mapsto x\cap_{M_\varphi}Y_3$.
Analogous assertion holds with $\varphi,Y,Y_3$ replaced by $\varphi',Y', Y_3'$.
Now the assertion for `$+1$-modification' follows because by Lemma \ref{l:cobordbeta}
$$
(Y_4')^2\cap_{M_{\varphi'}} Y_3'-Y_4^2\cap_{M_\varphi} Y_3=
[\C P^2\times S^1]^2\cap_{\C P^3\times S^1}[\C P^2\times1_1]=
[\C P^1\times S^1]\cap_{\C P^3\times S^1}[\C P^2\times1_1]=1
$$
$$
\mbox{and}\qquad
p_1^*(M_{\varphi'})\cap_{M_{\varphi'}} Y_3'-p_1^*(M_\varphi)\cap_{M_\varphi} Y_3=
p_1^*(\C P^3\times S^1)\cap_{\C P^3\times S^1}[\C P^2\times1_1]=4.
$$
The assertion for `$-1$-modification' is analogous.
\end{proof}

\comment


The following sentence seems incorrect:
Analogously to Lemma 3.11.a it is proved that for every two spin bundle isomorphisms ?C0 ? ?C1 one of them can be obtained from (a spin bundle isomorphism equivalent to) the other by a sequence of twistings with +1 ? p3(SO3) = Z.

If we have two spin bundle isomorphisms with d(\varphi_0, \varphi_1) \in H_1 then a priori there is a further obstruction to their being homotopic.  This is of course an element of

\pi_4(SO_3) = \Z/2.

When I first wrote this argument, I used that the map from the group of bundle automorphisms \aut(\de C_0 \to S^1 \times S^3) to the integers, given by taking the characteristic number

Y^2-\frac14p_1^*(M_\varphi)

is a homomorphism.  Then of course the torsion group Z/2 cannot contribute to the characteristic number.  Without this point of view, we have to find a way around this final obstruction for the proof to be correct.

\begin{proof}[Proof of Lemma \ref{l:Bet}]
We are assuming that $M_\varphi$ is spin.
If $M_\varphi$ is parallelizable, then $p_1(M_\varphi)=0$, so the required equality holds by definition of $\beta(f_0,f_1)$.
By Lemma -ref, there is a string bundle isomorphism $\varphi''$.
Let $\Phi$ and $\Phi''$ be the extensions of $\varphi$ and $\varphi'$ to the disc bundle
If $d(\Phi,\Phi'') = 0$, the by the proof of Lemma -ref,
$M_\varphi''$ is a $\pi$-manifold and so the lemma holds for $M_\varphi''$.

Now $\varphi'$ is obtained from $\varphi$ by twisting with   $+1\in\pi_3(SO_3)=\Z$.
By Lemma \ref{l:cobordbeta}
$$
(Y_4')^2\cap_{M_{\varphi'}} Y_3'-Y_4^2\cap_{M_\varphi} Y_3=
[\C P^2\times S^1]^2\cap_{\C P^3\times S^1}[\C P^2\times*]=
[\C P^1\times S^1]\cap_{\C P^3\times S^1}[\C P^2\times*]=1
$$
$$
\mbox{and}\qquad
p_1^*(M_{\varphi'})\cap_{M_{\varphi'}} Y_3'-p_1^*(M_\varphi)\cap_{M_\varphi} Y_3=
p_1^*(\C P^3\times S^1)\cap_{\C P^3\times S^1}[\C P^2\times*]=4.
$$
Thus $Y^2{-}\frac14p_1^*(M_\varphi) = Y^2{-}\frac14p_1^*(M_\varphi')$.
Let $\Phi'$ be the extension of $\varphi'$ to the disc bundle.
By construction and the proof of Lemma -ref, $d(\Phi, \Phi')$ generates $H_1(S^1 \times S^3)$.

Now for any three bundle isomorphisms $\Phi_a, \Phi_b$ and
$\Phi_c$ the difference element satisfies the relation
\[ d(\Phi_a, \Phi_c) = d(\Phi_a, \Phi_b) + d(\Phi_b, \Phi_c) .\]
Let $d(\Phi, \Phi'') = d[\ast \times S^3] \in H_1(S^1 \times S^3; \pi_3(SO_3)) = H_1$.
If $d > 0$, then by inductively modifying $\varphi$ as above, we will obtain a bundle
isomorphism $\varphi'''$ with $Y^2{-}\frac14p_1^*(M_\varphi) = Y^2{-}\frac14p_1^*(M_\varphi''')$
and $d(\Phi''', \Phi'') = 0$.  Hence $\varphi'''$ is string and the lemma holds.
If $d<0$ then the argument doesn't work: we need a version of Lemma 3.16 with -1.
\end{proof}

\endcomment

\begin{proof}[Proof of Lemma \ref{l:cobordbeta}]
Take a map $\alpha\colon S^3\to SO_3$ representing $+1\in\pi_3(SO_3)$ and such that $\alpha|_{D^3_-}=\id S^2$.
Identify $\partial C_0$ with $S^2\times S^1\times S^3$ by any bundle isomorphism.
Identify $\partial C_1$ with $\partial C_0=S^2\times S^1\times S^3$ by $\varphi$.
Define a self-diffeomorphism
$$
\overline\alpha\quad\text{of}\quad
(S^2\times S^1\times S^3\times I)-\left(S^2\times S^1\times \Int D^3_+\times\left[\frac13,\frac23\right]\right)
$$
$$
\text{by}\quad \overline\alpha(x):=\begin{cases}
(\alpha(b)a,z,b,t) & x=(a,z,b,t)\in S^2\times S^1\times D^3_+\times[\frac23,1]\\
x &\text{otherwise.}\end{cases}
$$
Let
$$
V:=(C_0\times I)\bigcup\limits_{\overline\alpha}\ (C_1\times I)\quad\text{and}\quad
\Sigma:=S^2\times D^3\times\left[\frac13,\frac23\right].
$$
Then $V$ is a cobordism between $M_{\varphi'}$ and $M_\varphi\sqcup E_\alpha\times S^1$, where
$$
\widehat\alpha\colon \de\Sigma\to\de\Sigma\quad\text{maps $(a,b,t)$ to}\quad
\begin{cases}(a,b,t) &t<2 \\ (\alpha(b)a,b,2)& t=2\end{cases}
$$
$$
\text{and}\quad E_\alpha:=\Sigma\cup_{\widehat\alpha}\Sigma\overset{(1)}\cong
\frac{S^2\times D^4}{\{(a,b)\sim(\alpha(b)a,R (b))\}_{(a,b)\in S^2\times D^3_+}}\overset{(2)}\cong\C P^3.
$$
Here $R \colon D^3_+\to D^3_-$ is the reflection with respect to $\partial D^3_+=\partial D^3_-$.
The diffeomorphism (2) is well-known and is proved using a retraction to the dual $(\C P^1)^*\subset\C P^3$
of the complement to a tubular neighborhood of $\C P^1\subset\C P^3$.%
\footnote{Alternatively, (2) holds because $E_\alpha$ is an $S^2$-bundle over $S^4$
with characteristic map representing $+1\in\pi_3(SO_3)$.}

By the Agreement Lemma \ref{l:Agr}.a $\varphi\partial A_0=\partial A_1$ and $\varphi'\partial A_0=\partial A_1$.
Hence using the (Poincar\'e dual of the cohomology) Mayer-Vietoris sequence for $V$ we see that for each $q\in\{3,4\}$ there are
$$\overline Y_q\in H_{q+2}(V,\partial)\quad\text{such that}\quad
\overline Y_4\cap (C_k\times I)=A_k[S^1\times S^3]\times I\in H_6(C_k\times I,\partial)$$
$$\text{and}\quad\overline Y_3\cap(C_k\times I)=A_k[1_1\times S^3]\times I\in H_5(C_k\times I,\partial)
\quad\text{for each}\quad k=0,1.$$
Then
$Y_q:=\de\overline Y_q\cap M_\varphi$ and $Y_q':=\de\overline Y_q'\cap M_{\varphi'}$ are joint Seifert classes; $Y_4$ is for $[S^1\times S^3]$ and $Y_3$ is for $[1_1\times S^3]$.
Hence $(V,\overline Y_4,\overline Y_3)$ is a cobordism between
$$(M_{\varphi'},Y_4',Y_3')\quad\text{and}\quad (M_\varphi,Y_4,Y_3)\sqcup
(S^1\times E_\alpha,\partial\overline Y_4\cap(S^1\times E_\alpha),\partial\overline Y_3\cap(S^1\times E_\alpha)).$$
We have $\partial \overline Y_4\cap(S^1\times E_\alpha)=[S^1]\otimes y_4$ for a certain $y_4\in H_4(E_\alpha)$.
Since
$$\overline Y_4\cap (C_0\times I)=A_0[S^1\times S^3]\times I,\quad\text{we have}
\quad y_4\cap\Sigma=\left[*\times D^3_-\times\left[\frac13,\frac23\right]\right]\in H_4(\Sigma,\de).$$
Hence under (1) $y_4$ goes to a class whose intersection with $[S^2\times 0]$ in the quotient manifold is $+1$.
Therefore under the composition of (1) and (2) $y_4$ goes to
a class whose intersection with $[\C P^1]$ in $\C P^3$ is $+1$, i.e.~to $[\C P^2]$.

We have $\partial \overline Y_3\cap(S^1\times E_\alpha)=*\otimes y_3$ for a certain $y_3\in H_4(E_\alpha)$.
Analogously to the above under the composition of (1) and (2) $y_3$ goes to $[\C P^2]$.
\end{proof}

Take a map $\alpha\colon S^3\to\pi_3(V_{4,2})$ representing the element $(0,b)$.
In this subsection we abbreviate the subscript $\tau_k$ to $k$, e.g. $\nu_\alpha=\nu_{\tau_\alpha}$.

\begin{proof}[Proof of the calculation of $\beta$ (Lemma \ref{l:calc}.a)]
Take a normal vector field $e_1$ on $S^3\subset S^7$ tangent to $D^4\supset S^3$ and pointing outside $D^4$.
Take the standard framing $S^3\times D^3\to S^7$ of the orthogonal complement to $e_1$ in the normal bundle of $S^3\subset S^7$.
Take a normal vector field $e_2$ on $S^3\subset S^7$ orthogonal to $e_1$ and representing an element $b\in\pi_3(S^2)$ w.r.t.~the standard framing.
Since the `action' map $\pi_3(SO_3)\to\pi_3(S^2)$ is an isomorphism, $e_2$
can be completed  to a framing $e_2,e_3,e_4$ of the orthogonal complement to $e_1$,
and this framing is unique up to homotopy.
Recall that $\im\tau_\alpha$ is formed by ends of $\varepsilon$-length vectors normal to $S^3\subset S^7$ in the subbundle spanned by $e_1$ and $e_2$ of the normal bundle to $S^3$ in $S^7$ (for some small $\varepsilon$).

Recall that $\nu_\alpha=\nu_{\tau_\alpha}$ is the normal bundle of $\tau_\alpha$.
Take a section $\xi_\alpha$ of $\nu_\alpha$ in the 2-plane subbundle, spanned by $e_1$ and $e_2$, of the normal bundle to $S^3\subset S^7$, so that for every point $(x,y)\in S^1\times S^3$ the vector $\xi_\alpha(x,y)$ looks into the
2-disk bounded by $\tau_\alpha(S^1\times y)$ but not outside.
Then $\xi_\alpha,e_3,e_4$ is a framing of $\nu_\alpha$.

For every $x\in S^3$ the circle $\xi_\alpha(S^1\times x)\subset S^3\times x$  bounds a 2-disk in $x\times D^4$.
The union of such 2-disks along $x\in S^3$ is a $5$-manifold $W_\alpha \cong D^2 \times S^3$
with boundary $\xi_\alpha(S^1 \times S^3)$.
Choose the orientation on $W_\alpha$ so that $\de W_\alpha=\xi_\alpha(S^1\times S^3)$.
Since $A_\alpha^{-1} = \nus_\alpha\de$ and $\nus_\alpha \xi_\alpha = \id N$,
the manifold $W_\alpha$ represents the relative homology class
$[W_\alpha]=A_\alpha[S^1\times S^3]\in H_5(C_\alpha,\partial)$.

Let
$$D^4_\alpha:=(1-\varepsilon)D^4\subset S^7\quad\text{and}\quad S^3_1:=1_1\times S^3.$$
We have $\de D^4_\alpha=\xi_\alpha(S^3_1)$.
Hence $[D^4_\alpha]=A_\alpha[S^3_1]\in H_4(C_\alpha,\de)$ for a certain orientation on $D^4_\alpha$.

Make analogous construction for $\alpha$ replaced by the constant map $\alpha_0$.
We obtain the standard embedding $\tau_0=\tau_{\alpha_0}$, a section $\xi_0$, a framing,
$$[W_0]=A_0[S^1\times S^3]\in H_5(C_0,\de)\quad\text{and}\quad [D_0^4]=A_0[S^3_1]\in H_4(C_0,\de).$$
Take a bundle isomorphism $\varphi\colon \de C_0\to\de C_\alpha$ defined by the constructed framings.
We may assume that $f_0=f_\alpha$ over a neighborhood of $S^1\times1_3$.
Hence $\varphi$ carries the spin structure $\spn_\alpha$ on $\de C_\alpha$ coming from $S^7$ to
the spin structure $\spn_0$ on $\de C_0$ coming from $S^7$.
Therefore $M_\varphi$ is spin.
Since $\varphi\xi_0=\xi_\alpha$, we have
$$
\varphi\de W_0=\varphi\xi_0(S^1\times S^3)=\xi_\alpha(S^1\times S^3)=\de W_\alpha\quad\text{and}
\quad \varphi\de D^4_0=\varphi\xi_0(S^3_1)=\xi_\alpha(S^3_1)=\de D^4_\alpha.
$$
Hence $W_0\cup_{\de}(-W_\alpha)$ and $\Sigma^4:=D^4_0\cup_{\de}(-D^4_\alpha)$ with their natural orientations
represent joint Seifert classes $Y_4\in H_5(M_\varphi)$ and $[\Sigma^4]\in H_4(M_\varphi)$, where $[\Sigma^4]$ corresponds to $[S^3_1]$.
We have
$$
W_0\cap D^4_0=W_\alpha\cap D^4_\alpha=\emptyset\ \ \ \Rightarrow\quad
(W_0\cup_{\de}(-W_\alpha))\cap\Sigma^4=\emptyset\ \ \ \Rightarrow\quad
Y_4\cap_{M_\varphi}[\Sigma^4]=0\ \ \ \Rightarrow\quad Y_4^2\cap_{M_\varphi}[\Sigma^4]=0,
$$
$$
\text{and}\quad p_1^*(M_\varphi)\cap_{M_\varphi}[\Sigma^4]=p_1^*(\tau_{M_\varphi}|_{\Sigma^4})=
p_1^*(\Sigma^4)+p_1^*(\mu)=-4b,$$
where

$\bullet$ $\mu$ is the normal bundle of $\Sigma^4$ in $M_\varphi$;

$\bullet$ the last equality follows because $S^4$ is stably parallelizable, the map $p_1\colon \pi_3(SO_3)\to\Z$ is multiplication by 4 \cite{DNF12}, and $\mu$ corresponds to the preimage of $-b$ under the `action' map $\pi_3(SO_3)\to\pi_3(S^2)$ because the obstruction to existence of a non-zero section of $\mu$ is $-b$.

Let us prove the latter statement.
Take the normal vector field $e_2^0$ on $S^3\subset S^7$ orthogonal to $e_1$ and representing the constant map
$S^3\to S^2$ w.r.t. the standard framing.
The corresponding section of the normal bundle of $\de D^4_0\subset S^7$ is tangent to $\de C_0$.
That section is mapped under $d\varphi$ to the section of the normal bundle of $\de D^4_\alpha\subset S^7$ corresponding to $e_2$ (here $\varphi\colon \de C_0\to \de C_\alpha$ is thought of as a diffeomorphism rather than
a bundle isomorphism).
Clearly, $e_2^0$ extends to a non-zero section of the normal bundle of $D^4_0\subset S^7$.
The obstruction to extension of $e_2$ to a non-zero section of the normal bundle of $D^4_\alpha\subset S^7$ is $b$.
Since $\mu$ is a bundle over $D^4_0\cup_\de(-D^4_\alpha)$,
the obstruction to the existence of a non-zero section of $\mu$ is $-b$.

Thus
$\beta(\tau_0,\tau_\alpha)\cap_{S^1\times S^3}[S^3_1]= (Y_4^2-\frac14p_1^*(M_\varphi))\cap_{M_\varphi}[\Sigma^4]=b$
by Lemmas \ref{l:Bet} and \ref{l:desei}.d.
Hence $\beta(\tau_0,\tau_\alpha)=b[S^1\times1_3]$.
\end{proof}

\begin{proof}[Proof of the parametric additivity of $\beta$ (Lemma \ref{l:Pad})]
We use the notation from \S\ref{s:lemmas-pad}.
Analogously to Lemma \ref{l:piso} one proves that {\it there is a $\pi$-isomorphism
$\psi\colon \de C_\alpha\to \de C_0$ such that $\psi(\de C_\alpha\cap D^7_\pm)\subset(\de C_0\cap D^7_\pm)$.}

For an embedding $g$ denote $C_{g\pm}:=C_g\cap D^7_\pm$.
Observe that
$$
C_{f-}=C_{\alpha-}=C_{0-}\cong C_{0+},\quad C_{h+}=C_{f+} \quad\mbox{and}\quad C_{h-}=C_{\alpha+}.
$$
Then $\id C_{f+}\cup R$ gives diffeomorphisms
$$
C_h\ \cong\ C_{f+}\bigcup\limits_{C_{f+}\cap \de D^7_+=C_{\alpha+}\cap \de D^7_+}C_{\alpha+}\qquad\mbox{and}\qquad
\de C_h \cong
\ \de C_f\cap D^7_+\bigcup\limits_{\de C_f\cap \de D^7_+=\de C_\alpha\cap \de D^7_+}\de C_\alpha\cap D^7_+
$$
(here $C_{\alpha+}$ comes with the positive orientation because $R$ preserves the orientation).
Identify $\de C_h$ with the right-hand side of the latter equality.
Let $\varphi\colon \partial C_h\to\partial C_f$ be $\id(\de C_f\cap D^7_+)\cup\psi|_{\de C_\alpha\cap D^7_+}$.
Then by the String Lemma \ref{l:frbuis} $\varphi$ is a $\pi$-isomorphism.
(This is proved by considering the restrictions of stable tangent framings coming from $S^7$ to $\de C_f\cap D^7_+$,
$\de C_\alpha\cap D^7_+$ and $\de C_0\cap D^7_+$.)

Let
$$
V:=M_{\id\de C_f}\times[-1,0]\bigcup\limits_{R _3}(-M_\psi)\times[0,1],
$$
where $R _3\colon \left(C_{f-}\cup(-C_{f-})\right)\times0\to\left((-C_{\tau-})\cup C_{\tau_0,-}\right)\times0$
is the union of two copies of the reflection with respect to the hyperplane $x_3=0$.
Then $\partial V=M_\varphi\sqcup(-M_\psi)\sqcup(-M_{\id\de C_f})$.
Now the proof is completed analogously to the second paragraph of the proof of $\beta$-transitivity
(Lemma \ref{l:Dif}) using the calculation of $\beta$ (Lemma \ref{l:calc}.a).
\end{proof}




\aronly{

\subsection{Appendix: some remarks to \S\ref{s:prelemmas}}\label{s:prelemmas-rem}

\begin{Remark}[to \S\ref{s:prelemmas-equ}]\label{r:toequ}
(a) The class $\varkappa\in H_2$ measures the linking of $2$-cycles in $N$ and the `top cell' of $N$ under $f$:
if $f=f'$ on $N_0$, then
$(\varkappa(f')-\varkappa(f))\cap N_0=2\widehat A_{f|_{N_0}}^{-1}[f(B^4)\cup f'(B^4)]\in H_2(N_0;\Z)\cong H_2$
(this is proved analogously to \cite[\S2, The Bo\'echat-Haefliger invariant Lemma]{Sk08'}).

(b) Weakly unlinked sections may differ on a 3-skeleton of $N_0$ even up to vertical homotopy, and
{\em a priori} changing $\xi$ on a 3-skeleton could change the integer $\lk_{S^7}(fX,\xi Y)$ in the definition of $\lambda$.
However different choices of $\xi$ do not change $\lk_{S^7}(fX,\xi Y)$.
The formal explanation for this is given in Lemma \ref{l:La}.$\lambda'$.
Informally, the change is trivial because it `factors through' $H_3(S^2)=0$.

(c) If in the definitions of $\varkappa$ and $\lambda$ we would take an arbitrary (i.e. not weakly unlinked) section $\xi$, we would obtain different values.
Note that $2\lambda(x,y)=\lk_{S^7}(fx,\xi y)+\lk_{S^7}(fx,-\xi y)$ for any (i.e., not necessarily weakly unlinked)
section $\xi$ (D. Tonkonog, unpublished, cf. \cite{To10}).

(d) Although a weakly unlinked section is only defined over $N_0$, its construction involves all of the embedding $f$
via the inclusions $\nus^{-1}N_0 \to \de C \to C$, and not only $f|_{N_0}$.
For embeddings $N_0\to\ S^7$ an analogue of $\varkappa$ is not defined and only `a part' of $\lambda$ is defined
(D. Tonkonog, unpublished, cf.~\cite{To10}).

(e) Let $\xi:N_0\to\nus^{-1}N_0$ be a section such that $i_C\xi$ is weakly unlinked.
Then
$\nus^!\overline\lambda=\xi-\partial A$ on $H_3$ and
$\nus^!\overline\varkappa=\xi-\partial A$ on $H_2$.

{\it Proof.} Let $q\in\{2,3\}$.
Since $\nus\partial A=\id H_q=\nus\xi$, for every $y\in H_q$ there is a class $x(y)\in H_{q-2}$ such that
$\partial Ay-\xi y=\nus^!x(y)$.
Then $0=i_C\partial Ay=i_C\xi y+i_C\nus^!x(y)=i_C\xi y+\widehat Ax(y)$.
Now the required equalities  follow by Lemma \ref{l:La}.$\lambda,\varkappa$.
\end{Remark}


\begin{Remark}[to \S\ref{s:prelemmas-hss}]\label{r:tohss}
(a) Lemma \ref{l:jhss} also follows from Lemmas \ref{l:Agr}.c and \ref{l:desei}.a.

(b) {\it An alternative proof of the Agreement Lemma \ref{l:Agr}.a,b for $q\in\{2,3,4\}$.}
We generalize the proof of \cite[Agreement Lemma 2.5]{CS11} which is part (a) for $q=4$.

Let $\xi_0:N_0\to\partial C_0$ be a weakly unlinked section for $f_0$.
Since $\varkappa (f_0)=\varkappa (f_1)$, $\lambda(f_0)=\lambda(f_1)$ and $H_2$ has no 2-torsion,
by Lemma \ref{l:phi-unl} $\xi_1:=\varphi\xi_0$ is a weakly unlinked section for $f_1$.

We have $\nus_1\varphi\de_0A_0=\nus_0\de_0A_0=\id H_q=\nus_1\de_1A_1$.
Also
$$
\xi_1^!\varphi\de_0A_0=\xi_0^!\de_0A_0\overset{(2)}=
\left\{\begin{cases}
\overline\varkappa(f_0) &q=4\\
\xi_0^!\xi_0-\xi_0^!\nus_0^!\overline\lambda(f_0)=\xi_0^!\xi_0-\overline\lambda(f_0) &q=3\\
\xi_0^!\xi_0-\xi_0^!\nus_0^!\overline\varkappa(f_0)=\xi_0^!\xi_0-\overline\varkappa(f_0) &q=2\\
\end{cases}\right\}\overset{(3)}=\xi_1^!\de_1A_1.
$$
Here

$\bullet$ (2) holds because $\xi_0^!\de_0A_0[N]=\varkappa$ \cite[Lemma 2.5.b]{Sk10} and
by Remark \ref{r:toequ}.e.

$\bullet$ (3) holds because $\xi_0^!\xi_0=\xi_0^!\varphi^{-1}\varphi\xi_0=\xi_1^!\xi_1$.

Now (a) follows because the map $\nus_1\oplus\xi_1^!:H_3(\partial C_1)\to H_3\oplus H_1$ is an isomorphism.

Let $i_k:=i_{\de C_k,C_k}$ and consider the diagram from \S\ref{s:prelemmas-hss}.
Part (b) follows from (a) because
$$
(a)\quad\Rightarrow\quad
\varphi\im(\de_0A_0)=\im(\de_1A_1)\quad\Leftrightarrow
\varphi\im\de_0=\im\de_1\quad\Leftrightarrow\quad
\varphi\ker i_0=\ker i_1\quad\Rightarrow$$
$$\Rightarrow\quad\varphi\ker i_0\supset\im\partial_1
\quad\Leftrightarrow\ \ i_0\varphi^{-1}\partial_1=0\quad\Leftrightarrow\quad(b).
$$
(In fact, $(a)~\Leftrightarrow~ \varphi\ker i_0=\ker i_1$, see the proof from \S\ref{s:prelemmas-hss}.)
\end{Remark}

\begin{Remark}[to \S\ref{s:lemmas-sf}]\label{r:tosf}
The existence part of Lemma \ref{l:piso} (under weaker assumption `$\lambda(f_0)(x,x)\equiv\lambda(f_1)(x,x)\mod2$ for every $x$') can be proved as follows.
By the regular homotopy classification of embeddings (Proposition \ref{l:reho})
$f_0$ and $f_1$ are regular homotopic.
(We actually only need the regular homotopy over $N_0$ which is easier to prove.)
A regular homotopy between $f_0$ and $f_1$ extends to a regular homotopy of a tubular neighborhood of $f_0(N)$ in $S^7$.
By the String Lemma \ref{l:frbuis} the bundle isomorphism defined by this regular homotopy is a $\pi$-isomorphism.
(In \S\ref{s:lemmas-sf} we give another proof together with the proof of the uniqueness part of Lemma \ref{l:piso}.)
\end{Remark}

\begin{Remark}[to \S\ref{s:lemmas-jhss}]\label{r:tojhss}
(a) In Lemma \ref{l:desei} we do not assume that $\varkappa(f_0)=\varkappa(f_1)$ or $\lambda(f_0)=\lambda(f_1)$.
However, we apply this lemma when $\varkappa(f_0)=\varkappa(f_1)$, because the existence of a joint Seifert class (Lemma \ref{l:jhss}) requires this assumption.
If $\lambda(f_0)\ne\lambda(f_1)$, then $\overline{\lambda(f_0)}(H_3)\ne\overline{\lambda(f_1)}(H_3)$ is possible,
(but $i_{C_0,M_\varphi}\overline{\lambda(f_0)}(H_3)=i_{C_1,M_\varphi}\overline{\lambda(f_1)}(H_3)$ by (b) and (c)).
This is possible because $i_{C_0,M_\varphi}$ or $i_{C_0,M_\varphi}$ need not be injective when $\lambda(f_0)\ne\lambda(f_1)$.

(b) In applications of Lemma \ref{l:jhssn0} both subsets of $H_3(C_f;\Z_d)$ consist of one element
$\widehat{A_f}b_{\varphi_0,Y_0}=\widehat{A_f}b_{\varphi_1,Y_1}$, where $b_{\varphi,Y}$ is defined in Lemma \ref{l:webeta}.
\end{Remark}

\begin{Remark}[to \S\ref{s:lemmas-calc}]\label{r:tocalc}
The following result (not used in this paper) is proved analogously to Lemma \ref{l:Bet}.
{\it If $\lambda(f_0)=\lambda(f_1)$, \ $\varkappa(f_0)=\varkappa(f_1)$, \ $\varphi\colon \partial C_0\to \partial C_1$ is a bundle isomorphism such that $M_\varphi$ is spin and $Y\in H_5(M_\varphi)$ is a joint Seifert class, then
$\beta(f_0,f_1)=
[\widehat A_0^{-1}i_{C_0,M_\varphi}^{-1}\rho_{\di(\varkappa(f_0))}(Y^2-\frac14p_1^*(M_\varphi))]\in C_{\lambda(f_0),\varkappa(f_0)}$.}
\end{Remark}

}

\section{Proof of the MK Isotopy Classification Theorem \ref{l:isomk}}\label{s:eta-mk}

\subsection{The obstruction $\eta(\varphi,Y)$ and its properties} \label{s:etaobs}

Recall that some notation was introduced in \S\S \ref{s:intro-main}, \ref{s:intro-strat}, \ref{s:defandplan-not} and \ref{s:prelemmas-not}.

Denote $d_0:=\di\varkappa(f_0)$.
In what follows, a statement involving $k$ holds for both $k=0,1$.

A joint Seifert class $Y\in H_5(M_\varphi)$ is called a {\it $d$-class} for an integer $d$ if $\rho_dY^2=0$ (or, equivalently, $Y^2\in dH_3(M_\varphi)$).


\begin{Lemma} \label{l:fc}
If $\lambda(f_0)=\lambda(f_1)$,\ $\varkappa(f_0)=\varkappa(f_1)$,\ $\beta(f_0,f_1)=0$
and $\varphi\colon \de C_0\to\de C_1$ is a $\pi$-isomorphism, then there is a $d_0$-class for $\varphi$.
\end{Lemma}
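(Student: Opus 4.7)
The plan would be to start from any joint Seifert class $Y_0 \in H_5(M_\varphi)$ (available by Lemma \ref{l:jhss}, since $\varkappa(f_0)=\varkappa(f_1)$) and then modify it by a shift of the form $i_{C_0,M_\varphi}\widehat{A}_0\, y$ as in Lemma \ref{l:desei}.b, using the vanishing of $\beta(f_0,f_1)$ to choose $y$ so that the self-intersection becomes divisible by $d_0$.

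First, I would invoke Lemma \ref{l:webeta} to extract the unique class $b := b_{\varphi,Y_0} \in H_1(N;\Z_{d_0})$ with
$$i_{C_0,M_\varphi}\widehat{A}_0\, b \;=\; \rho_{d_0}Y_0^2 \in H_3(M_\varphi;\Z_{d_0}),$$
using the factorisation $i_{\de C_0,M_\varphi}\nus_0^! = i_{C_0,M_\varphi}\widehat{A}_0$ through $\de C_0\subset C_0\subset M_\varphi$. The hypothesis $\beta(f_0,f_1)=[b]=0$ in $K_{\varkappa(f_0),\lambda(f_0)} = \coker(2\rho_{d_0}\overline{\lambda(f_0)})$ then supplies some $y \in H_3$ with $b = 2\rho_{d_0}\overline{\lambda(f_0)}(y)$ in $H_1(N;\Z_{d_0})$.

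Second, I would set $Y := Y_0 - i_{C_0,M_\varphi}\widehat{A}_0\, y$, which is still a joint Seifert class by Lemma \ref{l:desei}.b (it is $(Y_0)_{-y}$ in that notation). Applying Lemma \ref{l:desei}.c with $y$ replaced by $-y$ then gives
$$Y^2 \;=\; Y_0^2 - 2\,i_{C_0,M_\varphi}\widehat{A}_0\overline{\lambda(f_0)}(y),$$
so reducing modulo $d_0$ and using the defining property of $b$,
$$\rho_{d_0}Y^2 \;=\; i_{C_0,M_\varphi}\widehat{A}_0\bigl(b - 2\rho_{d_0}\overline{\lambda(f_0)}(y)\bigr) \;=\; 0,$$
which exhibits $Y$ as a $d_0$-class.

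I do not expect a serious obstacle: the argument is essentially bookkeeping, because the group $K_{\varkappa(f_0),\lambda(f_0)}$ was defined precisely so that its vanishing captures the indeterminacy of $\rho_{d_0}Y_0^2$ under the family of joint Seifert classes catalogued in Lemma \ref{l:desei}.b,c. The only minor pitfall is the sign of the shift, which is handled by taking $-y$ rather than $y$.
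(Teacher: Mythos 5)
Your argument is correct and is exactly the route the paper takes: the paper's proof is the one-line remark that Lemma \ref{l:fc} follows from Lemma \ref{l:jhss}, Lemma \ref{l:desei}.b,c and the definition of $\beta(f_0,f_1)$, and your write-up simply makes explicit the choice of $y$ with $b_{\varphi,Y_0}=2\rho_{d_0}\overline{\lambda(f_0)}(y)$ and the shift $Y=(Y_0)_{-y}$. The sign bookkeeping via $-y$ is handled correctly.
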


Lemma \ref{l:fc} follows from Lemmas \ref{l:jhss} and \ref{l:desei}.bc together with the definition of $\beta(f_0,f_1)$.

For a 8-manifold $W$ we consider the intersection products
$$
\capM{\de} \colon H_4(W)\times H_4(W,\de)\to\Z\quad\text{and}\quad
\capM{\de\de}\colon H_6(W,\de)\times H_6(W,\de)\to H_4(W,\de).
$$
As before, for the corresponding squares $H_6(W,\de)\to H_4(W,\de)$ we do not put any subscript.

The following lemma is well-known.
Let $\nu_Q$ be the stable equivalence class of the normal bundle of a manifold $Q$.
A {\it stable normal spin structure} on $Q$ is a framing $\xi$ of $\nu_Q$ over the 2-skeleton of $Q$ compatible with the orientation of $Q$, considered up to the homotopy of the restriction of $\xi$ to the $1$-skeleton.
For brevity we omit `stable' (analogously to stable tangent spin structures).

\begin{Lemma}\label{l:krst}
If a manifold has a spin structure, then it has a normal spin structure.
\end{Lemma}

{\bf Definition of $\eta(\varphi,Y)$ for a $\pi$-isomorphism $\varphi\colon \de C_0\to\de C_1$ and a $d$-class
$Y\in H_5(M_\varphi)$.}
Since $\varphi\colon \de C_0\to\de C_1$ is a $\pi$-isomorphism, $M_\varphi$ is spin.
Take any normal spin structure on $M$ given by Lemma \ref{l:krst}.
Since $M_\varphi$ is simply-connected, a normal spin structure on $M_\varphi$ is unique.
Since $\Omega_7^{Spin}(\C P^\infty)=0$ \cite[Lemma 6.1]{KS91} there is a 8-manifold $W$ with a normal
spin structure and $z\in H_6(W,\partial)$ such that $\partial W\underset{spin}=M_\varphi$ and $\partial z=Y$.
Consider the following fragment of the exact sequence of the pair $(W,\de W)$:
$$H_4(\de W;\Z_d)\xra{~i_W~} H_4(W;\Z_d) \xra{~j_W~} H_4(W,\de;\Z_d)
\xra{~\partial_W~} H_3(\de W;\Z_d).$$
Since $\partial_W\rho_d z^2=\rho_dY^2=0$, there is a class $\overline{z^2}\in H_4(W;\Z_d)$ such that $j_W\overline{z^2}=\rho_dz^2$.
Define
$$\eta(\varphi,Y)=\eta(f_0,f_1,d,\varphi,Y):=
\rho_{\widehat d}(\overline{z^2}\capM{\de}(z^2-p_W^*))\in\Z_{\widehat d},
\quad\text{where}\quad\widehat d:=\gcd(d,24).$$


\begin{proof}[Proof that $\eta(\varphi,Y)$ is well-defined, i.e.~independent of the choice of $W,z$ and
$\overline{z^2}$]
The proof is analogous to \cite[2.3 and footnote (q)]{CS11}.
For the independence from $\overline{z^2}$ instead of \cite[Lemma 2.7]{CS11} we use $\de_Wp_W^*=p_{M_\varphi}^*=0$.
For the independence from $W,z$ instead of the uniqueness of $\de_Wz$ of \cite[Lemma 2.6]{CS11} we use that
$\de_Wz=Y$ is fixed.
\end{proof}

\begin{Lemma}[proved in \S\ref{s:varphin0}]\label{l:etaeven}
Let $\varphi\colon \de C_0\to \de C_1$ be a $\pi$-isomorphism and $Y\in H_5(M_\varphi)$ a $d_0$-class.

(a) (Divisibility of $\eta$ by 2) The residue $\eta(\varphi,Y)\in\Z_{\widehat{d_0}}$ is even.

(b) (Change of $\eta$)
There is an embedding $g\colon S^4\to S^7$, a $\pi$-isomorphism $\varphi'\colon \de C_0\to \de C_{f_1\#g}$ and a $d_0$-class
$Y'\in H_5(M_{\varphi'})$ such that $\eta(\varphi',Y',f_0,f_1\#g,d_0)=\eta(\varphi,Y,f_0,f_1,d_0)+2$.

(c)(Change of $\varphi$)
For every $\pi$-isomorphism $\varphi'\colon \de C_0\to \de C_1$ there is a $d_0$-class $Y'\in H_5(M_{\varphi'})$ such that $\eta(\varphi',Y')=\eta(\varphi,Y)$.
\end{Lemma}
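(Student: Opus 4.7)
The plan is to prove (a) via Wu's formula on spin manifolds, (b) by computing the effect of connected sum with an explicit generator of $E^7(S^4)$, and (c) by a cobordism argument.

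For (a), I would translate $\eta$ to cohomology via Lefschetz duality: the class $\overline{z^2}\in H_4(W;\Z_d)$ corresponds to $y\in H^4(W,\de;\Z_d)$ with $j^*y=\rho_d x^2$, where $x\in H^2(W)$ is Lefschetz-dual to $z\in H_6(W,\de)$, so that $\eta(\varphi,Y)=\rho_{\widehat d}\langle y\cup(x^2-p),[W,\de]\rangle$. The odd-$\widehat d$ case is trivial; for $2\mid d$, I reduce mod $2$. By Wu's formula on the spin $8$-manifold $W$ with boundary, $(\rho_2 y)^2=Sq^4(\rho_2 y)=v_4\cup\rho_2 y=w_4\cup\rho_2 y$, using $v_1=v_2=v_3=0$ on spin. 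On spin manifolds one has $w_4=\rho_2 p$, since $p$ generates $H^4(B\Spin;\Z)$ and its mod-$2$ reduction is $w_4$ (as can be verified on $\mathbb{H}P^2$). Moreover $(\rho_2 y)^2=\rho_2 y\cup j^*(\rho_2 y)=\rho_2 y\cup\rho_2 x^2=\rho_2(y\cup x^2)$. Combining gives $\rho_2(y\cup x^2)=\rho_2(y\cup p)$, so $\rho_2\eta=0$ in $\Z_2$, i.e.~$\eta\in 2\Z_{\widehat d}$.

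For (b), I would take $g\colon S^4\to S^7$ to be a generator of $E^7(S^4)\cong\Z_{12}$ and build a null-bordism of the pair associated to $f_1\# g$ from one for $(M_\varphi,Y)$ by attaching a standard spin cobordism $X_g$ that encodes the knot. The passage from $f_1$ to $f_1\# g$ is local near a small ball, so $C_{f_1\# g}$ arises from $C_{f_1}$ by removing a tubular neighborhood of $g(S^4)$, producing a canonical cobordism piece. The change in $\eta$ then becomes a characteristic number of $X_g$ with its distinguished class, which I would compute to be exactly $2\in\Z_{\widehat{d_0}}$ for a suitably chosen generator $g$. This factor of $2$ is consistent with the action of $E^7(S^4)$ on $\Z_{\widehat{d_0}}$ landing in the even subgroup $2\Z_{\widehat{d_0}}$, which has order $12$ when $\widehat{d_0}=24$.

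For (c), by Lemma \ref{l:piso} any two $\pi$-isomorphisms $\varphi,\varphi'\colon\de C_0\to\de C_1$ agree up to homotopy over $N_0$, differing only over the top $4$-cell of $N$ by an element of $\pi_4(SO_3)=\Z_2$. Using this controlled difference I would construct a spin cobordism $\mathcal W$ from $M_\varphi$ to $M_{\varphi'}$, extend $Y$ to $\overline Y\in H_6(\mathcal W,\de)$, and set $Y':=\overline Y\cap M_{\varphi'}$. Then $W\cup_{M_\varphi}\mathcal W$ serves as a null-bordism of $(M_{\varphi'},Y')$, and the characteristic number defining $\eta$ is unchanged because $\mathcal W$ is essentially a twisted product $(\de C_0)\times I$ and contributes trivially to the intersection pairings involved. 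The main obstacle will be (b): the explicit identification of the knot cobordism $X_g$ and the characteristic-number computation yielding exactly $2$; parts (a) and (c) are more formal, relying on Wu's formula and standard bordism constructions respectively.
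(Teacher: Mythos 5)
Your argument for part (a) is correct and in fact a little more direct than the paper's. The paper first performs spin surgery on a map $W\to\C P^\infty$ relative to the boundary to kill $\Tors H_4(W)$, lifts $\overline{z^2}$ to an integral class $\widehat{z^2}$, and then invokes the integral congruence $\widehat{z^2}\capM{\de}\widehat{z^2}\equiv\widehat{z^2}\capM{\de}p_W^*\bmod 2$ from \cite[Lemma 2.11]{CS11}. Your mod $2$ computation ($Sq^4u=u\cup u$ for $u\in H^4(W,\de;\Z_2)$, together with $v_4=w_4=\rho_2p$ on a spin $8$-manifold with boundary) establishes the same congruence directly for the $\Z_2$-reduction of $\overline{z^2}$, so no integral lift and hence no surgery step is needed. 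The underlying mathematics is the same; your route is cleaner.

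Parts (b) and (c), however, are not actually proved. For (b) you only outline a plan and explicitly defer "the explicit identification of the knot cobordism $X_g$ and the characteristic-number computation yielding exactly $2$"; that computation \emph{is} the content of the statement, and nothing in your sketch pins down which generator of $E^7(S^4)\cong\Z_{12}$ produces the shift $+2$ rather than some other even residue. (The paper outsources this to the second equality of Addendum 1.3 of \cite{CS11}.) For (c) your outline agrees with the paper's, but the assertion that the cobordism $\mathcal W$ "contributes trivially to the intersection pairings involved" is precisely what must be proved. When $\varphi$ and $\varphi'$ differ by the nontrivial element of $\pi_4(SO_3)\cong\Z_2$ over the top cell, the twisted product is a cobordism from $M_{\varphi'}$ not to $M_\varphi$ alone but to $M_\varphi\sqcup(S^2\tilde\times S^5)$, where $S^2\tilde\times S^5$ carries a distinguished generator $A\in H_5$ (this is Lemma \ref{l:cobordeta}). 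One must then cap off this extra closed component and check that its contribution to $\eta$ vanishes modulo $\widehat{d_0}$ --- the calculation the paper quotes from the proof of the Framing Theorem 2.9 of \cite{CS11}. You also do not verify that the transported class $Y'=\overline Y\cap M_{\varphi'}$ is again a $d_0$-class; in the paper this is guaranteed by the explicit form of the cobordism in Lemma \ref{l:cobordeta}. Without these two computations, (b) and (c) remain conjectural.
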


Note that Lemma \ref{l:etaeven}.a is trivial for $\widehat{d_0}$ odd.

Other properties of $\eta$ which are not used in this paper will be discussed in \cite{II}.

\comment

\begin{Lemma}(\cite[Lemma 6.1]{KS91})\label{l:krst}
Assume that $M$ is a closed 7-manifold with a spin structure and $Y\in H_5(M)$.
Then there are an 8-manifold $W$ with a spin structure and
$z\in H_6(W,\partial)$ such that $\partial W\underset{spin}=M_\varphi$ and $\partial z=Y$.
\end{Lemma}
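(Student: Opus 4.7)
The plan is to interpret the homology class $Y$ as a classifying map to $\C P^\infty$ and then invoke the vanishing $\Omega_7^{\mathrm{Spin}}(\C P^\infty)=0$.

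First I would apply Poincar\'e duality to convert $Y\in H_5(M)$ into the cohomology class $y:=\text{PD}^{-1}(Y)\in H^2(M;\Z)$. Since $\C P^\infty$ is an Eilenberg--MacLane space $K(\Z,2)$, the class $y$ is represented by a map $f\colon M\to\C P^\infty$, unique up to homotopy, with $f^*(u)=y$, where $u\in H^2(\C P^\infty;\Z)$ is the canonical generator.

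Next, the pair consisting of $M$ (with its given spin structure) together with $f$ defines an element of the spin bordism group $\Omega_7^{\mathrm{Spin}}(\C P^\infty)$. This group vanishes by \cite[Lemma 6.1]{KS91} (the same reference already invoked in the definition of $\eta(\varphi,Y)$ in \S\ref{s:etaobs}). Hence there is a compact spin 8-manifold $W$ with $\partial W\underset{\mathrm{spin}}{=}M$ and a map $F\colon W\to\C P^\infty$ extending $f$.

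Finally I would define $z\in H_6(W,\partial)$ to be the Poincar\'e--Lefschetz dual of $F^*(u)\in H^2(W;\Z)$. The required equality $\partial z=Y$ then follows from the standard compatibility between the connecting homomorphism $\partial\colon H_6(W,\partial W)\to H_5(\partial W)$ and the restriction $H^2(W;\Z)\to H^2(\partial W;\Z)$ under Poincar\'e--Lefschetz duality: because $F|_{\partial W}=f$, the restriction of $F^*(u)$ equals $f^*(u)=y$, whose Poincar\'e dual is $Y$. The only substantive input is the vanishing of $\Omega_7^{\mathrm{Spin}}(\C P^\infty)$, which the authors rightly outsource to \cite{KS91}; the remaining steps are routine applications of Eilenberg--MacLane classification and naturality of duality, and present no further obstacle.
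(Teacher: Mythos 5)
Your argument is correct and follows essentially the same route as the paper: both reduce the statement to the vanishing $\Omega_7^{\mathrm{Spin}}(\C P^\infty)=0$ of \cite[Lemma 6.1]{KS91}, viewing the class $Y$ (equivalently, a map to $\C P^\infty$) as part of the bordism data and recovering $z$ from the nullbordism by Poincar\'e--Lefschetz duality. The only point the paper makes explicit that you leave implicit is the well-known identification of stable tangential with stable normal spin structures needed to quote \cite{KS91}, which is harmless.
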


\begin{proof}
This follows from \cite[Lemma 6.1]{KS91} because normal spin structures on oriented manifolds are in natural 1--1 correspondence with tangent spin structures.
Recall well-known construction of this 1--1 correspondence.
If
$s_t$ and $s_n$ are tangent and normal spin structures on $M$,
then the Whitney sum $s_0 \oplus s_1$ defines a spin structure of a trivial bundle over $M$.
If $s$ is a tangent spin structure on $M$, then there is a unique (up to equivalence) normal spin structure $-s$
on $M$ such that $s \oplus (-s)$ is homotopic to the spin structure given by restricting the trivial bundle to the $2$-skeleton.
Then $s\mapsto -s$ defines the 1--1 correspondence.
\end{proof}

\endcomment

\subsection{Proof of Theorem \ref{l:isomk} using Theorem \ref{t:aldi} and
Lemmas \ref{l:etaeven}, \ref{l:welleta'}} \label{s:lemmas-isotmod}

In the definition of $\eta(\varphi,Y)$ in \S\ref{s:etaobs} instead of $C_0,C_1,\varphi,Y$ we can take
any pair of simply-connected parallelizable 7-manifolds $M_0,M_1$, a diffeomorphism
$\varphi\colon \de M_0\to\de M_1$ such that the manifold $M:=M_0\cup_\varphi(-M_1)$ is parallelizable and any class $Y\in H_6(M)$ such that $\rho_dY^2=0$.
Denote by $\eta(\varphi,Y)=\eta(M_0,M_1,d,\varphi,Y)\in\Z_{\widehat d}$ the obtained residue.
Also, in this situation for $d$ even we can define
$$\eta'(\varphi)=\eta'(M_0,M_1,d,\varphi):=\rho_2(\overline{z^2}\capM{\de}z^2)\in\Z_2.$$
The proof that $\eta'(\varphi)$ is independent of the choice of $W,z$ and $\overline{z^2}$
is analogous to the case of $\eta(\varphi,Y)$.

\begin{Lemma}\label{l:welleta'}
Assume that $d_0$ is even and $\varphi\colon \de C_0\to \de C_1$ is a $\pi$-isomorphism.

(a) (proved in \S\ref{s:welleta'})
The residue $\eta'(\varphi)$ is well-defined, i.e.~is independent of the choice of $Y$.

(b) (Change of $\eta'$; proved in \S\ref{s:varphin0})
There is a $\pi$-isomorphism $\varphi'\colon \de C_0\to \de C_1$ such that $\eta'(\varphi')=\eta'(\varphi)+1$.
\end{Lemma}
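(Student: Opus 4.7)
For part (a), independence of $\eta'(\varphi)$ from the auxiliary data $(W,z,\overline{z^2})$ is already granted by the discussion preceding the lemma (it is proved analogously to the corresponding well-definedness of $\eta(\varphi,Y)$, using in particular that $\rho_2(Y^2)=0$ since $Y$ is a $d_0$-class and $d_0$ is even), so the task is to prove independence from the joint Seifert class $Y$. By Lemma \ref{l:desei}.b, any two joint Seifert classes differ by $i\widehat A_0 y$ for some $y\in H_3$, where $i:=i_{C_0,M_\varphi}$. I will fix a bounding pair $(W,z)$ for $(M_\varphi,Y)$ and construct a class $w\in H_6(W,\de)$ with $\de w=i\widehat A_0 y$ supported in a collar $M_\varphi\times[0,\varepsilon]\subset W$ (concretely, as the image of the product of an oriented $3$-cycle $V\subset N$ representing $y$ with a suitable $3$-manifold factor in a trivialised collar). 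Then $(W,z+w)$ is a bounding pair for $(M_\varphi,\,Y+i\widehat A_0 y)$, and
\[
(z+w)^2 \equiv z^2 + w^2 \pmod 2
\]
because the cross term $2z\capM{\de\de}w$ vanishes mod $2$; since $w$ is supported in a trivialised collar its self-intersection $w^2$ vanishes in $H_4(W,\de;\Z_2)$, so the resulting value of $\eta'$ is unchanged.

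For part (b), the uniqueness analysis in the proof of Lemma \ref{l:piso} together with the hypothesis that $H_1$ is torsion-free implies that any two $\pi$-isomorphisms $\varphi,\varphi'\colon\de C_0\to\de C_1$ can differ only in a top-cell secondary obstruction: Lemma \ref{l:strdif}.b forces $2\,d(\varphi,\varphi')=d(\varphi'_*\spn,\varphi_*\spn)=0$ in $H_1$, so the primary difference is $2$-torsion and hence zero, while the secondary obstruction lives in $H^4(N;\pi_4(SO_3))\cong\Z_2$ and survives the $\pi$-isomorphism condition because the stabilisation $\pi_4(SO_3)\to\pi_4(SO_7)=0$ is zero. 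I will realise the non-trivial secondary class by twisting $\varphi$ on a collar of the top cell $B^4\subset N$ by the generator $\nu\in\pi_4(SO_3)=\Z_2$, obtaining a new $\pi$-isomorphism $\varphi'$. To compute $\eta'(\varphi')-\eta'(\varphi)$, I will build a cobordism between bounding pairs for $\varphi$ and $\varphi'$ in the style of Lemma \ref{l:cobordbeta}: an extra closed spin $8$-manifold piece $W^\bullet$ carrying a class $z^\bullet\in H_6(W^\bullet,\de)$ arises from the $\nu$-twist, and the required change $\rho_2\bigl(\overline{(z^\bullet)^2}\capM{\de}(z^\bullet)^2\bigr)\in\Z_2$ will be evaluated on $W^\bullet$.

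The main obstacle will be the final identification: showing that the characteristic number attached to the top-cell $\pi_4(SO_3)$-twist equals $1\in\Z_2$ rather than $0$. The natural candidate for $W^\bullet$ is a $4$-disk bundle over $S^4$ whose clutching function on the boundary $S^3$ represents the generator of $\pi_4(SO_3)$, together with a canonical $6$-class built from the Thom class; the mod-$2$ quantity $\rho_2(\overline{z^2}\capM{\de}z^2)$ is then an Arf-type refinement of the self-intersection form on $H_6(W,\de)$, and verifying that this refinement detects the $\Z_2$-twist (equivalently, that it is the unique non-trivial spin cobordism invariant of the relevant bounding pair) is the heart of the argument, parallel in spirit to the calculation of $p_1^*$ in Lemma \ref{l:p1}.
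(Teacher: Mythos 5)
Your proof of part (a) has a genuine gap at the construction of the class $w$. You want $w\in H_6(W,\de)$ with $\de_W w=i\widehat A_0y$, supported in a collar $M_\varphi\times[0,\varepsilon]\subset W$. But a product chain $Z\times[0,\varepsilon]$ (with $Z$ a $5$-cycle representing $i\widehat A_0y$) has boundary $Z\times 0-Z\times\varepsilon$, and the inner copy $Z\times\varepsilon$ lies in the interior of $W$, so this is not a relative cycle mod $\de W$; equivalently, $H_6(M_\varphi\times[0,\varepsilon],M_\varphi\times 0)=0$, so nothing supported in the collar can hit $i\widehat A_0y$ under $\de_W$. To cap off the inner copy you would need $i\widehat A_0y$ to bound in $W$, which is exactly what fails in general. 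This is precisely the difficulty the paper flags in the footnote to Lemma \ref{l:prdiffbor} and resolves by first reducing (via an additivity of $\eta'$ under composition of $\pi$-isomorphisms) to the model $M_{f_0}=\de(C_{f_0}\times I)$ and then performing an $S^2$-parametric surgery on $C_{f_0}\times I$ (built from a simplifying $6$-bordism, Lemmas \ref{l:real} and \ref{l:emb}) to produce a null-bordism $(W,z)$ of $(M_{f_0},Y_{f_0,y})$; one must then re-verify that $p_W^*$ is still even for the surgered $W$, which is a nontrivial computation you would also be skipping.

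For part (b), your choice of $\varphi'$ (twist $\varphi$ over the top cell by the generator of $\pi_4(SO_3)\cong\Z_2$; the String Lemma \ref{l:frbuis} keeps it a $\pi$-isomorphism) agrees with the paper, but the step you defer as ``the heart of the argument''--- that the twist changes $\eta'$ by exactly $1$ --- is the entire content of the lemma and cannot be left as an obstacle. Moreover your proposed auxiliary manifold is structurally wrong: an element of $\pi_4(SO_3)$ is a clutching function for a bundle over $S^5$, not over $S^4$, and the correct extra cobordism piece is the nontrivial $S^2$-bundle $S^2\tilde\times S^5$ carrying a generator of $H_5$ (Lemma \ref{l:cobordeta}); the value $+1$ is then extracted by the characteristic-number calculation cited from \cite[Proof of the Framing Theorem 2.9]{CS11}. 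Without that cobordism and that evaluation, part (b) is unproved. (Also, the discussion of primary difference classes of $\pi$-isomorphisms is not needed for (b), which is a pure existence statement.)
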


\begin{Theorem}[Almost Diffeomorphism Theorem; proved in \S\ref{s:lemmas-adtmod}]\label{t:aldi}
Let

$\bullet$ $M_0,M_1$ be oriented simply-connected 7-manifolds whose homology groups are free abelian
and such that $H_5(M_k,\de)\cong\Z$;

$\bullet$ $\varphi:\de M_0\to\de M_1$ be a diffeomorphism such that $M:=M_0\cup_\varphi(-M_1)$ is a
parallelizable oriented manifold for which $H_2(M),H_3(M)$ are free abelian and
$\t j_k:=j_{M_k,M} \colon H_4(M)\to H_4(M,M_k)$, $k=0,1$, are epimorphisms having the same kernel;

$\bullet$  $Y\in H_5(M)$ be a class such that $Y\cap M_k$ is a generator $\alpha_k\in H_5(M_k,\de)$,
\ $\di(Y^2)=\di(\alpha_0^2)=\di(\alpha_1^2)=:d$ and there is a class $Q\in H_4(\de M_0)$ such that $i_MQ\cap_M Y^2=d$.

For some homotopy 7-sphere $\Sigma$ there is an orientation-preserving diffeomorphism
$\overline\varphi\colon M_0\to M_1\#\Sigma$ extending $\varphi$ and such that $\overline\varphi\alpha_0=\alpha_1\#0$ if%
\footnote{We conjecture that the `only if' statement is also true.}
$$\eta(\varphi,Y)=0\quad\text{and, for $d$ even,}\quad \eta'(\varphi)=0.$$
\end{Theorem}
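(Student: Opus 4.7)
The proof plan is to apply modified surgery in the spirit of~\cite{Kr99} and of the diffeomorphism arguments in~\cite[Theorems~2.8 and~4.7]{CS11}, but relative to a non-empty boundary and in the presence of a non-trivial Pontryagin correction. First I would equip each $M_k$ with a normal spin structure (Lemma~\ref{l:krst}) together with a cohomology class Poincar\'e--Lefschetz dual to $\alpha_k\in H_5(M_k,\de)$; these data determine a normal $2$-smoothing $M_k\to B\Spin\times\C P^\infty$ in the sense of Kreck. Because $M$ is parallelizable and $Y$ is a joint Seifert class, the two $2$-smoothings agree on $\de M_k$ under $\varphi$, so they glue together on $M$. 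Using $\Omega_7^{Spin}(\C P^\infty)=0$ (\cite[Lemma~6.1]{KS91}) this provides a compact $8$-manifold $W$ with a normal spin structure and a class $z\in H_6(W,\de)$ such that $\de W\underset{spin}{=}M_0\cup_\varphi(-M_1)$ and $\de z=Y$; viewed through the corners, $W$ is a relative bordism from $M_0$ to $M_1$ over $B\Spin\times\C P^\infty$.

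Next I would surger $W$ below the middle dimension, relative to $\de W$, to make the classifying map $W\to B\Spin\times\C P^\infty$ a $4$-equivalence. This is possible because $M_0,M_1$ are simply-connected, $H_2(M)$ and $H_3(M)$ are free abelian, and both ends of $W$ are already $3$-equivalent to the target; the procedure is the standard step of modified surgery and is the direct analogue of the argument of~\cite[\S4]{CS11} carried out rel~$\de$. After this reduction the only remaining obstruction to $W$ being a relative h-cobordism sits in $H_4(W)$: the intersection pairing $\capM{\de}\colon H_4(W)\times H_4(W,\de)\to\Z$, the class $z^2\in H_4(W,\de)$ and the Pontryagin correction $p_W^*$ together define a quadratic refinement in the sense of~\cite[Theorem~3]{Kr99} which controls whether middle-dimensional handles can be cancelled. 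The hypothesis that the two maps $\t j_k$ are epimorphisms with the same kernel, together with the class $Q$ realising $\di(Y^2)=d$, ensures that the singular part of this form behaves like that of $\alpha_0\capM{\de}\alpha_0$ and lets one split off a unimodular complement.

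The invariant $\eta(\varphi,Y)\in\Z_{\widehat d}$ records precisely the value of the Kreck form on a generator of this unimodular complement, and, when $d$ is even, $\eta'(\varphi)\in\Z_2$ records the additional Arf-type refinement coming from the normal spin structure. Under the vanishing hypotheses $\eta(\varphi,Y)=0$ and, if $d$ is even, $\eta'(\varphi)=0$, Kreck's surgery step above the middle dimension can be performed on $W$, converting it into a relative h-cobordism from $M_0$ to $M_1$ through which the class $z$ restricts to $\alpha_0$ on one end and to $\alpha_1$ on the other. The relative h-cobordism theorem~\cite{Mi65} then yields an orientation-preserving diffeomorphism $\overline\varphi\colon M_0\to M_1\#\Sigma$ extending $\varphi$, where $\Sigma\in\Theta_7$ is the homotopy $7$-sphere coming from the undetermined smooth structure in dimension~$7$; by construction $\overline\varphi_*\alpha_0=\alpha_1\#0$.

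The hard part will be the third step, namely verifying that the $\eta$ and $\eta'$ obstructions are truly complete and that the Pontryagin correction enters exactly with the factor $\tfrac14$ modulo $\widehat d=\gcd(d,24)$. The factor $24$ comes from the combined contributions of $|\Theta_7|$ and $\pi_7(B\Spin)$, both of which act on the set of surgeries one is allowed to perform above the middle dimension. Carrying out these surgeries without disturbing the already-achieved $4$-equivalence or destroying $\de z=Y$ requires one to choose each surgered sphere $S^4\hookrightarrow W$ carefully, using the class $Q\in H_4(\de M_0)$ and the existence of a lift $\overline{z^2}$ of $\rho_d z^2$ whose pairing with $z^2-p_W^*$ is trivial modulo $\widehat d$. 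This analysis is analogous to, but substantially more delicate than, the corresponding step in~\cite[\S4]{CS11}, since here $H_4(M)$ is infinite and boundary effects must be tracked at every stage.
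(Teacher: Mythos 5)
Your outline follows the same overall route as the paper: normal $2$-smoothings into $B\Spin\times\C P^\infty$, a null-bordism $(W,z)$ from $\Omega_7^{Spin}(\C P^\infty)=0$, surgery below the middle dimension to make the classifying map $4$-connected, and then the relative $h$-cobordism theorem via Kreck's criterion. Up to that point the proposal is sound.

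The genuine gap is the middle-dimensional step, which you yourself flag as ``the hard part'' but then only assert. Saying that $\eta$ and $\eta'$ ``record the value of the Kreck form on a generator of a unimodular complement'' and that their vanishing lets ``Kreck's surgery step be performed'' is not a proof: Kreck's Theorem~3/4 requires exhibiting a subgroup $U\subset\ker\overline{S\nu}\subset H_4(W)$ such that $j_{M_0,W}|_U$, $j_{M_1,W}|_U$ form an elementary pair (half-rank direct summands with $j_0U\capM{01}j_1U=0$). The paper manufactures this in two separate lemmas. First (Pre-elementary class Lemma~\ref{l:modif}) one must \emph{modify $W$ itself} by connected sum with explicit closed or almost-closed spin $8$-manifolds --- $(S^2)^4$ with $z_S^4=24$, $\Ha P^2\#(-\Ha P^2)$, $S^4\tilde\times S^4$, the Kreck--Stolz examples, and $\overline E_8$ --- to arrange $\sigma(W)=sp_W^*\capM{\de}p_W^*=sz^2\capM{\de}z^2=sz^2\capM{\de}p_W^*=0$ for a suitable splitting homomorphism $s$; the hypotheses $\eta=0$ and $\eta'=0$ are exactly what make these normalizations possible. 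Second (Elementary pair Lemma~\ref{l:easur}) one needs a delicate algebraic argument splitting $H_4(W)=\im i_W\oplus\im s$ and building $U=U_S\oplus U_\de$, where $U_\de\subset\im i_W$ is constructed from the class $Q$ and the hypothesis $\ker\t j_0=\ker\t j_1$. Neither construction appears in your sketch, and without them the claim that the obstructions are complete is unsupported.

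Two of your heuristic attributions are also wrong in a way that matters. The homotopy sphere $\Sigma$ does not come from ``undetermined smooth structure'' in the $h$-cobordism theorem (which produces an honest diffeomorphism); it arises because killing the signature of $W$ forces a connected sum with $\overline E_8$, whose boundary is an exotic $7$-sphere. And the modulus $\widehat d=\gcd(d,24)$ comes from $z_S^4=24$ on $(S^2)^4$ (an almost-parallelizable closed $8$-manifold with $p^*=0$), not from $|\Theta_7|$, which equals $28$, nor from $\pi_7(B\Spin)$.
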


\begin{proof}[Proof of Theorem \ref{l:isomk} using the Almost Diffeomorphism Theorem \ref{t:aldi}]
Since $\lambda(f_0)=\lambda(f_1)$ and $\varkappa(f_0)=\varkappa(f_1)$, by Lemma \ref{l:piso} there is a $\pi$-isomorphism $\varphi\colon \de C_0\to\de C_1$.
Since $\beta(f_0,f_1)=0$, by Lemma \ref{l:fc} there is a $d_0$-class $Y\in H_5(M_\varphi)$.

By the divisibility of $\eta(\varphi, Y)$ by 2 (Lemma \ref{l:etaeven}.a) and by a change of $\eta$ (Lemma \ref{l:etaeven}.b)
we can change $f_1$ (by connected sum with a knot), $\varphi$ and $Y$, and assume that $\eta(\varphi,Y)=0$.

In this paragraph assume that $d_0$ is even.
Then by a change of $\eta'$ (Lemma \ref{l:welleta'}.b) we obtain a new $\pi$-isomorphism $\varphi$ such that $\eta'(\varphi)=0$.
By a change of $\varphi$ (Lemma \ref{l:etaeven}.c) we obtain a new $d_0$-class $Y$ such that $\eta(\varphi,Y)=0$.

Since $7=4+3$, by general position $C_k$ are simply-connected.
The groups $H_3(C_k)\overset{\widehat A_k}\cong H_1$ and $H_4(C_k,\de)\overset{A_k}\cong H_3$ are free abelian.
Hence by Lefschetz duality $H_2(C_k)$ is free abelian.
So all homology groups of $C_k$ are free abelian.
Since $\varkappa(f_0)=\varkappa(f_1)$ and $\lambda(f_0)=\lambda(f_1)$,

$\bullet$ by the exact sequence of the pair $(M_\varphi,C_k)$
and the Agreement Lemma \ref{l:Agr}.b for $q=2$ the group $H_2(M_\varphi)$ is free abelian;

$\bullet$ by the Agreement Lemma \ref{l:Agr}.b for $q=3$ the map $j_k\colon H_4(M_\varphi)\to H_4(M_\varphi,C_k)$ is onto.

Since $\widehat A_k\colon H_2\to H_4(C_k)$ is an isomorphism and
$i_{C_1,M_\varphi} \widehat A_1=i_{\de C_1,M_\varphi}\varphi\nus_0^!=i_{C_0,M_\varphi}\widehat A_0$, we have
$\im i_{C_0,M_\varphi}=\im i_{C_1,M_\varphi}$.
Hence $\ker j_{C_0,M_\varphi}=\ker j_{C_1,M_\varphi}$.
Then from the exact sequence of the pair $(M_\varphi, C_0)$ we obtain that $H_3(M_\varphi)$ and $H_2(M_\varphi)$ are free abelian.

By Alexander duality, $\alpha_k:=A_k[N]$ is a generator of $H_5(C_k,\de)$.
We have $d_0=\di(\varkappa(f_0))=\di(\alpha_k^2)$.
By Lemmas \ref{l:desei}.a and \ref{l:Agr}.c there is a class $Y\in H_5(M)$ such that $Y\cap C_k=\alpha_k$.
Since $Y^2$ is divisible by $d_0$ and $A_0\varkappa(f_0)=A_0(Y\cap C_0)^2$, we have $\di(Y^2)=d_0$.

Take $Q'\in H_2$ such that $Q'\cap_N\varkappa(f_0)=d_0$.
Let $Q:=\nus_0^!Q'\in H_4(\de C_0)$.
Then
$$i_{\de C_0,M_\varphi}Q\cap_{M_\varphi} Y^2=i_{C_0,M_\varphi}\widehat A_0Q'\cap_{M_\varphi} Y^2 =
\widehat A_0Q'\cap_{C_0}(Y\cap C_0)^2= Q'\cap_N \varkappa(f_0)=d_0.$$
Hence by the Almost Diffeomorphism Theorem \ref{t:aldi} for $M_0=C_0$, $M_1=C_1$ and $M=M_\varphi$
there is an orientation preserving diffeomorphism $\overline\varphi\colon C_0\to C_1\#\Sigma$ extending the bundle isomorphism $\varphi$.
The bundle isomorphism $\varphi$ also extends to an orientation-preserving diffeomorphism
$S^7-\Int C_0\to S^7-\Int C_1$.
So $S^7\cong S^7\#\Sigma\cong\Sigma$.
Then $f_0$ and $f_1$ are isotopic by Lemma \ref{l:isored}.
\end{proof}

\subsection{Proofs of Lemmas \ref{l:etaeven} and \ref{l:welleta'}.b}\label{s:varphin0}

\begin{proof}[Proof of Lemma \ref{l:etaeven}.a]
Take any pair $(W, z)$ from the definition of $\eta(\varphi,Y)$ and
take some map $Z\colon W\to \C P^\infty$ corresponding to $z\in H_6(W,\de)\cong H^2(W)\cong [W,\C P^\infty]$.
By spin surgery of $Z$ relative to $\de W$ we may assume that $Z$ is 3-connected.
The residue $\overline{z^2}\capM{\de}(z^2-p_W^*)$ does not change throughout this surgery because it is
`spin $\C P^\infty$-characteristic residue modulo $d$ relative to the boundary'.
Since $Z$ is 3-connected, by the Hurewicz Theorem for the mapping cylinder of $Z$
we have $H_3(W)=\pi_3(W)=\pi_3(\C P^\infty)=0$.
Hence $\Tors H_4(W)\cong\Tors H_3(W)=0$.
So there is a class $\widehat{z^2}\in H_4(W)$ such that $\rho_{d_0}\widehat{z^2}=\overline{z^2}$.
Then
$$\overline{z^2}\capM{\de}(z^2-p_W^*)=\rho_{d_0}(\widehat{z^2}\capM{\de} z^2-\widehat{z^2}\capM{\de} p_W^*) =\rho_{d_0}(\widehat{z^2}\capM{\de}\widehat{z^2}-\widehat{z^2}\capM{\de} p_W^*).$$
The latter residue is divisible by 2 by \cite[Lemma 2.11]{CS11}.
\end{proof}

Lemma \ref{l:etaeven}.b is proved analogously to \cite[\S3, the second equality of Addendum 1.3]{CS11}.

For the proofs of Lemmas \ref{l:etaeven}.c and \ref{l:welleta'}.b we need the following result.

\begin{Lemma}[proved below in \S\ref{s:varphin0}]\label{l:cobordeta}
Assume that $\pi$-isomorphisms $\varphi',\varphi\colon \de C_0\to\de C_1$ coincide over $N_0$ and that
over $\Cl(N-N_0)$ they differ by the generator of   $\pi_4(SO_3)\cong\Z_2$.
Then for every integer $d$ and $d$-class $Y\in H_5(M_\varphi)$ there is a $d$-class $Y'\in H_5(M_{\varphi'})$
such that the pair
$$(M_{\varphi'},Y')\quad\text{is cobordant to}\quad(M_\varphi,Y)\sqcup(S^2\tilde\times S^5, A),$$
where $S^2\tilde\times S^5$ is the total space of the non-trivial $S^2$-bundle over $S^5$ (i.e.~the bundle corresponding to the non-trivial element of $\pi_4(SO_3)\cong\Z_2$) and
$A\in H_5(S^2\tilde\times S^5)\cong\Z$ is a generator.
\end{Lemma}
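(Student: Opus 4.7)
The approach is to model the proof directly on Lemma \ref{l:cobordbeta}, substituting the $\pi_3(SO_3)$-twist over $D^3_+$ there by a $\pi_4(SO_3)$-twist over the $4$-ball $B^4 := \Cl(N - N_0)$. First I would choose a smooth map $\alpha\colon B^4 \to SO_3$ that equals the identity on $\de B^4$ and represents (modulo boundary) the generator of $\pi_4(SO_3) \cong \Z_2$. After locally trivialising the normal $S^2$-bundles, I may arrange that $\varphi' = \varphi$ on $\nus_0^{-1}(N_0)$ and that $\varphi'(a, b) = \varphi(\alpha(b) a, b)$ on $\nus_0^{-1}(B^4) \cong S^2 \times B^4$.

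Next, copying the construction in Lemma \ref{l:cobordbeta} almost verbatim, I would define a self-diffeomorphism $\overline\alpha$ of $(\de C_0 \times I) \setminus (S^2 \times \Int B^4 \times [1/3, 2/3])$ which is the identity in the time range $[0, 2/3]$ and the $\alpha$-twist $(a, b, t) \mapsto (\alpha(b) a, b, t)$ on $S^2 \times B^4 \times [2/3, 1]$, and set $V := (C_0 \times I) \cup_{\overline\alpha} (C_1 \times I)$. The boundary of $V$ decomposes as $M_\varphi \sqcup (-M_{\varphi'}) \sqcup (-E_\alpha)$, where the plug $E_\alpha := \Sigma \cup_{\widehat\alpha} \Sigma$ is assembled from two copies of $\Sigma := S^2 \times B^4 \times [1/3, 2/3] \cong S^2 \times D^5$ glued along $S^2 \times S^4$ via a map incorporating $\alpha|_{S^4}$. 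This is by construction the $S^2$-bundle over $S^5 = D^5_+ \cup D^5_-$ with clutching function the generator of $\pi_4(SO_3)$; that is, $E_\alpha \cong S^2 \tilde\times S^5$.

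To produce the joint Seifert data, I would use the Agreement Lemma \ref{l:Agr}.a (applicable because $\varphi, \varphi'$ are $\pi$-isomorphisms and in particular spin, so $\varkappa, \lambda$ are respected) together with Mayer--Vietoris for $V$ to extend $Y$ to a class $\overline Y \in H_6(V, \de)$ with $\overline Y \cap (C_k \times I) = A_k[N] \times I$ for $k = 0, 1$; then $Y' := \de\overline Y \cap M_{\varphi'}$ is a joint Seifert class by Lemma \ref{l:desei}.a. Tracing through exactly as in the identification of $y_4$ with $[\C P^2]$ at the end of Lemma \ref{l:cobordbeta}, the restriction of $\de\overline Y$ to the plug $E_\alpha$ is a generator $A$ of $H_5(S^2 \tilde\times S^5) \cong \Z$. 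The triple $(V, \overline Y)$ is then the desired cobordism.

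The step I expect to be the main obstacle is verifying that the $Y'$ produced this way is actually a $d$-class. Since $\de_V \overline Y^2 = Y^2 - (Y')^2 - A^2 \in H_3(\de V)$ and $A^2 = 0$ automatically because $H_3(S^2 \tilde\times S^5) = 0$, this gives an integral relation between $Y^2$ and $(Y')^2$; but promoting the mod-$d$ vanishing of $Y^2$ to that of $(Y')^2$ requires working with $\Z_d$-coefficients throughout and, when a residual obstruction survives, adjusting $Y'$ by a class of the form $i_{C_0, M_{\varphi'}} \widehat A_0(y)$ from Lemma \ref{l:desei}.b so that the resulting change $2 i \widehat A_0 \overline{\lambda(f_0)}(y)$ in $(Y')^2$ (computed by Lemma \ref{l:desei}.c) absorbs it. Controlling this adjustment, and ensuring the needed element $y \in H_3$ exists, is the most delicate technical point.
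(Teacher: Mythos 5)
Your construction of the cobordism is the same as the paper's: the same twisted gluing $V=(C_0\times I)\cup_{\overline\alpha}(C_1\times I)$, the same plug $E_\alpha\cong S^2\tilde\times S^5$, and the same extension of the Seifert class over $V$ with identification of its restriction to the plug with a generator $A$. Up to that point the proposal is fine.

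The genuine gap is exactly the step you flag as ``the most delicate technical point'' and then leave unresolved: showing that $Y'$ is a $d$-class. Your fallback does not work. By Lemma \ref{l:desei}.b,c the only freedom in choosing a joint Seifert class for $\varphi'$ changes $\rho_d(Y')^2$ by elements of $2\rho_d\widehat A_0\overline{\lambda(f_0)}(H_3)$, which is in general a proper subgroup of $H_3(M_{\varphi'};\Z_d)$; there is no reason an arbitrary residual obstruction lies in it (indeed, the nonvanishing of such residues is precisely what the $\beta$-invariant detects). Also, the identity $\de_V\overline Y^2=Y^2-(Y')^2-A^2$ gives no relation between $Y^2$ and $(Y')^2$: these sit in different direct summands of $H_3(\de V)$, so the mere \emph{existence} of $\overline Y$ tells you nothing about $(Y')^2$. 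What is needed is a \emph{uniqueness} statement: that $\overline Y^2$ is determined mod $d$ by its restriction to $M_\varphi$.

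The paper supplies exactly this. Using excision, $H^q(\de C_1\times I,U_1)\cong H_{7-q}(\Sigma_1)=0$ for $q=1,\dots,4$, the exact sequence of the triple $U_1\subset\de C_1\times I\subset C_1\times I$, and the 5-lemma, one shows that $r_{M_\varphi}\de_V\colon H_q(V,\de)\to H_{q-1}(M_\varphi)$ is an \emph{isomorphism} for $q=4,6$ (and likewise for $\varphi'$). One then \emph{defines} $\overline Y:=(r_{M_\varphi}\de_V)^{-1}Y$, so that
$$
\rho_d(Y')^2=\rho_d\, r_{M_{\varphi'}}\de_V\,(r_{M_\varphi}\de_V)^{-1}Y^2=0
$$
automatically, with no adjustment needed. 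Without this isomorphism (or an equivalent injectivity statement in degree $4$ with $\Z_d$-coefficients), the lemma as stated is not proved.
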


\begin{proof}[Proof of Lemma \ref{l:etaeven}.c]
By Lemma \ref{l:piso} we may assume that $\varphi'=\varphi$ over $N_0$.
We may also assume that over $\Cl(N-N_0)$ isomorphism $\varphi'$ obtained from $\varphi$ by twisting with   $d(\varphi',\varphi)\in\pi_4(SO_3)\cong\Z_2$.
If $d(\varphi',\varphi)=0$, then we may assume that   $\varphi'=\varphi$ and take $Y'=Y$.
If $d(\varphi',\varphi)\ne0$, then by Lemma \ref{l:cobordeta} and a calculation in
\cite[Proof of the Framing Theorem 2.9]{CS11} $\eta(\varphi',Y')=\eta(\varphi,Y)$.
\end{proof}

\begin{proof}[Proof of Lemma \ref{l:welleta'}.b]
We do not assume Lemma \ref{l:welleta'}.a.
and so we write $\eta'(\varphi,Y)$ instead of $\eta'(\varphi)$ and prove the lemma in the following form.

{\it For every $\pi$-isomorphism $\varphi\colon \de C_0\to \de C_1$, even integer $d$ and $d$-class $Y\in H_5(M_\varphi)$ there is a $\pi$-isomorphism $\varphi'\colon \de C_0\to \de C_1$
and a $d$-class $Y'\in H_5(M_{\varphi'})$ such that $\eta'(\varphi',Y')=\eta'(\varphi,Y)+1$.}

Take a bundle isomorphism $\varphi'\colon \de C_0\to\de C_1$ coinciding with $\varphi$ over $N_0$ and over $\Cl(N-N_0)$ obtained from $\varphi$ by twisting with the non-trivial element of $\pi_4(SO_3)\cong\Z_2$.
Then by the String Lemma \ref{l:frbuis} $\varphi'$ is a $\pi$-isomorphism.
By Lemma \ref{l:cobordeta} and a calculation in \cite[Proof of the Framing Theorem 2.9]{CS11} $\eta'(\varphi',Y')=\eta'(\varphi,Y)+1$.
\end{proof}

\begin{proof}[Proof of Lemma \ref{l:cobordeta}]
Take a smooth map $\alpha\colon S^4\to SO_3$ representing the non-trivial element of $\pi_4(SO_3)\cong\Z_2$ and such that $\alpha|_{D^4_-}=\id S^2$.
For $k=0,1$ identify
$$
\nus_k^{-1}\Cl(N-N_0)\times\left[\frac13,\frac23\right]\quad\text{with}\quad
\Sigma_k:=S^2\times D^4\times\left[\frac13,\frac23\right] \quad(\text{so }\Sigma_0=\Sigma_1).
$$
Let $U_k:=\de C_k\times I-\Int\Sigma_k$.
Define
$$
\overline\alpha\colon U_0\to U_1\quad\text{by}\quad
\overline\alpha(s,t)\colon =\begin{cases}
(\varphi(s),t) & s\in\nus_0^{-1}(N-N_0),\ t\in [\frac23,1]\\
(\varphi'(s),t) & \text{otherwise}\end{cases}
$$
$$
\text{and}\quad V:=C_0\times I\bigcup\limits_{\overline\alpha}\ C_1\times I.
$$
Hence $V$ is a cobordism between $M_{\varphi'}$ and $M_\varphi\sqcup E_\alpha$, where
$$
\widehat\alpha\colon \de\Sigma_0\to\de\Sigma_1\quad\text{maps $(a,b,t)$ to}\quad
\begin{cases}(a,b,t) &t<\frac23 \\
(\alpha(b)a,b,\frac23)& t=\frac23\end{cases}
$$
$$
\text{and}\quad E_\alpha:=\Sigma_0\cup_{\widehat\alpha}\Sigma_1 \overset{(1)}\cong
\frac{S^2\times D^5}{\{(s,b)\sim(\alpha(b)s,R (b))\}_{(s,b)\in S^2\times D^4_+}}\overset{(2)}\cong
S^2\tilde\times S^5.
$$
Here $R :D^4_+\to D^4_-$ is the reflection with respect to $0\times\R^4$.

Consider the following commutative diagram:
$$
\xymatrix{ H^q(V,C_0\times I) \ar[r]^{\ex\cong} \ar[d]^i & H^q(C_1\times I,U_1) \ar[d]&  \\
H^q(M_\varphi,C_0) \ar[r]^{\ex\cong} & H^q(C_1,\de) \ar[rr]^(.4){i_{C_1,C_1\times I}\cong} & &
H^q(C_1\times I,\de C_1\times I) \ar[llu]_{i_{C_1\times I}}}
$$
We have
$$
H^q(\de C_1\times I,U_1)\overset\ex\cong H^q(\Sigma_1,\de)\cong H_{7-q}(\Sigma_1)=0\quad\text{for}\quad q=1,2,3,4.
$$
Hence from the exact sequence of the triple $U_1\subset \de C_1\times I\subset C_1\times I$ we see that
$i_{C_1\times I}$ is injective for $q=2,3,4,5$.
Hence $i$ is
an isomorphism for $q=2,3,4,5$.
Look at the inclusion-induced mapping of the exact sequences of pairs
$(M_\varphi,C_0)$ and $(V,C_0\times I)$.
By the 5-lemma we see that the inclusion $M_\varphi\to V$ induces an isomorphism in $H^q(\cdot)$ for $q=2,4$.
Or, in Poincar\'e dual form, $r_{M_\varphi}\de_V\colon H_q(V,\de)\to H_{q-1}(M_\varphi)$ is an isomorphism for $q=4,6$.
The same holds for $\varphi$ replaced by $\varphi'$.

Let $\overline Y:=(r_{M_\varphi}\de_V)^{-1}Y\in H_6(V,\de)$.
Then by Lemma \ref{l:desei}.a
$$
\overline Y\cap (C_k\times I)=A_k[N]\times I\in H_6(C_k\times I,\partial)\quad
\text{for each $k = 0, 1$.}
$$
So by Lemma \ref{l:desei}.a $Y':=r_{M_{\varphi'}}\de_V\overline Y\in H_5(M_{\varphi'})$ is a joint Seifert class.
We have the equation
$\rho_d(Y')^2=\rho_dr_{M_{\varphi'}}\de_V (r_{M_\varphi}\de_V)^{-1}Y^2=0$, i.e.~$Y'$ is a $d$-class.
Let $Y_\alpha\colon =\partial\overline Y\cap E_\alpha\in H_5(E_\alpha)$.
Since
$$\overline Y\cap (C_0\times I)=A_0[N]\times I,\quad\text{we have}\quad
Y_\alpha\cap\Sigma_1=\left[*\times D^4\times\left[\frac13,\frac23\right]\right]\in H_5(\Sigma_1,\partial).$$
Hence under (1) $Y_\alpha$ goes to a class whose intersection with $[S^2\times 0]$ is $+1$.
Therefore under the composition of (1) and (2) $Y_\alpha$ goes to
a class whose intersection with the fiber $S^2$ is $+1$, i.e.~to $A$.

Therefore $(V,\overline Y)$ is the required cobordism.
\end{proof}

\subsection{Proof of Lemma \ref{l:welleta'}.a}\label{s:welleta'}

{\bf Definition of $M_f$ and $Y_{f,y}$.}
Identify $C_f$ and $C_f\times0$.
Denote
$$
M_f:=\partial(C_f\times I)=M_{\id \de C_f}\quad
\text{and, \quad for }\quad y\in H_3,\quad
Y_{f,y}:=\partial(A_f[N]\times I)+i_{C_f,M_f}\widehat A_fy\in H_5(M_f).
$$

\begin{Lemma}[Description of $d$-classes for $M_f$] \label{l:simpleY0}
A class $Y\in H_5(M_f)$ is a $d$-class if and only if $Y=Y_{f,y}$ for some
$y\in \ker(2\rho_{\di(\varkappa(f))}\overline{\lambda(f)})$.
\end{Lemma}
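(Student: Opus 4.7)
The plan is to reduce everything to two facts: the description of joint Seifert classes modulo $i\widehat A_f(H_3)$ from Lemma \ref{l:desei}.b,c, and the injectivity of the composition $i_{C_f,M_f}\widehat A_f$ with $\Z_d$-coefficients, where $d:=\di(\varkappa(f))$.

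First, apply Lemma \ref{l:desei}.b with $f_0=f_1=f$ and $\varphi=\id\de C_f$, noting that $\partial(A_f[N]\times I)$ is a joint Seifert class (either directly, or by Lemma \ref{l:desei}.a since its restriction to each copy of $C_f$ in $M_f$ is $A_f[N]$). This shows every joint Seifert class $Y\in H_5(M_f)$ has the form $Y=Y_{f,y}$ for a unique $y\in H_3$, so it remains to characterize the $y$ for which $\rho_d Y_{f,y}^2=0$.

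Next, I would compute $Y_{f,0}^2$. Via the suspension isomorphism $H_k(C_f\times I,\partial)\cong H_{k-1}(C_f,\de)$, the relative class $A_f[N]\times I\in H_6(C_f\times I,\partial)$ satisfies $\partial_V(A_f[N]\times I)=Y_{f,0}$ (where $V:=C_f\times I$, so $\de V=M_f$), and its self-intersection in $V$ corresponds to $A_f[N]\capM{C_f} A_f[N]=A_f\varkappa(f)$ by Lemma \ref{l:La}.$\varkappa'$. Writing $\varkappa(f)=dw$ with $w\in H_2$ (using that $H_2$ is torsion free and the convention $d=0$ when $\varkappa(f)=0$), we get
\[
Y_{f,0}^2=\partial_V\bigl((A_f[N]\times I)^2\bigr)=\partial_V(A_f\varkappa(f)\times I)=d\cdot\partial_V(A_fw\times I)\in d\cdot H_3(M_f),
\]
so $\rho_d Y_{f,0}^2=0$. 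Combined with Lemma \ref{l:desei}.c, this gives
\[
\rho_d Y_{f,y}^2=\rho_d Y_{f,0}^2+2\rho_d i_{C_f,M_f}\widehat A_f\overline{\lambda(f)}(y)=2i_{C_f,M_f}\widehat A_f\rho_d\overline{\lambda(f)}(y)\in H_3(M_f;\Z_d).
\]

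Finally, I need the map $i_{C_f,M_f}\widehat A_f\colon H_1(N;\Z_d)\to H_3(M_f;\Z_d)$ to be injective, so that the vanishing of the right-hand side is equivalent to $2\rho_d\overline{\lambda(f)}(y)=0$. Alexander duality (which is valid for $\Z_d$-coefficients) gives that $\widehat A_f\colon H_1(N;\Z_d)\to H_3(C_f;\Z_d)$ is an isomorphism, while the Agreement Lemma \ref{l:Agr}.b applied with $\Z_d$-coefficients to $f_0=f_1=f$, $\varphi=\id$, $q=3$ gives that $i_{C_f,M_f}$ is injective on $H_3(\cdot;\Z_d)$ (the hypotheses $\varkappa(f_0)=\varkappa(f_1)$ and $\lambda(f_0)=\lambda(f_1)$ are trivially satisfied). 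Putting these together, $\rho_d Y_{f,y}^2=0$ iff $y\in\ker(2\rho_d\overline{\lambda(f)})$, which is the desired characterization. The only subtlety is verifying that the Agreement Lemma applies with $\Z_d$-coefficients in this absolute case, but this is explicitly allowed by the statement of Lemma \ref{l:Agr}; everything else is a direct application of results already established.
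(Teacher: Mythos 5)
Your proof is correct and takes essentially the same route as the paper, whose entire proof is the citation ``This follows by Lemma \ref{l:desei}.b,c.'' You have merely made explicit the two ingredients that citation leaves implicit, namely that $\rho_d Y_{f,0}^2=0$ (which the paper verifies inside the proof of Lemma \ref{l:Add} via $A_f[N]^2=A_f\varkappa(f)$) and the injectivity of $i_{C_f,M_f}\widehat A_f$ with $\Z_d$-coefficients (which the paper establishes in the proof of Lemma \ref{l:webeta} from the Agreement Lemma \ref{l:Agr}.b), and both of your verifications are sound.
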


This follows by Lemma \ref{l:desei}.b,c.

\begin{Lemma}[proved below in \S\ref{s:welleta'}]\label{l:prdiffbor}
For every $y\in H_3$ there is a spin null-bordism $(W,z)$ of $(M_f,Y_{f,y})$ such that $p_W^*$ is even.%
\footnote{We cannot take $W=C_f\times I$ because $\de H_5(C_f\times I,\de)\not\ni Y_{f,y}$.
So we note that the following equality holds
$\de(A_f[N]\times I+\widehat A_fy\times I)=Y_{f,y}+\widehat A_fy\times 1\ne Y_{f,y}$
and `surger out' $\widehat A_fy\times1$ shifted into the interior.}
\end{Lemma}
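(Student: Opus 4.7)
The plan is to start with the natural candidate $W_0 := C_f \times I$ and modify it via interior surgery, following the hint in the footnote.

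First I would note that $W_0$ is a compact spin $8$-manifold with $\partial W_0 = M_f$, the spin structure being inherited from $C_f \subset S^7$. Moreover, since $C_f$ is a codimension-zero submanifold of the parallelizable manifold $S^7$, the tangent bundle $TW_0 = TC_f \oplus \varepsilon^1$ is stably trivial, so $W_0$ is stably parallelizable and $p_{W_0}^* = 0$. The sole obstruction to taking $W = W_0$ is that the inclusion-induced map $H_5(M_f) \to H_5(W_0) = H_5(C_f)$ sends $Y_{f,y}$ to $\widehat A_f y$, which is nonzero in general; equivalently, the candidate chain $A_f[N] \times I + \widehat A_f y \times I$ has the wrong boundary $Y_{f,y} + \widehat A_f y \times 1$, as spelled out in the footnote.

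To kill the class $\widehat A_f y \in H_5(W_0)$ while preserving stable parallelizability, I proceed as follows. By transversality, $\widehat A_f y$ is represented by an embedded closed oriented $5$-submanifold $V \subset C_f$ (codimension $2$ in the $7$-manifold $C_f$). I push $V$ into the interior as $V \times \{1/2\} \subset \Int W_0$ and, using that $\Omega_5^{\Spin} = 0$, I perform preliminary surgeries on $V$ inside $W_0$ (there is ample room for handle exchanges in dimension $8$, and these surgeries do not change $[V] \in H_5(W_0)$) to reduce to the case where $V$ is an embedded $5$-sphere. Its normal bundle in $W_0$ is then an oriented rank-$3$ bundle over $S^5$ which is stably trivial (as $W_0$ is stably parallelizable), hence trivial. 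I then perform surgery by removing $V \times \Int D^3 \cong S^5 \times \Int D^3$ and gluing in $D^6 \times S^2$ along $S^5 \times S^2$, obtaining $W$. Since $\pi_5(\SO) = 0$, the stable tangent framing on the complement of the surgery region extends across $D^6 \times S^2$, so $W$ remains stably parallelizable; thus $p_W^* = 0$ is even, $\partial W = M_f$, and $W$ is spin.

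To extract $z$, I observe that the surgery has killed the class $[V \times \{1/2\}]$ in $H_5(W)$, as $V \times \{1/2\}$ is now the boundary of $D^6 \times \{\mathrm{pt}\}$ inside the glued piece $D^6 \times S^2$. Since $V \times \{1\}$ is homologous to $V \times \{1/2\}$ in $W_0$ via the cylinder $V \times [1/2, 1]$, the class $i_{C_f, M_f} \widehat A_f y$ is null-homologous in $H_5(W)$ as well. By the long exact sequence of the pair $(W, \partial W)$, the image of $\partial \colon H_6(W, \partial) \to H_5(M_f)$ equals the kernel of $H_5(M_f) \to H_5(W)$; both summands of $Y_{f,y}$ lie in this kernel ($\partial(A_f[N]\times I)$ is already a boundary in $W_0$, and $i_{C_f, M_f}\widehat A_f y$ vanishes in $H_5(W)$ by the above), so a class $z \in H_6(W, \partial)$ with $\partial z = Y_{f,y}$ exists.

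The main obstacle will be the surgery step: reducing $V$ to an embedded framed $5$-sphere via handle exchanges in $W_0$, and verifying that the stable tangent framing extends compatibly across the surgery so that stable parallelizability (and hence $p_W^* = 0$) is preserved. Both points are standard in the requisite dimensions but require careful surgery-theoretic bookkeeping, which I would carry out in full in the detailed proof.
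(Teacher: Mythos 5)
Your overall strategy --- start from $C_f\times I$, whose boundary is $M_f$ and whose $p^*$ vanishes, and surger the offending class $\widehat A_f y\in H_5(C_f\times I)$ out of the interior so that $Y_{f,y}$ becomes a boundary --- is the right one, and your bookkeeping for $\partial z=Y_{f,y}$ at the end would be fine. The fatal gap is the step where you ``reduce to the case where $V$ is an embedded $5$-sphere''. This is impossible in general. If $V'\subset C_f$ is any closed oriented $5$-submanifold representing $\widehat A_f y$, then $A_f[N]\capM{C_f}\widehat A_f y$ lies in $\im\bigl(H_3(V')\to H_3(C_f)\bigr)$, because the transverse intersection of a relative $5$-cycle representing $A_f[N]$ with the submanifold $V'$ is a $3$-cycle carried by $V'$. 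By Lemma \ref{l:La}.$\lambda'$ this intersection equals $\widehat A_f\overline{\lambda(f)}(y)$, which is nonzero for general $f$ and $y$ (and the lemma is asserted for \emph{all} $y\in H_3$; e.g.\ $f$ with $\lambda(f)\ne0$ on $S^1\times S^3$ and $y=[1_1\times S^3]$). Since $\widehat A_f$ is injective, such a $V'$ has $H_3(V')\ne0$ and cannot be a sphere. Your proposed mechanism also fails on its own terms: the natural representative $\nus_f^{-1}P$ is an $S^2$-bundle over a $3$-manifold whose fibre is the meridian $S^2_f$, a generator of $H_2(C_f)\cong\Z$; this $2$-sphere is not even null-homologous in $C_f\times I$, so it can never be compressed by an ambient surgery, and $\Omega_5^{\Spin}=0$ only supplies an abstract null-bordism, not an embedded bordism inside $C_f\times I$ staying in the class $\widehat A_f y$.

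This obstruction is exactly what the paper's proof is organized around. The representative is reduced not to a sphere but to $S^2\times S^3$ with $S^2\times 1_3$ homologous to the meridian, by means of a \emph{simplifying $6$-bordism} embedded in the $7$-manifold $C_f$; even this weaker reduction is nontrivial (Lemma \ref{l:real} uses the Smale--Barden classification of simply-connected $5$-manifolds and the Semiproper Embedding Theorem \ref{l:emb}.b). One then performs an $S^2$-\emph{parametric} surgery, replacing $S^2\times S^3\times D^3$ by $S^2\times D^4\times S^2$, which kills the $H_5$-class while leaving the meridian class intact. Two further (more minor) discrepancies: the resulting $W$ is not stably parallelizable, and the lemma only claims --- and the paper only proves, via the auxiliary manifold $W_+'$ and Lemma \ref{l:sp} --- that $p_W^*$ is \emph{even}; and even for your glued piece $D^6\times S^2$ the appeal to $\pi_5(SO)=0$ overlooks the top obstruction in $H^8(D^6\times S^2,\de;\pi_7(SO))\cong\Z$ (though $p_W^*=0$ would still follow there, since $H^4(W)\to H^4(W_-)$ is injective).
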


\begin{proof}[Proof of Lemma \ref{l:welleta'}.a]
Before we prove that $\eta'(\varphi)$ is independent of $Y$  we denote it by $\eta'(\varphi,Y)$.
Take any pair of $d_0$-classes $Y',Y''\in H_5(M_\varphi)$.
We have
$$\eta'(\varphi,Y')-\eta'(\varphi,Y'')\overset{(1)}= \eta'(\id\de C_{f_0},Y)\overset{(2)}=
\eta'(\id\de C_{f_0},Y_{f_0,y})\overset{(3)}=0\in\Z_2,$$
where

$\bullet$ equality (2) holds for some $y\in \ker(2\rho_{\di(\varkappa(f_0))}\overline{\lambda(f_0)})$
by the description of $d$-classes for $M_f$ (Lemma \ref{l:simpleY0});

$\bullet$ equality (3) holds by Lemmas \ref{l:etaeven}.a and \ref{l:prdiffbor};

$\bullet$ equality (1) holds for some $d_0$-class $Y\in H_5(M_{f_0})$ by the following result.

{\it Let $f_0,f_1,f_2 \colon N\to S^7$  be embeddings, $\varphi_{01}\colon \de C_0\to\de C_1$ and $\varphi_{12}\colon \de C_1\to\de C_2$
$\pi$-isomorphisms, $Y_{01}\in H_5(M_{\varphi_{01}})$ and $Y_{12}\in H_5(M_{\varphi_{12}})$ \ $d$-classes.
Then $\varphi_{02}\colon =\varphi_{12}\varphi_{01}$ is a $\pi$-isomorphism and there is a $d$-class
$Y_{02}\in H_5(M_{\varphi_{02}})$ such that $\eta'(\varphi_{02},Y_{02})=\eta'(\varphi_{01},Y_{01})+\eta'(\varphi_{02},Y_{12})$.}

This result is proved analogously to \cite[Lemma 2.10]{CS11}, cf.~\cite[\S2, Additivity Lemma]{Sk08'} (the property that $Y_{02}$ is a $d$-class is achieved analogously to the proof of Lemma \ref{l:cobordeta}).
\end{proof}


{\bf Definition of a simplifying 6-bordism $V$ and maps $v=v_0,v_1,v_2,v_3$.}
A {\it simplifying 6-bordism} for $f$ and an oriented 3-submanifold $P\subset N$ is a 6-manifold $V\subset C_f$
with boundary $\de V=\nus_f^{-1}P\sqcup v(S^2\times S^3)$ for some embedding $v=v_0\colon S^2\times S^3\to \Int C_f$
such that $V\cap\de C_f=\nus_f^{-1}P$ and $v(S^2\times1_3)$ is homologous to $S^2_f$ in $C_f$.
(Then $[\im v]=\widehat A_f[P]\in H_5(C_f)$.)

E.g.~for $N=S^1\times S^3$ and $P=1_1\times S^3$ we can take a simplifying 6-bordism
$S^2\times S^3\times I\cong V\subset C_f$.

Let $v_1\colon S^2\times S^3\times D^1\to\Int C_f$ be an embedding such that $v_1|_{S^2\times S^3\times1}=v$,
\ $\im v_1\cap V=\im v$ and $v_1(c\times D^1)$ is tangent to $V$ for every $c\in S^2\times S^3$.

Extend $v_1$ to an orientation-preserving embedding $v_2 \colon S^2\times S^3\times D^2\to\Int C_f=\Int C_f\times\frac12$   transversal to $V$ and such that $\im v_2\cap V=\im v$.

Extend $v_2$ to an orientation-preserving embedding $v_3\colon S^2\times S^3\times D^3\to \Int(C_f\times I)$.

\begin{Lemma}\label{l:real}
For every oriented 3-submanifold $P\subset N$ there is a simplifying 6-bordism.
\end{Lemma}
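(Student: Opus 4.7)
The plan is to realize $V$ as the total space of the $S^2$-sphere bundle of the normal rank-$3$ bundle of an embedded $4$-dimensional cobordism $\iota\colon W \hookrightarrow C_f$ from $P$ to a standard $3$-sphere. Since $\Omega_3^{SO}=0$, the closed oriented $3$-manifold $P$ bounds a compact connected oriented $4$-manifold $W_0$; removing an open $4$-ball from $\Int W_0$ produces $W$ with $\de W = P\sqcup (-S^3)$.

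The first task is to embed $W$ into $C_f$ so that $\iota(P)$ sits inside $\de C_f$ as a section of $\nus_f^{-1}P\to P$ and $\iota(W\setminus P)\subset \Int C_f$. I would fix a nowhere-zero section $\sigma\colon P \to \nus_f^{-1}P$ (such $\sigma$ exists because the Euler class of an oriented rank-$3$ bundle is $2$-torsion while $H^3(P;\Z)$ is torsion-free) and then extend $\sigma$ to an embedding of $W$. Existence of such an extension follows from: $\pi_3(S^7)=0$ gives a continuous extension $W\to S^7$ of $\sigma$; the metastable range $2\cdot 7 \geq 3\cdot 4 + 2$ allows approximation by a smooth embedding in $S^7$; and the generic $1$-dimensional intersection $\iota(\Int W)\cap f(N)$ can be eliminated by Whitney-style tubing along meridional $2$-spheres, which live naturally in $C_f$.

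Let $\nu \to W$ be the normal rank-$3$ oriented vector bundle of $\iota$ in $C_f$, together with a choice of tubular neighborhood $\nu W \subset C_f$. By construction, its restriction to $P$ is identified with $\nud_f|_P$ via the section $\sigma$, and its restriction to the $S^3$-component of $\de W$ is trivial (since $\pi_2(SO_3)=0$). Define $V$ to be the associated $S^2$-bundle over $W$ inside $\nu W$. Then $V\cap \de C_f = \nus_f^{-1}P$ by the identification over $P$, and the remaining boundary component is $S^3\times S^2$, which via the triviality above gives an embedding $v\colon S^2\times S^3\to \Int C_f$. The homological condition $[v(S^2\times 1_3)]=S^2_f$ in $H_2(C_f)$ follows because a fiber $S^2$ of $V\to W$ over any point of $\iota(S^3)$ is connected through fibers of $V\to W$ (along an arc in $W$ from a point of $S^3$ to a point of $P$) to a fiber of $\nus_f^{-1}P\to P$, and the latter is a meridional sphere of $f(N)$, hence a representative of $S^2_f$.

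The main obstacle will be the embedding step, specifically arranging $\iota(\Int W)\subset \Int C_f$: one must verify that intersections of $\iota(\Int W)$ with $f(N)$ occur in algebraically cancelling pairs so that each pair can be removed by a Whitney-type tube, and check that the resulting embedding preserves the identification of the normal bundle with $\nud_f|_P$ over $P$ and the triviality over $S^3$.
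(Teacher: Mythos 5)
There is a genuine gap, and it lies not in the embedding step you flag as the main obstacle but in the very first identification. You define $V$ as the sphere bundle of the normal bundle $\nu$ of the embedded cobordism $\iota(W)\subset C_f$. The fibres of that bundle are small $2$-spheres linking $\iota(W)$. Over a point $x\in P$ such a fibre is a small sphere centred at $\sigma(x)$; it bounds the normal $3$-disc of $\sigma(P)$ in $\de C_f$ and is therefore null-homologous in $C_f$. It is \emph{not} the fibre $\nus_f^{-1}(x)$, which is a meridian of $f(N)$ and represents the generator $S^2_{f}$ of $H_2(C_f)\cong\Z$. Although $\nu|_P\cong\sigma^{\perp}\oplus\varepsilon^{1}\cong\nud_f|_P$ as abstract bundles, the subset $V\cap\de C_f$ that your tubular neighbourhood produces is the boundary of a tubular neighbourhood of $\sigma(P)$ in $\de C_f$, not the submanifold $\nus_f^{-1}P$, and your chain of identifications ``fibre of $V\to W$ over a point of $P$ $=$ fibre of $\nus_f^{-1}P\to P$'' is false. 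Consequently $v(S^2\times 1_3)$ is null-homologous in $C_f$ rather than homologous to $S^2_f$, and both defining conditions of a simplifying $6$-bordism ($V\cap\de C_f=\nus_f^{-1}P$ and $[v(S^2\times 1_3)]=S^2_f$) fail. This cannot be repaired by a different choice of $W$, $\sigma$ or isotopy: the fibres of the normal sphere bundle of any embedded $4$-manifold are meridians of that $4$-manifold, and these differ from the meridians of $f(N)$ already at the level of $H_2(C_f)\cong\Z$.

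The object actually needed is a bordism \emph{inside} $C_f$ whose fixed end is the $5$-manifold $\nus_f^{-1}P$ itself and whose free end is an $S^2\times S^3$ with $S^2$-factor still a meridian of $f(N)$; it is not the sphere bundle over a $4$-dimensional cobordism in any natural way. The paper produces it by first constructing the bordism abstractly --- spin surgery on $1$-spheres in $\nus_f^{-1}P$ over $C_f$ to reach a simply connected $5$-manifold, then the Smale--Barden classification to recognise this as $(S^2\times S^3)\# X'$ and to surger $X'$ away --- and only afterwards embedding the resulting $6$-manifold into $C_f$ rel $\nus_f^{-1}P$ via the Semiproper Embedding Theorem \ref{l:emb}.b. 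If you want to keep a geometric picture, the handle description of $V$ ($2$- and $3$-handles attached to $\nus_f^{-1}P\times I$) is the correct substitute for your bundle construction.
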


\begin{proof}
Equip $\nus_f^{-1}P$ with the spin structure induced from $C_f$.
(This spin structure is compatible with the orientation of $\nus_f^{-1}P$.)
Since $C_f$ is simply connected, we can perform spin surgeries on 1-spheres in
the $5$-manifold $\nus_f^{-1}P$ to obtain a spin bordism between the inclusion
$\nus_f^{-1}P\to C_f$ and a map $\mu\colon X\to C_f$ of some closed simply connected 5-manifold $X$.
Since the induced map $i_{C_f}\colon H_2(\nus_f^{-1}P) \to H_2(C_f) \cong \Z$ is surjective,
$\mu \colon H_2(X) \to H_2(C_f)$ is surjective.
By Smale's classification of simply connected spin $5$-manifolds \cite[Theorem A]{Sm62} (see also \cite[Theorem 4.1]{Cr11}),
there is a closed simply connected spin $5$-manifold $X'$ with $H_2(X')=\ker\mu$.
Choose any isomorphism $H_2(X)\to\ker\mu \oplus \Z$.
Then by Barden's classification of simply connected $5$-manifolds
\cite[Theorem 2.2]{Ba65} (see also \cite[Theorem 5.1]{Cr11}), we may identify $X$ with $(S^2\times S^3)\#X'$
so that $\mu H_2(X')=\{0\}\subset H_2(C_f)$.
Also by Smale's classification \cite[Theorem 1.1]{Sm62}, $X'$ is spin diffeomorphic to the boundary of
a handlebody obtained by attaching $3$-handles to $D^6$ (for some spin structures on these manifolds).
So the co-cores of these handles give framed embeddings of $2$-spheres such that spin surgery on these 2-spheres
gives $S^5$.
Applying this to 2-spheres in $X'$ and using $\mu H_2(X')=0\in H_2(C_f)$ we obtain a spin bordism over $C_f$,
$g\colon V\to C_f$, between $\mu$ and a map $S^2\times S^3\to C_f$ inducing an isomorphism on $H_2$.
Then

$\bullet$ $V$ is a spin 6-manifold obtained from $\nus_f^{-1}P\times I$ by attaching 2-handles $D^2 \times D^4$ and 3-handles $D^3 \times D^3$ to $\nus_f^{-1}P\times 1$;

$\bullet$ $\partial V\underset{spin}=\nus_f^{-1}P\times 0\sqcup S^2 \times S^3$;

$\bullet$ $g|_{\nus_f^{-1}P\times0}$ is the identity and $g|_{S^2 \times S^3}$ induces an isomorphism on $H_2$.

Now the lemma follows by (the second part of) the following Semiproper Embedding Theorem \ref{l:emb}.b for $\de_+V:=\nus_f^{-1}P$ and $\de_-V:=S^2\times S^3$.
\end{proof}

\begin{Theorem}[Semiproper Embedding] \label{l:emb}
Let $V$ and $X$ be $v$- and $x$-manifolds such that $\de V=\de_+V\sqcup \de_-V$.
Then every map $g\colon V\to X$ such that $g|_{\de_+V}$ is an embedding into $\de X$ is homotopic $\rel\de_+V$
to an embedding $V\to X$, provided either

(a) $x<2v$ and $(V,\de_-V)$ is $(2v-x-1)$-connected, or

(b) $x=7=v+1$ and $(V,\de_-V)$ is 2-connected, $X$ and $V$ have spin structures $s_X$ and $s_V$ such that  $s_V|_{\de_+V}=g^* s_X|_{\de X}$.
\end{Theorem}

\begin{proof}
First assume that $(V,\de_-V)$ is $(2v-x-1)$-connected.
Then there is a handle decomposition of $V$ relative to $\de_+V$ without handles of index more than
$v-(2v-x-1)-1=x-v$.
In particular, $V$ is a regular neighborhood (in itself) of an $(x-v)$-polyhedron.

Use induction on the number of handles.
The base case is $V=\de_+V\times I$  (when there are no handles).
Then define an embedding $V\to X$ as
$\de_+V\times I\overset{g|_{\de_+V}\times \id I}\to \de X\times I\overset{i_X}\to X$, where $i_X$ is the collar inclusion.

Let us prove the inductive step for (a).
We may assume that $V':=V\cup D^k\times D^{v-k}$, \ $k\le x-v$, \ $g\colon V'\to X$ is a map such that
$g|_{\de_+V}$ is an embedding into $\de X$ and $g|_V$ is an embedding and $g^*s_X|_V=s_V$.
Since $x<2v$, we have $x\ge2(x-v)+1\ge 2k+1$.
Since $V$ is a regular neighborhood (in itself) of an $(x-v)$-polyhedron, we may assume that
$g|_{D^k\times0}$ is an embedding and $g(D^k\times0)\cap g(V)=g(\de D^k\times0)$.
Since $k\le x-v$, we have $\pi_{k-1}(V_{x-k,v-k})=0$.
Hence the normal $(v-k)$-framing of $g(\de D^k\times0)$ in $g(\de_-V)$ extends to a normal $v-k$ framing of
$g(D^k)$ in $X$.
Thus $g|_{D^k\times0}$ extends to an embedding $D^k\times D^{v-k}\to X$ whose image intersects $g(V)$ at
$g(\de D^k\times D^{v-k})$.
This extension defines an embedding $V'\to X$ extending $g|_V$ and homotopic to $g$.

Now we prove (b).
By hypothesis, there is a handle decomposition of $V$ relative to $\de_+V$ without handles of index more than $6-2-1=3$.
The proof is the same as the proof of (a) above except that $x\ge2k+1$ is verified directly and $k>x-v=1$ is possible.
The required extension of the $(v{-}k)$-framing exists

$\bullet$ for $k=3$ because $\pi_{k-1}(V_{x-k,v-k})=\pi_2(\SO_4)=0$;

$\bullet$ for $k=2$ because $g^*s_X|_V=s_V$ (in spite of $\pi_{k-1}(V_{x-k,v-k})=\pi_1(\SO_5)\ne0$).
\end{proof}

\begin{proof}[Proof of Lemma \ref{l:prdiffbor}]
Take any $y\in H_3$.
Since $H_3 \cong H^1(N) \cong [N, S^1]$, the class $y$ is represented by an oriented 3-submanifold $P\subset N$
that is  the preimage of a regular value of a map $N \to S^1$ representing $y$;
orientations on $N$ and $S^1$ give an orientation on the preimage.
Take a simplifying 6-bordism $V\subset C_f$ given by Lemma \ref{l:real}.
Take the corresponding maps $v,v_1,v_2,v_3$.
Let
$$
W_-:=(C_f\times I)-\Int\im v_3\quad\text{and}\quad W:=
W_-\cup_{v_3|_{S^2\times S^3\times S^2}}(S^2\times D^4\times S^2).
$$
(The manifold $W$ may be called the result of an $S^2$-parametric surgery along $v_3$.)

Denote
$$t:=v_3(S^2\times 0\times S^2)\quad\text{and}\quad \Delta:=1_2\times D^4\times1_1.$$
Identify $S^2\times D^4\times S^2$ with $t\times\Delta$.

Consider the cohomology exact sequence of the pair $(W,W_-)$ in the following Poincar\'e dual form:
$$(*)\qquad\xymatrix{
H_6(t\times\Delta) \ar[r]  & H_6(W,\de) \ar[r]^{r_{W_-}} & H_6(W_-,\de) \ar[r] & H_5(t\times\Delta) \\
H^2(W,W_-)\ar[u]_{\cong}^{PD\circ\ex}  & & & H^3(W,W_-)\ar[u]_{\cong}^{PD\circ\ex}
}$$
Since $H_5(t\times\Delta)=0$, the map $r_{W_-}$ is an epimorphism.
Take any
$$Z\in r_{W_-}^{-1}(A_f[N]\times I\cap W_-)\subset H_6(W,\partial).$$
Denote
$$\widehat V:=V\cup (S^2\times D^4\times1)\subset W
\quad\text{and}\quad z:=Z+[\widehat V]\in H_6(W,\partial).$$
Objects constructed above depend upon $y,f$ and the choices in the construction.
We do not indicate this in their notation.

Since $H_6(t\times\Delta)=0$, the spin structure on $W_-$ coming from $S^7\times I$ extends to $W$.
Clearly, $\partial W\underset{spin}=\partial(C_f\times I)\underset{spin}=M_f$
(for the `boundary' spin structure on $M_f$ coming from $C_f\times I$).
Since
$$\de_WZ=\de_{C_f\times I}(A_f[N]\times I)=Y_{f,0}\quad\text{and}\quad
\de_W[\widehat V]=[\nus_f^{-1}P\times\frac12]=i_{M_f}\widehat A_fy,\quad\text{we have}\quad \de_W z=Y_{f,y}.$$

Consider the first line of diagram (*)
with subscripts 6,5 changed to 4,3, respectively.
Since $p_W^*\cap W_-=p_{W_-}^*=0$, by exactness $p_W^*=n[t]$ for some $n\in\Z$.
Denote
$$W'_+:=(S^7-\Int\im v_2)\cup_{v_2|_{S^2\times S^3\times S^1}}S^2\times D^4\times S^1.$$
Then
$$n=n[t]\cap_{t\times\Delta}[\Delta]=
(p_W^*\cap t\times\Delta)\cap_{t\times\Delta}[\Delta]\overset{(3)}=
(p_{W'_+}^*\cap S^2\times D^4\times S^1)\cap_{S^2\times D^4\times S^1}[\Delta]\equiv0\bmod2.$$
Here

$\bullet$ the homology classes $[t]$ and $[\Delta]$ are taken in the space indicated under `$\cap$'
(so $[\Delta]$ has different meanings in different parts of the formula);

$\bullet$ the equality (3) holds because
$r_{S^2\times D^4\times S^1}\colon H_4(t\times\Delta,\de)\to H_3(S^2\times D^4\times S^1,\de)$ is an isomorphism;

$\bullet$ the congruence holds because $H_5(S^2\times D^4\times S^1)=0$, so the spin structure on $S^7-\Int\im v_2$ coming from $S^7$ extends to $W'_+$, hence by Lemma \ref{l:sp} $p_{W'_+}^*$ is even.
\end{proof}

\subsection{Proof of Theorem \ref{t:aldi} using Lemmas \ref{l:modif} and \ref{l:easur}}\label{s:lemmas-adtmod}


{\bf Definition of an elementary pair.}
Suppose that $U,V_0$ and $V_1$ are abelian groups and that $\capM{01}\colon V_0\times V_1\to\Z$ a unimodular pairing.
(Then $V_k$ has to be free abelian.)
An {\it elementary} pair is a pair $v_k\colon U\to V_k$, $k=0,1$, of monomorphisms such that $v_0U\capM{01} v_1U=0$ and
$v_kU$ is a half-rank direct summand in $V_k$ for each $k=0,1$.
(Then $\rk V_k$ has to be even.)


\smallskip
The following theorem is an easy corollary of a theorem of Kreck.
In it and in \S\ref{s:lemmas-ele} we consider the intersection product
$$
\capM{01}\colon H_4(W,M_0)\times H_4(W,M_1)\to\Z.
$$

\begin{Theorem}[Modified surgery theorem]\label{mst}
For $l \geq 2$ let

$\bullet$ $M_0,M_1\subset\R^{8l}$ be $(4l{-}1)$-manifolds with common boundary;

$\bullet$ $p\colon B\to BO$ be a fibration such that $\pi_1(B)=0$ and $\pi_i(p)=0$ for every $i\ge 2l$;

$\bullet$ $\overline{S\nu_k}\colon M_k\to B$, $k=0,1$, be $(2l{-}1)$-connected maps coinciding on the boundary and such that $p\overline{S\nu_k}$ is the classifying map of the normal bundle of $M_k$.

A diffeomorphism $M_0\to M_1$
commuting with $\overline{S\nu_k}$ and identical on $\de M_0$ exists if
there is

$\bullet$ a $4l$-manifold $W$ such that $\partial W=M_0\cup(-M_1)$,

$\bullet$ a $2l$-connected
map $\overline{S\nu}\colon W\to B$ extending $\overline{S\nu_0}\cup \overline{S\nu_1}$,

$\bullet$ a subgroup $U\subset \ker\overline{S\nu}\subset H_{2l}(W)$ such that the pair $j_{M_k,W}|_U$, $k=0,1$, is elementary.
\end{Theorem}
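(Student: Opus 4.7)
The plan is to deduce the result from Kreck's modified surgery theorem \cite[Theorem 3]{Kr99} applied to the $B$-bordism $(W, \overline{S\nu})$. Kreck's theorem provides an obstruction in the monoid $l_{4l+1}(\pi_1(B))$ whose vanishing is equivalent to performing a finite sequence of $B$-surgeries on $\Int W$ to reach an $s$-cobordism between $M_0$ and $M_1$ rel boundary, and hence an $h$-cobordism since $\pi_1(B) = 0$. Once this is achieved, the $h$-cobordism theorem \cite{Mi65} delivers the required $B$-diffeomorphism $M_0 \to M_1$ extending $\varphi$. The job is thus to show that the elementary-pair hypothesis on $U$ implies the vanishing of Kreck's obstruction, and in fact directly enables the geometric surgery.

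First I would represent a $\Z$-basis $u_1, \dots, u_n$ of $U$ by immersed $2l$-spheres in $\Int W$; this is possible by Hurewicz since $W$ is $2l$-connected. The inclusion $U \subset \ker\overline{S\nu}$ together with $\pi_i(p) = 0$ for $i \ge 2l$ equips these spheres with stable normal framings compatible with the $B$-smoothing. Since $\dim W = 4l \ge 8$, the Whitney trick applies, so the spheres can be made disjoint and embedded with trivial normal bundle provided pairwise and self intersection numbers in $W$ vanish. The geometric intersection pairing on $W$ factors through the Poincar\'e--Lefschetz pairing $\capM{01}\colon V_0 \times V_1 \to \Z$ via $j_{M_0,W}$ and $j_{M_1,W}$, so the hypothesis $j_{M_0,W}(U) \capM{01} j_{M_1,W}(U) = 0$ forces the pairwise intersections among the $u_i$ to vanish; self-intersections can then be killed by standard cusp modifications using the normal $B$-smoothing.

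Performing simultaneous $B$-surgery on these disjoint framed embedded spheres yields a new $B$-bordism $W'$ between $M_0$ and $M_1$ rel boundary. The half-rank direct summand condition on $j_{M_k,W}(U) \subset V_k$ for each $k = 0, 1$ is exactly what forces $W'$ to be an $h$-cobordism: surgery kills $j_{M_k,W}(U)$ in $V_k$, and the unimodularity of $\capM{01}$ together with the half-rank summand hypothesis forces the Poincar\'e--Lefschetz dual summand in each $V_k$ to die as well. Combined with the $2l$-connectedness of $\overline{S\nu}$ below middle dimension, this yields $H_*(W', M_k) = 0$ on both sides of $W'$, so $W'$ is an $h$-cobordism and the $h$-cobordism theorem finishes the proof.

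The main obstacle is the penultimate step: rigorously showing that simultaneous surgery on a generating set of $U$ kills \emph{all} of the middle-dimensional relative homology on both sides of $W$ at once. Algebraically this is a duality argument based on $\capM{01}$ and a Mayer--Vietoris analysis of $H_*(W', M_k)$; the key input is that $j_{M_k,W}(U)$ being a half-rank direct summand of $V_k$ means the complementary summand is Poincar\'e--Lefschetz dual to $U$ itself and therefore dies in tandem with the surgery. Matching the vanishing of Kreck's algebraic obstruction in $l_{4l+1}$ to this geometric picture, especially the quadratic refinement defined on $\ker\overline{S\nu}$, should be automatic in the simply connected case from $U \subset \ker\overline{S\nu}$ and the hypothesis $\pi_i(p) = 0$ for $i \ge 2l$.
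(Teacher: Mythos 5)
Your overall route coincides with the paper's: both reduce the statement to Kreck's modified surgery machinery and finish with the relative $h$-cobordism theorem \cite{Mi65}. But where the paper's proof is a two-line reduction to the algebraic criterion of \cite[Theorem 4.1]{CS11} (cf.\ \cite[Theorem 4]{Kr99}), you instead try to re-derive the geometric content of that criterion, and the decisive step is left unproved --- you yourself flag it as ``the main obstacle''. Concretely: after surgering out framed embedded spheres representing a basis of $U$, you must show that \emph{all} of $H_*(W',M_k)$ vanishes for $k=0,1$, not merely that $j_{M_k,W}U$ dies. Your justification (``the complementary summand is Poincar\'e--Lefschetz dual to $U$ itself and therefore dies in tandem'') is an assertion, not an argument, and it is exactly the non-trivial part of Kreck's theorem. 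The missing idea is the one the paper supplies: since $v_1U$ is a half-rank direct summand of $V_1$ orthogonal to $v_0U$, and $\capM{01}$ is unimodular, $v_1U$ must equal the annihilator of $v_0U$, so the induced pairing $v_0U\times V_1/v_1U\to\Z$ is \emph{unimodular}. That unimodularity is precisely the hypothesis under which \cite[Theorem 4.1]{CS11} guarantees that $(W,\overline{S\nu})$ is bordant rel boundary to an $h$-cobordism; once you quote the theorem in that form, no geometric surgery argument needs to be repeated.

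Two further points. First, ``this is possible by Hurewicz since $W$ is $2l$-connected'' is false as stated: $W$ is not $2l$-connected (that would force $H_{2l}(W)=0$ and hence $U=0$); it is the map $\overline{S\nu}$ that is $2l$-connected. The correct justification that classes in $\ker\overline{S\nu}\subset H_{2l}(W)$ are spherical is the relative Hurewicz theorem applied to the pair $(B,W)$ (mapping cylinder), using $\pi_1(W)=\pi_1(B)=0$. Second, your appeal to ``vanishing'' of the obstruction in $l_{4l+1}(\pi_1(B))$ should be to the obstruction being \emph{elementary} in Kreck's sense; in the present simply-connected even-dimensional setting the elementary-pair hypothesis on $U$ is designed to certify exactly that, via the unimodular quotient pairing above. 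With these corrections your sketch would collapse to the paper's proof; as written, it does not establish the theorem.
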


\begin{proof}
For an elementary pair $v_k\colon U\to V_k$, $k=0,1$, the quotient $v_0U\times V_1/v_1U\to\Z$ of $\capM{01}$ is unimodular.
So by \cite[the Kreck Theorem 4.1]{CS11}, cf.~\cite[Theorem 4]{Kr99}, $\overline{S\nu}$ is bordant
(relative to the boundary) to an $h$-cobordism.
Hence the theorem holds by the relative $h$-cobordism theorem \cite{Mi65}.
\end{proof}

{\bf Definitions of $i_W,j_W,\de_W$, convenient manifold and pre-elementary class.}
Let $W$ be an 8-manifold.

Denote by $i_W,j_W,\de_W$ the homomorphisms from the exact sequence of the pair $(W,\de W)$.

The manifold $W$ is called {\it convenient} if $H_3(\de W)$ is free abelian,
$H_5(W,\de)=H_3(W)=0$ and $\de W$ is parallelizable.

A class $z\in H_6(W,\de)$ is called {\it pre-elementary} if there is a homomorphism $s\colon H_4(W,\de)\to H_4(W)$ such that

(1) $H_4(W)=\im i_W\oplus\im s$,

(2) $su\cap_W sv=su\capM{\de} v$ for every $u,v\in H_4(W,\de)$, and

(3) $\sigma(W)=sp_W^*\capM{\de} p_W^*=sz^2\capM{\de} z^2=sz^2\capM{\de} p_W^*=0$.

\begin{Lemma}[Pre-elementary class; proved in \S\ref{s:lemmas-mod}]\label{l:modif}
Let $W$ be a convenient 8-manifold
and $z\in H_6(W,\de)$ a class such that for $d:=\di(\de_Wz^2)$ and some $\overline{z^2}\in H_6(W,\Z_d)$
$$j_W\overline{z^2}=\rho_dz^2,\quad\overline{z^2}\capM{\de}(z^2-p_W^*)\equiv0\bmod\widehat d \quad
\text{and, if $d$ is even,}\quad\overline{z^2}\capM{\de} z^2\equiv 0\bmod2.$$
Then there is a spin 8-manifold $W'$ such that $\de W'$ is a homotopy 7-sphere and
$z\sharp0\in H_6(W\sharp W',\de)$ is pre-elementary.
\end{Lemma}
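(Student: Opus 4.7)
The plan is to construct $W'$ as a boundary connected sum of punctured $\Ha P^2$, punctured $S^4\times S^4$, and Milnor $E_8$-plumbing 8-manifolds (all spin, the last two with homotopy 7-sphere boundary), and then choose the splitting $s$ on $W\sharp W'$ that realizes (1) together with the simultaneous vanishing of the four characteristic numbers in (3). Each building block is chosen so that $H_3=H_5(\cdot,\de)=H_6(\cdot,\de)=0$; consequently $W\sharp W'$ remains convenient, $(z\sharp 0)^2=(z^2,0)$ in the decomposition $H_4(W\sharp W',\de)=H_4(W,\de)\oplus H_4(W',\de)$, $p^*_{W\sharp W'}=(p^*_W,p^*_{W'})$, and $\sigma$ is additive. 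Condition (1) holds automatically from the convenient hypotheses: $H_4(W,\de)\cong\Hom(H_4(W),\Z)$ is free by UCT and $H_3(W)=0$, while $H_3(\de W)$ is free by convenience, so both short exact sequences extracted from $0\to H_4(\de W)\to H_4(W)\to H_4(W,\de)\to H_3(\de W)\to 0$ split.

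The first key step is to quantify the flexibility in $s$ satisfying (1). Replacing $s$ by $s+i_W\beta$ for $\beta\colon H_4(W,\de)\to H_4(\de W)$ preserves (1), and by the standard identity $i_Wa\capM{\de}c=a\capM{\de W}\partial_Wc$ it shifts $sz^2\capM{\de}z^2$ by $\beta(z^2)\capM{\de W}\partial_Wz^2\in d\,\Z$, where $d=\di(\partial_Wz^2)$. Since $\de W$ is parallelizable, $\partial_Wp^*=0$, and the same computation shows that $sp^*\capM{\de}p^*$ and $sz^2\capM{\de}p^*$ are actually independent of $s$ (with $sz^2$ fixed). The reduction $\rho_d(sz^2\capM{\de}z^2)$ equals $\overline{z^2}\capM{\de}z^2$, which by hypothesis is even when $d$ is even (and trivially controlled when $d$ is odd via Lemma \ref{l:etaeven}.a). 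The reduction $\rho_{\widehat d}(sz^2\capM{\de}(z^2-p^*))$ equals $\overline{z^2}\capM{\de}(z^2-p^*)$, which vanishes by hypothesis. These two congruences say that the integer quadruple $(\sigma(W), sp^*\capM{\de}p^*, sz^2\capM{\de}p^*, sz^2\capM{\de}z^2)$ lies in the lattice of values achievable by adjustments available from connect-summing the building blocks.

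The second key step is to realize the needed integer adjustments using the classical characteristic numbers of the building blocks: $\sigma(\Ha P^2)=1$, $p_1^2[\Ha P^2]=4$, $p_2[\Ha P^2]=7$; the Milnor $E_8$-plumbing has $\sigma=8$ and homotopy sphere boundary; and $S^4\times S^4$ gives a hyperbolic summand in $H_4$ with all characteristic numbers zero. Concretely, signed copies of $\Ha P^2$ and of the Milnor plumbing achieve $\sigma(W\sharp W')=0$; further $\Ha P^2$ copies together with a suitable choice of $s$ on the new $H_4$-summands kill $sp^*\capM{\de}p^*$; $S^4\times S^4$ copies combined with a final shift $s\mapsto s+i_W\beta$, using the divisibility furnished by the hypothesis, simultaneously kill $sz^2\capM{\de}p^*$ and $sz^2\capM{\de}z^2$. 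The number $\widehat d=\gcd(d,24)$ is the spin-signature modulus, forced by Hirzebruch ($45\sigma=7p_2-p_1^2$) and Rokhlin-type congruences on closed spin 8-manifolds; this matches the $24$ appearing in the hypothesis and in the target lattice of achievable adjustments.

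The main obstacle is the combinatorial bookkeeping: each adjustment potentially perturbs the other three quantities, and one must verify that the arithmetic congruences coming from the hypothesis are exactly matched by the residues achievable from the building blocks — so that after finitely many stabilizations and one final choice of $s$ all four quantities vanish. This parallels the strategy of \cite[Lemma 2.11]{CS11} in the absolute (closed $\de W=\emptyset$) case; the presence of boundary here forces the use of the $\Z_d$-lift $\overline{z^2}$ together with Lemma \ref{l:etaeven}.a to control the mod-$d$ reductions, and the analysis of $\partial_W$ and $\im i_W$ to localize the $s$-flexibility.
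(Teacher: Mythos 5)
Your overall strategy (stabilize $W$ by connected sum with standard spin blocks, then re-choose the splitting $s$) is the same as the paper's, but the mechanism you propose for actually moving the characteristic numbers fails, and that mechanism is where the real content of the lemma lies. You never address condition (2) of the definition of a pre-elementary class, $su\cap_W sv=su\capM{\de}v$; producing an $s$ satisfying (1) \emph{and} (2) is exactly what Lemma \ref{l:modules} (the ``2-homomorphism'') provides, and every later modification of $s$ must preserve (2), since the Elementary pair Lemma \ref{l:easur} relies on it. Now test your shift $s\mapsto s+i_W\beta$ against (2): the left-hand side $su\cap_W sv$ is unchanged because $\im i_W$ lies in the radical of $\cap_W$, while the right-hand side changes by $i_W\beta(u)\capM{\de}v=\beta(u)\capM{\de W}\de_Wv$. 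So (2) is preserved only if $\im\beta\subset(\im\de_W)^{\perp}$ --- but then the quantity you want to move, namely $\beta(z^2)\capM{\de W}\de_Wz^2$, vanishes automatically because $\de_Wz^2\in\im\de_W$. Your flexibility mechanism is therefore self-defeating: every shift compatible with (2) changes nothing. The paper obtains the changes by $2d$, $d^2-d$ and $d^2$ by a genuinely different device: after summing with $\Ha P^2\#(-\Ha P^2)$ or with $S^4\t\times S^4$ (the \emph{twisted} bundle with $p^*=(2,0)$ --- the untwisted $S^4\times S^4$ in your list has $p_1=0$ and cannot produce the $2d$ move), the new 2-homomorphism $t^*t$ is assembled from a map $t''\colon H_3(\de W)\to\Z^2$ with $t''(\de_Wz^2)=(0,d)$, which routes the boundary of $z^2$ into the new $\Z^2$-summand; Lemma \ref{l:modules}.b,c4 then certifies that $t^*t$ again satisfies (1) and (2). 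This is the step missing from your proposal.

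Two smaller omissions. Your list of building blocks lacks the closed pairs $((S^2)^4,z_S)$ with $z_S^4=24$ and the Kreck--Stolz manifold $(W_0,z_0)$ with $z_0^2\cap_{W_0}(z_0^2-p_{W_0}^*)=0$ and $z_0^2\cap_{W_0}z_0^2=2$, which are what realize the adjustments by $24$ and by $2$ (the source of $\widehat d=\gcd(d,24)$ and of the evenness hypothesis). And killing $\sigma(W)$ and $sp_W^*\capM{\de}p_W^*$ simultaneously with copies of $\Ha P^2$ and $\overline E_8$ requires the congruence $\sigma(W)\equiv sp_W^*\capM{W}sp_W^*\pmod 8$, which comes from $sp_W^*$ being a characteristic element for $\cap_W|_{\im s}$; you assert the killing can be done but give no justification.
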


\begin{Lemma}[Elementary pair; proved in \S\ref{s:lemmas-ele}]\label{l:easur}
Let $W$ be a convenient 8-manifold such that

(*) $\de W=M_0\cup_{\de M_0=\de M_1}(-M_1)$ for some 7-manifolds $M_0,M_1$
without torsion in their homology and having a common boundary, and

(**) $j_{M_k,\de W}\colon H_4(\de W)\to H_4(\de W,M_k)$, $k=0,1$, are epimorphisms having the same kernel.

Let $z\in H_6(W,\de)$ be a pre-elementary class for which there is a class
$q\in\ker j_{M_0,\de W}$ such that $q\cap_{\de W}\de_Wz^2=\di(\de_Wz^2)$.

Then
there is a subgroup $U\subset H_4(W)$ such that $U\capM{\de} z^2=U\capM{\de} p_W^*=0$ and
the pair of homomorphisms $j_{M_k,W}|_U$, $k=0,1$, is elementary.
\end{Lemma}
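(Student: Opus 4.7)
The strategy is to build $U$ as a direct sum $L\oplus i_W L'$ with $L\subset\im s$ a Lagrangian for the form induced by $\cap_W$ on $\im s$ and $L'\subset H_4(\de W)$ chosen so that $i_W L'$ fills out the correct half-rank direct summand in each $H_4(W,M_k)$ for both $k=0,1$ at once. The pre-elementary splitting supplies the algebra on $\im s$, condition~(**) makes $L'$ coherent for both $k$, and the auxiliary class $q$ enforces the divisibility condition needed to find $L'$ perpendicular to $\de_W z^2$.

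First the algebra on $\im s$. Since $W$ is convenient $i_W$ is injective, and property~(1) gives $H_4(W)=\im i_W\oplus\im s$. The subgroup $\im i_W$ is totally isotropic for $\cap_W$ because boundary $4$-cycles can be pushed into disjoint normal collars, and mixed intersections vanish by exactness: $sv\cap_W i_W(x)=sv\cap_\de j_W(i_W x)=0$. Property~(2) identifies $\cap_W|_{\im s}$ with the pairing $(u,v)\mapsto su\cap_\de v$ on $H_4(W,\de)$; this is symmetric, and modulo the radical contained in $\im i_W$ it is unimodular by Lefschetz duality (using that $H_3(\de W)$ is free). By property~(3) this form has signature $\sigma(W)=0$, while $\{sz^2,sp_W^*\}$ spans a totally isotropic subgroup of it. A classical lemma on symmetric unimodular $\Z$-forms of signature $0$ (cf.\ Milnor--Husemoller) then yields a Lagrangian $L\subset\im s$ orthogonal to $sz^2$ and $sp_W^*$; via property~(2) this becomes $L\cap_\de z^2=0=L\cap_\de p_W^*$.

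Next, most of the elementary pair conditions already hold for $L$ alone. The restriction $j_{M_k,W}|_L$ is injective because $L\cap\im i_W=0$ and $\ker j_{M_k,W}=\im(H_4(M_k)\to H_4(W))\subset\im i_W$ (as $M_k\subset\de W$). The formula $(j_{M_0,W}u)\cap_{01}(j_{M_1,W}v)=u\cap_W v$ for $u,v\in H_4(W)$, combined with $L\cap_W L=0$, gives the isotropy condition $j_{M_0,W}L\cap_{01}j_{M_1,W}L=0$ in the unimodular Lefschetz pairing for the pair $(W,M_0\cup M_1)$.

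The hard part will be arranging that $j_{M_k,W}(U)$ is a half-rank direct summand of $H_4(W,M_k)$ simultaneously for both $k$. In general $\rk H_4(W,M_k)$ exceeds $\rk\im s$ by the rank of $H_4(\de W,M_k)\cong H_4(M_{1-k},\de)$ (which injects into $H_4(W,M_k)$ since $H_5(W,\de)=0$), so $L$ must be enlarged by an appropriately chosen $L'\subset H_4(\de W)$ with $L'\cap\ker j_{M_0,\de W}=0$. Under condition~(**), the common kernel $K:=\ker j_{M_0,\de W}=\ker j_{M_1,\de W}$ yields $H_4(\de W)/K\cong H_4(\de W,M_k)$ coherently for both $k$, so a single $L'$ lifted from this common quotient produces controlled images $j_{M_k,W}(i_W L')$ for both $k$ simultaneously, and the injectivity of $H_4(\de W,M_k)\to H_4(W,M_k)$ means these images have rank $\rk L'$. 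Isotropy of $\im i_W$ and the vanishing $L\cap_W\im i_W=0$ ensure $U:=L\oplus i_W L'$ remains isotropic for $\cap_W$. Orthogonality of $i_W L'$ to $sp_W^*$ is automatic since $i_W(x)\cap_\de p_W^*=x\cap_{\de W}\de_W p_W^*=x\cap_{\de W} p^*_{\de W}=0$ by parallelizability of $\de W$, and orthogonality to $sz^2$ reduces via $i_W(x)\cap_\de z^2=x\cap_{\de W}\de_W z^2$ to the single condition $L'\cap_{\de W}\de_W z^2=0$. Here the divisibility hypothesis $q\cap_{\de W}\de_W z^2=d=\di(\de_W z^2)$ with $q\in K$ is essential: the class $q$ is a primitive direction of $H_4(\de W)$ that absorbs the full divisibility of the dual of $\de_W z^2$, so after splitting off $\Z\cdot q$ from a chosen complement of $K$ in $H_4(\de W)$ the remaining free summand contains a subgroup of the desired rank perpendicular to $\de_W z^2$, which serves as $L'$. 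Setting $U:=L\oplus i_W L'$ then completes the construction.
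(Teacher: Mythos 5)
Your construction follows the same architecture as the paper's proof: $U$ is assembled from a totally isotropic half-rank direct summand of $\im s$ that is orthogonal to $sz^2$ and $sp_W^*$ (built using $\sigma(W)=0$ and the isotropy of the span of $sz^2,sp_W^*$), together with the $i_W$-image of a complement of the common kernel $K=\ker j_{M_0,\de W}$ made orthogonal to $\de_Wz^2$ by means of $q$. The first half of your argument is sound. The genuine gap is in the second half: you never verify that $j_{M_k,W}(U)$ is a \emph{half-rank direct summand} of $H_4(W,M_k)$, and the rank bookkeeping you substitute for this is wrong. The rank of $H_4(W,M_k)$ exceeds $\rk\,\im s$ by $2\rk H_4(\de W,M_k)$, not by $\rk H_4(\de W,M_k)$: one copy is $j_{M_k,W}(\im i_W)\cong\im i_{W,k}\cong H_4(\de W,M_k)$ and a second is $\coker j_{M_k,W}\cong H_3(M_k)\cong H_4(\de W,M_k)$. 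With your count, a subgroup of rank $\tfrac12\rk\,\im s+\rk L'$ with $\rk L'=\rk H_4(\de W,M_k)$ would overshoot half-rank, so the error is not cosmetic. The missing content is precisely the paper's Lemma \ref{l:sinter}.b,c: the orthogonal splitting $H_4(W,M_k)=(j_{M_{1-k},W}S)^\perp\oplus j_{M_k,W}S$ coming from unimodularity of $\capM{01}$ on $j_{M_0,W}S\times j_{M_1,W}S$, and the chain of isomorphisms $(j_{M_{1-k},W}S)^\perp/j_{M_k,W}(\im i_W)\cong\coker j_{M_k,W}\cong H_3(M_k)\cong H_4(\de W,M_k)\cong j_{M_k,W}(\im i_W)$ (using $H_3(W)=0$, $H_5(W,\de)=0$, Poincar\'e--Lefschetz duality and the torsion-freeness of $H_*(M_k)$), which shows that $j_{M_k,W}(\im i_W)$ is a half-rank direct summand of the free group $(j_{M_{1-k},W}S)^\perp$. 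Injectivity of $H_4(\de W,M_k)\to H_4(W,M_k)$ only controls the rank of $j_{M_k,W}(i_WL')$, not that it is a direct summand, nor how it sits relative to the complementary pieces.

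A secondary point: your recipe for $L'$ --- ``splitting off $\Z\cdot q$ from a chosen complement of $K$'' --- does not parse, since $q\in K$ and hence lies in no complement of $K$. The correct move (the paper's Lemma \ref{l:sinter}.d) is to choose a complement $U''$ of $K$ and shear it \emph{inside} $K$: replace each $u\in U''$ by $u-(a_0\capM{\de W}u)\,q$, where $\de_Wz^2=\di(\de_Wz^2)\,a_0$. Since $q\in K=\ker j_{M_0,\de W}=\ker j_{M_1,\de W}$ this does not change the image in $H_4(\de W,M_k)$ for either $k$ (this is where (**) enters), while $q\capM{\de W}\de_Wz^2=\di(\de_Wz^2)$ makes the sheared classes orthogonal to $\de_Wz^2$. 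Your intended conclusion is the right one and this step is repairable, but as written the construction of $L'$ is not well-defined.
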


We remark that the proofs of the Elementary pair Lemma \ref{l:easur} modulo Lemma \ref{l:sinter}
(found in \S\ref{s:lemmas-ele}) and of the Pre-elementary class Lemma \ref{l:modif} modulo Lemma \ref{l:modules} (found in \S\ref{s:lemmas-mod})
are similar to \cite[Proof of Bordism Theorem 4.3 and of Lemma 4.5]{CS11}.
However, these proofs are different in details from those in \cite{CS11}, even when $H_1=0$.

\begin{proof}[Proof of the Almost Diffeomorphism Theorem \ref{t:aldi} modulo Lemmas \ref{l:modif} and \ref{l:easur}]
Take the spin structure on $M$ corresponding to a tangent framing on $M$.
Take any normal spin structure on $M$ given by Lemma \ref{l:krst}.
Since $\Omega_7^{Spin}(\C P^\infty)=0$ \cite[Lemma 6.1]{KS91} there is an 8-manifold $W$ with a normal spin structure and $z\in H_6(W,\partial)$ such that $\partial W\underset{spin}=M$ and $\partial z=Y$.

Recall that $B\Spin=BO\!\left<4\right>$ is the (unique up to homotopy) 3-connected
space for which there exists a fibration $\gamma\colon B\Spin\to BO$ inducing an isomorphism on $\pi_i$ for every $i\ge4$.
Let $B := B\Spin\times\C P^\infty$, $p:=\gamma\pr_2$ and
$\overline{S\nu}\colon W\to B$ be the map corresponding to the given normal spin structure on $W$ and to
$z\in H_6(W,\partial)\cong H^2(W)\cong [W,\C P^\infty]$.

For each $k=0,1$ since $M_k$ is torsion free, $H_2(M_k)\cong H_5(M_k,\de)\cong\Z$.
Then the homomorphism $\overline{S\nu}|_{M_k}\colon H_2(M_k)\to H_2(\C P^\infty)$ is an isomorphism.
This and the fact that $\pi_1(M_k)=0$ imply that the map $\overline{S\nu}|_{M_k}$ is 3-connected.

Performing $B$-surgery below the middle dimension we can change $\overline{S\nu}$ relative to the boundary and
assume that $\overline{S\nu}$ is 4-connected \cite[Proposition 4]{Kr99}.
Then
$$
H_5(W,\partial)\cong H^3(W)\cong H^3(B)=0,\quad H_3(W)\cong H_3(B)=0\quad\text{and}\quad H_2(W)\cong H_2(B)\cong\Z.
$$
From Poincar\'e-Lefschetz duality it follows that $\im j_W$ is a direct summand in  $H_4(W,\de)$.
The manifold $W$ is now convenient.  By the Pre-elementary class Lemma \ref{l:modif}
we can change $W$ to obtain a new manifold, again denoted $W$, with $\partial W = M_0 \# \Sigma$
and $z \in H_6(W, \partial)$ elementary.



Let $q:=i_MQ$.
Take a subgroup $U$ given by the Elementary pair Lemma \ref{l:easur}.
Recall that there is an isomorphism $H_4(B)\to \Z\oplus\Z$ mapping
$\overline{S\nu}(x)$ to $(x\capM{\de} z^2,x\capM{\de} p_W^*)$ for every $x\in H_4(W)$.
Then $U\subset \ker\overline{S\nu}$.
Apply the Modified Surgery Theorem \ref{mst} for $l=2$ and $\overline{S\nu_k}:=\overline{S\nu}|_{M_k}$.
The obtained diffeomorphism commutes with $\overline{S\nu_k}$ and so is orientation-preserving.
\end{proof}

\comment

 {\bf Remark.}
 There is a natural forgetful map $\gamma \colon B\Spin \to B\SO$.
For an oriented manifold $P$, a {\em normal smoothing} of $P$ in $B\Spin \times \C P^\infty$ is a map $\overline{S\nu}\colon P\to B\Spin\times\C P^\infty$ such that $\gamma\circ\pr_1\circ\overline{S\nu}\colon P\to B\SO$ is the normal Gauss map for some embedding of $P$ into $\R^\infty$
(i.e.~is a classifying map of the stable normal bundle of $P$).
A {\em normal $k$-smoothing} is a  normal smoothing which is $(k{+}1)$-connected.

When necessary in the surgery arguments of Proof of the Almost Diffeomorphism Theorem \ref{t:aldi} (\S\ref{s:lemmas-adtmod}), we pass to the language of normal smoothings.
If $P$ is $p$-dimensional then up to natural equivalence, the normal 2?-smoothings
inducing an isomorphism on $\pi_1$ (i.e.~$\pi_1(P)=0$?), an epimorphism on $\pi_2$ and such that
$\overline{S\nu}\colon P \to B\Spin \times \C P^\infty$ are in 1--1 correspondence to pairs of a spin structure on
$P$ and a homology class $z \in H_{p-2}(P, \de)$.
This 1--1 correspondence assigns to $\overline{S\nu}$ the spin structure  $\pr_1\circ\overline{S\nu}$ and
the class homology class $z \in H_{p-2}(P, \de)\cong H^2(P)$ is determined by the homotopy class of the map $\pr_2\circ\overline{S\nu}$.

\endcomment

\subsection{Proof of the Pre-elementary class Lemma \ref{l:modif}}\label{s:lemmas-mod}

We first construct a homomorphism $s$ satisfying (1) from the definition of a pre-elementary class
(and some additional properties).
Then we show how to achieve (2) keeping (1), and finally we show how to achieve (3) keeping (1) and (2).

\begin{Lemma}\label{l:modules}
Let $V,V'$ be free abelian groups, $\cdot\colon V\times V'\to\Z$ a unimodular form, $j\colon V\to V'$ a homomorphism whose image
is a direct summand and $(\im j)^\perp=\ker j$.

A homomorphism $s\colon V'\to V$ is called {\it 1-homomorphism} if
$$V=\ker j\oplus\im s,\quad jsj=j\quad\text{and}\quad sjs=s.$$
\ \quad
A homomorphism $s \colon V'\to V$ is called a {\it 2-homomorphism}  if
$$V=\ker j\oplus\im s\quad\text{and}\quad x\cdot jsv=x\cdot v\quad\text{for every}\quad x\in\im s,v\in V'.$$

(a) There is a 1-homomorphism.

(b) For a 1-homomorphism $t\colon V'\to V$
define a homomorphism $t^*\colon \im t\to V$ by the property
$$t^*x\cdot u=x\cdot jtu\quad\text{for every}\quad u\in V'.$$
Then $t^*t$ is a 2-homomorphism.

(c) If $s$ is a 2-homomorphism, then

\quad (c1) $jsj=j$;

\quad (c2) $V'=\im j\oplus\ker s$;

\quad (c3) $sjs=s$;

\quad (c4) $t$ is a 1-homomorphism for every homomorphism $t\colon V'\to\im s$ such that $tj=sj$.
\end{Lemma}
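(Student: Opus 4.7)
The plan is to prove (a) by direct construction, (b) by identifying $t^*$ explicitly modulo $\ker j$, and (c) by a sequence of implications whose key step is a unimodularity-based triviality lemma.

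For (a), since $\im j$ is a direct summand of $V'$, it is free abelian, so the surjection $V\to\im j$ splits: choose a complement $J$ of $\ker j$ in $V$ such that $j|_J\colon J\to\im j$ is an isomorphism. Pick any complement $K$ of $\im j$ in $V'$ and define $s$ to agree with $(j|_J)^{-1}$ on $\im j$ and with $0$ on $K$. Then $jsj=j$, $sjs=s$ and $V=\ker j\oplus\im s$ are immediate from the construction.

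For (b), the essential observation is that $jtj=j$ forces $jt$ to act as the identity on $\im j$. Substituting $u=jy$ in the defining equation $t^*x\cdot u=x\cdot jtu$ gives $t^*x\cdot jy=x\cdot jy$ for all $y\in V$, hence $t^*x-x\in(\im j)^\perp=\ker j$. Thus $t^*=\id+\kappa$ on $\im t$ for some $\kappa\colon\im t\to\ker j$, and $s:=t^*t$ satisfies $js=jt$. The 2-homomorphism pairing identity $x\cdot jsv=x\cdot v$ for $x=su_0\in\im s$ then reduces, via $su_0\cdot v=tu_0\cdot jtv$ (defining property of $t^*$) together with $jsv=jtv$, to a tautology. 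The splitting $V=\ker j\oplus\im s$ follows since $\im s$ is the graph of $\kappa$ over $\im t$ inside $\ker j\oplus\im t=V$; in particular $sv\in\ker j$ forces $jtv=jsv=0$, hence $tv\in\ker j\cap\im t=0$, hence $sv=t^*0=0$, which gives $\ker j\cap\im s=0$.

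For (c), I would derive the conditions in the order (c1), (c2), (c3), (c4). The main algebraic lemma is that $\im j\cap(\im s)^\perp=0$ in $V'$: any $u$ in this intersection can be written uniquely as $u=jx_0$ with $x_0\in\im s$ (by (2-i)), and pairs trivially with $\ker j\subseteq(\im j)^\perp$ as well as with $\im s$ by assumption, hence with all of $V=\ker j\oplus\im s$, so $u=0$ by unimodularity. Applied to $w=jv$ in the 2-homomorphism identity, this shows $jsjv-jv\in\im j\cap(\im s)^\perp=0$, yielding (c1). Then $js\colon V'\to V'$ is a projection (by $(js)^2=(jsj)s=js$) with image $\im j$ (using (c1)) and kernel $\ker s$ (since $jsv=0$ and $sv\in\im s$ force $sv\in\ker j\cap\im s=0$), so $V'=\im j\oplus\ker s$, i.e.~(c2). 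Decomposing $v'=jy+w$ via (c2) and applying (c1) gives $sjsv'=s(jy)=sv'$, i.e.~(c3). For (c4), the hypothesis $tj=sj$ combined with $sj|_{\im s}=\id$ (from (c3)) forces $\im t\supseteq\im s$, hence $\im t=\im s$, and the identities $jtj=jsj=j$ and $tjt=t$ (the latter from $tj|_{\im t}=\id$) follow.

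The main obstacle is keeping the book-keeping between subgroups of $V$ and subgroups of $V'$ straight; the single non-routine algebraic fact is the triviality of $\im j\cap(\im s)^\perp$ in (c1), which crucially uses both unimodularity and the orthogonality condition $(\im j)^\perp=\ker j$.
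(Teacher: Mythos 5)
Your proof is correct and follows essentially the same route as the paper's: the same explicit splitting in (a), the same computation $t^*x-x\in(\im j)^\perp=\ker j$ and graph/complement argument in (b), and the same orthogonality-plus-unimodularity reasoning in (c1), (c3) and (c4). The only substantive deviation is in (c2), where you replace the paper's brief appeal to ``rank considerations'' by the observation that $js$ is an idempotent with image $\im j$ and kernel $\ker s$ --- a cleaner and fully general argument that yields the splitting $V'=\im j\oplus\ker s$ directly.
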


\begin{proof}[Proof of (a)]
Since $V$ and $V'$ are free abelian, there is a subgroup $T\subset V$ such that $V=\ker j\oplus T$.
Then $j|_T$ is injective and $j(T)=\im j$.
Since $j(T)=\im j$ is a direct summand in $V'$, the inverse of the abbreviation $j\colon T\to j(T)$ extends
to an epimorphism $t\colon V'\to T$.
We have $tjt=t$ and $jtj=j$.
\end{proof}

\begin{proof}[Proof of (b)]
Denote $s:=t^*t$.
Take any $x\in\im t$.
Since $jtj=j$, we have $t^*x\cdot jtu=x\cdot jtjtu=x\cdot jtu$ for every $u\in V'$.
Hence $t^*x-x\perp \im(jt)$.
Since $V=\ker j\oplus\im t$, we have $\im(jt)=\im j$.
Then $t^*x-x\in \ker j$, i.e.~$jt^*x=jx$.
Since $tjt=t$, we have $tjt^*x=tjx=x$.

Since $t^*x-x\in \ker j$, we have $V=\ker j+\im t^*$.
If $jt^*x=0$, then $jx=0$, and consequently $x\in\ker j\cap\im t=\{0\}$.
Hence $V=\ker j\oplus\im t^*$.
Since $\im s=t^*\im t=\im t^*$, we obtain $V=\ker j\oplus\im s.$

Since $$t^*x\cdot jt^*y=x\cdot jtjt^*y=x\cdot jy\quad\text{for all}\quad x,y\in \im t,$$
$$\text{we have}\quad su\cdot jsv=t^*tu\cdot jt^*tv = tu\cdot jtv=t^*tu\cdot v=su\cdot v\quad\text{for all}\quad u,v\in V'.$$
\end{proof}

\begin{proof}[Proof of (c1)]
Take any $y\in V$.
Since $\ker j\perp\im j$, we have $x_1\cdot jy=0=x_1\cdot jsjy$ for every $x_1\in\ker j$.
Also $x_2\cdot jy=x_2\cdot jsjy$ for every $x_2\in\im s$.
Since $V=\ker j\oplus\im s$, we have $x\cdot jy=x\cdot jsjy$ for every $x\in V$.
Then by the unimodularity of $\cdot \colon V \times V' \to \Z$ we have $j=jsj$.
\end{proof}

\begin{proof}[Proof of (c2)] If $sjx=0$, then $jsjx=0$.
So by (c1) $jx=0$.
Therefore $\im j\cap\ker s=0$.
Since $\im j$ is a direct summand, by rank considerations $V'=\im j\oplus\ker s$.
\end{proof}

\begin{proof}[Proof of (c3)] By (c2) $\im s=s\im j$. Also $\im j=j\im s$.
So the abbreviations $j\colon \im s\to\im j$ and $s\colon \im j\to\im s$ are surjective.
Hence
$$jsj=j\quad\Leftrightarrow\quad js|_{\im j}=\id(\im j)\quad\Leftrightarrow\quad  sj|_{\im s}=\id(\im s)\quad\Leftrightarrow\quad sjs=s.$$
\end{proof}

\begin{proof}[Proof of (c4)] We have $jtj=jsj=j$ by (c1).
We have $\im t\supset tj(V)=sj(V)=\im s$ by (c2).
Hence $V=\ker j\oplus\im t$ and $tj|_{\im s}=sj|_{\im s}=\id(\im s)$ by (c3).
\end{proof}

\begin{proof}[Proof of the Pre-elementary class Lemma \ref{l:modif}]
Since $H_3(\de W)$ is free abelian, $\im j$ is a direct summand in $H_4(W,\de)$.
Apply Lemma \ref{l:modules}.ab to $V=H_4(W)$, $V'=H_4(W,\de)$, $\cdot=\capM{\de}$ and $j=j_W$.
We obtain a 2-homomorphism $s$.
Let us show how to modify $(W,z,s)$ to achieve property (3) from the definition of a pre-elementary class.

Since $\rho_d\de_Wz^2=0$, $\rho_dz^2\in\rho_d\im j_W$.
Hence there are $a\in H_4(W)$ and $b\in H_4(W,\de)$ such that $z^2=j_Wa+db$.
So $\rho_dj_Wsz^2=\rho_dj_Wsj_Wa\overset{(2)}=\rho_dj_Wa=\rho_dz^2$.
Here (2) holds by Lemma \ref{l:modules}.c1.
Since the residues in the Pre-elementary class Lemma \ref{l:modif} are independent of the choice of
$\overline{z^2}\in j_W^{-1}\rho_dz^2$, we may take $\overline{z^2}:=\rho_dsz^2$.

Below we prove that

(a) we can change $\eta(z,s):=sz^2\capM{\de}(z^2-p_W^*)$ by $2d$ without changing $sz^2\capM{\de} z^2$;

(b) we can simultaneously change $\eta(W,z,s)$ by $d^2-d$ and $sz^2\capM{\de} z^2$ by $d^2$.

If $d$ is odd, applying (b) we make $sz^2\capM{\de} z^2$ even keeping $\eta(z,s)$ divisible by $\widehat d=\gcd(d,3)$.
Then applying (a) we can change $\eta(z,s)$ by $2d$ keeping $sz^2\capM{\de} z^2$ even.

If $d$ is even, $sz^2\capM{\de} z^2$ is even by the hypothesis.
Applying (a,b) we can change $\eta(z,s)$ by $\gcd(2d,d^2-d)=d$ keeping $sz^2\capM{\de} z^2$ even.

Take $(S^2)^4$ and the class $z_S$ which is the sum of four summands, each represented by a product of three 2-spheres and a point.
Then $z_S^4=24$.
Since $(S^2)^4$ is almost parallelizable, we have $p_{(S^2)^4}^*=0$.
Taking connected sums with copies of $((S^2)^4, z_S)$ we can change $\eta(z,s)$ by
any multiple of 24 while keeping $sz^2\capM{\de} z^2$ even.

So we obtain that $\eta(z,s)=0$ and $sz^2\capM{\de} z^2$ is even.

By \cite[spin case of (2.4) and Proposition 2.5]{KS91} there is a closed spin 8-manifold $W_0$ and $z_0\in H_6(W_0)$ such that
$z^2_0\cap_{W_0}(z^2_0-p_{W_0}^*)=0$ and $z^2_0\cap_{W_0}z^2_0=2$.
Taking connected sums with copies of $(W_0, z_0)$ we can change $sz^2\capM{\de} z^2$ by any multiple of 2  without changing $\eta(W,z,s)$.
So we can obtain $sz^2\capM{\de} z^2=0$ while keeping $\eta(z,s)=0$.

Let $\Ha P^2$ be quaternionic projective space oriented so that its signature
is given by $\sigma(\Ha P^2)=1$.
Recall that $\Ha P^2$ is 3-connected and $(p_{\Ha P^2}^*)^2=1$
\cite[Lemmas 3 and 4]{Mi56}.
There is a 3-connected parallelizable 8-manifold $\overline E_8$ whose boundary is
a homotopy sphere and whose signature is 8.
Then $p_{\overline E_8}^*=0$.

Since $\de W$ is parallelizable, $\de_Wp_W^*=0$.
By \cite[Lemma 2.11.b]{CS11} $sp_W^*$ is a characteristic element for $\cap_W|_{\im s}$.
Hence by Lemma \ref{l:sinter}.a $\sigma(W)=\sigma(\cap_W|_{\im s})\underset{\mod8}\equiv sp_W^*\capM{W} sp_W^*$.
Therefore taking connected sums with copies of $\Ha P^2$  and $\overline E_8$ we can achieve
$\sigma(W)=sp_W^*\capM{\de} p_W^*=0$ while keeping $\eta(z,s)=sz^2\capM{\de} z^2=0$.%
\footnote{This covers a minor gap in \cite[\S4]{CS11}: there we needed additionally to take connected sums with the
$\overline E_8$-manifold to kill $\alpha_W$, and so $\de W$ will in general be changed by connected sum with a homotopy sphere.}
\end{proof}

\begin{proof}[Proof of (b)]
Denote $W_1:=W\#\Ha P^2\#(-\Ha P^2)$.
We have $H_4(\Ha P^2\#(-\Ha P^2))\cong\Z^2$ with evident basis.
In this basis the intersection form is $\diag(1,-1)$ and $p_{\Ha P^2\#(-\Ha P^2)}=(1,1)$.
Let $z_1$ be the preimage of $z$ under the `connected sum' isomorphism $H_6(W_1,\de)\to H_6(W,\de)$.
In order to construct the new $s$ (this is $t^*t$ not $s_1$, both defined below) let us define the lower two lines of the following diagram:
$$\xymatrix{
{z_1^2\choose p_{W_1}^*} \ar[d]^\in \ar@{|->}[r]^{c_\de} \ar@(u,u)[rrr]^{ct}
& {z^2,0,0\choose p_W^*,1,1} \ar[d]^\in \ar@{|->}[r]^(0.45){s':=(s\oplus\de)\oplus\id}
& {sz^2,\de_Wz^2,0,0\choose sp_W^*,0,1,1} \ar[d]^\in \ar@{|->}[r]^(0.525){t':=i\oplus(t''\oplus\id)}
& {sz^2,0,d\choose sp_W^*,1,1} \ar[d]^\in \\
H_4(W_1,\de) \ar[r]^(.425){c_\de,\cong} \ar[dr]^{t} \ar@(d,l)@{-->}[dr]_{s_1,t^*t}
& H_4(W,\de)\oplus\Z^2 \ar[r]^(.425){s',\cong} \ar@(d,d)@{-->}[rr]^{s\oplus\id\ (\ne t's')}
& \im s\oplus H_3(\partial W)\oplus\Z^2 \ar[r]^(.6){t'} & H_4(W)\oplus\Z^2 \\
& H_4(W_1) \ar@(r,d)[rru]_{c,\cong}
 }$$
Let $c_\de$ and $c$ be the `connected sum' orthogonal isomorphisms (for the form $\diag(1,-1)$ on $\Z^2$).
Let $\id:=\id\Z^2$.
Let $s'(u,a,b):=s(u)\oplus\de u\oplus(a,b)$.
Since $H_3(W)=0$, by Lemma \ref{l:modules}.c2 $s\oplus\de\colon H_4(W,\de)\to \im s\oplus H_3(\partial W)$ is an isomorphism.
Hence $s'$ is an isomorphism.

Since $H_3(\de W)$ is free abelian and $d=\di(\de_Wz^2)$, there is  a map
$$t''\colon H_3(\de W)\to
\Z^2\quad\text{such that}\quad t''(\de_Wz^2)=(0,d).$$
Let $t'(u,v,a,b):=u\oplus(t''(v)+(a,b))$.
Let
$$V:=H_4(W_1),\quad V':=H_4(W_1,\de),\quad \cdot:=\cap,\quad j:=j_{W_1},\quad s_1:=c^{-1}(s\oplus\id)c_\de,
\quad t:=c^{-1}t's'c_\de,$$
so that the undashed lines of the diagram commute.
Then $c\im s_1=\im s\oplus\Z^2=\im t'=c\im t$.
Clearly, $s_1$ is a 2-homomorphism.
 Also
$$tj=c^{-1}t's'c_\de j=c^{-1}t's'(j_W\oplus\id)c=c^{-1}t'((sj_W\oplus0)\oplus\id)c=c^{-1}s(j_W\oplus\id)c=
c^{-1}(s\oplus\id)c_\de j=s_1j.$$
Hence by Lemma \ref{l:modules}.b,c4 for $s_1$ we obtain that $t^*t$ is a 2-homomorphism.

For every $u_1,u_2\in H_4(W_1,\de)$ by definition of $t^*$ we have
$$t^*tu_1\cap_{W_1} u_2=tu_1\cap_{W_1} tu_2=v_1\cap_Wv_2+a_1a_2-b_1b_2,\quad\text{where}\quad (v_k,a_k,b_k)=ctu_k.$$
Clearly, the images of $z_1^2$ are as shown in the first line of the diagram.
Since $\de W$ is parallelizable, the images of $p_{W_1}^*$ are as shown in the first line of the diagram.
Hence
$$t^*tz_1^2\cap_{W_1}z_1^2=sz^2\cap_W sz^2-d^2 \quad\text{and}\quad \eta(z_1,t^*t)-\eta(z,s)=0\cdot(0-1)-d\cdot(d-1)=-d^2+d.$$
\end{proof}

\begin{proof}[Proof of (a)]
By \cite{Mi56} there is a $D^4$-bundle over $S^4$ whose Euler class is 0 and whose first Pontryagin class is 4.
The double of this bundle is an $S^4$-bundle $S^4\t \times S^4$ over $S^4$ whose first Pontryagin class is 4.
We have $H_4(S^4\t \times S^4)\cong\Z^2$ with evident basis.
In this basis $p_{S^4\t \times S^4}=(2,0)$ and the intersection form is $\left(\begin{matrix}0&1\\1&0\end{matrix}\right)$.
Analogously to the proof of (b) with $\Ha P^2\#(-\Ha P^2)$ replaced by $S^4\t \times S^4$ we construct $W_1,z_1$ and $t^*t$.
Then for every $u_1,u_2\in H_4(W_1,\de)$ we have
$$t^*tu_1\cap_{W_1} u_2=tu_1\cap_{W_1} tu_2=v_1\cap_Wv_2+a_1b_2+a_2b_1,\quad\text{where}\quad (v_k,a_k,b_k)=ctu_k.$$
Also $\displaystyle ct{z_1^2\choose p_{W_1}^*}={sz^2,0,d\choose sp_W^*,2,0}$.
Then
$t^*tz_1^2\cap_{W_1}z_1^2=sz^2\capM{W} sz^2\quad\text{and}\quad \eta(z_1,t^*t)=\eta(z,s)-2d.$
\end{proof}

\subsection{Proof of the Elementary pair Lemma \ref{l:easur}}\label{s:lemmas-ele}

\begin{Lemma}\label{l:sinter}
Let $W$ be an 8-manifold satisfying the assumptions (*) and (**) of the Elementary pair  Lemma \ref{l:easur}.
Let $s\colon H_4(W,\de)\to H_4(W)$ be a homomorphism such that
$H_4(W)=\im i_W\oplus\im s$ (additively which implies orthogonally w.r.t. $\cap_W$).
Denote $j_k:=j_{M_k,W}$ and $S:=\im s$.
Denote by the superscript $\perp$ the orthogonal complement with respect to $\capM{01}$, unless another intersection product is indicated as subscript.
Then

(a)  $S$ is free abelian and the form $\cap_W|_S$ is unimodular;

(b) $j_0|_S,j_1|_S$ are injective,
$H_4(W,M_k)=(j_{1-k}S)^\perp\oplus j_kS$, $k=0,1$, and the restrictions of $\capM{01}$ both to
$j_0S\times j_1S$ and to $(j_1S)^\perp\times (j_0S)^\perp$ are unimodular;

(c) $j_k\im i_W$ is a half-rank direct summand in the free abelian group $(j_{1-k}S)^\perp$;

(d) if
$$a\in H_4(W,\de),\quad q\in \ker j_{M_0,\de W}\subset H_4(\de W)\quad\text{and}
\quad q\cap_{\de W} \de_Wa=\di(\de_Wa),$$
then there is a subgroup $U\subset\im i_W$ such that $U\capM{\de} a=0$ and the pair $j_k|_U\colon U\to(j_{1-k}S)^\perp$,
$k=0,1$, is elementary.
\end{Lemma}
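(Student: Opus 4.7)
The proof will rest on two standard adjunctions: (i) $i_W(x) \cap_W b = \pm\, x \cap_{\de W} \de_W b$ for $x \in H_4(\de W)$ and $b \in H_4(W, \de)$, which places $\im i_W$ in the radical of $\cap_W$ on $H_4(W)$ and identifies the annihilator $(\im i_W)^\perp \subset H_4(W,\de)$ with $\im j_W$; and (ii) $j_0(a) \capM{01} j_1(b) = a \cap_W b$ for $a,b \in H_4(W)$. First I will establish that all relevant groups are free abelian: $H_4(W)$ and $H_4(W,\de)$ from the segment $0 \to H_4(\de W) \to H_4(W) \to H_4(W,\de) \to H_3(\de W) \to 0$ of the long exact sequence of $(W, \de W)$ (valid since $W$ is convenient, so $H_5(W,\de)=H_3(W)=0$, and $H_*(\de W)$ is torsion free by (*)); and $H_4(W, M_k)$ via Poincar\'e--Lefschetz $H_4(W, M_k) \cong H^4(W, M_{1-k})$ combined with $H_3(W, M_{1-k}) \hookrightarrow H_2(M_{1-k})$. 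In particular $S = \im s$ will be free.

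\textbf{Parts (a) and (b).} For (a), the pairing $\capM{\de}\colon H_4(W) \times H_4(W,\de) \to \Z$ is unimodular with left radical $\im i_W$ and right annihilator $\im j_W$, so the induced pairing $H_4(W)/\im i_W \times \im j_W \to \Z$ is unimodular; the section $s$ provides isomorphisms $S \cong H_4(W)/\im i_W$ and $j_W|_S\colon S \cong \im j_W$, transporting this to unimodularity of $\cap_W|_S$. For (b), $\ker j_k \subset \ker j_W = \im i_W$ meets $S$ trivially, so $j_k|_S$ is injective; adjunction (ii) then transports unimodularity of $\cap_W|_S$ to unimodularity of $\capM{01}|_{j_0 S \times j_1 S}$. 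Standard lattice arguments in the unimodular ambient pairing $\capM{01}$ will yield the orthogonal decomposition $H_4(W, M_k) = (j_{1-k} S)^\perp \oplus j_k S$ with $\capM{01}|_{(j_1 S)^\perp \times (j_0 S)^\perp}$ unimodular.

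\textbf{Part (c).} Condition (**) says $j_{M_k,\de W}$ is surjective with common kernel $K$; combined with the factorization $j_k \circ i_W = \iota_k \circ j_{M_k,\de W}$ (where $\iota_k\colon H_4(\de W, M_k) \to H_4(W, M_k)$ is induced by inclusion), this gives $j_k(\im i_W) = \iota_k H_4(\de W, M_k)$. The long exact sequence of the triple $(W, \de W, M_k)$ will embed $H_4(W, M_k)/j_k \im i_W$ into the free group $H_4(W,\de)$; since $(j_{1-k} S)^\perp$ is a direct summand of $H_4(W, M_k)$ by (b), the quotient $(j_{1-k} S)^\perp/j_k\im i_W$ will also be free. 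For the ranks, excision and Poincar\'e--Lefschetz give $\rk j_k\im i_W = \rk H_4(\de W, M_k) = \rk H_4(M_{1-k}, \de M_{1-k}) = \rk H^3(M_{1-k}) = \rk H_3(M_{1-k})$, which equals $\rk H_3(M_k)$ by (**) (as $H_4(\de W, M_0) \cong H_4(\de W, M_1)$ forces $\rk H_3(M_0) = \rk H_3(M_1)$); a parallel rank count via the pair $(W, M_k)$ using $H_3(W)=0$ will show $\rk(j_{1-k} S)^\perp = 2\rk H_3(M_k)$, so $j_k\im i_W$ has half-rank.

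\textbf{Part (d) and the main obstacle.} For (d), I will set $\phi\colon H_4(\de W) \to \Z$, $\phi(x) := x \cap_{\de W} \de_W a$; its image is $\di(\de_W a)\cdot\Z$ (by definition of divisibility, since $H_*(\de W)$ is torsion free), and the hypothesis $\phi(q) = \di(\de_W a)$ with $q \in K$ yields $\phi(K) = \im\phi$, hence $K + \ker\phi = H_4(\de W)$. Since $H_4(\de W)/K \cong H_4(\de W, M_k)$ is free, a complement $X \subset \ker\phi$ of $K$ exists, so $H_4(\de W) = K \oplus X$ with $X \subset \ker\phi$. Setting $U := i_W(X)$: adjunction (i) gives $U \capM{\de} a = \phi(X) = 0$; $j_k|_U$ is injective because $X \cap K = 0$ and $\ker j_k = i_W(K)$; $j_k(U) = \iota_k j_{M_k,\de W}(X) = \iota_k H_4(\de W, M_k) = j_k\im i_W$, a half-rank direct summand in $(j_{1-k} S)^\perp$ by (c); and $j_0 U \capM{01} j_1 U = 0$ automatically, since $U \subset \im i_W$ lies in the radical of $\cap_W$. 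The hard part will be (c): coordinating condition (**), excision, and Poincar\'e--Lefschetz to obtain simultaneously the freeness of $(j_{1-k} S)^\perp/j_k\im i_W$ and the half-rank equality. Once (c) is in hand, (d) will follow directly by the construction above.
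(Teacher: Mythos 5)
Your proposal is correct and follows essentially the same route as the paper's proof: the same orthogonal decomposition $H_4(W,M_k)=(j_{1-k}S)^\perp\oplus j_kS$ in (b), the same identification $j_k\im i_W=i_{W,k}H_4(\de W,M_k)$ with the rank count via $H_3(M_k)$ and assumption (**) in (c), and the same use of $q$ to produce a complement of $\ker j_{M_0,\de W}$ annihilating $\de_W a$ in (d). The only deviations are cosmetic: you obtain freeness of $(j_{1-k}S)^\perp/j_k\im i_W$ by embedding it into $H_4(W,\de)$ via the triple sequence rather than via the paper's isomorphism with $H_3(M_k)$, and in (d) you invoke an abstract splitting where the paper writes the complement $\{u-(a_0\cap_{\de W}u)q\}$ explicitly.
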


\begin{proof}[Proof of (a)]
Since the torsion of $H_4(W)$ is contained in $\im i_W=H_4(W)^\perp_{\cap_W}$, the group $S$ is free abelian.
Since  $\capM{\de}\colon H_4(W)\times H_4(W,\de)\to\Z$ is unimodular, $x\cap_W y=x\capM{\de} j_Wy$
for all $x,y\in H_4(W)$
and $H_3(\de W)$ is free abelian,
it follows that the form $\cap_W|_S$ is unimodular.
\end{proof}

\begin{proof}[Proof of (b)]
Since $j_0x\capM{01} j_1y=x\cap_W y$ for all $x,y\in H_4(W)$, it follows that $\capM{01}|_{j_0S\times j_1S}$ is unimodular.
Then $j_0|_S$ and $j_1|_S$ are injective.
So if $x\in S$ and $j_kx\capM{01} j_{1-k}S=0$, then $x=0$.
Also, for every  $y\in H_4(W,M_k)$ the $\capM{01}$-intersection with $y$ defines a linear map $j_{1-k}S\to\Z$.
Hence there is a class $y_S\in j_kS$ such that $y\capM{01} x=y_S\capM{01} x$ for every $x\in j_{1-k}S$.
Then $y=y_S+(y-y_S)$ and $(y-y_S)\capM{01} j_{1-k}S=0$.
Thus $H_4(W,M_k)=(j_{1-k}S)^\perp\oplus j_kS$, $k=0,1$.
Since $\capM{01}$ and $\capM{01}|_{j_0S\times j_1S}$ are unimodular,  $\capM{01}|_{(j_1S)^\perp\times (j_0S)^\perp}$ is unimodular.
\end{proof}

{\it Some notation for the proofs of Lemmas \ref{l:sinter}.c,d and \ref{l:easur}.}
Denote by $i_{W,k},\partial_{W,k}$ homomorphisms from the exact sequence of the triple $(W,\partial W,M_k)$.
Denote by $i_k,j_k,\de_k$ and $\t  i_k,\t  j_k,\t \partial_k$ the homomorphisms from the exact sequences of
the pairs $(W,M_k)$ and $(\partial W,M_k)$, respectively.
Recall that $H_4(W)^\perp_{\cap_W}=\im i_W$.
Consider the following diagram:
$$\xymatrix{&
&H_5(W,\de)=0 \ar[d]^{\de_W} \ar[dr]^{\de_{W,k}} &
& & H_3(W)=0  \\
& H_4(M_k) \ar[r]^{\t  i_k} \ar[rd]^{i_k} & H_4(\de W) \ar[d]^{i_W} \ar[r]^{\t  j_k}
&H_4(\de W,M_k) \ar[d]^{i_{W,k}} \ar[r]^{\t \de_k=0}
& H_3(M_k) \ar[ru]^{i_k}
\\
& &H_4(W) \ar[r]^{j_k} \ar@{=}[d]  &H_4(W,M_k) \ar@{=}[d] \ar[ru]^{\de_k} & \\
& &\im i_W\oplus S \ar[r]^{j_k}    & (j_{1-k}S)^\perp\oplus j_kS &
}$$

\begin{proof}[Proof of (c)]
We have
$$\frac{(j_{1-k}S)^\perp}{j_k\im i_W}\overset{(1)}\cong
\coker j_k
\overset{(2)}\cong H_3(M_k)\overset{(3)}\cong H_4(\partial W,M_k) \overset{(4)}
\cong\im i_{W,k}\overset{(5)}=j_k\im i_W.$$
Here

$\bullet$ (1) is obtained by adding $j_kS$ both to nominator and denominator and using (b);

$\bullet$ (2) holds because $H_3(W)=0$, hence $\partial_k$ is an epimorphism;

$\bullet$ (3) holds by Poincar\'{e}-Lefschetz duality because both $H_3(M_k)$ and
$H_4(\de W,M_k)\overset{\ex}\cong H_4(M_{1-k},\de)$ are free abelian;

$\bullet$ (4) holds because $H_5(W,\de)=0$, hence $i_{W,k}$ is injective;

$\bullet$ (5) holds because $\t  j_k$ is surjective.

Since $H_3(M_k)$
is free abelian, $j_k\im i_W\cong \dfrac{(j_{1-k}S)^\perp}{j_k\im i_W}$ is free abelian.
This implies (c).
\end{proof}

\begin{proof}[Proof of (d)]
Since $M_k$ is torsion free, by Poincar\'{e}-Lefschetz duality
$H_4(\de W,M_0)\overset{\ex}\cong H_4(M_1,\de)$ is free abelian.
Since $\t  j_0$ is surjective, it follows that there is a subgroup
$$U''\subset H_4(\de W)\quad\text{such that}\quad \t  j_0|_{U''}\colon U''\to H_4(\de W,M_0)\quad\text{is an isomorphism.}$$
Since $H_3(\partial W)$ is free abelian, there is a class
$$a_0\in H_3(\partial W)\quad\text{such that}\quad\de_Wa=a_0\di(\de_Wa).$$
$$\text{Define}\quad U':=\{u-(a_0\cap_{\de W} u)q\ :\ u\in U''\}\quad\text{and}\quad U:=i_WU'.$$
Since $q\cap_{\de W} \de_Wa=\di(\de_Wa)$, we have $\de_Wa\cap_{\de W} U'=0$.
Thus $U\capM{\de} a=0$.
So by (c) it remains to prove that
{\it $j_k|_U$ is an isomorphism onto $j_k\im i_W$.}

Since $\ker\t  j_0 = \ker \t  j_1$, the map $\t  j_1|_{U''}$ is injective.
Then $U''\cap_{\de W}\im\t  i_k=0$.
This and the fact that $q\in \im\t  i_0=\im\t  i_1=\ker\t  j_0 = \ker \t  j_1$ imply that $U'\cap_{\de W} \im\t  i_0=0$.
Since $\ker\t  j_0 = \ker \t  j_1$, we have $\im\t  i_0=\im\t  i_1$.
 Therefore $U'\cap_{\de W}\im\t  i_1=0$.
Thus  $\t  j_k|_{U'}$ is injective.
Since $H_5(W,\partial)=0$, the map $i_W$ is injective.
Hence $U\cap_W i_W\ker\t  j_k=0$.
We have $i_W\ker\t  j_k=i_W\im\t  i_k=\im i_k=\ker j_k$.
Thus  $j_k|_U$ is injective.

Since $\ker\t  j_0 = \ker \t  j_1$, we have
$H_4(\partial W)=U''+\ker\t  j_0=U''+\ker\t  j_1$.
Since $q\in \ker\t  j_0 = \ker \t  j_1$, it follows that
$H_4(\partial W)=U'+\ker\t  j_0=U'+\ker\t  j_1$.
So $\t  j_1U'=\t  j_1H_4(\partial W)$.
Therefore we have
$j_kU=j_ki_WU'=i_{W,k}\t  j_kU'=i_{W,k}\t  j_kH_4(\partial W)=j_k\im i_W$.
\end{proof}

\begin{proof}[Proof of the Elementary pair Lemma \ref{l:easur}]
The group $H_4(W,M_k)$ is torsion free for $k=0,1$.
(Indeed, consider the Poincar\'e dual of the exact sequence of the pair $(W,M_{3-k})$:
$$H_5(W,\de)\to H_4(M_{3-k},\de) \to H_4(W,M_k) \to H_4(W,\de).$$
By the assumptions
$$H_5(W,\de)=0,\quad \Tors H_4(M_{3-k},\de)=\Tors H_2(M_{3-k})=0\quad\text{and}\quad
\Tors H_4(W,\de)=\Tors H_3(W)=0.$$
Hence $H_4(W,M_k)$ is torsion free.)

Since $z$ is pre-elementary, there is a homomorphism $s$ from the definition of a pre-elementary class.
Denote $S:=\im s$.
Let
$$\widehat U:=\{u\in S\ |\ lu=msz^2+nsp_W^*\text{ for some integers }l,m,n\}.$$
Since $z$ is pre-elementary, $\widehat U\cap_W\widehat U=0$.
By Lemma \ref{l:sinter}.a $S$ is free abelian and the form $\cap_W|_S$ is unimodular.
Then there is a subgroup
$$T\subset S\quad \text{such that}\quad \widehat U\subset T,
\quad \rk T=2\rk \widehat U \quad \text{and \quad $\cap_W|_T$ is unimodular}.$$
Hence $\sigma(T)=0$.
Since both $\cap_W|_S$ and $\cap_W|_T$ are unimodular, $T\cap T^\perp_{\cap_W}=0$ and
$\rk T^\perp_{\cap_W}=\rk S-\rk T$, we have $S=T\oplus T^\perp_{\cap_W}$.
So $\sigma(T^\perp_{\cap_W})=\sigma(W)-\sigma(T)=0$.
Hence there is a half-rank direct summand
$$\t  U\subset T^\perp_{\cap_W}\quad\text{such that}\quad\t  U\cap_W \t  U=0.$$
Let $U_S:=\widehat U\oplus \t  U$.

We have {\it $U_S\capM{\de} z^2=U_S\capM{\de} p_W^*=0$ and the pair $j_k|_{U_S} \colon U_S\to j_kS$, $k=0,1$,
is elementary.}

(Indeed, since $z$ is pre-elementary, $\widehat U\capM{\de} z^2=\widehat U\capM{\de} p_W^*=0$.
Also $\t  U\cap_W\widehat U=0$.
Hence by the properties (2) and (3) of $s$ we obtain $U_S\capM{\de} z^2=U_S\capM{\de} p_W^*=0$.
Since $\widehat U\cap_W\widehat U=0=\t U\cap_W\t U$, we have $j_0U_S\capM{01} j_1U_S=0$.
By Lemma \ref{l:sinter}.b $j_k|_S$ is injective.
Since $\widehat U$ and $\t  U$ are half-rank direct summands in $T$ and in $T^\perp_{\cap_W}$, respectively,
$U_S$ is a half-rank direct summand in $S$.
So $j_kU_S$ is a half-rank direct summand in $j_kS$.)

Applying Lemma \ref{l:sinter}.d to $a=z^2$ we obtain a subgroup $U_{\de}\subset\im i_W$.
Since $\partial W$ is parallelizable, $p_1(\partial W)=0$.
Hence $\im i_W\capM{\de} p_W^*=0$.
Therefore $U:=U_S\oplus U_{\de}$ is as required.
\end{proof}


\noindent{\it Diarmuid Crowley, Institute of Mathematics, University of Aberdeen,
United Kingdom. University of Melbourne, Australia.}

Email: \texttt{dcrowley@unimelb.edu.au}.
Web: \url{www.dcrowley.net}

\noindent{\it Arkadiy Skopenkov,
Moscow Institute of Physics and Technology,
and Independent University of Moscow, Russia.}

Email: \texttt{skopenko@mccme.ru}.
Web: \url{www.mccme.ru/~skopenko}


\begin{thebibliography}{99}

\bibitem[Av16]{Av16} S. Avvakumov, {\it The classification of certain linked 3-manifolds in 6-space,}
Moscow Mathematical Journal, 16:1 (2016) 1-25. arxiv:math/1408.3918.


\bibitem[Ba65]{Ba65} D.~Barden, {\em Simply connected five-manifolds}, Ann.~of Math.~82 (1965) 365--385.


\bibitem[Bo71]{Bo71} J. Boechat, {\it Plongements differentiables de varietes de dimension $4k$ dans $\R^{6k+1}$}, Comment. Math. Helv., 46:2 (1971), 141--161.

\bibitem[BH70]{BH70} J. Boechat and A. Haefliger,
{\em Plongements differentiables de varietes de dimension 4 dans $\R^7$,}
Essays on topology and related topics, Springer, 1970, 156--166.

\bibitem[CCV08]{CCV08} M.~\v{C}adek, M.~Crabb and J. Van\v{z}ura, {\em Obstruction theory on 8-manifolds}, Manuscripta Math.~{\bf 127} (2008), 167--186.

\bibitem[Cr11]{Cr11} D. Crowley, {\em $5$-manifolds: $1$-connected}, Bull. Man. Atl. (2011);
\url{http://www.boma.mpim-bonn.mpg.de/articles/33}


\bibitem[CFS14]{CFS14} D. Crowley, S.C. Ferry, M. Skopenkov
{\it The rational classification of links of codimension $>2$,} Forum Math. 26:1 (2014), 239-269.
arxiv:math/1106.1455

\bibitem[CRS04]{CRS04} M. Cencelj, D. Repov\v s and A. Skopenkov,
{\em On the Browder-Haefliger-Levine-Novikov embedding theorems,}
Proc. of the Steklov Inst. Math., 247 (2004).

\bibitem[CRS07]{CRS07} M. Cencelj, D. Repov\v s and M. Skopenkov,
{\em Homotopy type of the complement of an immersion and classification of embeddings of tori,}
Russian Math. Surveys, 62:5 (2007)

\bibitem[CRS12]{CRS12} M. Cencelj, D. Repov\v s and M. Skopenkov,
{\em Classification of knotted tori in the 2-metastable dimension,} Mat. Sbornik, 203:11 (2012), 1654-1681.
arxiv:math/0811.2745.

\bibitem[CS11]{CS11} D. Crowley and A. Skopenkov, {\em A classification of smooth embeddings of
4-manifolds in 7-space, II}, Intern. J. Math., 22:6 (2011) 731-757, arxiv:math/0808.1795.

\bibitem[DNF12]{DNF12} Dubrovin B. A., Fomenko A. T. and Novikov  S. P.,
{\em Modern geometry --- methods and applications. Part II: The geometry and topology of manifolds,}
 2012, Springer Science \& Business Media, 2012.


\bibitem[GW99]{GW99} T. Goodwillie and M. Weiss, {\em Embeddings from the point of view of immersion theory, II},
Geometry and Topology, 3 (1999) 103--118.

\bibitem[Ha66]{Ha66} A. Haefliger, {\it Differentiable embeddings of $S^n$ in $S^{n+q}$ for $q>2$,}
Ann. Math., Ser.3, 83 (1966) 402--436.



\bibitem[Ha68]{Ha68} A. Haefliger, {\it Knotted spheres and related geometric topic,}
In: Proc. of the Int. Cong. Math., Moscow, Mir, 1968, 437--445.


\bibitem[Hi60]{Hi60} M. W. Hirsch, {\em Immersions of manifolds,} Trans. Amer. Mat. Soc. 93 (1960) 242--276.

\bibitem[Hi76]{Hi76} M. W. Hirsch, {\it Differential Topology}, 1976, Springer-Verlag, New York.




\bibitem[II]{II}
D. Crowley and A. Skopenkov, {\em Embeddings  of non-simply-connected 4-manifolds in 7-space. II. Smooth classification.} arXiv:1612.04776.

\bibitem[III]{III} D. Crowley and A. Skopenkov, {\em Embeddings of non-simply-connected 4-manifolds in 7-space. III. Piecewise-linear classification.}

\bibitem[KL05]{KL05} M. Kreck and W. L\"uck, {\em The Novikov Conjecture}, Birkh\"auser, 2005.

\bibitem[Kr99]{Kr99} M.~Kreck, {\em Surgery and Duality}, Ann. of Math. 149:3 (1999) 707-754.

\bibitem[KS91]{KS91} M. Kreck and S. Stolz, {\em Some nondiffeomorphic homeomorphic homogeneous 7-manifolds
with positive sectional curvature,} J. Diff. Geom. 33 (1991) 465--486.



\bibitem[MAE]{MAE} www.map.mpim-bonn.mpg.de/
\linebreak
    Embeddings\_in\_Euclidean\_space:\_an\_introduction\_to\_their\_classification
Manifold Atlas Project

\bibitem[MAI]{MAI} http://www.map.mpim-bonn.mpg.de/Intersection\_form
Manifold Atlas Project


\bibitem[MAM]{MAM} www.map.mpim-bonn.mpg.de/4-manifolds\_in\_7-space
Manifold Atlas Project

\bibitem[MAP]{MAP} www.map.mpim-bonn.mpg.de/Parametric\_connected\_sum
Manifold Atlas Project

\bibitem[Mi56]{Mi56} J. Milnor, {\em On manifolds homeomorphic to the $7$-sphere}, Ann. of Math. {\bf 64} (1956), 399--405.

\bibitem[Mi65]{Mi65} J. Milnor, {\em Lectures on the $h$-cobordism theorem}. Notes by
L.~Siebenmann and J.~Sondow, Princeton University Press, 1965.


\bibitem[Pa56]{Pa56} G. Paechter, {\em On the groups $\pi_r(V_{mn})$}, I,
Quart. J. Math. Oxford, Ser.2, 7:28 (1956) 249--265.



\bibitem[RS99]{RS99} D. Repovs and A. Skopenkov, {\em New results on embeddings of polyhedra and manifolds into Euclidean spaces,} Russ. Math. Surv. 54:6 (1999) 1149--1196.


\bibitem[Sa99]{Sa99} O. Saeki {\em On punctured 3-manifolds in 5-sphere}, Hiroshima Math. J. 29 (1999) 255--272.

\bibitem[Sk02]{Sk02} A. Skopenkov, {\em On the Haefliger-Hirsch-Wu invariants for embeddings and immersions},
Comment. Math. Helv. 77 (2002), 78--124.

\bibitem[Sk06]{Sk06} A. Skopenkov, {\em Classification of embeddings below the metastable dimension,}
arxiv: math/ 0607422

\bibitem[Sk07]{Sk07} A. Skopenkov, {\em A new invariant and parametric connected sum of embeddings},
Fund. Math. 197 (2007), 253--269. arxiv: math/ 0509621

\bibitem[Sk08]{Sk08} A. Skopenkov, {\em Embedding and knotting of manifolds in Euclidean spaces,} \
in: Surveys in Contemporary Mathematics, Ed. N. Young and Y. Choi
London Math. Soc. Lect. Notes, 347 (2008) 248--342. arxiv:math/0604045

\bibitem[Sk08']{Sk08'} A. Skopenkov, {\em Classification of smooth embeddings of 3-manifolds in 6-space},
Math. Zeitschrift, 260:3, 2008, 647-672, arxiv:math/0603429

\bibitem[Sk10]{Sk10} A.  Skopenkov, {\em A classification of smooth embeddings of 4-manifolds in 7-space, I},
Topol. Appl., 157 (2010) 2094-2110, arxiv:math/0808.1795.

\bibitem[Sk10']{Sk10'} A.  Skopenkov, {\em Embeddings of $k$-connected $n$-manifolds into $\R^{2n-k-1}$},
Proc. AMS, 138 (2010) 3377--3389, arxiv:math/0812.0263.

\bibitem[Sk11]{Sk11} M. Skopenkov, {\em When is the set of embeddings finite?} Intern. J. Math. 26:7 (2015),
arxiv:math/1106.1878.

\bibitem[Sk14]{Sk14} A. Skopenkov, {\em How do autodiffeomorphisms act on embeddings},  Proc. A of the Royal Society of Edinburgh, 148:4 (2018), 835--848.
arxiv:math/1402.1853.

\bibitem[Sk15]{Sk15} A. Skopenkov, {\em A classification of knotted tori},
Proc. A of the Royal Society of Edinburgh, to appear. arxiv:math/1502.04470

\bibitem[Sm59]{Sm59} S.~Smale, {\em The classification of immersions of spheres in Euclidean spaces,}
Ann. of Math. (2), 69 (1959) 327--344.

\bibitem[Sm62]{Sm62} S.~Smale, {\em On the structure of $5$-manifolds}, Ann.~of Math.~74 (1962) 38--46.

\bibitem[St99]{St99} N. Steenrod, {\em The Topology of Fibre Bundles.} Princeton Univ. Press, Princeton, 1999.

\bibitem[To10]{To10} D. Tonkonog, {\em Embedding punctured $n$-manifolds in Euclidean $(2n-1)$-space,}
arxiv:math/1010.4271.


\bibitem[Wa70]{Wa70}
C. T. C. Wall, {\em Surgery on compact manifolds}, 1970, Academic Press, London.

\bibitem[Wa66]{Wa66} C.~T.~C.~Wall, {\em Classification Problems in Differential Topology. V. On Certain 6-Manifolds,}
Invent. Math., 1 (1966) 355--374.

\bibitem[Wh78]{Wh78} G.~W.~Whitehead, {\em Elements of homotopy theory},  Graduate Texts in Mathematics, Springer-Verlag 1978.


\bibitem[Ya84]{Ya84} T. Yasui, {\em Enumerating embeddings of $n$-manifolds in Euclidean $(2n-1)$-space,}
J. Math. Soc. Japan, 36:4 (1984) 555--576.


\bibitem[Ze93]{Ze93} E. C. Zeeman, \emph{A Brief History of Topology,} UC Berkeley, October 27, 1993, On the occasion of Moe Hirsch's 60th birthday, \url{http://zakuski.utsa.edu/~gokhman/ecz/hirsch60.pdf}.

\bibitem[Zh16]{Zh16} A. Zhubr, {\em On smoothing embeddings and isotopies}, Math. Notes, 99 (2016), 946-947.

\end{thebibliography}
\end{document}